\newcommand{\R}{\mathbb{R}}
\newcommand{\loc}{\mathrm{loc}}
\newcommand{\ra}{\rightarrow}
\newcommand{\diver}{\mathrm{div}}
\newcommand{\eps}{\varepsilon}
\newcommand{\lip}{\mathrm{Lip}}
\newcommand{\disp}{\displaystyle}
\newcommand{\di}{\mathrm{d}}
\newcommand{\vol}{\mathrm{vol}}
\newcommand{\Dd}{\mathcal{D}}
\newcommand{\SR}{\mathrm{S}}
\newcommand{\GSR}{\mathrm{GS}}
\newcommand{\K}{\mathcal K}
\newcommand{\Kpt}{\mathcal K_{\psi,\theta}}
\newcommand{\A}{\mathcal{A}}
\newcommand{\B}{\mathcal{B}}
\newcommand{\F}{\mathcal{F}}
\newcommand{\wup}{W^{1,p}(\Omega)}
\newcommand{\wupz}{W^{1,p}_0(\Omega)}
\begin{document}

\newtheorem{Definition}{Definition}[section]
\newtheorem{Proposition}{Proposition}[section]
\newtheorem{Lemma}{Lemma}[section]
\newtheorem{Theorem}{Theorem}[section]
\newtheorem{Remark}{Remark}[section]
\newtheorem{Corollary}{Corollary}[section]
\newtheorem{Example}{Example}[section]
\newtheorem*{Question}{Question}
\numberwithin{equation}{section}
\newtheorem*{Notation}{Notation}
%
%
%
\newcommand{\riem}{(M, \langle \, , \, \rangle)}
\newcommand{\Hess}{\mathrm{Hess}\, }
\newcommand{\hess}{\mathrm{hess}\, }
\newcommand{\cut}{\mathrm{cut}}
\newcommand{\ind}{\mathrm{ind}}
\newcommand{\ess}{\mathrm{ess}}
\newcommand{\longra}{\longrightarrow}
\newcommand{\metric}{\langle \, , \, \rangle}
\newcommand{\mmetric}{\langle\langle \, , \, \rangle\rangle}
\newcommand{\rad}{\mathrm{rad}}
\newcommand{\sm}{\mathrm{sm}}
\newcommand{\sn}{\mathrm{sn}}
\newcommand{\cn}{\mathrm{cn}}
\newcommand{\ink}{\mathrm{in}}
\newcommand{\capac}{\mathrm{cap}}
\newcommand{\Fk}{\mathcal{F}_k}
\newcommand{\dist}{\mathrm{dist}}
\newcommand{\grh}{\mathcal{G}^{(h)}}
\newcommand{\Ricc}{\mathrm{Ric}}
\newcommand{\LL}{\mathcal{L}}
\newcommand{\Kt}{\K_{\theta}}
\newcommand{\Aa}{\mathcal{A}}
\newcommand{\BB}{\mathcal{B}}
\newcommand{\wupst}{W^{1,p}(\Omega)^*}
\newcommand{\wupzst}{W^{1,p}_0(\Omega)^*}
\newcommand{\bKpt}{\bar{\K}_{\psi,\theta}}
\newcommand{\supp}{\operatorname{supp}}
\newcommand{\wedgedot}{\wedge\cdots\wedge}
\newcommand{\mob}{\mathrm{M\ddot{o}b}}
\newcommand{\mab}{\mathfrak{m\ddot{o}b}}
\newcommand{\Cf}{\mathcal{C}_f}
\newcommand{\cutf}{\mathrm{cut}_{f}}
\newcommand{\Cn}{\mathcal{C}_n}
\newcommand{\cutn}{\mathrm{cut}_{n}}
\newcommand{\Ca}{\mathcal{C}_a}
\newcommand{\cuta}{\mathrm{cut}_{a}}
\newcommand{\cutc}{\mathrm{cut}_c}
\newcommand{\cutcf}{\mathrm{cut}_{cf}}
\newcommand{\rk}{\mathrm{rk}}
\newcommand{\crit}{\mathrm{crit}}
\newcommand{\diam}{\mathrm{diam}}
\newcommand{\haus}{\mathcal{H}}
\newcommand{\po}{\mathrm{po}}
\newcommand{\gp}{\mathcal{G}_p}
\newcommand{\cal}{\mathcal}
\newcommand{\HH}{\mathbb{H}}
\newcommand{\green}{\mathcal{G}}
\newcommand{\II}{\mathrm{II}}%
%
%
%

\author{Bruno Bianchini \and Luciano Mari \and Marco Rigoli}

\title[Yamabe type equations on manifolds]{Yamabe type equations with a sign-changing nonlinearity, and the prescribed curvature problem}


\date{\today}

\address{Bruno Bianchini\\ Dipartimento di Matematica Pura e Applicata, Universit\`a degli Studi di Padova\\
Via Trieste 63, I-35121 Padova (Italy)}
\email{bianchini@dmsa.unipd.it}

\address{Luciano Mari\\ Departamento de Matem\'atica, Universidade Federal do Cear\'a\\
Av. Humberto Monte s/n, Bloco 914, 60455-760 Fortaleza (Brazil)}
\email{mari@mat.ufc.br}

\address{Marco Rigoli \\ Dipartimento di Matematica,
Universit\`a
degli studi di Milano\\
Via Saldini 50, I-20133 Milano (Italy)}
\email{marco.rigoli55@gmail.com}

\subjclass[2010]{primary 58J05, 35B40; secondary 53C21, 34C11, 35B09.}

\keywords{Yamabe equation, Schr\"odinger operator, subcriticality, p-Laplacian, spectrum, prescribed curvature}

\maketitle 

%
%
%



\begin{abstract} 
In this paper, we investigate the prescribed scalar curvature problem on a non-compact Riemannian manifold $(M, \metric)$, namely the existence of a conformal deformation of the metric $\metric$ realizing a given function $\widetilde s(x)$ as its scalar curvature. In particular, the work focuses on the case when $\widetilde s(x)$ changes sign. 
Our main achievement are two new existence results requiring minimal assumptions on the underlying manifold, and ensuring a control on the stretching factor of the conformal deformation in such a way that the conformally deformed metric be bi-Lipschitz equivalent to the original one. The topological-geometrical requirements we need are all encoded in the spectral properties of the standard and conformal Laplacians of $M$. Our techniques can be extended to investigate the existence of entire positive solutions of quasilinear equations of the type
$$
\Delta_{p} u + a(x)u^{p-1} - b(x)u^\sigma = 0
$$
where $\Delta_p$ is the $p$-Laplacian, $\sigma>p-1>0$, $a,b \in L^\infty_\loc(M)$ and $b$ changes sign, and in the process of collecting the material for the proof of our theorems, we have the opportunity to give some new insight on the subcriticality theory for the Schr\"odinger type operator 
$$
Q_V' \ : \ \varphi \longmapsto -\Delta_p \varphi - a(x)|\varphi|^{p-2}\varphi.
$$
In particular, we prove sharp Hardy-type inequalities in some geometrically relevant cases, notably for minimal submanifolds of the hyperbolic space.

%
%
%
\end{abstract}

\vspace{0.3cm}

\tableofcontents

\section{Introduction, I: existence for the generalized Yamabe problem}
Generalizations of the classical Yamabe problem on a Riemannian manifold have been the focus of an active area of research over the past 30 years. Among these, the prescribed scalar curvature problem over non-compact manifolds appears to be challenging: briefly, given a non-compact Riemannian manifold $(M^m, \metric)$ with scalar curvature $s(x)$ and a smooth function $\widetilde s \in C^\infty(M)$, the problem asks under which conditions there exists a conformal deformation of $\metric$,
\begin{equation}\label{01}
\widetilde{\metric}=\varphi^2 \metric,   \qquad 0 < \varphi \in C^\infty(M),
\end{equation}
realizing $\widetilde s(x)$ as its scalar curvature. When the dimension $m$ of $M$ is at least $3$, writing $\varphi = u^{\frac{2}{m-2}}$, the problem becomes equivalent to determining a positive solution $u \in C^\infty(M)$ of the Yamabe equation
\begin{equation}\label{02}
\disp \Delta u -\frac{s(x)}{c_m}u+ \frac{\widetilde s(x)}{c_m} u^{\frac{m+2}{m-2}}=0, \qquad c_m = \frac{4(m-1)}{m-2}.
\end{equation}
Here, $\Delta$ is the Laplace-Beltrami operator of the background metric $\metric$. For $m=2$, setting $\varphi= e^{u}$ one substitutes \eqref{02} with
$$ 
2\Delta u-s(x)+\widetilde s(x) e^{2u}=0
$$
where now $u \in C^\infty (M)$ may change sign (see \cite{kazdan}). Hereafter, we will confine ourselves to dimension $m\ge 3$, and $M$ will always be assumed to be connected. Agreeing with the literature, we will call the linear operator in \eqref{02}:  
\begin{equation}\label{laplaconfo}
L_{\metric} \doteq - \Delta - \frac{s(x)}{c_m} = -\Delta + \frac{m-2}{4(m-1)}s(x) 
\end{equation}
the conformal Laplacian of $(M, \metric)$.\par
The original Yamabe problem is a special case of the prescribed scalar curvature problem, namely that when $\widetilde s(x)$ is a constant, and for this reason, in the literature, the prescribed scalar curvature problem is often called the generalized Yamabe problem. Besides establishing existence of a positive solution $u$ of \eqref{02}, it is also useful to investigate its qualitative behaviour since this reflects into properties of $\widetilde\metric$. For instance, $u \in L^{\frac{2m}{m-2}}(M)$ is equivalent to the fact that $\widetilde \metric$ has finite volume. Also, if $u$ is bounded between two positive constants, the identity map 
\begin{equation}\label{defidentita}
i \ \ : \ \ (M, \metric) \longrightarrow (M, \widetilde \metric)
\end{equation} 
is globally bi-Lipschitz, and thus $\widetilde \metric$ inherits some fundamental properties of $\metric$. For instance, geodesic completeness, parabolicity, Gromov-hyperbolicity, etc. (see \cite{gromov, gromov2}). Agreeing with the literature, when $C^{-1} \metric \le \widetilde{\metric} \le C\metric$ for some constant $C>0$ we will say that $\widetilde \metric$ and $\metric$ are uniformly equivalent.\par

%
%
Given the generality of the geometrical setting, it is reasonable to expect that existence or non-existence of the desired conformal deformation heavily depends on the topological and metric properties of $M$ and their relations with $\widetilde s(x)$. As we shall explain in awhile, a particularly intriguing (and difficult) case is when $\widetilde s(x)$ is allowed to change sign. In this situation, with the exception of a few special cases, a satisfactory answer to the prescribed curvature problem is still missing. To properly put our results into perspective, first we describe some of the main technical problems that arise when looking for solutions of \eqref{02} for sign-changing $\widetilde s(x)$. Then, we briefly comment on some classical and more recent approaches. In particular, we pause to describe in detail four results that allow us to grasp the situation in the relevant examples of Euclidean and hyperbolic spaces and to underline the key features of our new achievements. We stress that, when $\widetilde s(x) \le 0$, there is a vast literature and the interaction between topology and geometry is better understood. Among the various references on the  existence problem, we refer the reader to \cite{avilesmcowen,rrv,rrv2,brs2, litamyang}. \par
If $\widetilde s(x)$ is positive somewhere, basic tools to produce solutions are in general missing. More precisely, uniform $L^\infty$-estimates fail to hold on regions where $\widetilde s(x)$ is non-negative, and comparison theorems are not valid where $\widetilde s(x)$ is positive. This suggests why, in the literature, equation \eqref{02} in a non-compact ambient space has mainly been studied via variational and concentration-compactness techniques (\cite{holcman, zhang}) or radialization techniques (\cite{WMni, naito, kawano, avilesmcowen}). We also quote the interesting method developed in \cite{rrv, rrv2, bae}.\par
To the best of our knowledge, up to now there have been few attempts to adapt the variational approach to (non-compact, of course) spaces other than $\R^m$, \cite{holcman, zhang}. In this respect, a particularly interesting result is the next one due to Q.S. Zhang \cite{zhang}. 
\begin{Theorem}[\cite{zhang}, Thm. 1.1]\label{teo_zhang}
Let $(M^m, \metric)$ be a complete manifold with dimension $m \ge 3$ and scalar curvature $s(x) \ge 0$. Suppose that $\vol(B_r(x)) \le Cr^m$ for some uniform $C$ independent of $x$, and that $M$ has positive Yamabe invariant $Y(M)$:
\begin{equation}\label{Yamabeinvariant}
Y(M) = \inf \left\{ \int_M \Big[ |\nabla \phi|^2+ \frac{s(x)}{c_m}\phi^2\Big] \ \ : \ \ \phi \in \lip_c(M), \ \int_M \phi^{\frac{2m}{m-2}} =1 \right\},
\end{equation}
$c_m$ as in \eqref{02}. Assume further that 
\begin{itemize}
\item[-] $\widetilde s(x) \ge 0, \not\equiv 0$ and $\widetilde s(x)\ra 0$ as $r(x) \ra +\infty$, 
\item[-] $\widetilde s(x)$ is sufficiently flat around at least one of its maximum points.
\end{itemize}
Then, there exists a solution $u \in L^{\frac{2m}{m-2}}(M)$ of \eqref{02} such that 
\begin{equation}\label{upperbound_4}
u \le C \big(1+r(x)\big)^{-\frac{m-2}{2}},
\end{equation}
for some $C>0$. In particular, $\widetilde{\metric}= u^{\frac{4}{m-2}} \metric$ has finite volume and is geodesically incomplete.
\end{Theorem}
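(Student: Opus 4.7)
The plan is to obtain the positive solution $u$ as (a rescaling of) a minimizer of the constrained variational problem
\begin{equation*}
I \doteq \inf\left\{ \int_M\Big(|\nabla\phi|^2 + \tfrac{s(x)}{c_m}\phi^2\Big) \ : \ \phi\in \lip_c(M), \ \int_M \widetilde s(x)\, |\phi|^{\frac{2m}{m-2}}\,\di\vol=1\right\},
\end{equation*}
and the main difficulty is the well-known loss of compactness at the critical Sobolev exponent, which I would handle by the P.-L. Lions concentration-compactness principle, showing that neither concentration at a point, nor vanishing, nor escape to infinity can occur for a minimizing sequence.

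\textbf{Setup.} The uniform Euclidean volume growth $\vol(B_r(x))\le Cr^m$ together with $Y(M)>0$ and $s(x)\ge 0$ gives a global Sobolev inequality $\|\phi\|_{L^{2m/(m-2)}}^2\le C_0\int_M(|\nabla\phi|^2+\tfrac{s}{c_m}\phi^2)$ for $\phi\in \lip_c(M)$; hence the completion $H$ of $\lip_c(M)$ with respect to $Q(\phi)=\int_M(|\nabla\phi|^2+s\phi^2/c_m)$ embeds continuously into $L^{2m/(m-2)}(M)$. Since $\widetilde s$ is bounded and $\widetilde s>0$ somewhere, the constraint set is nonempty and $I\in(0,+\infty)$.

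\textbf{The threshold and the flat-max trick.} Concentration at a point $x_0$ of a minimizing sequence would produce a "bubble" carrying energy $\ge (\max\widetilde s)^{-(m-2)/m}\,Y(\R^m)$ (this uses the local asymptotics of Sobolev extremals and the fact that, on small scales, the ambient geometry looks Euclidean thanks to $s(x)\ge 0$ and the volume bound). The central analytic step is to show
\begin{equation*}
I \ < \ (\max \widetilde s)^{-\frac{m-2}{m}}\,Y(\R^m).
\end{equation*}
I would prove this by plugging into the Rayleigh quotient an Aubin-type test function $\phi_{\varepsilon}(x) = \eta(x)\, \varepsilon^{-\frac{m-2}{2}}\bigl(1+\varepsilon^{-2}\,r_{P_0}(x)^2\bigr)^{-(m-2)/2}$ centered at a maximum point $P_0$ of $\widetilde s$, with $\eta$ a cutoff. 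Expanding the numerator and the denominator in $\varepsilon$, the flatness of $\widetilde s$ around $P_0$ ensures that the correction from $\widetilde s(x)-\max\widetilde s$ in the denominator dominates the error from the local geometry and from the zeroth-order term $s(x)/c_m$, yielding strict inequality for $\varepsilon$ small.

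\textbf{Concentration-compactness.} Applied to a minimizing sequence $\{\phi_k\}\subset H$, one of three alternatives occurs for the measures $|\nabla\phi_k|^2\,\di\vol$ and $\widetilde s|\phi_k|^{2m/(m-2)}\,\di\vol$: tightness, vanishing, or dichotomy. Vanishing and escape to infinity are ruled out by $\widetilde s(x)\to 0$ as $r(x)\to\infty$ together with $\int \widetilde s|\phi_k|^{2m/(m-2)}=1$. Dichotomy is excluded by the strict inequality above, which forbids mass splitting into a bubble plus a residue. Hence $\phi_k\to u$ strongly in $L^{2m/(m-2)}(\widetilde s\,\di\vol)$ and $u$ attains $I$; the Euler-Lagrange equation, after rescaling, is precisely \eqref{02}.

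\textbf{Regularity, positivity, and pointwise decay.} Standard elliptic regularity lifts $u$ to $C^\infty(M)$, and the strong maximum principle (applicable since $L_{\metric}$ has a nonnegative potential by $s\ge 0$) yields $u>0$. For the decay \eqref{upperbound_4}, I would run a Moser iteration on annuli $B_{2R}(x)\setminus B_{R/2}(x)$ for $R\sim r(x)$: the Sobolev inequality is uniform in $x$ by the volume hypothesis, the gradient term dominates the (small) $\widetilde s\, u^{(m+2)/(m-2)}$ contribution because $\widetilde s\to 0$ at infinity, and the $L^{2m/(m-2)}$-mass of $u$ on such annuli tends to $0$; this yields $\sup_{B_R(x)} u\le C R^{-(m-2)/2}\|u\|_{L^{2m/(m-2)}(B_{2R}(x))}$, which after rearrangement gives the claimed bound. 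Finiteness of $\vol_{\widetilde\metric}(M)=\int u^{2m/(m-2)}\,\di\vol$ is immediate from $u\in L^{2m/(m-2)}(M)$, and the fast decay of $u$ prevents geodesic completeness of $\widetilde\metric$ along rays.

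The hardest step is the sharp comparison $I<(\max\widetilde s)^{-(m-2)/m}Y(\R^m)$: the flatness assumption on $\widetilde s$ at its maximum is exactly what is needed to make the $\varepsilon$-expansion of the test-function Rayleigh quotient tip below the compactness threshold, and quantifying "sufficiently flat" requires a careful balance of the curvature/potential corrections against the variation of $\widetilde s$.
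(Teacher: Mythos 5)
This theorem is stated in the paper as a citation of Q.~S.~Zhang's Theorem 1.1 from \cite{zhang}; the paper does not reproduce a proof, so there is no in-paper argument to compare your sketch against. However, the authors do describe Zhang's method in the introduction as ``variational and concentration-compactness techniques,'' and your outline is indeed in exactly that spirit, so it is a reasonable reconstruction of what the cited argument does.

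A few points you would have to nail down if you were to make the sketch rigorous. First, the Euclidean-type volume bound $\vol(B_r(x))\le Cr^m$ by itself is not what gives the global Sobolev inequality; that inequality comes from the positivity of $Y(M)$ (it is literally the statement $\|\phi\|_{2m/(m-2)}^2\le Y(M)^{-1}\int(|\nabla\phi|^2+s\phi^2/c_m)$), while the volume bound enters later in controlling the local geometry uniformly in $x$, so that the concentration threshold at every point is the same $Y(\R^m)$ and the Moser iteration constants on annuli do not degenerate. Second, when you rule out escape to infinity you also implicitly need the normalization $\int_M\widetilde s|\phi_k|^{2m/(m-2)}=1$ together with $\widetilde s\to 0$; if $\widetilde s$ has noncompact support you should spell out that any mass running off to infinity contributes a vanishingly small amount to the constraint while a nonnegligible amount of energy, so that the residue remaining in a compact region still violates the threshold---this is where the constraint functional and the energy functional must be estimated together. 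Third, for the strict inequality $I<(\max\widetilde s)^{-(m-2)/m}Y(\R^m)$ the ``sufficiently flat'' hypothesis must be quantified: in the $\varepsilon$-expansion the leading correction from the geometry of $M$ near $P_0$ (in particular from $s(P_0)$ and the metric coefficients, since you no longer work in conformal normal coordinates as in the compact case) and from $\widetilde s(x)-\widetilde s(P_0)$ both appear at the same order unless $\widetilde s$ vanishes to high enough order at $P_0$, and the precise order needed depends on $m$ (as in the Escobar--Schoen analysis the paper refers to). Finally, the conclusion that $\widetilde\metric$ is geodesically incomplete deserves a word: from $u\le C(1+r)^{-(m-2)/2}$ the conformal factor $u^{4/(m-2)}\le C(1+r)^{-2}$ makes rays from a fixed origin have finite $\widetilde\metric$-length, which is the point. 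With these caveats, your structure---constrained minimization under a Sobolev-critical constraint weighted by $\widetilde s$, an Aubin test-function estimate at a flat maximum of $\widetilde s$ to beat the compactness threshold, P.-L.~Lions concentration-compactness, and Moser iteration on annuli for the decay---is the natural and, as far as the paper indicates, the intended route.
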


\begin{Remark}
\emph{The flatness condition above is the one usually required for the compact Yamabe problem, see \cite{escobar, escobarschoen}.
}
\end{Remark}

The above theorem is not, indeed, the most general statement of Zhang's result, but however the version here is a good compromise between generality and simplicity, and it is enough for the sake of comparison with our main theorems. On the positive side, topological conditions on $M$ are not so demanding. However, we underline that the polynomial volume growth assumption is essential for Zhang's method to work, hence this excludes the case of negatively curved manifolds like the hyperbolic space $\HH^m_\kappa$ of sectional curvature $-\kappa^2$. 
On the contrary, as the recent \cite{bmr3} highlights, the radialization methods developed by W.M. Ni, M. Naito and N. Kawano in \cite{WMni, naito, kawano} on $\R^m$, and by P. Aviles and R. McOwen in \cite{avilesmcowen} for $\HH^m$ are very flexible with respect to curvature control on $M$, but on the other hand they require $M$ to possess a pole (that is, a point $o$ for which the exponential map $\exp_o$ is a diffeomorphism), a quite restrictive topological assumption. We quote the two results, starting from Ni-Naito-Kawano's theorem.

\begin{Theorem}[\cite{WMni, naito, kawano}]\label{teo_ninaitokawano}
Let $\widetilde s(x) \in C^\infty(\R^m)$, $m \ge 3$, and suppose that there exists $B \in C^0(\R)$ such that
\begin{equation}\label{crescescaleugen}
|\widetilde s(x)| \le B\big(r(x)\big) \qquad \text{and} \qquad rB(r) \in L^1(+\infty).
\end{equation}
Then, there exists a small $\gamma_0>0$ such that, for each $\gamma \in (0, \gamma_0)$, there exists a conformal deformation $\widetilde \metric$ of the flat metric $\metric$ such that 
\begin{equation}\label{limit_Ninaitokawano}
\widetilde{\metric}_x \ra \gamma \metric_x \qquad \text{as } \, r(x) \ra +\infty.
\end{equation} 
\end{Theorem}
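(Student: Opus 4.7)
The plan is to rewrite the Yamabe equation on flat $\R^m$ as an integral equation and solve it via Banach's fixed-point theorem, producing a positive solution $u$ bounded between two positive constants and converging to $\alpha := \gamma^{(m-2)/4}$ at infinity; setting $\widetilde{\metric} = u^{4/(m-2)}\metric$ will then yield \eqref{limit_Ninaitokawano}. Since $s \equiv 0$ on $\R^m$, \eqref{02} reduces to $\Delta u + c_m^{-1}\widetilde s(x) u^\sigma = 0$ with $\sigma = (m+2)/(m-2)$, and inverting $-\Delta$ against the Newtonian kernel on $\R^m$ transforms this into the fixed-point equation
\[
u(x) = (Tu)(x) \;:=\; \alpha + \frac{1}{c_m(m-2)\omega_{m-1}}\int_{\R^m} \frac{\widetilde s(y)\,u(y)^\sigma}{|x-y|^{m-2}}\,\di y.
\]

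The whole argument pivots on the uniform potential estimate
\[
J \;:=\; \sup_{x\in\R^m}\int_{\R^m} \frac{B(|y|)}{|x-y|^{m-2}}\,\di y \;=\; \omega_{m-1}\int_0^{+\infty} r B(r)\,\di r \;<\; \infty,
\]
where finiteness is guaranteed by \eqref{crescescaleugen}. The identity is the key point: the Newtonian potential $U$ of $B(|\cdot|)$ is radial, non-negative and superharmonic, $U'(r)\le 0$ follows by a direct integration of $-(r^{m-1}U')' = r^{m-1} B(r)$, and $U$ vanishes at infinity, so $U$ attains its maximum at the origin, where Fubini gives the value explicitly. With $J<\infty$ in hand, I work on the closed convex set $K = \{u \in C_b(\R^m) : \alpha/2 \le u \le 2\alpha\}$; using $|\widetilde s(y)|\le B(|y|)$ and the elementary bound $|u^\sigma - v^\sigma| \le \sigma(2\alpha)^{\sigma-1}|u-v|$ for $u,v\in K$, one obtains
\[
\norm{Tu-\alpha}_{\infty} \le \frac{J(2\alpha)^\sigma}{c_m(m-2)\omega_{m-1}}, \qquad \norm{Tu-Tv}_{\infty} \le \frac{J\sigma(2\alpha)^{\sigma-1}}{c_m(m-2)\omega_{m-1}}\,\norm{u-v}_\infty.
\]
Since $\sigma>1$, choosing $\gamma_0$ sufficiently small forces both constants to be at most $\alpha/2$ and $1/2$, respectively, so $T(K)\subset K$ and Banach's theorem produces a unique fixed point $u\in K$, positive and uniformly separated from $0$ and $+\infty$.

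Standard $W^{2,p}_\loc$ and Schauder estimates upgrade this $u$ to a classical $C^\infty$ solution of \eqref{02}. The asymptotic $u(x)\to \alpha$ comes from splitting the integral defining $Tu$ over $\{|y|\le R\}$ and $\{|y|>R\}$: for fixed $R$, the first piece is dominated by $(|x|-R)^{2-m}\int_{|y|\le R} B(|y|)\,\di y \to 0$ as $|x|\to\infty$, while the second piece is controlled uniformly in $x$ by $\omega_{m-1}\int_R^{+\infty} r B(r)\,\di r$ via the same rearrangement principle applied to $B\,\mathbf{1}_{\{|y|>R\}}$, and this tends to $0$ as $R\to\infty$. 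Setting $\widetilde{\metric}_x = u(x)^{4/(m-2)}\metric_x$ then gives \eqref{limit_Ninaitokawano}. The main technical hurdle is exactly the uniform potential estimate $J<\infty$: once this global bound on the Newtonian integral operator is secured, the superlinearity of $u\mapsto u^\sigma$ is tamed by the smallness of $\gamma$, and the fixed-point iteration closes routinely.
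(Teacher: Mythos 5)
The paper does not prove this statement: it is quoted as Theorem \ref{teo_ninaitokawano} from the cited references \cite{WMni, naito, kawano} purely for comparison with the new results, so there is no in-paper proof to compare against. Your argument is nevertheless a correct reconstruction of the original fixed-point method of Ni in \cite{WMni}: converting \eqref{02} (with $s\equiv 0$) to the integral equation $u=\alpha+\Gamma * (\widetilde{s}\,u^{\sigma}/c_{m})$, proving the uniform Newtonian-potential bound $J=\omega_{m-1}\int_{0}^{\infty}rB(r)\,\di r$ via radial superharmonicity (equivalently Newton's shell theorem), and closing the contraction on $\{\alpha/2\le u\le 2\alpha\}$ by choosing $\alpha=\gamma^{(m-2)/4}$ small, which the superlinearity $\sigma>1$ permits. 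Two small points worth being explicit about: one should replace $B$ by $B_{+}$ at the outset so the potential-theoretic monotonicity applies (this is harmless since $rB_{+}(r)\le |rB(r)|\in L^{1}$), and the radial ODE manipulation for $U$ is easiest to justify either by mollification or directly by Newton's theorem for spherical shells, since $B\in C^{0}$ gives only $U\in C^{1,\beta}$ classically. With those caveats noted, the proof is sound.
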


\begin{Theorem}[\cite{avilesmcowen}, Thm 4]\label{teo_avilesmcowen}
Let $(M^m, \metric)$ be a complete manifold with a pole and dimension $m \ge 3$, and suppose that there exist constants $\bar \kappa \ge\kappa>0$ such that sectional curvature $K$ of $M$ be pinched as follows:
\begin{equation}\label{ipo_AHaviles}
-\bar \kappa^2 \le K \le -\kappa^2 <0, \qquad \text{with }  \quad \bar \kappa^2 < \frac{(m-1)^2}{m(m-2)}\kappa^2.
\end{equation}
Suppose also that $\widetilde s(x) \in C^\infty(M)$ satisfies 
\begin{equation}\label{boundsnearinftyiperb}
-C_1 \le \widetilde s(x) \le -C_2 < 0 \qquad \text{outside of a compact set,} 
\end{equation}
for some constants $C_1,C_2>0$. Then, there exists $\delta>0$ sufficiently small such that, if
\begin{equation}\label{upbound_deltaAviles}
\widetilde s(x) \le \delta \qquad \text{on } \, M, 
\end{equation}
there exists a conformal deformation $\widetilde{\metric}$ realizing $\widetilde s(x)$ and satisfying
\begin{equation}\label{asy_hiperbolico}
C^{-1} \metric \le \widetilde{\metric} \le C\metric \qquad \text{on } \, M, 
\end{equation}
for some positive constant $C$.
\end{Theorem}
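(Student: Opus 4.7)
I would establish Theorem~\ref{teo_avilesmcowen} by a classical sub- and super-solution argument, carried out on an exhaustion of $M$ by geodesic balls. The key geometric input is that the two sides of the pinching \eqref{ipo_AHaviles} together render the conformal Laplacian $L_\metric$ of \eqref{laplaconfo} coercive on $M$. Indeed, the presence of a pole forces $M$ to be simply connected, so by McKean's theorem the upper curvature bound $K\le-\kappa^2$ gives $\inf\sigma(-\Delta)\ge (m-1)^2\kappa^2/4$, while tracing the lower bound $K\ge-\bar\kappa^2$ yields $s(x)\ge-m(m-1)\bar\kappa^2$ pointwise. Combining these with $c_m=4(m-1)/(m-2)$, one obtains
\[
\int_M \Big(|\nabla \phi|^2 + \frac{s}{c_m}\phi^2\Big) \;\ge\; \frac{(m-1)^2\kappa^2 - m(m-2)\bar\kappa^2}{4}\int_M \phi^2 \qquad \forall\,\phi\in C^\infty_c(M),
\]
whose coefficient is strictly positive exactly under \eqref{ipo_AHaviles}; thus $L_\metric$ admits a bounded, positivity-preserving Green operator on $M$.

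\textbf{Barriers.} A small positive constant $u_-\equiv a$ is a global subsolution of \eqref{02}. The required pointwise inequality $\widetilde s\,a^{\sigma-1}\ge s$, with $\sigma=(m+2)/(m-2)$, is immediate where $\widetilde s\ge 0$ (since $s<0$) and at points where $\widetilde s<0$ follows from $|s|\ge m(m-1)\kappa^2$ together with the uniform bound on $|\widetilde s|$ implied by \eqref{boundsnearinftyiperb} and continuity on the corresponding compact set, provided $a^{4/(m-2)}$ is taken small. The delicate half is the supersolution: no constant can satisfy $\widetilde s\,A^{\sigma-1}\le s$ wherever $\widetilde s>0$. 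I would therefore set $u_+:=A+v$, where $A\gg 1$ is fixed so that $\widetilde s\,A^{\sigma-1}\le s$ outside a compact set $\widehat K\supset\{\widetilde s\ge 0\}$, and $v\ge 0$ is the unique $H^1$-solution of the linear problem $L_\metric v=h$ on $M$, with $h\ge 0$ compactly supported in $\widehat K$ and chosen so that the full nonlinear supersolution inequality holds pointwise. Coercivity gives both existence of $v$ and an $L^\infty$-bound $\|v\|_\infty\le C\|h\|_\infty$, while the smallness \eqref{upbound_deltaAviles} of $\delta$ enters precisely here to force $\|v\|_\infty<A$, so that $a\le u_-\le u_+\le 2A$ on $M$.

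\textbf{Global solution and main obstacle.} On each ball $B_n:=B_n(o)$ a standard monotone-iteration variant of the sub- and super-solution method, applied after a Lipschitz-type shift of the nonlinearity (needed because $\widetilde s$ changes sign), yields a classical Dirichlet solution $u_n$ of \eqref{02} with constant boundary datum $a$ and $a\le u_n\le 2A$. Uniform interior Schauder estimates and a diagonal extraction then produce a smooth limit $u$ on $M$ with $a\le u\le 2A$, whence \eqref{asy_hiperbolico} follows with $C:=\max\{(2A)^{4/(m-2)},\,a^{-4/(m-2)}\}$. The crux of the argument is the construction of the bounded global supersolution $u_+$: both geometric hypotheses of the theorem enter here, the pinching \eqref{ipo_AHaviles} to give global invertibility of $L_\metric$ with a positive Green kernel, and the smallness \eqref{upbound_deltaAviles} of $\delta$ to force the corrector $v$ to remain below $A$.
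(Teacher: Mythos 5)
Your proof takes a genuinely different route from the one in \cite{avilesmcowen}. As noted in Remark~\ref{rem_pocapositivita}, Aviles and McOwen build \emph{radial} sub- and supersolutions using the Laplacian comparison theorem from above and from below, which is what forces the two-sided pinching and the pole; the constant in \eqref{ipo_AHaviles} is tuned to make these explicit radial barriers close up. You instead invest the pinching into a spectral coercivity estimate for $L_\metric$ (McKean for the Rayleigh quotient, the scalar-curvature consequence of $K\ge-\bar\kappa^2$ for the potential) and build the supersolution non-radially as $A+v$ with $v$ a Green-operator corrector. Your inequality
\[
\int_M \Big(|\nabla\phi|^2+\frac{s}{c_m}\phi^2\Big)\ \ge\ \frac{(m-1)^2\kappa^2-m(m-2)\bar\kappa^2}{4}\,\int_M\phi^2\qquad\forall\,\phi\in \lip_c(M)
\]
is correct and transparently explains the threshold $\frac{(m-1)^2}{m(m-2)}$ in \eqref{ipo_AHaviles}; it also places your proof closer in spirit to the subcriticality framework of this paper (and to Corollary~\ref{corquattro}, where only a scalar-curvature lower bound is retained) than to the radial comparison argument of \cite{avilesmcowen}.

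One step is overclaimed. With $L_\metric=-\Delta+s/c_m$, a supersolution of \eqref{02} must satisfy $L_\metric u_+\ge \frac{\widetilde s}{c_m}u_+^\sigma$. Since $s\le -m(m-1)\kappa^2<0$, the constant $A$ is not even a linear supersolution: $L_\metric A=\frac{s}{c_m}A<0$ on all of $M$. The corrector must therefore absorb $h\ge -\frac{s}{c_m}A\ge \frac{m(m-1)\kappa^2}{c_m}A$ on $\widehat K$, a term of order $A$ which is \emph{not} small in $\delta$; consequently the claimed $\|v\|_\infty<A$ would require $\sup_x\int_{\widehat K}\green(x,y)\,d\mu(y)\cdot \|s\|_{L^\infty(\widehat K)}/c_m<1$, a quantitative $L^\infty$-bound on the Green kernel of $L_\metric$ that does not follow from $L^2$-coercivity alone. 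The repair is routine, since all that is actually needed is $\|v\|_\infty<\infty$: the constant $\Gamma:=\sup_x\int_{\widehat K}\green(x,y)\,d\mu(y)$ is finite (the Green kernel of a coercive Schr\"odinger operator decays exponentially), and the smallness of $\delta$ serves only to close the implicit inequality $\|v\|_\infty\le\Gamma\bigl(\|s\|_{L^\infty(\widehat K)}A+\delta(A+\|v\|_\infty)^\sigma\bigr)/c_m$, yielding a bounded $u_+$ and hence \eqref{asy_hiperbolico}.
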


\begin{Remark}
\emph{Theorem \ref{teo_avilesmcowen} has later been improved in \cite{rrv} with a different technique: however, the main Theorem 0.1 in \cite{rrv} still requires \eqref{upbound_deltaAviles} and a couple of conditions on the curvatures of $M$ that, though more general than \eqref{ipo_AHaviles}, nevertheless are more demanding than \eqref{secdasopra}, \eqref{iposcalarehyp} appearing in our Corollary \ref{corquattro} below. 
}
\end{Remark}

\begin{Remark}
\emph{For the special case of the Hyperbolic space, in \cite{rrv2} (see Theorem 1.1 therein) the authors were able to guarantee the existence of a solution for the Yamabe equation giving rise to a complete metric even when \eqref{boundsnearinftyiperb} is replaced by the weaker
$$
- C r(x)^2 \le \widetilde s(x) <0 \qquad \text{outside of a compact set.}
$$
The counterpart of this improvement is that a control of the type \eqref{asy_hiperbolico} is no longer  available. We remark that, in Theorem 1.1 of \cite{rrv2}, condition \eqref{upbound_deltaAviles} still appears.
}
\end{Remark}

Inspired by Ni-Naito-Kawano's approach, in \cite{bmr3} we have obtained sharp existence theorems for \eqref{02} (and, more generally, for \eqref{03} below) on manifolds possessing a pole $o$ via mild assumptions on the radial sectional curvature $K_\rad$ (the sectional curvature restricted to $2$-planes containing $\nabla r$, with $r(\cdot) = \dist(\cdot, o)$). In the particular case of manifolds close to the hyperbolic space, our outcome has been the following result. Observe that condition \ref{crescescalhypgen} below guarantees the existence of solutions even when $\widetilde s(x)$ is strongly oscillating. On the other hand, \eqref{decayhypgen} implies that the conformally deformed metric is incomplete and has finite volume.

%

\begin{Theorem}[\cite{bmr3}, Thm 2]\label{cor_tipoiperb}
Let $(M,\metric)$ be a complete manifold of dimension $m\ge 3$, with a pole $o$ and sectional curvature $K$ satisfying 
\begin{equation}\label{iposezionalehyp}
- \kappa^2 - \mathcal{K}\big(r(x)\big) \le K(x) \le - \kappa^2,
\end{equation}
for some constant $\kappa>0$ and some non-negative $\mathcal{K}\in C^0(\R^+_0) \cap L^1(\R^+)$. Suppose that the scalar curvature $s(x)$ of $M$ is such that
\begin{equation}\label{ssimilehyp}
s(x) \ge - \frac{(m-1)^3\kappa^2}{m-2} \qquad \text{on } M.
\end{equation}
Then, for each $\widetilde{s}(x) \in C^\infty(M)$ satisfying, for some $B \in C^0(\R^+_0)$,
\begin{equation}\label{crescescalhypgen}
|\widetilde{s}(x)| \le B(r(x)), \qquad e^{-2\kappa r }B(r) \in L^1(+\infty),
\end{equation} 
the metric $\metric$ can be conformally deformed to a smooth metric $\widetilde{\metric}$ of scalar curvature $\widetilde{s}(x)$, satisfying 
\begin{equation}\label{decayhypgen}
\Gamma_1 e^{-2\kappa r (x)}\metric_x \le \widetilde{\metric}_x \le \Gamma_2 e^{-2\kappa r (x)}\metric_x \qquad \forall \, x\in  M,
\end{equation}
for some $0<\Gamma_1 \le \Gamma_2$. In particular, $\widetilde \metric$ is incomplete and has finite volume. Furthermore, $\Gamma_2$ and consequently $\Gamma_1$ can be chosen to be as small as we wish. 
\end{Theorem}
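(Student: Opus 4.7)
The plan is to solve \eqref{02} by finding a positive $u\in C^\infty(M)$ satisfying
$$
\gamma_1 e^{-\alpha r(x)} \le u(x) \le \gamma_2 e^{-\alpha r(x)}, \qquad \alpha=\frac{(m-2)\kappa}{2},
$$
with $\gamma_2$ arbitrarily small. Indeed, setting $\varphi = u^{2/(m-2)}$ gives the conformal factor of $\widetilde\metric$, and \eqref{decayhypgen} then follows with $\Gamma_i=\gamma_i^{4/(m-2)}$. The strategy is a sub/super-solution scheme whose barriers are radial functions lifted from model manifolds through the Laplacian comparison granted by \eqref{iposezionalehyp}.

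For the supersolution I would work on the constant-curvature model $M_g$ with $g(r)=\kappa^{-1}\sinh(\kappa r)$, and solve the ``worst-case'' radial Yamabe-type ODE
$$
v''+(m-1)\frac{g'(r)}{g(r)}\,v' - \frac{s_0}{c_m}\,v + \frac{B(r)}{c_m}\,v^{(m+2)/(m-2)} = 0, \qquad v(0)=\gamma,\ v'(0)=0,
$$
with $s_0=-(m-1)^3\kappa^2/(m-2)$, the extremal constant compatible with \eqref{ssimilehyp}. Its linearization at $v\equiv 0$ has, at infinity, fundamental solutions of exponential type, with slow rate exactly $e^{-\alpha r}$ determined by \eqref{ssimilehyp}. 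A Volterra integral reformulation followed by a Banach fixed point in the weighted space $\{v:\|v e^{\alpha r}\|_\infty<\infty\}$ delivers, for small $\gamma$, a positive decreasing solution $\overline v(r)\sim C\gamma e^{-\alpha r}$; the integrability $e^{-2\kappa r}B(r)\in L^1(+\infty)$ from \eqref{crescescalhypgen} is the quantitative condition ensuring contractivity of the iteration and confining the semilinear correction to the slow mode. Setting $u_+(x):=\overline v(r(x))$, the Laplacian comparison $\Delta r\ge (m-1) g'/g$ implied by $K\le-\kappa^2$ (combined with $\overline v'<0$) gives $\Delta u_+\le\Delta_{M_g}\overline v$ on $M$, and the pointwise bounds \eqref{ssimilehyp} and $\widetilde s\le B$ upgrade this to a supersolution inequality for \eqref{02}. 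An analogous construction on the warped model with $h$ solving $h''=(\kappa^2+\K(r))h$, $h(0)=0$, $h'(0)=1$ (so that $h'/h\to\kappa$ thanks to $\K\in L^1$), using the opposite Laplacian comparison $\Delta r\le (m-1)h'/h$ from $K\ge-\kappa^2-\K$ together with $\widetilde s\ge-B$, produces a radial subsolution $u_-\sim C'\gamma' e^{-\alpha r}$; a further rescaling makes $u_-\le u_+$ globally.

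With the barriers in hand, I would solve \eqref{02} on an exhaustion $\Omega_n\uparrow M$ via the standard monotone iteration: after adding a large $K w$ to both sides of \eqref{02} to render $w\mapsto \widetilde s/c_m\, w^{(m+2)/(m-2)}+K w$ nondecreasing on $[\inf u_-,\sup u_+]$, the iterates of the resulting linear Dirichlet problem starting from $u_+|_{\partial\Omega_n}$ decrease to a solution $u_n$ of \eqref{02} on $\Omega_n$ with $u_-\le u_n\le u_+$. Interior Schauder estimates and a diagonal Arzel\`a--Ascoli argument pass to the limit, producing a $C^2_\loc$ (hence, by bootstrap, $C^\infty$) solution $u$ on all of $M$ satisfying $u_-\le u\le u_+$. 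The smallness of $\gamma$ descends to $\Gamma_2$ as claimed.

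The main difficulty lies in the sharp supersolution construction: one must isolate, among the fundamental solutions of the linearized radial ODE, the slow mode $e^{-\alpha r}$ and show that the semilinear correction respects it. The scalar-curvature bound \eqref{ssimilehyp}, which can be rephrased as non-negativity of the conformal Laplacian on the hyperbolic model, is precisely what pins the slow exponent at $\alpha=(m-2)\kappa/2$; any weakening of it would push the decay rate below $e^{-\alpha r}$ and destroy the bi-Lipschitz conclusion \eqref{decayhypgen}.
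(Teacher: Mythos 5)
The supersolution construction has a genuine gap. The linearized radial ODE you chose does \emph{not} have $e^{-\alpha r}$, $\alpha=(m-2)\kappa/2$, among its fundamental modes. With $s_0=-\frac{(m-1)^3\kappa^2}{m-2}$ the zero-order coefficient is $-\frac{s_0}{c_m}=\frac{(m-1)^2\kappa^2}{4}$, and the characteristic polynomial at infinity is
\begin{equation*}
\lambda^2+(m-1)\kappa\lambda+\frac{(m-1)^2\kappa^2}{4}=\left(\lambda+\frac{(m-1)\kappa}{2}\right)^2,
\end{equation*}
with a \emph{double} root $-\frac{(m-1)\kappa}{2}<-\alpha$. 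Hence every decaying solution of the worst-case $s_0$-ODE behaves like $e^{-(m-1)\kappa r/2}$ (up to a polynomial factor), i.e.\ is $o(e^{-\alpha r})$; equivalently, inserting $\bar v=\gamma e^{-\alpha r}$ the linear part tends to $+\frac{\kappa^2}{4}\gamma e^{-\alpha r}>0$, so $\gamma e^{-\alpha r}$ is not a supersolution of the ODE you wrote down, and a Banach fixed-point in $\{v:\|ve^{\alpha r}\|_\infty<\infty\}$ would at best converge to a function that is $o(e^{-\alpha r})$. Meanwhile the subsolution you produce decays like $e^{-\alpha r}$, so for large $r$ the ordering $u_-\le u_+$ fails and the monotone scheme collapses; one cannot recover the two-sided estimate \eqref{decayhypgen}.

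The underlying misattribution is your claim that \eqref{ssimilehyp} ``pins the slow exponent at $\alpha$''. In fact \eqref{ssimilehyp} only guarantees the uniform bound $-\frac{s}{c_m}\le\frac{(m-1)^2\kappa^2}{4}$, which together with the Hardy inequality coming from $K\le-\kappa^2$ makes $L_{\metric}$ non-negative (indeed subcritical); this is what makes the local Dirichlet problems solvable. The exponent $\alpha$ is instead fixed by the two-sided sectional curvature pinching: tracing $-\kappa^2-\K(r)\le K\le-\kappa^2$ yields
\begin{equation*}
\frac{m(m-2)\kappa^2}{4}\ \le\ -\frac{s(x)}{c_m}\ \le\ \frac{m(m-2)}{4}\bigl(\kappa^2+\K(r(x))\bigr),
\end{equation*}
so $\K\in L^1(\R^+)$ makes the zero-order coefficient an $L^1$-perturbation of $\frac{m(m-2)\kappa^2}{4}$. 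The corresponding characteristic polynomial,
\begin{equation*}
\lambda^2+(m-1)\kappa\lambda+\frac{m(m-2)\kappa^2}{4}=\left(\lambda+\frac{(m-2)\kappa}{2}\right)\left(\lambda+\frac{m\kappa}{2}\right),
\end{equation*}
has simple roots $-\alpha$ and $-m\kappa/2$, and Levinson-type asymptotics for $L^1$-perturbed linear ODEs propagate the slow mode $e^{-\alpha r}$ to the perturbed equation. The supersolution must therefore be built from the \emph{upper} bound $-\frac{s}{c_m}\le\frac{m(m-2)}{4}(\kappa^2+\K(r))$, which comes from the lower sectional curvature bound $K\ge-\kappa^2-\K(r)$, not from $s\ge s_0$; symmetrically the subsolution uses the lower bound $-\frac{s}{c_m}\ge\frac{m(m-2)\kappa^2}{4}$ coming from $K\le-\kappa^2$. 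The rest of your outline (Laplacian comparisons in the two directions, the bounds $\widetilde s\lessgtr\pm B$, the monotone iteration on an exhaustion) is the right machinery once this coefficient is corrected, but the supersolution coefficient is the whole ball game here, and as written the barriers do not close.
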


\begin{Remark}
\emph{The growth conditions \eqref{crescescaleugen} and \eqref{crescescalhypgen} are sharp: it is proved in \cite{chenglin} (for $\R^m$) and \cite{BR} (for $\HH^m_\kappa$) that no conformal deformation exists whenever $\widetilde s(x) \le 0$ on $\R^m$ (respectively, on $\HH^m_\kappa$) and 
$$
\widetilde s(x) \le -\frac{C}{r(x)^2 \log r(x)} \qquad \left(\text{respectively, } \ \ \widetilde s(x) \le -  \frac{C e^{2\kappa r (x)}}{r(x) \log r(x)} \right)
$$
for some $C>0$ and large $r(x)$. 
}
\end{Remark}

The above four results are, to the best of our knowledge, an up-to-date account of what is known on the prescribed scalar curvature problem, in dimension $m \ge 3$ and with sign-changing $\widetilde s(x)$, on non-compact manifolds close to $\R^m$ and $\HH^m_\kappa$. Figures 1 and 2 below summarize Theorems \ref{teo_zhang} to \eqref{cor_tipoiperb} when assumptions overlap. 

		 \begin{figure}[ht] 
		\includegraphics[width=9cm]{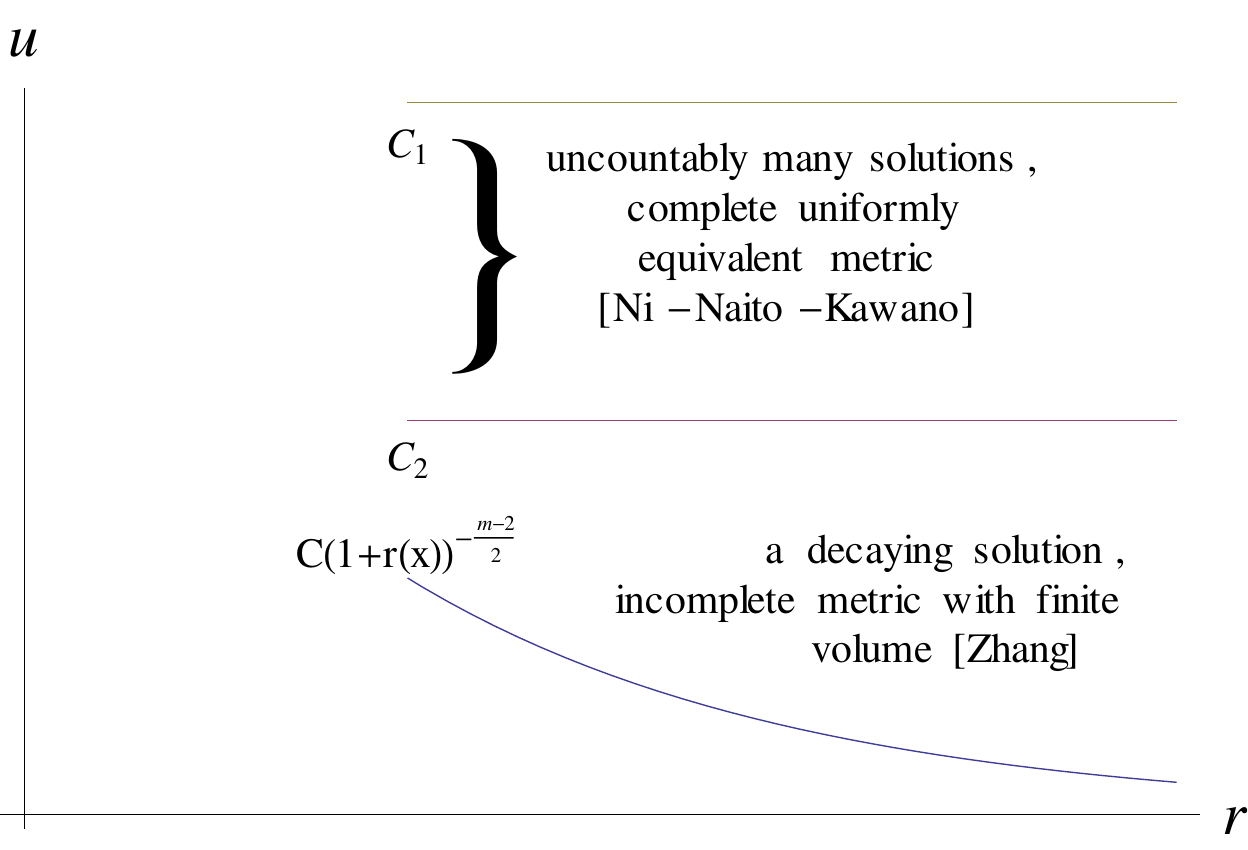}
		\caption{Euclidean space, \eqref{crescescaleugen} and Zhang's assumptions on $\widetilde s(x)$ in force.}
		 \end{figure}		 
 \begin{figure}[ht] 
		\includegraphics[width=9cm]{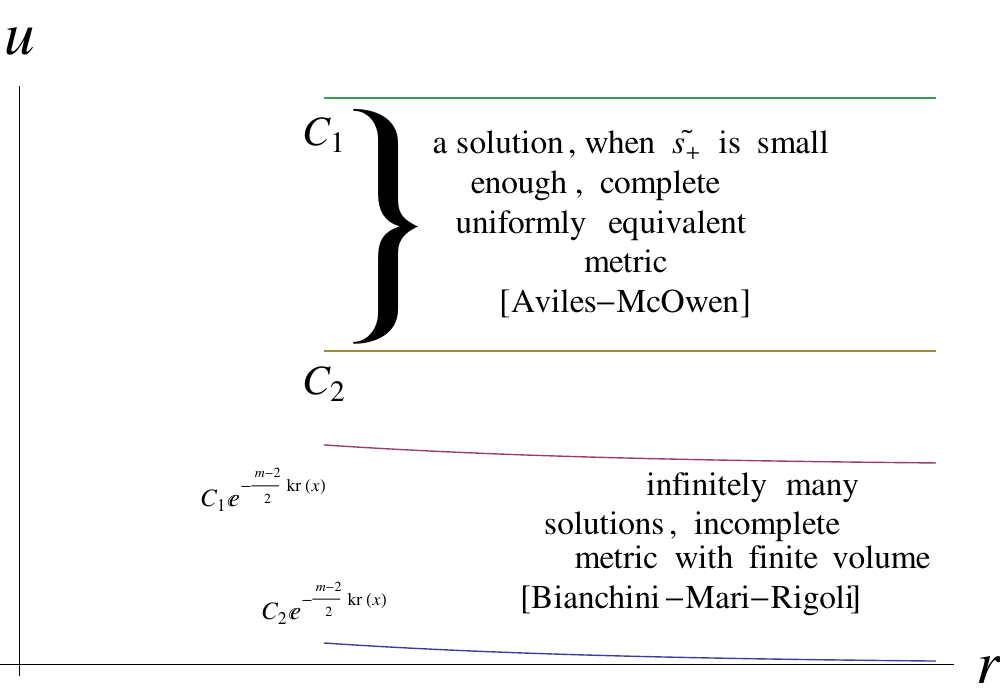}
		\caption{Manifolds close to $\HH^m_\kappa$, $-C_1 \le \widetilde s(x) \le -C_2 < 0$ for large $x$.}
		 \end{figure}

A first step in the direction of removing the pole requirement has been taken in \cite{bmr4} by adapting some ideas of \cite{bmr3} via the use of Green functions. Unfortunately, even though the requirements on $s(x)$ and $\widetilde s(x)$ in Theorem 5 of \cite{bmr4} are sharp, they express in a form that is generally difficult to check. In summary, the task of obtaining results of the type above but with a substantial weakening of the geometric assumptions calls for new ideas, and this is the objective of the present work. More precisely, we have a twofold concern in this paper. First, we aim to produce an existence theorem for sign-changing $\widetilde s(x)$ where topological and geometrical conditions are confined to a minimum. Second, we also want to keep control on the conformally deformed metric, in particular in such a way that $\metric$ and $\widetilde{\metric}$ are uniformly equivalent. Our contributions are Theorems \ref{coruno}, \ref{teo_tipohyperb} and \ref{teodue} below, a special case of Theorems \ref{teouno}, \ref{teouno_bis} which we are going to describe in awhile. 
\begin{Notation}
\emph{Hereafter, given $b\in L^\infty_\loc(M)$, we respectively denote with $b_+$ and $b_-$ its positive and negative parts, so that $b = b_+-b_-$. For $a \in L^\infty_\loc(M)$, we will write
$$
a(x) = O \big(b(x)\big) \qquad \big(\text{respectively,} \quad a(x) \asymp b(x) \ \big) \quad \text{as $x$ diverges}
$$
to indicate that there exists a constant $C>0$ and a compact set $\Omega$ such that 
$$
a(x) \le Cb(x) \qquad \big(\text{respectively,} \quad C^{-1}b(x) \le a(x) \le Cb(x) \ \big)
$$
on $M \backslash \Omega$.
}
\end{Notation}
Our first result deals with the case of non-parabolic manifolds with non-negative scalar curvature. We recall that a manifold $M$ is said to be non-parabolic if it admits a positive, non-constant solution of $\Delta u \le 0$. The notion of non-parabolicity will be recalled later in a more general setting (see Proposition \ref{prop_hyperbolicity_simple} and the subsequent discussion), here we limit to refer the interested reader to \cite{grigoryan} for deepening. The following theorem shall be compared to Theorems \ref{teo_zhang} and \ref{teo_ninaitokawano}. In particular, to compare Theorem \ref{coruno} below with Zhang's Theorem \ref{teo_zhang} we need some tools that will be defined in the next introduction, and therefore we postpone the analysis to Remark \ref{rem_Yamabeshort}.
\begin{Theorem}\label{coruno}
Let $(M, \metric)$ be a non-parabolic manifold of dimension $m \ge 3$ and scalar curvature $s(x)$ satisfying
\begin{equation}\label{scalnonneginteg}
s(x) \ge 0 \text{ on M,} \qquad s \in L^1(M),
\end{equation}
and let $\widetilde s \in C^\infty(M)$ with the following properties:
\begin{equation}\label{nuovascalinteg}
\text{$\widetilde s_+$ has compact support,} \qquad \widetilde s \in L^1(M).
\end{equation}
Then, for each constant $C>0$, $\metric$ can be pointwise conformally deformed to a new metric $\widetilde{\metric}$ of scalar curvature $\widetilde s(x)$ such that $\widetilde{\metric} \le C\metric$. Moreover, if $s$ and $\widetilde s$ have compact support, each such $\widetilde{\metric}$ can be chosen to be uniformly equivalent to $\metric$. 
\end{Theorem}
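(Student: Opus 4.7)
The reduction $\varphi = u^{2/(m-2)}$ turns the problem into producing a positive smooth solution of $L_\metric u = c_m^{-1}\widetilde s\, u^{(m+2)/(m-2)}$ on $M$ bounded above by $C^{(m-2)/4}$ (and, in the compactly supported subcase, also bounded below away from zero). Since $s \ge 0$, the conformal Laplacian $L_\metric = -\Delta + s/c_m$ dominates $-\Delta$; combined with the non-parabolicity of $M$ this yields a positive minimal Green function $G_L$ for $L_\metric$ with $0 < G_L \le G_\Delta$, and I will use that the Green potential $\mathcal{G}_L[f](x) := \int_M G_L(x,y) f(y)\di y$ is bounded on $M$ for every $f \in C^\infty_c(M)$ (the dimensional bound $m \ge 3$ makes the singularity of $G_L$ integrable, and compact support controls the contribution at infinity).

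The first concrete step is to build a bounded positive supersolution. Choose $f \in C^\infty_c(M)$ with $c_m f \ge \widetilde s_+$ pointwise (possible since $\supp \widetilde s_+$ is compact) and set $h_0 := 1 + \mathcal{G}_L[f]$, which is bounded, satisfies $h_0 \ge 1$ and $L_\metric h_0 = s/c_m + f \ge \widetilde s_+/c_m$. For $\varepsilon > 0$ chosen so that both $\varepsilon^{4/(m-2)}\|h_0\|_\infty^{(m+2)/(m-2)} \le 1$ and $\varepsilon\|h_0\|_\infty \le C^{(m-2)/4}$, the function $u_+ := \varepsilon h_0$ satisfies $u_+ \le C^{(m-2)/4}$ and, using $\widetilde s \le \widetilde s_+$,
$$
L_\metric u_+ - \frac{\widetilde s}{c_m} u_+^{(m+2)/(m-2)} \;\ge\; \frac{\varepsilon \widetilde s_+}{c_m}\Big(1 - \varepsilon^{4/(m-2)} h_0^{(m+2)/(m-2)}\Big) \;\ge\; 0.
$$
Hence $u_+$ is a bounded positive supersolution, and any solution $0 < u \le u_+$ will immediately yield $\widetilde{\metric} = u^{4/(m-2)}\metric \le C\metric$.

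To obtain such a solution below $u_+$ I would exhaust $M$ by relatively compact open sets $\Omega_n \uparrow M$, solve the Yamabe Dirichlet problem on $\Omega_n$ with boundary data $u_+|_{\partial\Omega_n}$ by the monotone iteration descending from $u_+$ (with respect to the shifted operator $L_\metric + \lambda$ for $\lambda$ large enough to maintain order), obtaining $0 < u_n \le u_+$ solving the equation on $\Omega_n$; standard elliptic $C^2_\loc$ estimates and a diagonal extraction then produce a global limit $0 \le u \le u_+$ satisfying the Yamabe equation on $M$. \emph{The main obstacle is to rule out $u \equiv 0$}: because $\widetilde s$ changes sign there is no natural globally positive subsolution below $u_+$, and the monotone scheme alone does not prevent collapse of the limit. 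I would circumvent this by exploiting non-parabolicity to construct, uniformly in $n$, a strictly positive $L_\metric$-harmonic barrier on a fixed geodesic ball contained in every $\Omega_n$, comparing each $u_n$ from below with a small multiple of this barrier, and then invoking the strong maximum principle on the open set $\{u > 0\}$. It is precisely here that the subcriticality theory of $L_\metric$ developed later, i.e.\ the general Theorems \ref{teouno}--\ref{teouno_bis}, is meant to be invoked, and Theorem \ref{coruno} should ultimately be read as a verification of their hypotheses.

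For the final assertion, when $s$ and $\widetilde s$ both have compact support $K$, outside $K$ the equation reduces to $\Delta u = 0$ and $u_+ = \varepsilon h_0 \to \varepsilon > 0$ at infinity because $\mathcal{G}_L[f] \to 0$ there. Comparing $u$ with the boundary data $u_+|_{\partial \Omega_n}$ as $n \to \infty$, and applying the Harnack inequality on dyadic annuli of $M \setminus K$ together with the non-vanishing limit of $u_+$ at infinity, one pins $u$ between two positive constants on $M$, so that $\widetilde{\metric}$ is uniformly equivalent to $\metric$.
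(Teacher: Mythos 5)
Your supersolution construction via the Green potential $h_0 = 1 + \mathcal{G}_L[f]$ is correct and cleanly verifies the crucial inequality $L_\metric u_+ - c_m^{-1}\widetilde s\, u_+^{(m+2)/(m-2)} \ge 0$. But you are explicit that you cannot close the argument: the passage to a global positive solution by exhaustion plus monotone iteration leaves the possibility $u \equiv 0$ open, and you write that this is ``precisely'' the point where Theorems \ref{teouno}--\ref{teouno_bis} must be invoked. That is the genuine gap. What your sketch is missing is not a small technicality but the heart of the paper's contribution: the paper solves this lower-bound problem via Lemma \ref{lem_uniformP}, which uses a logarithmic substitution $u=\log z$ combined with the subcriticality of $Q_0$ (the weighted spectral gap of Proposition \ref{prop_hyperbolicity}) to obtain a uniform bound $\inf_\Lambda z \ge C$ on a fixed compact $\Lambda$, independently of the exhaustion domain $\Omega$. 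Your proposed substitute --- a ``strictly positive $L_\metric$-harmonic barrier on a fixed geodesic ball'' compared from below with the $u_n$ --- does not come with an explicit construction and, more importantly, would not by itself give a lower bound that persists in the limit, since the barrier can only control the solution on a compact set after one has fixed a scale, and the scale could degenerate as $\Omega_n \uparrow M$. The decisive ingredient you lack is exactly the uniform estimate of Lemma \ref{lem_uniformP}, which requires a Hardy-type inequality and hence the subcriticality machinery.

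The actual proof of Theorem \ref{coruno} in the paper is a one-line deduction: $s\ge 0$ plus non-parabolicity makes $L_\metric$ subcritical (the potential $-s/c_m\le 0$, and subcriticality of $-\Delta$ then gives subcriticality of $Q_{-s/c_m}$ by Proposition \ref{prop_criterion}); conditions $i), ii'), iii')$ of Theorem \ref{teodue}(II) are then immediate from \eqref{scalnonneginteg}--\eqref{nuovascalinteg}, and $iv')$ handles the compactly supported case. Your proposal is not an alternative route but rather a reconstruction-in-progress of the sufficiency argument behind Theorem \ref{teouno}, stopping precisely at the hard step. Moreover, your closing argument for the uniformly-equivalent case has its own gap: Harnack on dyadic annuli only gives ratios $\sup/\inf$ on each annulus and does not by itself exclude $u \to 0$ at infinity (the Green function of $-\Delta$ itself is $\Delta$-harmonic off a compact set, positive, and decays to zero), so a genuine lower bound at infinity also requires an argument of the type carried out in the second half of the proof of Theorem \ref{teo_second} (comparison with the bounded solution $g$ of Proposition \ref{prop_lasolulimitata}).
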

\begin{Remark}
\emph{Non-parabolicity is a very mild requirement, and it is necessary to guarantee existence in all the cases investigated in Theorem \ref{coruno}. In fact, if $M$ is scalar flat and taking $\widetilde s(x)$ to be compactly supported, non-negative and not identically zero, any eventual solution $u$ of the Yamabe equation \eqref{02} would be a (non-constant) positive solution of $\Delta u = - \widetilde s(x)/c_m u^{\frac{m+2}{m-2}} \le 0$, showing that $M$ must be non-parabolic. 
}
\end{Remark}

Theorem \ref{coruno} applies, for instance, to the physically relevant setting of asymptotically flat spaces. According to \cite{leeparker}, $(M^m, \metric)$ is called asymptotically flat if 
\begin{itemize}
\item[-] its scalar curvature $s(x)$ satisfies \eqref{scalnonneginteg}, 
\item[-] there exists a compact set $K \subset M$ such that each connected component $U_j$ of $M\backslash K$ has a global chart $\Psi_j : (\R^m\backslash B_R(0), \metric_{\mathrm{can}}) \ra U_j$ for which the local expression $g_{ij}$ of $\metric$ satisfies
\begin{equation}\label{asflat}
|g_{ij} - \delta_{ij}| = O\big(r^{-p}\big), \quad |\partial_k \, g_{ij}| = O\big( r^{-p-1}\big), \quad |\partial^2_{kl} \, g_{ij}| = O\big( r^{-p-2}\big)
\end{equation}
as $r(x) = |x| \ra +\infty$, for some $p>(m-2)/2$ and for each $1 \le i,j,k,l \le m$. 
\end{itemize}
\begin{Corollary}\label{cor_asflat}
Let $(M, \metric)$ be an asymptotically flat manifold of dimension $m \ge 3$. Then, for each smooth function $\widetilde s(x)$ satisfying
\begin{equation}
\text{$\widetilde s_+$ has compact support} \qquad \text{and} \qquad \widetilde s \in L^1(M),
\end{equation} 
and for each constant $C>0$, $\widetilde{s}(x)$ is realizable via a conformal deformation $\widetilde \metric$ of $\metric$ satisfying 
$\widetilde \metric \le C\metric$. Furthermore, if $\widetilde s \equiv 0$ outside some compact set, then $\widetilde \metric$ can be chosen to be uniformly equivalent to $\metric$.
\end{Corollary}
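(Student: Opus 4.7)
The strategy is to verify the two hypotheses of Theorem \ref{coruno} and then, in the compactly supported case, to perform a preliminary conformal change so that the stronger ``compact support'' alternative of that theorem applies.

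The scalar curvature hypothesis $s \ge 0$, $s \in L^1(M)$ is essentially built into the definition of asymptotic flatness: the decay \eqref{asflat} with $p > (m-2)/2$ yields the pointwise bound $s(x) = O(r^{-p-2})$ on each end, and a direct integration in the Euclidean chart $\Psi_j$ together with $m \ge 3$ gives $s \in L^1(M)$. Non-parabolicity follows from the polynomial volume growth $\vol(B_r(x_0)) \asymp r^m$, which is a consequence of the bi-Lipschitz comparison between $\metric$ and $\metric_{\mathrm{can}}$ on each end: in dimension $m \ge 3$ one has
$$
\int^{+\infty} \frac{r\, \di r}{\vol(B_r(x_0))} < +\infty,
$$
and the Varopoulos--Grigor'yan criterion (see \cite{grigoryan}) applies. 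Invoking Theorem \ref{coruno} at this point already yields the first assertion.

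For the uniform equivalence when $\widetilde s$ has compact support, the plan is to produce a preliminary conformal metric $\metric_0 = u_0^{4/(m-2)}\metric$ uniformly equivalent to $\metric$ and with vanishing scalar curvature. Applying the ``compactly supported'' half of Theorem \ref{coruno} to $(M, \metric_0)$ with the same $\widetilde s$ then yields $\widetilde \metric$ uniformly equivalent to $\metric_0$, hence to $\metric$. To construct $u_0$, I would solve $L_\metric u_0 = 0$ with $u_0 \to 1$ at infinity by writing $u_0 = 1 - v$, where $v$ solves $L_\metric v = \frac{m-2}{4(m-1)}\, s$. Since $L_\metric$ is a non-negative Schr\"odinger operator on the non-parabolic manifold $M$, it admits a positive Green function $G_L$, whose Euclidean-type decay on the asymptotically flat ends combined with $s \in L^1$ shows that
$$
v(x) = \frac{m-2}{4(m-1)}\int_M G_L(x,y)\,s(y)\,\di V(y)
$$
is bounded and tends to $0$ at infinity.

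The main obstacle is ensuring the strict positivity $u_0 > 0$, equivalently $\sup v < 1$. When $s$ has small $L^1$ norm this is immediate from the integral representation; in general one bypasses the smallness condition by solving $L_\metric u_k = 0$ on an exhaustion $\{B_k\}$ of $M$ with Dirichlet data $u_k = 1$ on $\partial B_k$ and passing to the limit, the non-parabolicity of $M$ preventing the limit from collapsing to $0$. This construction is standard in the asymptotically flat / positive mass theorem framework, and it is the only technical point beyond a direct application of Theorem \ref{coruno}.
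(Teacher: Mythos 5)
Your overall strategy matches the paper's: first get the easy half of the statement by a direct application of Theorem \ref{coruno} to $(M,\metric)$, then for uniform equivalence perform a preliminary conformal change that puts one in the ``compactly supported'' alternative of Theorem \ref{coruno}. The paper invokes the Schoen--Yau/Lee--Parker cut-off lemma (Lee--Parker pp.~82--83), which produces a conformal metric $\metric_1$ that is asymptotically flat, uniformly equivalent to $\metric$, and scalar flat \emph{outside a compact set}; you aim instead for the stronger reduction $s_{\metric_0}\equiv 0$ globally by solving $L_\metric u_0 = 0$ with $u_0\to 1$. Both are legitimate, and both ultimately rest on the weighted Sobolev/Fredholm analysis of $L_\metric$ on asymptotically flat manifolds from the positive mass theorem literature.

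Two issues in your write-up. First, a minor slip: $s\in L^1(M)$ is not a consequence of the decay \eqref{asflat} alone, since $p>(m-2)/2$ gives only $s=O(r^{-p-2})$ and $\int^\infty r^{-p-2}r^{m-1}\,\di r$ converges only for $p>m-2$, which is strictly stronger. In the definition adopted by the paper (following Lee--Parker), condition \eqref{scalnonneginteg} — namely $s\ge 0$ and $s\in L^1(M)$ — is an \emph{assumption}, not a corollary of \eqref{asflat}; fortunately this does not affect your argument, since the needed facts are simply part of the hypotheses.

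Second, and this is the real gap, your invocation of non-parabolicity to prevent the exhaustion limit $u_0=\lim_k u_k$ from ``collapsing to $0$'' is not enough. Subcriticality of $L_\metric$ (which follows from non-parabolicity plus $s\ge 0$) rules out $u_0\equiv 0$ only after one knows some separate bounded positive solution exists, and more importantly it says nothing about $\inf_M u_0>0$, which is what uniform equivalence actually requires: even a strictly positive solution could decay to zero at infinity. Note that Proposition \ref{prop_lasolulimitata} of the paper, which does give a solution pinched between positive constants, is formulated for $V$ \emph{compactly supported} and hence is not directly applicable here where $V = -s/c_m$ is merely $L^1$ with decay. To control $u_0$ near infinity — indeed to show $u_0\to 1$ on each end — one needs the asymptotic expansion of harmonic functions on asymptotically flat ends, i.e.\ precisely the weighted Sobolev/Fredholm theory that the paper delegates to Lee--Parker. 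So your plan is sound, but the phrase ``non-parabolicity preventing collapse'' hides the same external input the paper is citing; it is not a shortcut around it.
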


%
%
Now, we deal with manifolds whose original scalar curvature can be somewhere negative.
\begin{Theorem}\label{teo_tipohyperb}
Let $(M^m, \metric)$ be a non-parabolic Riemannian manifold of dimension $m \ge 3$ with scalar curvature $s(x)$. Suppose that the conformal Laplacian \eqref{laplaconfo} admits a positive Green function on $M$. \par 
Let $\widetilde s(x) \in C^\infty(M)$ be such that 
\begin{equation}\label{daisu}
\text{$\widetilde s_+$ has compact support,} \qquad  \widetilde s(x) \asymp s(x) \, \text{ as $x$ diverges.} 
\end{equation}
Then, there exists $\delta>0$ such that if
\begin{equation}\label{labbrutta}
\widetilde s(x) \le \delta \qquad \text{on } M, 
\end{equation}
then $\widetilde s$ is realizable via a uniformly equivalent, conformal deformation $\widetilde \metric$ of $\metric$.
\end{Theorem}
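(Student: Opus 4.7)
The plan is to produce, via the classical sub--supersolution scheme on a compact exhaustion of $M$, a positive solution $u$ of the Yamabe equation \eqref{02} that is pinched between two positive constants; this will yield the claimed uniform equivalence $\widetilde{\metric} = u^{4/(m-2)}\metric \asymp \metric$. Writing $\sigma = (m+2)/(m-2)$, the PDE reads $L_\metric u = \widetilde s\, u^\sigma /c_m$. The key technical input is the hypothesis that $L_\metric$ admits a positive Green function $G$: by subcriticality theory (the subject of the paper's later sections) this furnishes positive $L_\metric$-harmonic functions, and it allows the use of Green potentials $\eta_\rho(x) = \int_M G(x,y) \rho(y)\,\di y$ of nonnegative $\rho \in C^\infty_c(M)$, which are positive, bounded, smooth, decay at infinity, and satisfy $L_\metric \eta_\rho = \rho$.

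Pick a compact set $K \subset M$ containing $\supp(\widetilde s_+)$ together with any compact region outside of which the asymptotic control $\widetilde s \asymp s$ is in force; thus on $M \setminus K$ one has $\widetilde s \le 0$ and $|s|/|\widetilde s| \le C_0$ for some constant $C_0$. For the supersolution I would take $u_+ = \lambda + \eta_\rho$ with $\lambda^{\sigma - 1} \ge C_0$ and $\rho \in C^\infty_c(M)$ suitably large and supported near $K$. Outside $K$ the constant $\lambda$ alone satisfies the supersolution inequality since $\lambda s/c_m \ge \widetilde s \lambda^\sigma/c_m$ boils down to $\lambda^{\sigma-1} \ge |s|/|\widetilde s|$, which is guaranteed at infinity. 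On $K$, $L_\metric u_+ = \lambda s/c_m + \rho$, so the inequality rearranges to
\[
\rho \,\ge\, \widetilde s(\lambda + \eta_\rho)^\sigma /c_m \,-\, \lambda s/c_m,
\]
a fixed-point condition in $\rho$: the smallness condition $\widetilde s \le \delta$ from \eqref{labbrutta} is precisely what makes the $\widetilde s$-term a small perturbation and lets the fixed point close via a linear coercivity estimate for $\rho$. Symmetrically, for the subsolution I would begin with a small constant $u_- = \epsilon$, which works at infinity as soon as $\epsilon^{\sigma - 1} \le \inf_\infty |s|/|\widetilde s|$, and modify it on the compact region where $s$ and $\widetilde s$ have incompatible signs by writing $u_- = \epsilon(1 - \zeta)$ with $\zeta \in C^\infty_c(M)$, $0 \le \zeta < 1$, and $\zeta$ constructed through a Green potential so that $\Delta \zeta \le (1-\zeta)[\epsilon^{\sigma-1}\widetilde s (1-\zeta)^{\sigma-1} - s]/c_m$ on $\supp(\zeta)$. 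Choosing $\epsilon$ small enough keeps $0 < u_- \le u_+$ globally, with both bounded between positive constants.

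With these sub- and supersolutions in hand, the standard monotone iteration yields, on each relatively compact $\Omega_n$ of an exhaustion of $M$, a solution $u_n \in [u_-, u_+]$ of the Dirichlet problem $L_\metric u_n = \widetilde s u_n^\sigma/c_m$ with boundary data $u_+|_{\partial \Omega_n}$; a subsequence converges in $C^2_\loc(M)$ by standard elliptic estimates to a smooth positive solution $u$ of \eqref{02} satisfying $u_- \le u \le u_+$, so in particular pinched between two positive constants.

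The main obstacle I expect is the construction of the global supersolution on $K$: the Green-potential correction $\eta_\rho$ depends on $\rho$, so the required pointwise inequality is genuinely a fixed-point problem, and it is only the smallness \eqref{labbrutta} that prevents the $\sigma$-power nonlinearity from overwhelming the linear gain from $\rho$. A subsidiary difficulty is fashioning the compactly supported correction $\zeta$ for the subsolution on regions where $s > 0$ while $\widetilde s \le 0$, which again exploits the flexibility granted by the existence of a positive Green function for $L_\metric$; the hypothesis $\widetilde s \asymp s$ then plays a complementary role by keeping $|s|/|\widetilde s|$ bounded at infinity, so that the constant parts of $u_\pm$ faithfully capture the asymptotic behaviour of $u$.
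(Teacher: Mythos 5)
Your proposal takes a genuinely different route from the paper's. The paper proves Theorem~\ref{teo_tipohyperb} by reduction: it is a restatement of part $(I)$ of Theorem~\ref{teodue} with $iv)$, which in turn is the $p=2$, $F(t)=t^{(m+2)/(m-2)}$ case of the general Theorem~\ref{teouno}. That theorem is proved not via global barriers, but by solving Dirichlet problems with boundary value $\eps$ on a compact exhaustion, deriving \emph{uniform} $L^\infty$ estimates from below (Lemma~\ref{lem_uniformP}, using the Hardy inequality furnished by subcriticality of $Q_0$) and from above (Lemma~\ref{lem_linftyestimates}, using the subcriticality of $Q_{A-cB}$ and the weighted spectral gap / ground state dichotomy of Theorem~\ref{teo_alternative}), and then running an iteration to reintroduce $b_-$. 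The fact that $L_{\metric}$ subcritical $\Leftrightarrow$ it has a positive Green kernel is used only as a dictionary, via Remark~\ref{rem_linearegreen}, not to build barriers. Your plan, by contrast, is to construct explicit global sub- and supersolutions via Green potentials of $L_{\metric}$ and then run monotone iteration.

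There are genuine gaps in the barrier construction that are not cosmetic.

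First, you assert that the Green potential $\eta_\rho$ is ``positive, bounded, smooth, decay[s] at infinity.'' Positivity and smoothness are fine, but boundedness and decay of $\eta_\rho$ are \emph{not} consequences of the mere existence of a positive Green kernel for $L_{\metric}$, and these are exactly the properties you need for $u_+=\lambda+\eta_\rho$ to be pinched between two constants (and hence for uniform equivalence). On a general manifold where $L_{\metric}$ is subcritical but without spectral gap, the minimal Green kernel need not yield a bounded potential. The paper flags precisely this obstruction: it remarks that a global positive supersolution of $L_{\metric}$ bounded both from above and from below cannot be constructed in the generality of Theorem~\ref{teo_tipohyperb} (see the discussion preceding the Question in Section~1 and Remark~\ref{rem_counterex1}). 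This is why the paper extracts an $L^\infty$ bound indirectly from a contradiction argument, instead of from an explicit barrier.

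Second, the subsolution is circular and unresolved. You want $\zeta\in C^\infty_c(M)$, $0\le \zeta<1$, with
$$
\Delta\zeta \;\le\; (1-\zeta)\Big[\eps^{\sigma-1}\widetilde s\,(1-\zeta)^{\sigma-1}-s\Big]/c_m,
$$
but this is a fixed-point condition in $\zeta$, and it is not explained how to produce a compactly supported solution that also stays strictly below $1$. Moreover, if one tries to solve the corresponding Dirichlet problem on a compact $\hat K$ and extend $\zeta$ by zero outside, the Hopf lemma forces a nontangential jump of $\nabla\zeta$ across $\partial\hat K$, so the extended $\zeta$ fails to be a distributional subsolution there: $\Delta\zeta$ acquires a nonnegative surface measure on $\partial\hat K$ with the wrong sign. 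Closing this requires a pasting argument for subsolutions (the subsolution analogue of Lemma~\ref{lem_pasting}) with a careful choice of domains and ordering of functions on the overlap, which is not sketched. The paper side-steps this entirely: its lower bound on the exhaustion solutions comes from the Hardy inequality applied to a logarithmic test function (Lemma~\ref{lem_uniformP}) together with the construction of a bounded-above-and-below solution of an auxiliary linear equation with compactly supported potential (Proposition~\ref{prop_lasolulimitata}), not from an explicit global subsolution of the Yamabe equation.

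Your supersolution fixed-point $\rho\ge \widetilde s(\lambda+\eta_\rho)^\sigma/c_m - \lambda s/c_m$ does plausibly close under the smallness $\widetilde s\le\delta$ (since $\eta_\rho$ depends linearly on $\rho$ while the nonlinear term carries a factor $\delta$), provided $\eta_\rho$ is uniformly bounded on $\supp\rho$; but that boundedness is the very point not established. In short, the approach is in the sub/supersolution spirit and could perhaps be pushed through with additional hypotheses, but the two most delicate ingredients---global boundedness of the supersolution and the construction and pasting of the subsolution---are precisely where the paper's variational/spectral machinery replaces the naive barrier argument, and the proposal does not supply substitutes for them.
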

\begin{Remark}
\emph{Observe that \eqref{daisu} implies that the original scalar curvature $s(x)$ is non-positive outside a compact set.  
}
\end{Remark}
Note that Theorems \ref{coruno} and \ref{teo_tipohyperb} seem to be new even in the simpler case $\widetilde s(x) \le 0$ on $M$. In this respect, these are skew with the main theorem in \cite{litamyang} and with Theorem 2.30 of \cite{bmr2}.\par

The requirement \eqref{laplaconfo} shows the central role played by the conformal Laplacian $L_\metric$ for the Yamabe equation. The relevance of $L_{\metric}$ for the original Yamabe problem is well-known and highlighted, for instance, in the comprehensive \cite{leeparker}. We will spend a considerable part of the paper to discuss on assumptions like \eqref{laplaconfo}. Clearly, Theorem \ref{teo_tipohyperb} is tightly related to Aviles-McOwen's Theorem \ref{teo_avilesmcowen}, a parallel which is even more evident in view of the next
\begin{Corollary}\label{corquattro}
Let $(M, \metric)$ be a complete manifold of dimension $m \ge 3$ with a pole $o$ and sectional curvature $K$ satisfying 
\begin{equation}\label{secdasopra}
K \le -\kappa^2,
\end{equation}
for some constant $H > 0$. Suppose further that 
\begin{equation}\label{iposcalarehyp}
s(x) \ge -\frac{(m-1)^3}{(m-2)}\kappa^2
\end{equation}
on $M$. Let $\widetilde s(x) \in C^ \infty(M)$ satisfying
\begin{equation}\label{nuovascalarehyp}
\disp -C_1 \le \widetilde s(x) \le -C_2 <0 \qquad \text{outside some compact set,}
\end{equation}
for some constants $0<C_2 < C_1$. Then, there exists $\delta>0$ such that if 
\begin{equation}\label{labbrutta2}
\widetilde s(x) \le \delta \qquad \text{on } M, 
\end{equation}
then $\widetilde s(x)$ is realizable by a conformal deformation $\widetilde{\metric}$ of $\metric$ which is uniformly equivalent to $\metric$. 
\end{Corollary}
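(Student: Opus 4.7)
The plan is to reduce the corollary to Theorem \ref{teo_tipohyperb} by verifying each of its four hypotheses. Since $M$ has a pole and $K\le -\kappa^{2}<0$, it is a Cartan--Hadamard manifold and Bishop--G\"unther volume comparison against $\HH^{m}_{\kappa}$ gives the exponential lower bound $\vol(B_{r}(o))\gtrsim e^{(m-1)\kappa r}$; in particular $\int^{\infty}\!dr/\vol(\partial B_{r})<\infty$, so $M$ is non-parabolic.

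Next I would turn the sectional upper bound into a scalar curvature upper bound. Summing $K(e_{i},e_{j})\le-\kappa^{2}$ over the $m(m-1)$ ordered pairs of vectors in an orthonormal frame gives
$$
s(x)\le -m(m-1)\kappa^{2}\qquad\text{on }M.
$$
Together with the assumed lower bound \eqref{iposcalarehyp}, this pinches $s(x)$ between two negative constants, so $s(x)\asymp -1$ globally. By \eqref{nuovascalarehyp} one has $\widetilde s(x)\asymp -1$ outside a compact set, whence $\widetilde s(x)\asymp s(x)$ as $x$ diverges; the compact support of $\widetilde s_{+}$ is immediate from $\widetilde s\le -C_{2}<0$ outside a compact set. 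This settles \eqref{daisu}.

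The main step is showing that $L_{\metric}=-\Delta+\frac{m-2}{4(m-1)}s(x)$ admits a positive Green function. By McKean's sharp inequality, which is valid on any simply connected manifold with $K\le-\kappa^{2}$, one has $\lambda_{1}(-\Delta)\ge \frac{(m-1)^{2}\kappa^{2}}{4}$. The hypothesis \eqref{iposcalarehyp} rewrites exactly as
$$
\frac{m-2}{4(m-1)}\,s(x)\ge -\frac{(m-1)^{2}\kappa^{2}}{4},
$$
so $L_{\metric}\ge -\Delta-\frac{(m-1)^{2}\kappa^{2}}{4}\ge 0$. To upgrade non-negativity to subcriticality, I would invoke the sharp Hardy inequality with strictly positive remainder weight developed earlier in the paper (the inequalities advertised in the abstract for Cartan--Hadamard manifolds, sharpening McKean); this shows that $-\Delta-\frac{(m-1)^{2}\kappa^{2}}{4}$ is subcritical on $M$, and domination transfers subcriticality to $L_{\metric}$, yielding the required positive Green function.

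With all hypotheses verified, Theorem \ref{teo_tipohyperb} produces the $\delta>0$ for which \eqref{labbrutta2} implies the existence of a conformal deformation $\widetilde{\metric}$ realizing $\widetilde s$ and uniformly equivalent to $\metric$. The main obstacle I anticipate is the subcriticality of $L_{\metric}$: the mere bound $\lambda_{1}(L_{\metric})\ge 0$ is critical on the model $\HH^{m}_{\kappa}$, so genuine room is needed, and this is precisely the role of the sharp Hardy-with-remainder inequality; once that analytic input is in hand, the remaining verifications are routine comparison-geometric observations.
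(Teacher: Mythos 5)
Your proposal is correct and follows essentially the paper's own route: both arguments verify the hypotheses of Theorem \ref{teo_tipohyperb}, obtain $s(x) \le -m(m-1)\kappa^2$ by tracing \eqref{secdasopra}, pinch $s$ and $\widetilde s$ between negative constants to get $s\asymp\widetilde s$, and use a Hardy inequality strictly dominating the constant $\tfrac{(m-1)^2\kappa^2}{4}$ to obtain subcriticality of $L_\metric$. The only presentational difference is that you insert the intermediate operator $-\Delta-\tfrac{(m-1)^2\kappa^2}{4}$ (McKean plus domination), whereas the paper applies Proposition \ref{prop_criterion} directly to $V=-s/c_m$ against the weight $\chi\circ r$ from Theorem \ref{teo_laprimahardy}, using \eqref{soprachialfa} for the pointwise lower bound $\chi\ge\tfrac{(m-1)^2\kappa^2}{4}$ and the blow-up of $\chi$ at the pole for strictness; you also make the non-parabolicity of $M$ explicit via volume comparison, which the paper leaves implicit in the subcriticality of $-\Delta$.
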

\begin{Remark}\label{rem_pocapositivita}
\emph{Corollary \ref{corquattro} improves on Theorem \ref{teo_avilesmcowen}, since requirement \eqref{ipo_AHaviles} in Theorem \ref{teo_avilesmcowen} implies \eqref{secdasopra}, \eqref{iposcalarehyp}. In particular, the hyperbolic space $\HH^m_\kappa$ of sectional curvature $-\kappa^2$ satisfies all the assumptions of Corollary \ref{corquattro}, being $s(x) = -m(m-1)\kappa^2$. Moreover, as the proof in \cite{avilesmcowen} shows, \eqref{ipo_AHaviles} is essential to ensure the existence of $\delta>0$ in \eqref{upbound_deltaAviles}; the case of equality in \eqref{iposcalarehyp} seems, therefore, hardly obtainable with the approach described in \cite{avilesmcowen}. It is worth to observe that the existence of a pole and the pinching assumption \eqref{ipo_AHaviles} on the sectional curvature are needed in \cite{avilesmcowen} to apply both the Laplacian comparison theorems (from above and below) for the distance function $r(x)= \dist(x,o)$ in order to find suitable radial sub- and supersolutions. On the contrary, here the weaker \eqref{secdasopra} and \eqref{iposcalarehyp} are just used to ensure that the conformal Laplacian has a positive Green function.
}
\end{Remark}
%
%
%
We pause for a moment to comment on assumption \eqref{labbrutta}. Both Theorems \ref{coruno} and \ref{teo_tipohyperb} will be consequences of Theorem \ref{teodue} below, and thus they will be proved via a common technique. The reason why assumption \eqref{labbrutta} is required in Theorem \ref{teo_tipohyperb} but not in Theorem \ref{coruno} can be summarized in the existence, in the second case, of a global, positive supersolution for the conformal Laplacian (that is, a solution $w$ of $L_{\metric}w \ge 0$ on $M$) which is bounded both from below and from above by positive constants; one can take, for instance, $u \equiv 1$. Such a function is not possible to construct in the general setting of Theorem \ref{teo_tipohyperb} (see Remark \ref{rem_counterex1} below for deepening). We stress that, unfortunately, the value of $\delta$ in Corollary \ref{corquattro} is not explicit: indeed, it depends on a uniform $L^\infty$ bound for solutions of some suitable PDEs, which is shown to exist via an indirect method.\\
The need of \eqref{labbrutta} to obtain existence for $\widetilde{\metric}$ is investigated in Remark \ref{rem_labrutta}. It is very interesting that the same condition \eqref{labbrutta} appears both in our theorem and in Aviles-McOwen's one, as well as in Theorems 0.1 in \cite{rrv} and 1.1 in \cite{rrv2}, although the techniques to prove them are different. This may suggest that, in general, \eqref{labbrutta} could not be removable. However, at present we still have no counterexample showing that \eqref{labbrutta} is necessary. For future work, we thus feel interesting to investigate the next 
\begin{Question}
Can assumption \eqref{labbrutta} in Theorem \ref{teo_tipohyperb} be removed, even without expecting  the new metric to be uniformly equivalent to $\metric$?
\end{Question}
%
%
%
%
%
%

%
%

\section{Introduction, II: our main results in their general setting}
Although the prescribed scalar curvature problem is the main focus of our investigation, the techniques developed here allow us to study more general classes of PDEs, namely nonlinear extensions (described in \eqref{04}) of the equation 
\begin{equation}\label{03}
\Delta u + a(x) u -b(x) u^\sigma = 0 \qquad \text{on } M, \qquad u>0 \ \text{ on } M
\end{equation}
with $\sigma>1$, $a,b \in C^\infty(M)$ and sign-changing $b$. Note that the signs of $a,b$ are reversed with respect to those of $s,\widetilde s$ in \eqref{02}, and that $\sigma$ can be greater than $\frac{m+2}{m-2}$, preventing a direct use of variational techniques. However, when $\sigma \le \frac{m+2}{m-2}$ and $b(x) < 0$ on $M$, the investigation of \eqref{03} on Euclidean space is still the core of a very active area of research. In this respect, we quote the seminal \cite{brezisnirenberg} and, for sign-changing $b(x)$ (and singular $a(x)$), the recent \cite{fellipistoia}.\par 
As a matter of fact, even for \eqref{03} the spectral properties of the linear part $L = -\Delta -a(x)$ play a prominent role, in particular the analysis of the fundamental tone $\lambda_1^L(M)$ of the Friedrichs extension of $\left( L,C^\infty_c (M) \right)$. We recall that $\lambda^L_1(M)$ is characterized via the Rayleigh quotient as follows:
\begin{equation}\label{primoautovalore}
\lambda_{1}^{L}(M) = \inf_{0 \not \equiv \varphi \in \lip_c(M)} \frac{\int_M \big[|\nabla \varphi|^2 -a(x) \varphi^2\big]\di x}{\|\varphi\|^2_{L^2(M)}}.
\end{equation}
For instance, if $\lambda_1^L (M) <0$, the situation is somewhat rigid:
\begin{itemize}
\item[(a)] if $b(x) \le 0$, then \eqref{03} has no positive solutions. This follows from a direct spectral argument\footnote{Indeed, assume the contrary and let $u>0$ solves \eqref{03}; then, $u$ is a positive solution of $Lu \ge 0$, and a result of \cite{fischercolbrieschoen, mosspie, allegretto} implies that $\lambda_1^L(M) \ge 0$, contradicting our assumption.}; 
\item[(b)] if $b(x) \ge 0$ and the zero set of $b$ is small in a suitable spectral sense, then there always exist a minimal and a maximal (possibly coinciding) positive solutions of \eqref{03}; see \cite{litamyang, prslogistic} and Section 2.4 in \cite{bmr2}. 
\end{itemize}
It is important to underline that, in both cases, the geometry of $M$ only reveals via the spectral properties of $L$. In other words, no \emph{a-priori} assumptions of completeness of $M$, nor curvature nor topological requests are made. As suggested by $(a)$ and $(b)$ above, it seems that the subtler case is that of investigating existence under the assumption $\lambda_1^L (M) \ge 0$. This condition is often implicitly met in the literature and it is automatically satisfied in many geometric situations. This happens, for instance, for Theorems \ref{teo_zhang} to \ref{cor_tipoiperb}.\par 
There is another aspect of the above picture which is worth mentioning. Partial differential equations similar to \eqref{03} are of interest even for quasilinear operators more general than the Laplacian. Just to give an example, of a certain importance in Physics, we can consider the general equation for radiative cooling
\begin{equation}
\kappa^{-1}\diver \big(\kappa |\nabla u|^{p-2}\nabla u\big) - \big(\tau\kappa^{-1}\big) u^4 = 0 \qquad \text{on } \R^m,
\end{equation}
where $\kappa>0$ is the coefficient of the heat conduction and $\tau$ is a function describing the radiation (see \cite{pucciserrin}, p.9). The existence problem for this type of quasilinear PDEs when the coefficient of the nonlinearity changes sign seems to be quite open. This suggests to extend our investigation to the existence of positive solutions to the quasilinear, Yamabe-type equation
\begin{equation}\label{04}
\Delta_{p,f} u + a(x) u^{p-1} -b(x) F(u) = 0 \qquad \text{on } M, 
\end{equation}
where $f \in C^\infty(M)$,  
\begin{equation}\label{05}
\Delta_{p,f}u = e^{f}\diver \big(e^{-f} |\nabla u|^{p-2} \nabla u \big)
\end{equation}
and $F(u)$ is a nonlinearity satisfying the following assumptions:
\begin{equation}\label{assu_F}
\left\{ \begin{array}{l}
F \in C^0(\R), \quad F(0)=0, \quad F>0 \ \text{ on } \R^+, \\[0.2cm]
\disp \frac{F(t)}{t^{p-1}} \ \text{ is strictly increasing on $\R^+$}, \\[0.3cm]
\disp \lim_{t \ra 0^+} \frac{F(t)}{t^{p-1}} = 0, \qquad \lim_{t \ra +\infty} \frac{F(t)}{t^{p-1}} = +\infty
\end{array}\right.
\end{equation} 
The prototype example of $F(t)$ is $F(t) = t^\sigma$ for $\sigma>p-1$ and $t \ge 0$. Of course \eqref{03} is recovered by choosing $p=2$ and $f$ constant. We underline that, even for \eqref{04} in the Euclidean space and with $F(t)=t^\sigma$, there seems to be no result covering the cases described in Theorems \ref{teouno} and \ref{teouno_bis}. The family of operators in \eqref{05} above encompasses two relevant geometrical cases: $\Delta_{p,0}$ the standard $p$-Laplacian $\Delta_p$, and $\Delta_{2,f}$ the drifted Laplacian, $\Delta_f$, appearing, for instance, in the analysis of Ricci solitons and quasi-Einstein manifolds. Note that the radiative cooling equation is of type \eqref{04} provided $1<p \le 5$. Note also that, since the definition of $\Delta_{p,f}$ is intended in the weak sense, solutions will be, in general, only of class $C^{1,\mu }_\loc(M)$ by \cite{tolksdorf}.   

\begin{Notation}
\emph{Hereafter, with a slight abuse of notation, with $C^{1,\mu}_\loc(M)$ we mean that for each relatively compact open set $\Omega \subset M$, there exists $\mu = \mu(\Omega) \in (0,1)$ such that $u \in C^{1,\mu}(\overline \Omega)$.
}
\end{Notation}

\begin{Remark}
\emph{We stress that, with possibly the exception of Theorem \ref{teo_zhang} when $F(t)=t^\sigma$ and $\sigma \le p^*-1$, the techniques used to prove Theorems \ref{teo_zhang} to \ref{cor_tipoiperb} seem hard to extend to deal with \eqref{05} for nonradial $f$, even for $p=2$. For constant $f$, it seems also very difficult to adapt them to investigate \eqref{05} when $p \neq 2$. In particular, the transformation performed in \cite{bmr3} for $p=2$ to absorb the linear term $a(x)u$ can only be applied when the driving operator is linear.  
}
\end{Remark}

To state our main result, Theorem \ref{teouno} below, we need to introduce some terminology. Let $\di \mu_f$ be the weighted measure $e^{-f} \di x$, with $\di x$ the Riemannian volume element on $M$. For $V  \in L^\infty_\loc(M)$, we consider the functional $Q_V$ defined on $\lip_c (M)$ by
\begin{equation}\label{06}
Q_V(\varphi) = \frac{1}{p}\left[\int_M |\nabla \varphi|^p \di \mu_f - \int_M V |\varphi|^p \di \mu_f\right]. 
\end{equation}
Its Gateaux derivative $Q'_V$ is given by 
\begin{equation}\label{07}
Q_V' (w)= -\Delta_{p,f}w - V |w|^{p-2}w \qquad \text{for } w \in W^{1,p}_\loc(M).
\end{equation}
When $M = \R^m$, the spectral properties of $Q_V$ have been investigated in \cite{allegrettohuang, allegrettohuang2, garciamelian_sabinadelis} and in a series of papers by Y. Pinchover and K. Tintarev (see in particular \cite{pinchovertintarev,pinchovertintarev2}). From now on, we follow the notation and terminology in \cite{pinchovertintarev2}. In the linear case $p=2$, $f \equiv 0$, that is, for the Schr\"odinger operator $Q'_V= -\Delta-V$, we refer the reader to \cite{mosspie, allegretto, piepenbrink, piepenbrink2, murata}.\par 
To begin with, and  according to \cite{murata, pinchovertintarev},  we recall the following:
\begin{Definition}\label{subcritic}
For $V  \in L^\infty_\loc(M)$, define $Q_V$ as in \eqref{06} and let $\Omega \subseteq M$ be an open set.  
\begin{itemize}
\item[i)] $Q_V$ is said to be \emph{\textbf{non-negative}} on $\Omega$ (shortly, $Q_V \ge 0$) if and only if $Q_V(\varphi)\ge 0$ for each $\varphi \in \lip_c(\Omega)$, that is, if and only if the Hardy type inequality
\begin{equation}\label{08}
\int_M V(x)|\varphi|^p\di \mu_f \le \int_M |\nabla \varphi|^p\di \mu_f \qquad \forall \  \varphi \in \lip_c(\Omega).
\end{equation}
holds.
\item[ii)] $Q_V$ is said to be \emph{\textbf{subcritical}} (or \emph{\textbf{non-parabolic}}) on $\Omega$ if and only if $Q_V \ge 0$ there, and there exists $w \in L^1_\loc(\Omega)$, $w \ge 0$, $w \not\equiv 0$ on $\Omega$, such that
\begin{equation}\label{09}
\int_M w(x) |\varphi|^p\di \mu_f \le Q_V(\varphi) \qquad \forall \  \varphi \in \lip_c(\Omega). 
\end{equation}
\end{itemize}
\end{Definition}
Sometimes, especially in dealing with the prescribed scalar curvature problem and when no possible confusion arises, we also say that $Q_V'$, and not $Q_V$, is non-negative (or subcritical). The term ``non-parabolic" is justified by the following statement for $Q_0$ (that is, $Q_V$ with $V(x) \equiv 0$), which is part of Proposition \ref{prop_hyperbolicity} below:

\begin{Proposition}\label{prop_hyperbolicity_simple}
Let $(M, \metric)$ be Riemannian, $f \in C^0(M)$ and $p>1$. Then, $Q_0$ is subcritical on $M$ if and only if there exists a non-constant, positive weak solution $g \in C^0(M) \cap W^{1,p}_\loc(M)$ of $\Delta_{p,f} g \le 0$.
\end{Proposition}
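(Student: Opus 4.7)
The plan is to run both implications through the Picone inequality for $\Delta_{p,f}$, coupled with the standard characterization of $p$-parabolicity via exhausting cutoffs of vanishing $p$-Dirichlet energy. Recall Picone: for any $u\in C^0(M)\cap W^{1,p}_{\loc}(M)$ with $u>0$ and any $\varphi\in\lip_c(M)$, the test function $|\varphi|^p/u^{p-1}$ lies in $\lip_c(M)$ (since $u$ is locally bounded away from zero) and Young's inequality gives the pointwise bound
\[
|\nabla\varphi|^p \;\ge\; \left\langle |\nabla u|^{p-2}\nabla u,\,\nabla\!\left(\tfrac{|\varphi|^p}{u^{p-1}}\right)\right\rangle \qquad \text{a.e. on } M.
\]

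For the implication $(\Leftarrow)$, given a non-constant positive $g$ with $\Delta_{p,f}g\le 0$, I would not apply Picone to $g$ itself (since $\Delta_{p,f}g$ may vanish identically) but rather to the power $u\doteq g^\alpha$ for a fixed $\alpha\in(0,1)$. A distributional chain-rule computation, justified by approximating $g$ with $g_\varepsilon=g+\varepsilon$ (so that $t\mapsto t^\alpha$ is Lipschitz on $[\varepsilon,\infty)$) and then letting $\varepsilon\to 0^+$, yields
\[
-\Delta_{p,f}u \;=\; -\alpha^{p-1}g^{(\alpha-1)(p-1)}\Delta_{p,f}g \;+\; \alpha^{p-1}(1-\alpha)(p-1)\,g^{(\alpha-1)(p-1)-1}|\nabla g|^p
\]
as a non-negative Radon measure. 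Dividing by $u^{p-1}=g^{\alpha(p-1)}$ produces the locally integrable function
\[
w \;\doteq\; \alpha^{p-1}(1-\alpha)(p-1)\,\frac{|\nabla g|^p}{g^p},
\]
which is non-negative and not identically zero because $g$ is non-constant on the connected manifold $M$. Testing the weak inequality $-\Delta_{p,f}u\ge w\,u^{p-1}$ against $|\varphi|^p/u^{p-1}$ and combining with Picone yields exactly the Hardy bound of Definition \ref{subcritic}(ii):
\[
\int_M w\,|\varphi|^p\,d\mu_f \;\le\; \int_M|\nabla\varphi|^p\,d\mu_f \qquad\forall\,\varphi\in\lip_c(M).
\]

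For the reverse implication $(\Rightarrow)$, I would argue by contradiction. If $Q_0$ is subcritical with a Hardy weight $w\not\equiv 0$ while every positive weak solution of $\Delta_{p,f}g\le 0$ is constant, then $M$ is $p$-parabolic, so there exist cutoffs $\varphi_n\in\lip_c(M)$ with $0\le\varphi_n\le 1$, $\varphi_n\to 1$ locally uniformly, and $\int_M|\nabla\varphi_n|^p\,d\mu_f\to 0$. Choosing a compact $K\subset M$ with $\int_K w\,d\mu_f>0$ and plugging $\varphi_n$ into the Hardy inequality, dominated convergence on the left forces
\[
0<\int_K w\,d\mu_f \;=\; \lim_n\int_M w\,\varphi_n^p\,d\mu_f \;\le\; \lim_n\int_M|\nabla\varphi_n|^p\,d\mu_f \;=\; 0,
\]
a contradiction. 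The main obstacle is the rigorous distributional computation of $\Delta_{p,f}(g^\alpha)$ at the minimal regularity $g\in C^0\cap W^{1,p}_\loc(M)$ with merely continuous $f$; the $\varepsilon$-regularization above is the cleanest route, and the non-negativity of the remainder term proportional to $|\nabla g|^p g^{(\alpha-1)(p-1)-1}$ is preserved in the limit. As an alternative bypass one may invoke the ground-state-representation form of Picone in the spirit of Devyver--Pinchover, which directly produces a non-negative Hardy remainder $\propto|\nabla g|^p/g^p$ without passing through $\Delta_{p,f}(g^\alpha)$.
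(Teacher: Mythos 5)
Your $(\Leftarrow)$ direction is correct and is essentially the paper's argument (Proposition \ref{prop_hyperbolicity}, $1)\Rightarrow 5)$): the power $g^\alpha$ of the supersolution, after applying the chain-rule formula \eqref{113}, becomes a positive supersolution of a Schr\"odinger inequality with the potential $w = (p-1)\alpha^{p-1}(1-\alpha)|\nabla g|^p/g^p$, and the Lagrangian/Picone representation turns this into the Hardy inequality. The paper uses the specific exponent $\alpha = (p-1)/p$, which is the optimizer of $\alpha\mapsto(p-1)\alpha^{p-1}(1-\alpha)$, giving the sharp constant $((p-1)/p)^p$; any $\alpha\in(0,1)$ works for mere subcriticality, so your more flexible choice is fine. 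The regularization via $g+\varepsilon$ is a reasonable substitute for applying \eqref{113} directly on compact sets where $g$ is bounded away from $0$.

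The gap is in $(\Rightarrow)$. The pivot of your argument is the step ``every positive weak solution of $\Delta_{p,f}g\le 0$ is constant $\Longrightarrow$ there exist cutoffs $\varphi_n\in\lip_c(M)$ with $\varphi_n\to 1$ locally and $\int_M|\nabla\varphi_n|^p\,d\mu_f\to 0$,'' which you invoke as a known characterization of $p$-parabolicity. But this is precisely the nontrivial half of the theorem, and it is not an independent elementary fact: it is exactly what the paper's ground state alternative (Theorem \ref{teo_alternative}, implications $(ii)_{\GSR}\Rightarrow(iii)_{\GSR}$ and, contrapositively, $(iv)_{\SR}\Rightarrow(iii)_{\SR}$) establishes, via the $Q_V$-capacitor construction and the pasting Lemma \ref{lem_pasting}. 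Once cutoffs are available, the contradiction with the Hardy inequality over a compact $K$ with $\int_K w\,d\mu_f>0$ is the easy direction (and note a small slip: one should use Fatou, $\int_K w \le \liminf_n\int_M w\varphi_n^p$, rather than claim $\lim_n\int_M w\varphi_n^p = \int_K w$). So as written, your proof really establishes only the easy equivalence ``cutoffs exist $\Leftrightarrow$ $Q_0$ not subcritical,'' and defers the substantive link between supersolutions and cutoffs to an unproved citation. A self-contained argument for $(\Rightarrow)$ would either (a) use the capacitor: subcriticality gives $\capac_{Q_0}(K,1)>0$, and the $Q_0$-capacitor $u$ of $(K,1)$ is a positive supersolution on $M$ (by the pasting lemma) that is nonconstant since $u\to 0$ along the exhaustion while $u=1$ on $K$; or (b) pass through the positive Green kernel $\green(\cdot,y)$ and truncate $\min\{\green,c\}$, again via the pasting lemma, as the paper does in $6)\Rightarrow 1)$ of Proposition \ref{prop_hyperbolicity}. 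Either way the pasting lemma, or an equivalent device, is unavoidable and should appear explicitly.
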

According to the literature, the existence of such $g$ is one of the equivalent conditions that characterize $M$ as being not $p$-parabolic; there are various other characterizations of $p$-parabolicity, given in terms of Green kernels, $p$-capacity of compact sets, Ahlfors' type maximum principles, and so on. We refer to the survey \cite{grigoryan} for deepening in the linear case $p=2$, and to (see \cite{troyanov, pigolasettitroyanov, holokoskela, holopainen2, holopainen}) for $p \neq 2$. The equivalence in Proposition \ref{prop_hyperbolicity_simple} has been observed, in the linear setting, by \cite{ancona, carron, liwang3}, and for $p \neq 2$ it has also recently been proved in \cite{dambrosiodipierro} with a technique different from our.\par 
In fact, all of these characterizations of the non-parabolicity of $-\Delta_{p,f}$ can be seen as a special case of a theory developed in \cite{murata, pinchovertintarev_JFA} (when $p=2$) and in \cite{pinchovertintarev,pinchovertintarev2} for operators $Q_V$ with potential. In Section \ref{sec_criticality}, we recall the main result in \cite{pinchovertintarev, pinchovertintarev2}, the ground state alternative, and we give a proof of it by including a further equivalent condition, see Theorem \ref{teo_alternative} below; as a corollary, we prove Propositions \ref{prop_hyperbolicity} and \ref{prop_hyperbolicity_simple}. 

\begin{Remark}
\emph{In the prescribed scalar curvature problem, the role of $\metric$ and $\widetilde \metric$ can be exchanged. Such a symmetry suggests that those geometric conditions which are invariant with respect to a conformal change of the metric turn out to be more appropriate to deal with the Yamabe equation. This is the case for the non-negativity and the subcriticality of the conformal Laplacian $L_{\metric}$ of $(M, \metric)$ in \eqref{laplaconfo}. In fact, the covariance of $L$ with respect to the conformal deformation $\disp\widetilde {\metric} = u^{\frac{4}{m-2}} \metric$ of the metric:
$$
L_{\widetilde \metric}(\cdot) = u^{-\frac{m+2}{m-2}}L_{\metric}(u \cdot)
$$ 
implies that, for each $0 \le w \in L^1_\loc(M)$ and $\varphi \in \lip_c(M)$, 
$$
\disp \int_M \Big[\|\widetilde{\nabla} \varphi\|^2 + \frac{\widetilde{s}}{c_m}\varphi^2 \Big] \di \widetilde 
x - \int_M \widetilde w\varphi^2 \di \widetilde x = \int_M \Big[|\nabla(u\varphi)|^2+ \frac{s}{c_m}(u\varphi)^2\Big]\di x - \int_M w (u\varphi)^2 \di x,
$$
where $\sim$ superscript indicates quantities referred to $\widetilde \metric$, $\|\cdot\|$ is its induced norm, and 
$$
\widetilde w = wu^{-\frac{4}{m-2}} \in L^1_\loc(M), \quad \widetilde w \ge 0.
$$
Consequently, $L_{\metric}$ is non-negative (resp. subcritical) if and only if so is $L_{\widetilde \metric}$.
}
%
\end{Remark}
\begin{Remark}\label{rem_Yamabeshort}
\emph{As a direct consequence of the ground state alternative, the positivity of the Yamabe invariant $Y(M)$ in Zhang's Theorem \ref{teo_zhang} implies that $L_{\metric}$ is subcritical (see Remark \ref{rem_Yamabeinv}). On the other hand, in our Theorem \ref{coruno} the subcriticality of $L_{\metric}$ follows combining the non-parabolicity of $M$ and $s(x) \ge 0$. Although, in general, the positivity of $Y(M)$ might not imply the non-parabolicity of $M$, this is so if $M$ is scalar flat outside a compact set and $\vol(M)=+\infty$. Indeed, if $s(x) \equiv 0$ outside a compact set $K$, then $Y(M)>0$ gives the validity of an $L^2$-Sobolev inequality on $M\backslash K$, and coupling with $\vol(M)=+\infty$ the non-parabolicity of $M$ follows by a result in \cite{carron2, pigolasettitroyanov}. We underline that, in the same assumptions, again by \cite{carron2, pigolasettitroyanov} property $\vol(M)=+\infty$ is automatic when $M$ is geodesically complete. Summarizing, if the manifold in Zhang's Theorem \ref{teo_zhang} is scalar flat near infinity, the geometric requirements there properly contain those of our Theorem \ref{coruno}. 
}
\end{Remark}

We are now ready to state
\begin{Theorem}\label{teouno} 
Let $M^m$ be a Riemannian manifold, $f \in C^\infty(M)$ and $p \in (1,+\infty)$. Suppose that $Q_0$ is subcritical on $M$, and let $a \in L^\infty_\loc(M)$ be such that $Q_a$ is subcritical on $M$. Consider $b \in L^\infty_\loc(M)$, and assume
\begin{itemize}
\item[$i)$] $b_-(x)$ has compact support;
\vspace{0.1cm}
\item[$ii)$] $a(x)=O\big(b(x)\big)$ as $x$ diverges;
\vspace{0.1cm}
\item[$iii)$] for some $\theta>0$, $\big(a(x)-\theta b_+(x) \big)_- \in L^1(M, \di \mu_f)$.
\end{itemize}
Fix a nonlinearity $F(t)$ satisfying \eqref{assu_F}. Then, there exists $\delta>0$ such that if
\begin{equation}\label{labbrutta_general}
b(x) \ge -\delta \qquad \text{on }M,
\end{equation}
there exists a weak solution $u \in C^{1,\mu }_\loc(M)$ of
\begin{equation}\label{011}
\left\{\begin{array}{l}
\disp \Delta_{p,f} u + a(x) u^{p-1} - b(x)F(u) = 0 \qquad \text{ on M}\\[0.2cm]
0 < u \le \|u\|_{L^\infty(M)} < +\infty.
\end{array}\right.
\end{equation}
If we replace $ii)$ and $iii)$ by the stronger condition 
$$
iv)\qquad b_+(x) \asymp a(x) \qquad \text{as } x \text{ diverges,}
$$
and we keep the validity of \eqref{labbrutta_general}, then $u$ can also be chosen to satisfy 
\begin{equation}\label{012}
\inf_Mu >0.
\end{equation}
\end{Theorem}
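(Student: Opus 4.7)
The plan is a sub-/super-solution argument on a smooth exhaustion $\{\Omega_n\}$ of $M$, with the spectral hypotheses entering through the construction of a bounded positive super-solution and the non-degeneracy of the limit. First I would exploit the subcriticality of $Q_a$ via the ground state alternative (Theorem \ref{teo_alternative}) to produce a positive $W \in C^{1,\mu}_\loc(M) \cap L^\infty(M)$ solving $Q_a'W = g$ weakly on $M$, with $g \in L^\infty_c(M)$ non-negative and bounded below by a constant $c_0 > 0$ on an open neighborhood of $\supp(b_-)$. By the $(p-1)$-homogeneity of $Q_a'$, for $A > 0$ the candidate $u_+ := AW$ satisfies
\begin{equation*}
-\Delta_{p,f} u_+ - a u_+^{p-1} + b F(u_+) = A^{p-1} g + b F(AW),
\end{equation*}
which is non-negative off $\supp(b_-)$ (where $b \ge 0$ and $g \ge 0$), and is bounded below by $A^{p-1} c_0 - \delta F(A\|W\|_\infty)$ on $\supp(b_-)$. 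Assumption \eqref{assu_F} yields $A^{p-1}/F(A\|W\|_\infty) \to +\infty$ as $A \to 0^+$, so fixing $A$ small enough and taking $\delta$ in \eqref{labbrutta_general} correspondingly small makes $u_+$ a bounded positive super-solution of the PDE in \eqref{011} on $M$. On each $\Omega_n$ the ordered pair $(0, u_+)$ yields, via monotone iteration for the $p$-Laplacian, a weak solution $u_n \in C^{1,\mu}(\overline{\Omega_n})$ of \eqref{011} in $\Omega_n$ with boundary datum $u_+\vert_{\partial \Omega_n}$ and $0 \le u_n \le u_+$. Tolksdorf's regularity \cite{tolksdorf} together with this uniform $L^\infty$-bound allows the extraction of a subsequence converging in $C^{1,\mu}_\loc(M)$ to a weak solution $u$ of \eqref{011} on $M$ with $0 \le u \le u_+$.

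The main technical point is ruling out $u \equiv 0$. I would exploit hypothesis $(iii)$ through the splitting $\bar a := a - \theta b_+$. Let $u_0 > 0$ denote the unique positive root of $F(u_0) = \theta u_0^{p-1}$, which exists by \eqref{assu_F}. A direct computation gives, for every $w$ with $0 < w \le u_0$,
\begin{equation*}
-\Delta_{p,f} w - a w^{p-1} + b F(w) \le -\Delta_{p,f} w - \bar a w^{p-1},
\end{equation*}
since $b_+\bigl[F(w) - \theta w^{p-1}\bigr] \le 0$ and $-b_- F(w) \le 0$ there. Hence any positive sub-solution $v$ of $Q_{\bar a}' v = 0$ with $v \le \min\{u_0, u_+\}$ and compactly supported in some $\Omega_0 \Subset M$ is automatically a sub-solution of the full equation \eqref{011}. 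The subcriticality of $Q_a$ transfers to that of $Q_{\bar a}$ since $b_+ \ge 0$ (so $Q_{\bar a}(\varphi) \ge Q_a(\varphi)$ for all test $\varphi$), and the $L^1(\di\mu_f)$-smallness of $\bar a_-$ from $(iii)$ should allow one to build such a non-trivial $v$ through the Green function of $Q_{\bar a}'$ against a suitably chosen compactly supported source combined with a careful cutoff. The weak comparison principle then gives $u_n \ge v$ on $\Omega_0$ for large $n$, hence $u \ge v \not\equiv 0$; the strong maximum principle for the $p$-Laplacian thus yields $u > 0$ on $M$.

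Under $(iv)$, $b_+ \asymp a > 0$ outside a compact set $K$, so any sufficiently small constant $\epsilon > 0$ satisfies
\begin{equation*}
-a\epsilon^{p-1} + b F(\epsilon) = \epsilon^{p-1}\bigl(-a + b F(\epsilon)/\epsilon^{p-1}\bigr) \le 0 \quad \text{on } M \setminus K,
\end{equation*}
since $F(\epsilon)/\epsilon^{p-1} \to 0$ as $\epsilon \to 0^+$ while $b_+/a$ is bounded on $M \setminus K$. Gluing this constant with the local sub-solution from the previous step via a max-of-sub-solutions construction produces a global sub-solution bounded below by $\epsilon$ outside a compact set, yielding $\inf_M u \ge \epsilon > 0$. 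The hardest part will be the non-degeneracy step: producing a genuinely positive sub-solution of the quasilinear $Q_{\bar a}'$ from just the $L^1$-integrability of $\bar a_-$, without full variational tools when $p \ne 2$, is delicate, and it is likely through this step that the non-explicit, compactness-based dependence of $\delta$ on the problem data (flagged in the Remark after Corollary \ref{corquattro}) enters.
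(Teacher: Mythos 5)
Your supersolution construction contains the crucial gap. You claim the subcriticality of $Q_a$ plus the ground state alternative yields a \emph{bounded} positive $W\in L^\infty(M)$ solving $Q_a'W=g$ for a compactly supported, non-negative source $g$. This does not follow. Subcriticality gives a weighted spectral gap, non-proportional positive supersolutions, a positive $Q_V$-capacity, etc.; none of these deliver a \emph{global upper} bound on a positive supersolution (or solution of $Q_a'W=g$). Indeed the half-Harnack inequality for supersolutions controls the $L^s$ norm by the infimum, not the supremum, and the existence of a bounded positive supersolution of $Q_a'$ on $M$ is a strictly stronger statement than subcriticality (e.g.\ it fails already for $p=2$, $f\equiv 0$, when $-\Delta-a$ is subcritical but has arbitrarily large positive solutions near infinity, as happens for suitable $a$ on $\mathbb{R}^m$ — see the discussion around \eqref{221}--\eqref{223}). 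Notice moreover that your construction of $u_+$ never invokes assumption $ii)$ ($a=O(b)$ as $x$ diverges), while a uniform $L^\infty$ bound for solutions of the approximating Dirichlet problems demonstrably cannot hold without a compatibility condition between $a$ and $b_+$ at infinity. This is exactly the role of Lemma \ref{lem_linftyestimates}, which is the technical heart of the paper's proof: it establishes the uniform upper bound $z\le C_\Lambda$ for solutions on arbitrary $\Omega\Supset\Lambda'$ using the inequality $A\le cB$ off $\Lambda$ together with (mere) non-negativity of $Q_A$, via a contradiction argument built on a null-sequence trick. Your proposal would bypass this lemma, but the object you propose to bypass it with does not exist in the stated generality.

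The non-degeneracy and lower-bound steps you sketch are also underspecified. You want a compactly supported positive subsolution of $Q_{\bar a}'$ built from a ``Green function of $Q_{\bar a}'$'', but for $p\neq 2$ the existence and measurable dependence of such kernels is delicate, and the paper explicitly avoids this route. Instead, Lemma \ref{lem_uniformP} obtains the lower bound directly: one takes $u=\log z$, tests with a cleverly truncated function $\eta(u)$ to obtain an $L^p$ gradient bound over the sublevel set $\{z<\delta\}$ in terms of $\|(a-\theta b_+)_-\|_{L^1}$, and then uses the weighted spectral gap of $Q_0$ to contradict $\inf_\Lambda z\to 0$. The $\inf_M u>0$ claim under $(iv)$ in the paper is finished by comparing $\varphi_{\infty,j}$ on its small sublevel set $\{\varphi_{\infty,j}<\alpha\}$ with a bounded positive solution of $\Delta_{p,f}g - (a-\theta b_+)_-g^{p-1}=0$ produced via Proposition \ref{prop_lasolulimitata} (where $V$ \emph{is} compactly supported, hence the proposition applies). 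Your constant $\epsilon$ at infinity plus gluing would also need a rigorous pasting lemma argument, and without the uniform lower bound already in hand on a compact core the gluing does not close. In short, the strategy is reasonable in outline but the two genuine cornerstones — a global $L^\infty$ bound without a bounded supersolution, and a uniform lower bound exploiting the $L^1$ hypothesis on $(a-\theta b_+)_-$ — are missing, and both are where the paper's real work lies.
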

%
%

In the next theorem, we remove requirement \eqref{labbrutta_general}; see also Remark \ref{rem_labrutta} for a related discussion.
\begin{Theorem}\label{teouno_bis} 
Let $M^m$ be a Riemannian manifold, $f \in C^\infty(M)$ and $p \in (1,+\infty)$. Suppose that $Q_0$ is subcritical on $M$ and let $a \in L^\infty_\loc(M)$ be such that $Q_a$ is subcritical on $M$. Consider $b \in L^\infty_\loc(M)$, and assume
\begin{itemize}
\item[$i)$] $b_-(x)$ has compact support;
\vspace{0.1cm}
\item[$ii')$]  $a(x) \le 0$ outside a compact set;
\vspace{0.1cm}
\item[$iii')$]  $a(x),b(x) 	\in L^1(M,\di \mu_f)$. 
\end{itemize}
Fix a nonlinearity $F(t)$ satisfying \eqref{assu_F}. Then, there exists a sequence $\{u_k\} \subset C^{1,\mu }_\loc(M)$ of distinct weak solutions of
\begin{equation}\label{011primo}
\left\{\begin{array}{l}
\disp \Delta_{p,f} u_k + a(x) u_k^{p-1} - b(x) F(u_k) = 0 \qquad \text{ on M}\\[0.2cm]
0 < u_k \le \|u_k\|_{L^\infty(M)} < +\infty,
\end{array}\right.
\end{equation}
such that $\|u_k\|_{L^\infty(M)} \ra 0$ as $k \ra +\infty$. If we replace $ii')$ and $iii')$ by the stronger condition 
$$
iv') \qquad a(x), b(x) \quad \text{have compact support,}
$$
then each $u_k$ also satisfies $\inf_M u_k >0$.
\end{Theorem}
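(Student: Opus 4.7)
The plan is to reduce Theorem \ref{teouno_bis} to Theorem \ref{teouno} by a rescaling of the nonlinearity that weakens it uniformly on bounded intervals, so that the hypothesis \eqref{labbrutta_general} becomes automatically satisfied for solutions with small $L^\infty$ norm.

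First I would verify that $i), ii'), iii')$ imply $i), ii), iii)$ of Theorem \ref{teouno}. Under $i)$ and $ii')$, both $b \ge 0$ and $a \le 0$ hold outside a common compact $K$, so trivially $a(x) \le C\,b(x)$ on $M\setminus K$ for every $C>0$, giving $ii)$. For $iii)$, the quantity $(a-\theta b_+)_- = \max(0,\theta b_+ - a)$ equals $\theta b + (-a)$ on $M\setminus K$ and hence lies in $L^1(M,\di\mu_f)$ by $iii')$. Analogously, $iv')$ makes $iv)$ trivially true, as $b_+$ and $a$ both vanish outside a compact set.

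Next, for $\eps \in (0,1)$ set $u = \eps v$; the equation becomes
\begin{equation*}
\Delta_{p,f} v + a(x)\, v^{p-1} - b(x)\, F_\eps(v) = 0, \qquad F_\eps(t) := \frac{F(\eps t)}{\eps^{p-1}}.
\end{equation*}
One checks directly that $F_\eps$ still satisfies \eqref{assu_F}, and that
\begin{equation*}
\sup_{t\in[0,L]}\frac{F_\eps(t)}{t^{p-1}} \,=\, \sup_{s\in[0,\eps L]}\frac{F(s)}{s^{p-1}} \,\longrightarrow\, 0 \quad \text{as } \eps\to 0^+,
\end{equation*}
for every fixed $L>0$. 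Inspecting the proof of Theorem \ref{teouno}, the threshold $\delta$ there enters only through quantities of the form $\sup_{[0,L]} F(\cdot)/(\cdot)^{p-1}$ where $L$ is an a priori $L^\infty$-bound on the solution; hence the corresponding threshold $\delta_\eps$ for the rescaled equation diverges as $\eps \to 0^+$. Since $b$ is locally bounded and $b \ge 0$ outside a compact set, we have $\inf_M b =: -B > -\infty$, and eventually $-B \ge -\delta_\eps$, so condition \eqref{labbrutta_general} holds trivially for the rescaled problem.

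Applying Theorem \ref{teouno} to each rescaled equation produces $v_\eps\in C^{1,\mu}_\loc(M)$ with $0 < v_\eps \le \|v_\eps\|_\infty < \infty$, and then $u_\eps := \eps v_\eps$ solves the original equation with $\|u_\eps\|_\infty \le \eps\|v_\eps\|_\infty$. Provided $\|v_\eps\|_\infty$ stays uniformly bounded in $\eps$ — which should follow from a close reading of the construction in Theorem \ref{teouno}, where the natural supersolution is built from a positive $w$ controlled by the subcritical operator $Q'_a$ and is essentially independent of $F$ — any sequence $\eps_k \to 0^+$ produces distinct solutions $u_k$ with $\|u_k\|_\infty \to 0$. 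Under $iv')$, the reduction to $iv)$ yields the lower bound $\inf_M v_k > 0$ from Theorem \ref{teouno}, and hence $\inf_M u_k > 0$. The principal obstacle is the quantitative tracking of how $\delta$ depends on the nonlinearity in the proof of Theorem \ref{teouno} — specifically, the claim that it scales reciprocally with the nonlinear strength — together with the uniform control of $\|v_\eps\|_\infty$ as $\eps$ shrinks.
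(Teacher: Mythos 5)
Your rescaling $u = \sigma v$, which replaces $F$ by $F_\sigma(t) = F(\sigma t)/\sigma^{p-1}$ at a fixed boundary datum $\eps$, is by $(p-1)$-homogeneity of $\Delta_{p,f}$ precisely the same as keeping $F$ fixed and shrinking the boundary datum in the approximating Dirichlet problems from $\eps$ to $\sigma\eps$, which is what the paper does. So the route is the same, and your reduction of $ii')$, $iii')$, $iv')$ to $ii)$, $iii)$, $iv)$ is correct and matches the paper's opening observation in the proof of Theorem~\ref{teouno_bis}.

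The point you leave open, however — the uniform bound on $\|v_\sigma\|_\infty$, equivalently the claimed reciprocal scaling of the threshold $\delta$ — is not a detail one can wave through: it is the one place where $ii')$ enters, and your proposed mechanism is off-target. It is not that some supersolution of $Q'_a$ in the proof of Theorem~\ref{teouno} is independent of $F$. What actually happens is that, under $ii')$, the potential $A = a + W1_\Lambda$ is $\le 0$ outside $\Lambda$, so Lemma~\ref{lem_linftyestimates} applies in case $(i)$ (with $B \equiv 0$), producing a constant $\hat C$, independent of $F$ and of $\Omega$, which bounds the solution $\psi$ of the \emph{linear} Dirichlet problem $\Delta_{p,f}\psi + A\psi^{p-1} = 0$, $\psi = 1$ on $\partial\Omega$. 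By homogeneity $\eps\psi$ is a supersolution of the problem for $\varphi_0$, and Proposition~\ref{prop_compagen} gives $\varphi_0 \le \eps\psi \le \eps\hat C$, i.e. $C_\Lambda(\eps) \le \eps\hat C$. This is exactly the uniform bound you need: in your notation $\|v_\sigma\|_\infty \le \eps\hat C$ for all $\sigma$, and plugging into the expression \eqref{defdelta} for $\delta$ yields
\begin{equation*}
\delta_\sigma \ge \frac{\min_{\bar\Lambda} W}{g(\sigma\,\eps\,\hat C)}, \qquad g(t) = \frac{F(t)}{t^{p-1}},
\end{equation*}
which diverges as $\sigma \to 0^+$. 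Without $ii')$ the only bound on $\varphi_0$ comes from Lemma~\ref{lem_linftyestimates} case $(ii)$, is $F$-dependent, and $C_\Lambda(\eps)$ need not tend to $0$ as $\eps \to 0$; Remark~\ref{rem_counterex1} gives an explicit example on $\HH^m$ where this fails. So the step you flag as "should follow from a close reading" is exactly the paper's new ingredient, and it is where the extra hypothesis is indispensable.
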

One of the main features in the proof of Theorems \ref{teouno} and \ref{teouno_bis} above is a new flexible technique, which is based on a direct use of the non-negativity and subcriticality assumptions on $Q_a$ and $Q_0$. Consequently, all the geometric information needed on $M$ is encoded in the spectral behaviour of $Q_0$ and $Q_a$. For this reason, in Sections \ref{sec_criticality} and \ref{sec_examples} we concentrate on operators $Q_V$ to show that the assumption on $Q_0,Q_a$ in Theorems \ref{teouno} and \ref{teouno_bis} can be made explicit and easily verifiable in various relevant cases. \par
%
We now come to the strategy to prove Theorems \ref{teouno} and \ref{teouno_bis}. The lack of tools to produce solutions in the present generality forces us to proceed along very simple, general schemes. In particular, the argument can be roughly divided into three parts:
\begin{itemize}
\item[(1)] For a big relatively compact domain $\Omega$, we solve locally \eqref{04} with boundary condition 1 and $b(x)$ replaced by $b_+(x)$. Call $z_\Omega$ the solution. This is the easiest part, and is addressed in Lemma \ref{lem-diriproblem}.
\item[(2)] We find uniform $L^\infty$ estimates from below and above for $z_\Omega$, independent of $\Omega$. According to our geometric assumptions, these estimates can be on the whole $M$ or on a relatively compact set $\Lambda$. The proof of this step combines Lemmas \ref{lem_uniformP} and \ref{lem_linftyestimates}, and Proposition \ref{prop_lasolulimitata}.
\item[(3)] Making use of the results in Step (2), we ``place" $b_-$ in the Dirichlet problem for \eqref{04} on a domain $\Omega$ via an iterative procedure, to produce a local solution of \eqref{04} that possesses uniform upper and lower bounds. The desired global solution is then obtained by passing to the limit along an exhaustion $\{\Omega_j\}$. Note that this is the point where a distinction between Theorems \ref{teouno} and \ref{teouno_bis} appears.
\end{itemize}

Among the lemmas, which are of independent interest, we underline and briefly comment on the next uniform $L^\infty$-estimate, Lemma \ref{lem_linftyestimates}. This result is a cornerstone both for steps (2) and (3).  
\begin{Lemma}[\textbf{Uniform $L^\infty$-estimate}]\label{lem_linftyestimates_intro}
Let $M$ be a Riemannian manifold, $f \in C^\infty(M)$, $p \in (1,+\infty)$. Let $A,B \in L^\infty_\loc(M)$ with $B \ge 0$ a.e. on $M$. Assume that either
\begin{itemize}
\item[$(i)$] $B \equiv 0$ and $Q_A$ is subcritical, or
\vspace{0.1cm}
\item[$(ii)$] $B \not\equiv 0$ and $Q_A$ is non-negative.
\end{itemize}
Suppose that there exist a smooth, relatively compact open set $\Lambda \Subset M$ and a constant $c>0$ such that 
\begin{equation}\label{assunz_intro}
A \le cB \qquad \text{a.e. on } M \backslash \Lambda, 
\end{equation}
and fix a smooth, relatively compact open set $\Lambda'$ such that $\Lambda \Subset \Lambda'$, and a nonlinearity $F(t)$ satisfying \eqref{assu_F}.\\ 
Then, there exists a constant $C_\Lambda>0$ such that, for each smooth, relatively compact open set $\Omega$ with $\Lambda' \Subset \Omega$, the solution $0<z \in C^{1,\mu}(\overline \Omega)$ of
\begin{equation}\label{localYamabe_intro}
\left\{ \begin{array}{ll}
\Delta_{p,f} z + A(x)z^{p-1} - B(x)F(z) = 0 & \quad \text{on } \Omega, \\[0.2cm]
z = 1 & \quad \text{on } \partial \Omega.
\end{array}\right.
\end{equation} 
satisfies
\begin{equation}\label{eq245intro}
z \le C_\Lambda \qquad \text{on } \Omega.
\end{equation}
\end{Lemma}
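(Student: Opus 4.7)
The plan is to argue by contradiction. Suppose the conclusion fails: there exist smooth, relatively compact open sets $\Omega_j \Supset \Lambda'$ and corresponding solutions $z_j$ of \eqref{localYamabe_intro} with $M_j := \|z_j\|_{L^\infty(\Omega_j)} \to +\infty$.

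\textbf{Step 1 (Localization of the blow-up on $\overline\Lambda$).} By \eqref{assunz_intro} and the monotonicity in \eqref{assu_F}, there is a unique $T \ge 1$ with $F(T) = c T^{p-1}$ (in case $(i)$, where $B \equiv 0$, the argument simplifies since then $A \le 0$ outside $\Lambda$ directly). On $\{z_j > T\} \cap (\Omega_j \setminus \overline\Lambda)$ the equation gives
$$
-\Delta_{p,f} z_j \;\le\; B\bigl(c\, z_j^{p-1} - F(z_j)\bigr) \;<\; 0,
$$
so $z_j$ is weakly $\Delta_{p,f}$-subharmonic there. The weak maximum principle for the $f$-weighted $p$-Laplacian, together with $z_j = 1$ on $\partial \Omega_j$, yields $\sup_{\Omega_j \setminus \overline\Lambda} z_j \le \max\{T,\,\sup_{\partial \Lambda} z_j\}$. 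Consequently, for $j$ large $M_j = \sup_{\overline\Lambda} z_j$, and the task reduces to showing that $z_j$ is uniformly bounded on $\overline\Lambda$.

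\textbf{Step 2 (Rescaling and passage to the limit).} Set $v_j := z_j / M_j$: then $0 < v_j \le 1$, $v_j$ attains the value $1$ at some $\bar x_j \in \overline\Lambda$, $v_j \equiv 1/M_j \to 0$ on $\partial \Omega_j$, and
$$
-\Delta_{p,f} v_j \;=\; A\, v_j^{p-1} - B\,\frac{F(M_j v_j)}{M_j^{p-1}} \qquad \text{on } \Omega_j.
$$
Tolksdorf's $C^{1,\mu}_\loc$ regularity and a diagonal extraction (the alternative scenario where the $\Omega_j$ stay inside a fixed compact being excluded directly by the strict positivity of $\lambda_1(Q_A;\,\cdot\,)$ on precompact subsets, a consequence of the assumptions on $Q_A$) produce a subsequential limit $v_\infty \in C^{1,\mu}_\loc(M)$, $0 \le v_\infty \le 1$, with $v_\infty(\bar x) = 1$ for some $\bar x \in \overline\Lambda$. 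Since the right-hand side is non-positive, $Q'_A v_\infty \le 0$ in the distributional sense, and the V\'azquez-type strong maximum principle for $\Delta_{p,f}$ gives $v_\infty > 0$ on (the connected component of $M$ containing $\bar x$, which we may WLOG take to be) $M$.

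\textbf{Step 3 (Picone-type inequality and contradiction).} For every $\varphi \in \lip_c(M)$, $\varphi \ge 0$, with $\supp \varphi \subset \Omega_j$ for $j$ large, the Allegretto-Huang-Picone inequality applied to the positive $v_j$ and the PDE it satisfies produces
$$
\int_M B\, \frac{F(M_j v_j)}{(M_j v_j)^{p-1}}\,\varphi^p\, d\mu_f \;\le\; p\, Q_A(\varphi).
$$
In case $(ii)$ the right-hand side is finite and $j$-independent, while on $\{v_\infty > 0\} = M$ the integrand on the left diverges to $+\infty$ because $M_j v_j \to +\infty$ and $F(t)/t^{p-1} \to +\infty$ by \eqref{assu_F}; Fatou's lemma then forces $B \equiv 0$ a.e. on $M$, contradicting $B \not\equiv 0$. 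In case $(i)$, $B \equiv 0$ from the outset, and $v_\infty$ becomes a bounded positive weak solution of $Q'_A u = 0$ on $M$ arising as a limit of Dirichlet solutions whose boundary trace tends to zero: by the ground state alternative (Theorem \ref{teo_alternative}), such a $v_\infty$ is necessarily a ground state for $Q_A$, which is excluded by its subcriticality.

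\textbf{Main obstacle.} The delicate point is to justify rigorously the passage to the limit $v_j \to v_\infty$ together with the correct limiting inequality on all of $M$, since the nonlinear term $B F(M_j v_j)/M_j^{p-1}$ is a priori unbounded; the $L^1_\loc$ control furnished by the Picone inequality of Step 3 is exactly what makes this passage legitimate, and it is also what enforces the contradiction in case $(ii)$. Equally subtle is the identification of $v_\infty$ as a \emph{ground state} in case $(i)$, where the decay $v_j|_{\partial\Omega_j} = 1/M_j \to 0$ is the essential ingredient linking the blow-up to the subcriticality assumption on $Q_A$.
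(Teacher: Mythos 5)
Your proposal contains two genuine gaps that undermine the argument.

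\textbf{The Picone inequality has the wrong sign.} With the rescaling $v_j = z_j/M_j$ one has $Q'_{W_j}(v_j) = 0$, where $W_j = A - B\,F(M_j v_j)/(M_j v_j)^{p-1}$. The Lagrangian/Picone identity (Proposition \ref{prop_lagrangian}) then yields $Q_{W_j}(\varphi) \ge 0$ for nonnegative $\varphi \in \lip_c$, i.e.
$$
\int_M A\,|\varphi|^p\, d\mu_f - \int_M B\,\frac{F(M_j v_j)}{(M_j v_j)^{p-1}}\,|\varphi|^p\, d\mu_f \;\le\; \int_M |\nabla\varphi|^p\, d\mu_f,
$$
which rearranges to $\int_M B\,\frac{F(M_j v_j)}{(M_j v_j)^{p-1}}\,|\varphi|^p\, d\mu_f \;\ge\; -p\,Q_A(\varphi)$. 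Since $Q_A \ge 0$ and $B \ge 0$ this is vacuous: it is a \emph{lower} bound, not the upper bound
$\int_M B\,\frac{F(M_j v_j)}{(M_j v_j)^{p-1}}\,\varphi^p\, d\mu_f \le p\,Q_A(\varphi)$ that you claim. Consequently there is no $L^1_\loc$ control on the nonlinear term, no Fatou contradiction in case $(ii)$, and also no justification for the convergence $v_j \to v_\infty$ (the right-hand side $B\,F(M_j v_j)/M_j^{p-1}$ is not uniformly bounded in any $L^q_\loc$, so the Tolksdorf/Serrin estimates do not give uniform $C^{1,\mu}_\loc$ bounds).

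\textbf{The strong maximum principle is applied in the wrong direction.} Even granting a limit $v_\infty$, it satisfies $Q'_A(v_\infty) \le 0$, i.e. $v_\infty$ is a nonnegative \emph{subsolution}. The V\'azquez/Harnack-type positivity statements (Theorem \ref{teo11}, items $(3)$, $(3a)$) apply to supersolutions; for subsolutions one only has a bound $\sup_B u \le C\|u\|_{L^s(B')}$, which does not preclude $v_\infty$ from vanishing on a set of positive measure. So $v_\infty > 0$ on all of $M$ is not justified, and the Fatou step collapses even if the sign in the Picone inequality were corrected.

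The paper's proof avoids both issues by \emph{not} passing to a scaled limit. It fixes $\alpha$ with $F(t)/t^{p-1} \ge c$ for $t \ge \alpha$, sets $U_j = \{z_j > \alpha\}$ and $\eta_j = (z_j - \alpha)_+/\|z_j\|_{L^\infty(\Lambda)} \in \lip_c(M)$, and tests the inequality $Q'_V(z_j) \le 0$ (valid on $U_j$, with $V = A - cB$) against $\eta_j$. This produces $0 \le Q_V(\eta_j) \le \frac{1}{p\|z_j\|^p}\int_{U_j\cap\Lambda} V_+\bigl[z_j^p - (z_j-\alpha)^p\bigr]\, d\mu_f$; since $z_j^p - (z_j-\alpha)^p = O(z_j^{p-1})$, the bound is $O(\|z_j\|^{-1})\to 0$. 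Because $Q_V$ is subcritical (in case $(i)$, $V=A$; in case $(ii)$, $Q_V \ge Q_A + \int cB|\cdot|^p$), the ground-state alternative gives a weighted spectral gap, so $Q_V(\eta_j)\to 0$ forces $\eta_j \to 0$ in $L^p_\loc$, contradicting the lower bound $\|h_j\|_{L^p(\Lambda')} \ge C_H^{-1}$ from half-Harnack. The cancellation $z_j^p - (z_j-\alpha)^p \sim p\alpha z_j^{p-1}$ is the crucial structural ingredient your proposal is missing.
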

The proof of the above estimate is accomplished by using non-negativity (resp. subcriticality) of $Q_A$ alone. As far as we know, the argument in the proof seems to be new and applicable beyond the present setting. Clearly, when $z$ is $C^2$ and $B \in C^0(M)$, $B>0$ on $M$, possibly evaluating \eqref{localYamabe_intro} at a interior maximum point $x_0$ we get
\begin{equation}\label{estitrivial}
\frac{F(z)}{z^{p-1}}(x_0) \le \sup_M \left(\frac{A_+}{B}\right),
\end{equation}
whence by \eqref{assu_F} $z(x_0)$ is uniformly bounded from above. Taking into account the boundary condition for $z$, in this case the uniform $L^\infty$ estimate is trivial with no assumption on $Q_A$. On the other hand, even a single point at which $B(x)=0$ makes this simple argument to fail, and actually Lemma \ref{lem_linftyestimates_intro} will be applied in cases when we have no control at all on the zero-set of $B$. Observe that \eqref{assunz_intro} is just assumed to hold outside of a compact set, hence $A$ is not required to be non-positive on the set where $B=0$. This suggests that the validity of \eqref{estitrivial} cannot be recovered ``in the limit" by using approximating positive functions $B_\eps$ for $B$ and related solutions $z_\eps$ for $z$. Note also that, when $B \equiv 0$,  Proposition \ref{pro115} below shows that $Q_A$ is necessarily non-negative, for otherwise $z$ might not exist for sufficiently large $\Omega$'s. Therefore, in the present generality at least the non-negativity of $Q_A$ on the whole $M$ needs to be assumed in any case.\par
%
%
%
%
%
%
%
%
We pause for a moment to comment on the subcriticality of $Q_V$. By its very definition, a sufficient condition for $Q_V$ to be subcritical is the coupling of the following two:
\begin{itemize}
\item[-] $Q_0$ is subcritical, thus there exists $w \in L^1_\loc(M)$, $w \ge 0$, $w \not\equiv 0$ such that
\begin{equation}\label{lahardysenzapot}
\int_M w(x)|\varphi|^p \di \mu_f \le \int_M |\nabla \varphi|^p \di \mu_f \qquad \forall \ \varphi \in \lip_c(M), \ \text{ and}
\end{equation}
\item[-] $V \le w$, $V \not \equiv w$.
\end{itemize}
Therefore, when $Q_0$ is subcritical, we can state simple, explicit conditions guaranteeing the subcriticality of $Q_V$ provided that we know explicit $w \in L^1_\loc(M)$, $w \ge 0$, $w \not \equiv 0$ satisfying \eqref{lahardysenzapot}. We define each of these $w$ a Hardy weight for $\Delta_{p,f}$.\par 
In the literature, there are conditions to imply the subcriticality of $Q_0$ that involve curvature bounds, volume growths, doubling properties and Sobolev type inequalities. For example, when $f\equiv 0$, in \cite{holokoskela} it is proved that a complete, non-compact manifold $M$ with non-negative Ricci curvature outside a compact set is not $p$-parabolic (i.e. $Q_0$ is subcritical) if and only if $p<m$. The interested reader can also consult \cite{holopainen2,rigolisettidue}. However, it seems challenging to obtain explicit Hardy weights in the setting of \cite{holokoskela,holopainen2,rigolisettidue}. Nevertheless, Hardy weights have been found in some interesting cases, starting with the famous Hardy type inequality for Euclidean space   
\begin{equation}\label{euclihardy}
\left(\frac{m-p}{p}\right)^p \int_{\R^m} \frac{|\varphi|^p}{r^p}\di x \le \int_{\R^m} |\nabla \varphi|^p\di x \qquad \forall \ \varphi \in \lip_c(\R^m),
\end{equation}
where $r(x) = |x|$ and $m>p$. In recent years (\cite{carron, liwang3, bmr2,   barbatisfilippastertikas, adimurthisekar, dambrosiodipierro, devyverfraaspinchover, devyverpinchover}) it has been observed how Hardy weights are related to positive Green kernels for $\Delta_{p,f}$. By exploiting the link established in Proposition \ref{prop_hyperbolicity} below, we will devote Section \ref{sec_examples} to produce explicit Hardy weights in various  geometrically relevant cases, see Theorems \ref{teo_laprimahardy}, \ref{multihardyCH}, \ref{teo_hardyminimal} below: in fact, a typical construction of Hardy weights via the Green kernel is compatible with comparison results for the Laplacian of the distance function, and thus Hardy weights can be transplanted from model manifolds to general manifolds, as observed in \cite{bmr2}, Theorem 4.15 and subsequent discussion. Moreover, the set of Hardy weights is convex in $L^1_\loc(M)$, thus via simple procedures one can produce new weights, such as multipole Hardy weights or weights blowing up along a fixed submanifold of $M$. Hardy weights can also be transplanted to submanifolds, but this procedure is more delicate and requires extra care. Let  $N^n$ be a Cartan-Hadamard manifold (i.e. a simply connected, complete manifold of non-positive sectional curvature), and let $M^m$ be a minimal submanifold of $N^n$. Suppose that the sectional curvature $\bar K$ of $N$ satisfies $\bar K \le -\kappa^2$, for some constant $\kappa \ge 0$. In \cite{carron,liwang3}, the authors proved the following Hardy type inequality: 
\begin{equation}\label{hardymineucl}
\left(\frac{m-2}{2}\right)^2\int_M \frac{\varphi^2}{\rho^2}\di x \le \int_M |\nabla \varphi|^2 \di x \qquad \forall \ \varphi \in \lip_c(M),
\end{equation}
$\rho(x)$ being the extrinsic distance in $N$ from a fixed origin $o \in N$. The Hardy weight in \eqref{hardymineucl} is sharp if $\kappa =0$ (in particular, if $N$ is the Euclidean space), but not if $\kappa>0$. Here, we will prove \eqref{hardymineucl} as a particular case of Theorem \ref{teo_hardyminimal} below, which also  strengthen \eqref{hardymineucl} to a sharp inequality when $\kappa>0$, in particular for minimal submanifolds of hyperbolic spaces. We stress that our Hardy weight for $\kappa>0$ is skew with the one found in \cite{liwang3}.\par
Using the Hardy inequalities mentioned before, we can rewrite the subcriticality assumption for $Q_0$ and $Q_V$ in Theorems \ref{teouno}, \ref{teouno_bis} in simple form for a wide class of manifolds; by a way of example, see Corollary \ref{cordue} in Section \ref{sec_applications}. We conclude by rephrasing Theorems \ref{teouno}, \ref{teouno_bis} in the setting of the generalized Yamabe problem. 
\begin{Theorem} \label{teodue}
Let $(M, \metric)$ be a non-parabolic Riemannian manifold of dimension $m \ge 3$ and scalar curvature $s(x)$. Suppose that the conformal Laplacian $L_{\metric}$ in \eqref{laplaconfo} is subcritical, and let $\widetilde s\in C^\infty(M)$. 
\begin{itemize}
\item[$(I)$] Assume that
\vspace{0.1cm}
\begin{itemize}
\item[$i)$] $\widetilde s_+$ has compact support;
\vspace{0.1cm}
\item[$ii)$] $s_-(x)=O\big(\widetilde s_-(x)\big)$ as $x$ diverges;
\vspace{0.1cm}
\item[$iii)$] for some $\theta>0$, $\big(\theta \widetilde s_-(x) - s_-(x)\big)_+ \in L^1(M)$.
\vspace{0.1cm}
\end{itemize}
Then, there exists $\delta>0$ such that if
\begin{equation}\label{labbrutta_generalYamabe}
\widetilde s(x) \le \delta,
\end{equation}
the metric $\langle \, , \, \rangle$ can be pointwise conformally deformed to a new metric $\widetilde{\langle \, , \, \rangle}$ with scalar curvature $\widetilde s(x)$ and satisfying
\begin{equation}\label{Cupper}
\widetilde{\metric} \le C \metric \qquad \text{on } M,
\end{equation}
for some constant $C>0$. Moreover, if $ii)$ and $iii)$ are replaced by the stronger
$$
iv) \qquad s(x) \asymp \widetilde s(x) \qquad \text{as $x$ diverges,} 
$$
then (under the validity of \eqref{labbrutta_generalYamabe}) there exists a pointwise conformal deformation $\widetilde{\langle \, , \, \rangle}$  of $\metric$ as above and satisfying 
\begin{equation}\label{doubleboundYamabe}
C_1 \metric \le \widetilde{\metric} \le C_2\metric \qquad \text{on } M,
\end{equation}
for some constants $0<C_1\le C_2$. In particular, $\widetilde \metric$ is non-parabolic, and it is complete whenever $\metric$ is complete.\par
\vspace{0.2cm}
\item[$(II)$] If $ii)$ and $iii)$ are replaced with
\vspace{0.1cm}
\begin{itemize}
\item[$ii')$] $s(x) \ge 0$ outside a compact set;
\vspace{0.1cm}
\item[$iii')$] $s(x), \widetilde s(x) \in L^1(M)$,
\vspace{0.1cm}
\end{itemize}
then the existence of the desired conformal deformation is guaranteed without the requirement \eqref{labbrutta_generalYamabe}, and moreover the constant $C$ in \eqref{Cupper} can be chosen as small as we wish (so that, indeed, there exist infinitely many conformal deformations realizing $\widetilde s$). If $ii')$ and $iii')$ are replaced with 
$$
iv') \qquad s(x),\widetilde s(x) \qquad \text{have compact support,} 
$$
each of these conformally deformed metrics $\widetilde \metric$ satisfies \eqref{doubleboundYamabe}.
\end{itemize}
\end{Theorem}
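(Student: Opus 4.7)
My plan is to recognize Theorem \ref{teodue} as a direct specialization of Theorems \ref{teouno} and \ref{teouno_bis} to the Yamabe setting. Taking $p=2$, $f \equiv 0$, $a(x) = -s(x)/c_m$, $b(x) = -\widetilde{s}(x)/c_m$, and $F(t) = t^{(m+2)/(m-2)}$, the Yamabe equation \eqref{02} becomes precisely \eqref{011}; here $F$ trivially satisfies \eqref{assu_F} since $(m+2)/(m-2) > 1 = p-1$ for $m \ge 3$. The spectral inputs translate immediately: non-parabolicity of $M$ is subcriticality of $Q_0$ by Proposition \ref{prop_hyperbolicity_simple}, and since $Q_a'\varphi = -\Delta\varphi + (s/c_m)\varphi = L_\metric\varphi$, the hypothesis that $L_\metric$ is subcritical says exactly that $Q_a$ is subcritical. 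Both spectral inputs required by Theorems \ref{teouno}, \ref{teouno_bis} are therefore in force.

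For part $(I)$ I would verify the three structural conditions of Theorem \ref{teouno}. Observing that $b_+ = \widetilde{s}_-/c_m$ and $b_- = \widetilde{s}_+/c_m$, condition $i)$ of Theorem \ref{teodue} yields $i)$ of Theorem \ref{teouno}. Outside a compact set $\widetilde{s}_+$ vanishes, so there $b = \widetilde{s}_-/c_m \ge 0$, and the pointwise bound $a \le s_-/c_m$ coupled with $s_- = O(\widetilde{s}_-)$ delivers $ii)$. A short computation gives $(a - \theta b_+)_- = (s + \theta\widetilde{s}_-)_+/c_m$, which on the asymptotic region where $s$ is non-positive (the region prescribed by $ii)$) reduces to $(\theta\widetilde{s}_- - s_-)_+/c_m$ and thus matches $iii)$. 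The bound $\widetilde{s} \le \delta$ becomes $b \ge -\delta/c_m$, which is \eqref{labbrutta_general} up to rescaling $\delta$. Theorem \ref{teouno} then produces $0 < u \in C^{1,\mu}_\loc(M)$ with $\|u\|_\infty < \infty$; the pointwise deformation $\widetilde{\metric} = u^{4/(m-2)}\metric$ realizes $\widetilde{s}$ and satisfies \eqref{Cupper} with $C = \|u\|_\infty^{4/(m-2)}$. Under condition $iv)$, $b_+ \asymp a$ outside a compact set follows at once, so Theorem \ref{teouno} further supplies $\inf u > 0$, giving \eqref{doubleboundYamabe}; bi-Lipschitz equivalence of the identity map then transports completeness and non-parabolicity from $\metric$ to $\widetilde{\metric}$.

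Part $(II)$ is the analogous translation to Theorem \ref{teouno_bis}: condition $ii')$ reads $a \le 0$ outside a compact set, and $iii')$ says $a, b \in L^1(M)$, with no smallness required on $\widetilde{s}$. The resulting sequence $\{u_k\} \subset C^{1,\mu}_\loc(M)$ with $\|u_k\|_\infty \to 0$ produces conformal factors $u_k^{4/(m-2)}$ that can be made uniformly as small as desired, accounting for the arbitrariness of $C$ in \eqref{Cupper} and for the existence of infinitely many realizing deformations. Under $iv')$, $a$ and $b$ have compact support and hence each $u_k$ also satisfies $\inf u_k > 0$, yielding \eqref{doubleboundYamabe}. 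Smoothness of $u$ (or each $u_k$) follows by standard elliptic bootstrapping: for $p=2$, with $s, \widetilde{s}$ smooth and $u$ locally bounded away from $0$, Schauder theory lifts $u$ from $C^{1,\mu}_\loc$ to $C^\infty(M)$. The main obstacle in this deduction is not analytical — the substantive work is entirely absorbed in Theorems \ref{teouno} and \ref{teouno_bis}, whose proofs occupy a considerable portion of the paper — but rather the careful sign bookkeeping in checking that conditions $iii)$ and $iii')$ of Theorem \ref{teodue}, read via the notation for $O(\cdot)$ and $\asymp$ introduced before Theorem \ref{coruno}, do deliver the apparently more general-looking conditions $iii)$ and $iii')$ of the two nonlinear theorems; the identities displayed above make this correspondence explicit.
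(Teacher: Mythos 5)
Your reduction of Theorem \ref{teodue} to Theorems \ref{teouno} and \ref{teouno_bis} via $p=2$, $f\equiv 0$, $a = -s/c_m$, $b = -\widetilde s/c_m$, $F(t) = t^{(m+2)/(m-2)}$ is exactly the route the paper takes: its proof simply states that ``the theorem follows immediately from Theorems \ref{teouno} and \ref{teouno_bis}'' and adds the remark, which you also make, that \eqref{doubleboundYamabe} transports completeness and non-parabolicity. Your identifications of the spectral hypotheses (non-parabolicity of $M$ with subcriticality of $Q_0$; subcriticality of $L_\metric$ with subcriticality of $Q_a$) and of conditions $i)$, $ii)$, $ii')$, $iii')$, $iv)$, $iv')$ are correct, as is the elliptic bootstrap to smoothness of $u$.

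There is, however, a wrinkle in your verification of condition $iii)$ of part $(I)$. You correctly compute
\[
(a - \theta b_+)_- = \frac{(s + \theta\widetilde s_-)_+}{c_m},
\]
but then assert that this equals $(\theta\widetilde s_- - s_-)_+/c_m$ ``on the asymptotic region where $s$ is non-positive (the region prescribed by $ii)$).'' Condition $ii)$, read with the paper's one-sided definition of $O(\cdot)$, only gives $s_- \le C\widetilde s_-$ outside a compact set; it does not force $s\le 0$ near infinity, and $s_+$ could a priori be unbounded and non-integrable. The exact pointwise identity is
\[
(s + \theta\widetilde s_-)_+ = s_+ + (\theta\widetilde s_- - s_-)_+,
\]
so matching condition $iii)$ of Theorem \ref{teouno} requires, in addition to $iii)$ of Theorem \ref{teodue}, that $s_+ \in L^1(M)$ (equivalently that $s_+$ decay sufficiently at infinity), which is not among the stated hypotheses of part $(I)$. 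Under $iv)$ the issue evaporates ($s\asymp\widetilde s$ and $\widetilde s\le 0$ near infinity force $s\le 0$ near infinity, so $s_+$ has compact support), and in part $(II)$ one has $s\in L^1(M)$ directly, so the gap only concerns the first set of conditions in $(I)$. This is arguably a gap already implicit in the paper's terse proof, but your derivation should either justify $s_+\in L^1(M)$ or record it as an additional assumption; as written, the step ``reduces to $iii)$'' does not follow from $ii)$.
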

%
%
%
%
%
%
The paper is organized as follows. In Section \ref{sec_prelim} we collect some basic material on $Q_V$ and $Q_V'$. Section \ref{sec_criticality} will then be devoted to the criticality theory for $Q_V$, its link with Hardy weights and with a $Q_V$-capacity theory. In Section \ref{sec_examples}, we use comparison geometry to produce sharp Hardy inequalities. Section \ref{esistenza} contains the proof of Lemma \ref{lem_linftyestimates_intro} and of our main Theorems \ref{teouno}, \ref{teouno_bis}. Then, in Section \ref{sec_applications} we derive our geometric corollaries, and we place them among the existing literature. Finally, in the Appendix we give a full proof of the pasting lemma, an important technical result for the $Q_V$-capacity theory. Besides the presence of new results, a major concern of Sections \ref{sec_prelim} to \ref{sec_examples} is to help the reader to get familiar with various aspects of the theory of Schr\"odinger type operators $Q'_V$. For this reason, the experienced reader may possibly skip them and go directly to Section \ref{esistenza}.

\section{Preliminaries}\label{sec_prelim}
In this section we recall some general facts for the operators $\Delta_{p,f}, Q_V, Q'_V$ that are extensions, to a quasilinear setting, of some classical results of spectral theory (see \cite{murata, pinchovertintarev_JFA, mosspie, allegretto}). The interested reader may consult \cite{allegrettohuang, allegrettohuang2, garciamelian_sabinadelis, pinchovertintarev, pinchovertintarev2} for further information. 

\begin{Notation}
\emph{
Hereafter, given two open subsets $\Omega, U$, with $\Omega \Subset U$ we indicate that $\Omega$ has compact closure contained in $U$. We say that $\{\Omega_j\}$ is an exhaustion of $M$ if it is a sequence of relatively compact, connected open sets $\Omega_j$ with smooth boundary and such that $\Omega_j \Subset \Omega_{j+1} \Subset M$, $M = \bigcup_j \Omega_j$. The symbol $1_U$ denotes the characteristic function of a set $U$, and the symbol $\doteq$ is used to define an object. 
}
\end{Notation}

\begin{Definition}
Let $\Omega \subset M$ be an open set and, for $V \in L^\infty_\loc(\Omega)$, let $Q_V,Q_V'$ be as in \eqref{06}, \eqref{07}. We say that $w \in W^{1,p}_\loc(\Omega)$ is a supersolution on $\Omega$ (respectively, subsolution, solution) if $Q_V'(w) \ge 0$ weakly on $\Omega$ (resp. $\le 0$, $=0$) that is, if
$$
\int_\Omega |\nabla w|^{p-2} \langle \nabla w, \nabla \varphi \rangle \di \mu_f  - \int_\Omega V|w|^{p-2}w\varphi  \ge 0 \qquad \text{(resp. } \le 0, \, = 0)
$$
for each non-negative $\varphi \in \lip_c(\Omega)$.
\end{Definition}
The basic technical material that is necessary for our purposes is summarized in the following

\begin{Theorem}\label{teo11}
Let $\Omega \Subset M$ be a relatively compact, open domain with $C^{1,\alpha}$ boundary for some $0<\alpha<1$. Let $f \in C^\infty (M)$, $p \in (1,+\infty)$, $V \in L^\infty_\loc (M)$ and define $Q_V,Q'_V$ as in \eqref{06}, \eqref{07}. Let $g \in L^\infty(\Omega)$, $\xi \in C^{1,\alpha}(\partial \Omega)$ and suppose that $u \in W^{1,p}(\Omega)$ is a solution of 
\begin{equation}\label{12}
\left\{ \begin{array}{ll}
Q'_V(u)= g & \quad \text{on } \Omega, \\[0.2cm]
u= \xi & \quad \text{on } \partial \Omega.
\end{array}\right.
\end{equation}
Then, 
\begin{itemize}
\item[$(1)$] \emph{[Boundedness]} $u \in L^\infty_\loc(\Omega)$, and for any relatively compact, open domains  $B \Subset B' \Subset \Omega$ there exists a positive constant $C=C(p,f,m,g,\xi, \Omega, \|u\|_{L^p(B',\di \mu_f)})$ such that 
$$
\|u\|_{L^\infty(B)} \le C. 
$$
If $\xi \in C^{2,\alpha}(\partial \Omega)$, $C$ can be chosen globally on $\Omega$, and thus $u \in L^\infty(\Omega)$.
\item[$(2)$] \emph{[$C^{1,\mu}$-regularity]} When $u \in L^\infty(\Omega)$, there exists $\mu \in (0,1)$ depending on $p,f,m,g,\alpha$ and on upper bounds for $\|u\|_{L^\infty}, \|g\|_{L^\infty}, \|\xi\|_{C^{1,\alpha}},\|V\|_{L^\infty}$ on $\Omega$ such that 
$$
\|u\|_{C^{1,\mu}(\overline\Omega)} \le C 
$$
for some constant $C$ depending on $\alpha, p$, the geometry of $\Omega$ and upper bounds for  $\|u\|_{L^\infty}$, $\|g\|_{L^\infty}, \|\xi\|_{C^{1,\alpha}}, \|V\|_{L^\infty}$ on $\Omega$.
\item[$(3)$] \emph{[Harnack inequality]}. For any relatively compact open sets $B \Subset B' \Subset \Omega$ there exists $C=C(f,p,m,B,B')>0$ such that, for each $u \ge 0$, $u \in W^{1,p} (\Omega)$ solution of $Q'_V (u) = 0$ on $\Omega$,
 \begin{equation} \label{13}
 \sup_B u \le C \inf_{B'} u.
 \end{equation}
In particular, either $u>0$ on $\Omega$ or $u\equiv 0$ on $\Omega$.
 \item[$(3a)$]\emph{[Half-Harnack inequalities]} For any  relatively compact, open sets $B \Subset B' \Subset \Omega$ the following holds:
 \begin{itemize}
 \item[] \emph{(Subsolutions)} for each $s>p-1$, there exists $C=C(f,p,m,B,B',V,s)>0$ such that for each $u \ge 0$ , $u \in W^{1,p} (\Omega)$ solution of $Q'_V (u) \le 0$ on $\Omega$ 
\begin{equation} \label{13b}
 \sup_B u \le C \|u\|_{L^s (B')};
 \end{equation} 
\item[] \emph{(Supersolutions)} for each 
$$
s \in \left(0,\frac{(p-1)m}{m-p}\right) \quad \text{if } p < m, \qquad s \in (0,+\infty) \quad \text{if } p \ge m,
$$ 
there exists $C=C(f,p,m,B,B',V,s)>0$ such that for each $u \ge 0$ , $u \in W^{1,p} (\Omega)$ solution of $Q'_V (u) \ge 0$ on $\Omega$
\begin{equation} \label{13c}
\|u\|_{L^s (B')} \le C \inf_B u.
 \end{equation} 
\end{itemize}
\item[$(4)$] \emph{[Hopf lemma]} Suppose that $\xi \ge 0, g \ge 0$ and let $u \in C^{1}(\overline \Omega)$ be a solution of \eqref{12} with $u \ge 0$, $u \not	\equiv 0$. If $x \in \partial \Omega$ is such that $u(x)=\xi(x)=0$, then, indicating with $\nu$ the inward unit normal  vector to $\partial \Omega$ at $x$ we have $\langle \nabla u, \nu\rangle(x)>0$.
\end{itemize}
\end{Theorem}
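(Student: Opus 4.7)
The plan is to reduce each of $(1)$--$(4)$ to classical regularity theory for quasilinear elliptic equations in divergence form, with some bookkeeping for the drift factor $e^{-f}$. First, I would rewrite $Q'_V(u)=g$ as
\begin{equation*}
-\diver\bigl(e^{-f}|\nabla u|^{p-2}\nabla u\bigr) \,=\, e^{-f}\bigl(V(x)|u|^{p-2}u + g\bigr) \qquad \text{on } \Omega.
\end{equation*}
Since $\overline{\Omega}$ is compact and $f\in C^\infty(M)$, the weight $e^{-f}$ is bounded above and below by positive constants with smooth bounded derivatives. In a finite cover of coordinate charts, this is a uniformly $p$-elliptic quasilinear equation of the type treated in \cite{tolksdorf} and \cite{pucciserrin}, whose right-hand side has at most $(p-1)$-growth in $u$. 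Every estimate below inherits a harmless additional dependence on upper bounds for $\|f\|_{C^1(\overline{\Omega})}$.

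For $(1)$ I would apply the Moser/De Giorgi iteration of Serrin--Trudinger, summarised for the $p$-Laplace setting in \cite{pucciserrin}, Ch.~7: since $|V(x)|u|^{p-2}u|\le \|V\|_{L^\infty}|u|^{p-1}$, testing the weak equation against suitable powers of $|u|$ times a cut-off and iterating on nested balls yields the claimed local $L^\infty$ bound in terms of $\|u\|_{L^p(B',\di\mu_f)}$. Under the stronger hypothesis $\xi\in C^{2,\alpha}(\partial\Omega)$, the iteration extends up to the boundary after subtracting a $C^{2,\alpha}$-extension of $\xi$. For $(2)$, interior $C^{1,\mu}$-regularity is exactly Tolksdorf's theorem \cite{tolksdorf}; the boundary version, with the stated dependence on $\|\xi\|_{C^{1,\alpha}(\partial\Omega)}$, is Lieberman's boundary $C^{1,\mu}$ regularity, applied to the weighted equation above after freezing coefficients in local charts.

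Parts $(3)$ and $(3a)$ are the standard weak Harnack machinery of Serrin--Trudinger. Testing the weak formulation against $\eta^p u^{q}$ with appropriate positive or negative $q$ and combining with Sobolev embedding produces reverse H\"older inequalities that, iterated on nested balls, give \eqref{13b} and \eqref{13c}; the range $s \in \bigl(0,(p-1)m/(m-p)\bigr)$ in the supersolution case is the natural Sobolev range when $p<m$. Multiplying the two half-estimates with compatible exponents yields the full Harnack inequality \eqref{13}. The strong minimum principle is then immediate: given $x_0\in\Omega$ with $u(x_0)=0$, pick relatively compact open sets $x_0\in B \Subset B'\Subset \Omega$ and apply \eqref{13} to get $\sup_B u \le C\inf_{B'} u = 0$, so $u\equiv 0$ on the neighbourhood $B$ of $x_0$; the zero set is thus open (as well as closed by continuity) in the connected set $\Omega$.

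Finally, $(4)$ is the quasilinear Hopf lemma, for which I would invoke \cite{tolksdorf} or Theorem~5.5.1 in \cite{pucciserrin}. By the strong minimum principle above, $u>0$ in $\Omega$; on an interior ball $B\subset \Omega$ tangent to $\partial\Omega$ at $x$, one compares $u$ on an annulus $B\setminus B_{R/2}$ with a radial barrier of the form $v(r)=c\bigl(e^{-\gamma r^2}-e^{-\gamma R^2}\bigr)$, with $\gamma$ large enough that $Q'_V(v)\le 0$ on the annulus (using $V\in L^\infty_\loc$ and $g\ge 0$) and with $c$ small enough that $v\le u$ on the inner sphere. Weak comparison then gives $v\le u$ on the annulus, and differentiation at $x$ yields the strictly positive inward normal derivative. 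I expect the only real ``obstacle'' to be clerical, namely tracking the dependence of constants on upper bounds for $\|f\|_{C^1(\overline{\Omega})}$; since $e^{-f}$ contributes only bounded smooth factors on $\overline{\Omega}$, no genuinely new analytic argument is needed beyond the cited references.
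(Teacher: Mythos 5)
Your proposal is essentially the paper's own argument: the paper proves Theorem \ref{teo11} by the same reduction, namely absorbing the smooth, positive, bounded weight $e^{-f}$ into the coefficients on the compact $\overline{\Omega}$ and then invoking the classical quasilinear theory — Serrin's local boundedness and weak Harnack inequalities (Theorems 7.1.1, 7.1.2, 7.2.1, 7.4.1 of \cite{pucciserrin}), interior $C^{1,\mu}$ regularity of \cite{tolksdorf,dibenedetto} with Lieberman's boundary version \cite{lieberman}, and a quasilinear Hopf lemma (the paper cites Corollary 5.5 of \cite{puccirigoliserrin}, you cite \cite{pucciserrin} and \cite{tolksdorf}; same content). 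The only spot where you diverge is the global $L^\infty$ bound under $\xi\in C^{2,\alpha}$: the paper routes this through a reflection technique from \cite{garciamelian_sabinadelis} (and \cite{tolksdorf2, barles}), whereas you propose running the Moser iteration after ``subtracting a $C^{2,\alpha}$-extension'' of $\xi$. That phrasing is a bit loose for a quasilinear operator — one cannot literally subtract and retain an equation of the same type since $|\nabla(\cdot)|^{p-2}\nabla(\cdot)$ does not split — but the underlying idea (use a $C^{2,\alpha}$ extension $\bar\xi$ to build admissible test functions $(u-\bar\xi-k)_+$ vanishing near $\partial\Omega$, then iterate) is a valid alternative to the reflection argument and yields the same conclusion.
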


\begin{Remark} \label{rem14}
\emph{
\begin{itemize}
\item[(1)] The local boundedness of $u$ is a particular case of Serrin's theorem, see \cite{pucciserrin}, Theorem 7.1.1, and does not need the boundary condition. When $\xi \in C^{2,\alpha}(\partial \Omega)$, global boundedness can be reached via a reflection technique described at page 54 of \cite{garciamelian_sabinadelis}, see also \cite{tolksdorf2, barles}.
\item[(2)] is a global version, Theorem 1 of \cite{lieberman}, of a local regularity result in \cite{tolksdorf} and \cite{dibenedetto}. 
\item[(3)] is due to J. Serrin, see Theorem 7.2.1 in \cite{pucciserrin} for $p <m$, the discussion at the beginning of Section 7.4 therein for $p=m$, and Theorem 7.4.1 for $p>m$. 
\item[(3a)] The half-Harnack for subsolutions can be found in Theorem 7.1.1 of \cite{pucciserrin}, the one for supersolutions in the subsequent Theorems 7.1.2 (case $p<m$) and 7.4.1 (case $p >m$) of \cite{pucciserrin}. Again, see the discussion at the beginning of Section 7.4 of \cite{pucciserrin}. 
\item[(4)] The Hopf lemma can be found in Corollary 5.5 of \cite{puccirigoliserrin}.
\end{itemize} 
}
\end{Remark}

An important tool for our investigation is the following Lagrangian representation in \cite{pinchovertintarev}. 
\begin{Proposition}\label{prop_lagrangian}
For each $\varphi,g \in W^{1,p}_\loc(M)$, with $\varphi/g$ a.e. finite on $M$, the Lagrangian 
\begin{equation}\label{deflagrangian}
\mathcal{L}(\varphi,g) = |\nabla \varphi|^p + (p-1) \left(\frac{\varphi}{g}\right)^{p}|\nabla g|^p - p \left( \frac{\varphi}{g}\right)^{p-1} |\nabla g|^{p-2} \langle \nabla g, \nabla \varphi\rangle 
\end{equation}
satisfies $\mathcal{L}(\varphi,g) \ge 0$ on $M$, and $\mathcal{L}(\varphi,g)\equiv 0$ on some connected open set $U$ if and only if $\varphi$ is a constant multiple of $g$ on $U$.\\ 
Moreover, suppose that $g \in W^{1,p}_\loc(M)$ is a positive solution of $Q_V'(g)=0$ (resp. $Q_V'(g) \ge 0$) on $M$. Then, for $\varphi \in L^\infty_c(M) \cap W^{1,p}(M)$, $\varphi \ge 0$ it holds
\begin{equation}\label{lagrangian}
Q_V(\varphi) = \int_M \mathcal{L}(\varphi, g)\di \mu_f \qquad (\text{resp, } \ge ).
\end{equation}
\end{Proposition}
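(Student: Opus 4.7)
The plan is to handle the two assertions separately: first the pointwise non-negativity of $\mathcal{L}(\varphi,g)$ together with the equality characterization, and then the integral identity (resp.\ inequality) relating $Q_V(\varphi)$ to $\int_M \mathcal{L}(\varphi,g)\di \mu_f$.

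For the pointwise bound, $\mathcal{L}(\varphi,g)$ is essentially a packaging of a one-dimensional Young-type inequality with Cauchy--Schwarz. Setting $s = |\nabla \varphi| \ge 0$ and $t = (\varphi/g)|\nabla g| \ge 0$, convexity of $\tau \mapsto \tau^p$ on $[0,\infty)$ gives $s^p + (p-1)t^p \ge p\, t^{p-1} s$, which rewrites as
\begin{equation*}
|\nabla \varphi|^p + (p-1)(\varphi/g)^p |\nabla g|^p \ge p(\varphi/g)^{p-1}|\nabla g|^{p-1}|\nabla \varphi|.
\end{equation*}
Combining with $|\nabla g||\nabla \varphi| \ge \langle \nabla g,\nabla \varphi\rangle$ yields $\mathcal{L}(\varphi,g) \ge 0$. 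Equality in both steps forces $|\nabla \varphi| = (\varphi/g)|\nabla g|$ together with $\nabla \varphi$ parallel to $\nabla g$ with the same orientation; rearranging gives $g\nabla \varphi - \varphi\nabla g = 0$, so $\nabla(\varphi/g) \equiv 0$ on the connected open set $U$ and $\varphi = cg$ there.

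For the integral formula, the natural test function in the (super)solution equation for $g$ is
\begin{equation*}
\eta = \frac{\varphi^p}{g^{p-1}}.
\end{equation*}
By the Harnack inequality of Theorem \ref{teo11}(3), $g$ is continuous and strictly positive on $M$, hence locally bounded away from $0$, and in particular on the compact set $\supp \varphi$; together with $\varphi \in L^\infty_c(M) \cap W^{1,p}(M)$, this makes $\eta \ge 0$ admissible in $W^{1,p}(M)$ with compact support, and the Sobolev chain rule produces
\begin{equation*}
\nabla \eta = p\,\frac{\varphi^{p-1}}{g^{p-1}}\,\nabla \varphi \; - \; (p-1)\,\frac{\varphi^p}{g^p}\,\nabla g.
\end{equation*}
Substituting $\eta$ into $\int |\nabla g|^{p-2}\langle \nabla g,\nabla \eta\rangle \di \mu_f \ge \int V g^{p-1}\eta \di \mu_f$ (an equality when $Q_V'(g)=0$, and the stated inequality in the supersolution case since $\eta \ge 0$), the right-hand side becomes $\int V\varphi^p \di \mu_f$, while the left-hand side expands to $p\int(\varphi/g)^{p-1}|\nabla g|^{p-2}\langle \nabla g,\nabla \varphi\rangle \di \mu_f - (p-1)\int(\varphi/g)^p |\nabla g|^p \di \mu_f$. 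Adding $\int|\nabla \varphi|^p \di \mu_f$ to both sides and rearranging yields precisely the claimed relation between $Q_V(\varphi) = \tfrac{1}{p}\bigl(\int|\nabla \varphi|^p \di \mu_f - \int V\varphi^p \di \mu_f\bigr)$ and $\int_M \mathcal{L}(\varphi,g)\di \mu_f$, with the direction of the inequality in the supersolution case consistent with the sign analysis above.

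The main technical subtlety will be justifying the Sobolev chain rule for $\eta$, which I would handle by a standard approximation: mollify and truncate $\varphi$ inside a neighborhood of $\supp\varphi$, apply the chain rule to the smooth approximants (using that $g$ is locally bounded away from $0$ there, so $1/g^{p-1}$ is smooth up to the required order after mollification), and pass to the limit via dominated convergence. Once admissibility of $\eta$ is secured, the rest of the proof is algebraic manipulation, and the subsolution case (not stated in the proposition) would be entirely symmetric.
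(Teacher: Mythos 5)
Your proposal is correct and follows essentially the same route as the paper's proof: the pointwise bound comes from Young's inequality combined with Cauchy--Schwarz on the cross term (with the equality analysis forcing $\nabla(\varphi/g)=0$), and the integral identity is obtained by testing the weak (super)solution equation for $g$ against $\varphi^p/g^{p-1}$, whose admissibility is secured by the Harnack inequality giving a local positive lower bound on $g$ together with the fact that $\varphi$ is bounded and compactly supported. The paper states these steps more tersely, while you spell out the convexity computation and the chain-rule justification, but there is no genuine difference in method.
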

\begin{proof}
The non-negativity of $\mathcal{L}(\varphi,g)$ follows by applying Cauchy-Schwarz  and Young inequalities on the third addendum in \eqref{deflagrangian}, and analyzing the equality case, $\mathcal{L}(\varphi,g) \equiv 0$ if and only if $\varphi = cg$ on $M$, for some constant $c \in \R$.\\
We now prove the integral (in)equality in \eqref{lagrangian}. By Harnack inequality, $g$ is locally essentially bounded from below on $M$. This, combined with our regularity requirement on $\varphi$, guarantees that $\varphi^p/g^{p-1} \in W^{1,p}(M)$ and is compactly supported. Thus, we integrate on $M$ the pointwise identity 
$$
|\nabla \varphi|^p - |\nabla g|^{p-2} \langle \nabla g, \nabla \left(\frac{\varphi^p}{g^{p-1}}\right)\rangle  = \mathcal{L}(\varphi,g),
$$
and couple with the weak definition of $Q_V'(g)=0$ (resp. $\ge 0$) applied to the test function $\varphi^p/g^{p-1}$:
$$
0 = \int_{M} |\nabla g|^{p-2} \langle \nabla g, \nabla \left(\frac{\varphi^p}{g^{p-1}}\right)\rangle \di \mu_f - \int_M Vg^{p-1}\left(\frac{\varphi^p}{g^{p-1}}\right)\di \mu_f \qquad \text{(resp. }  \le \text{)}
$$
to deduce \eqref{lagrangian}.
\end{proof}

Rewriting the expression of $\mathcal{L}(\varphi,g)$ we deduce the next useful Picone type inequality due to  \cite{allegrettohuang2,diazsaa,anane}.
%
%
\begin{Proposition}\label{anane}
Let $M$ be a Riemannian manifold, and let $\Omega \Subset M$ be a relatively compact, connected open set. Then, the functional
\begin{equation}\label{111'}
I(w,z)= \int_\Omega |\nabla w|^{p-2} \left\langle \nabla w, \nabla \frac{w^p-z^p}{w^{p-1}}\right\rangle \di \mu_f-\int_\Omega |\nabla z|^{p-2} \left\langle \nabla z, \nabla \frac{w^p-z^p}{z^{p-1}}\right\rangle \di \mu_f
\end{equation}
is non-negative on the set 
$$
{\cal D}_\Omega = \left\{ (w,z) \in W^{1,p}(\Omega)\times W^{1,p}(\Omega)  \ :\ w,z \ge 0 \ {\rm on} \ \Omega \ , \ \frac{w}{z}, \frac{z}{w} \in L^\infty (\Omega) \right\}.
$$
Furthermore, $I(w,z)=0$ if and only if $w=Cz$ on $\Omega$, for some constant $C>0$. 
\end{Proposition}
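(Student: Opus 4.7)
Plan: The proof reduces to a pointwise algebraic identity that recasts the integrand of $I(w,z)$ in terms of the Lagrangian $\mathcal{L}$ of Proposition \ref{prop_lagrangian}. Concretely, I would aim to show that, almost everywhere on $\Omega$,
$$
|\nabla w|^{p-2}\left\langle \nabla w, \nabla \tfrac{w^p-z^p}{w^{p-1}}\right\rangle \, - \, |\nabla z|^{p-2}\left\langle \nabla z, \nabla \tfrac{w^p-z^p}{z^{p-1}}\right\rangle \, = \, \mathcal{L}(z,w) + \mathcal{L}(w,z).
$$
Once this identity is established, integration against $\di\mu_f$, together with the non-negativity of $\mathcal{L}$ guaranteed by Proposition \ref{prop_lagrangian}, immediately yields $I(w,z)\ge 0$.

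To prove the pointwise identity, I would expand each summand with the chain rule. Writing $\tfrac{w^p-z^p}{w^{p-1}} = w - z^p w^{1-p}$ gives
$$
\nabla \tfrac{w^p-z^p}{w^{p-1}} \, = \, \nabla w \, - \, p\,\tfrac{z^{p-1}}{w^{p-1}}\nabla z \, + \, (p-1)\,\tfrac{z^p}{w^p}\nabla w.
$$
Contracting with $|\nabla w|^{p-2}\nabla w$ produces exactly the cross-term $p(z/w)^{p-1}|\nabla w|^{p-2}\langle \nabla w, \nabla z\rangle$ that appears (with opposite sign) in the definition of $\mathcal{L}(z,w)$; solving that defining relation for the cross-term and substituting yields
$$
|\nabla w|^{p-2}\langle \nabla w, \nabla \tfrac{w^p-z^p}{w^{p-1}}\rangle \, = \, \mathcal{L}(z,w) + |\nabla w|^p - |\nabla z|^p.
$$
A symmetric computation, with the roles of $w$ and $z$ swapped and accounting for the sign flip $w^p-z^p = -(z^p-w^p)$, yields
$$
|\nabla z|^{p-2}\langle \nabla z, \nabla \tfrac{w^p-z^p}{z^{p-1}}\rangle \, = \, -\mathcal{L}(w,z) + |\nabla w|^p - |\nabla z|^p,
$$
and subtracting the two produces the claimed identity. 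The equality case follows at once: if $I(w,z)=0$ then both non-negative summands $\mathcal{L}(z,w)$ and $\mathcal{L}(w,z)$ vanish $\mu_f$-a.e.\ on $\Omega$, and the rigidity part of Proposition \ref{prop_lagrangian} then forces $w=Cz$ on the connected set $\Omega$ for some constant $C>0$.

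The main technical point to watch is the regularity of the test functions $\tfrac{w^p-z^p}{w^{p-1}}$ and $\tfrac{w^p-z^p}{z^{p-1}}$, since they involve division by functions that are only assumed non-negative. Here the hypothesis $w/z,\,z/w \in L^\infty(\Omega)$ is exactly what is needed: it forces $\{w=0\}=\{z=0\}$ up to sets of measure zero and provides a constant $c\ge 1$ with $c^{-1}w \le z \le c\, w$ a.e., so that $z^p w^{1-p} \le c^p\, w$ and $w^p z^{1-p}\le c^p\, z$, both in $W^{1,p}(\Omega)$, and the chain rule above is legitimate almost everywhere on $\{w>0\}=\{z>0\}$ (while both sides of the pointwise identity vanish a.e.\ on the common zero set, where $\nabla w = \nabla z = 0$). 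A clean alternative that bypasses these subtleties is to regularize by $w_\eps = w+\eps$, $z_\eps = z+\eps$, carry out the identity on the strictly positive pair $(w_\eps, z_\eps)$, and pass to the limit as $\eps\downarrow 0$ by dominated convergence, exploiting the uniform comparability of $w$ and $z$.
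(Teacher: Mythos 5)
Your proof is correct and takes essentially the same route as the paper's: the paper also expands the two integrands and rearranges to show $I(w,z)=\int_\Omega\big[\mathcal{L}(w,z)+\mathcal{L}(z,w)\big]\,\di\mu_f$, then invokes the non-negativity and rigidity of $\mathcal{L}$ from Proposition \ref{prop_lagrangian}. You simply carry out the algebraic rearrangement and the regularity justification more explicitly than the paper, which states these steps without detail.
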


\begin{proof}
Since $w, z \in \mathcal{D}_\Omega$, it is easy to see that $\frac{w^p}{z^{p-1}}, \frac{z^p}{w^{p-1}} \in W^{1,p}(\Omega)$. We can therefore expand the integrand in \eqref{111'} and rearrange to deduce that
$$
I(w,z) = \int_\Omega \big[\mathcal{L}(w,z) + \mathcal{L}(z,w)\big] \di \mu_f,
$$
with $\mathcal{L}$ as in \eqref{deflagrangian}. The first part of previous proposition then gives the desired inequality.
\end{proof}
Now, we investigate property $Q_V \ge 0$ and its consequences. By its very definition, $Q_V \ge 0$ on an open set $\Omega\subset M$ is equivalent to the non-negativity of the fundamental tone
\begin{equation}\label{010}
\lambda_{V}(\Omega) = \inf_{0 \not \equiv \varphi \in \lip_c(\Omega)} \frac{pQ_V(\varphi)}{\|\varphi\|^p_{L^p(\Omega, \di \mu_f)}}.
\end{equation}
If $\Omega$ is a relatively compact domain with smooth boundary, then it is well-known that the infimum \eqref{010} is attained by a first eigenfunction $\phi \not\equiv 0$ solving Euler-Lagrange equation
$$
\left\{
\begin{array}{l}
\disp Q_V'(\phi) = \lambda_V(\Omega)|\phi|^{p-2}\phi \qquad \text{on } \, \Omega, \\[0.2cm]
\disp \phi=0 \qquad \text{on } \, \partial \Omega,
\end{array}
\right.
$$
and $\phi>0$ on $\Omega$ up to changing its sign \footnote{Briefly, $|\phi|$ still minimizes the Rayleigh quotient in \eqref{010}, thus it satisfies the Euler-Lagrange equation $Q_V'(|\phi|) = \lambda_V(\Omega) |\phi|^{p-1}$, hence $|\phi|>0$ on $\Omega$ by Harnack inequality in Theorem \ref{teo11}, $(3)$.}.
Furthermore, by Harnack inequality, if $\Omega \subset \Omega'$ are two relatively compact open sets and $\Omega'\backslash \Omega$ has non-empty interior, then $\lambda_V(\Omega) > \lambda_V(\Omega')$.\par
%
The next comparison result will be used throughout the paper, and improves on Theorem 5 of \cite{garciamelian_sabinadelis}.
\begin{Proposition}\label{prop_compagen}
Let $M^m, p, f$ be as above and, for $A \in L^\infty_\loc(M)$, define $Q_A, Q_A'$ as in \eqref{06}, \eqref{07} with $V(x)=A(x)$. Consider a relatively compact, open set $\Omega\Subset M$ with smooth boundary, and let $u_1,u_2 \in C^{1,\mu}(\overline\Omega)$, for some $\mu \in (0,1)$. Furthermore, suppose that, for some non-negative $B \in L^\infty(\Omega)$ and a nonlinearity $F(t)$ satisfying \eqref{assu_F}, 
\begin{equation}\label{16}
\left\{ \begin{array}{l}
\Delta_{p,f}u_1 + A|u_1|^{p-2}u_1 - BF(u_1) \ge 0, \\[0,2cm]
\Delta_{p,f}u_2 + A|u_2|^{p-2}u_2 - BF(u_2) \le 0, \\[0,2cm]
u_1 \le u_2 \ \text{ on } \partial \Omega, \qquad u_1 \ge 0, \, u_2 >0 \ \text{ on } \Omega.
\end{array}\right.
\end{equation}
Then, either 
\begin{itemize}
\item[$i)$] $u_1 \le u_2$ on $\Omega$, or
\item[$ii)$] $B(x) \equiv 0$ on $\Omega$, $u_2$ satisfies $Q_A'(u_2)=0$, $u_2 \equiv 0$ on $\partial \Omega$, and $\lambda_A(\Omega) = 0$. 
\end{itemize}
\end{Proposition}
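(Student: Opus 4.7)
The plan is a proof by contradiction. Suppose $\Omega_+\doteq\{x\in\Omega : u_1(x)>u_2(x)\}$ is non-empty; then on $\Omega_+$ one has $u_1>u_2>0$ (as $u_2>0$ on $\Omega$), and by the $C^{1,\mu}$-regularity of $u_1,u_2$ together with the boundary condition, $u_1=u_2$ on $\partial\Omega_+\cap\overline\Omega$. The goal is to show that the rigidity case $ii)$ must then hold.

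The core step is a Picone-type manipulation. I would test the weak differential inequality for $u_1$ in \eqref{16} against
$$
\eta_1 \doteq \frac{(u_1^p-u_2^p)_+}{u_1^{p-1}}\ge 0,
$$
and the one for $u_2$ against $\eta_2\doteq (u_1^p-u_2^p)_+/u_2^{p-1}\ge 0$; both are supported in $\overline{\Omega_+}$ and vanish on $\partial\Omega$. Their admissibility in $W^{1,p}_0(\Omega)$ is delicate around $\partial\Omega$, where $u_2$ may vanish; this is handled by first replacing $u_j$ with $u_j+\epsilon$ in the denominators and passing to the limit $\epsilon\to 0^+$ via dominated convergence, exploiting $u_1,u_2\in C^{1,\mu}(\overline\Omega)$ (and the Hopf-type bound $u_1\lesssim u_2$ near $\partial\Omega$). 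The $A$-terms produced by the two tests are both equal to $\int_{\Omega_+}A(u_1^p-u_2^p)\di\mu_f$ and cancel; what remains, after subtraction, is
$$
I(u_1,u_2)\big|_{\Omega_+} + \int_{\Omega_+} B\,(u_1^p-u_2^p)\!\left[\frac{F(u_1)}{u_1^{p-1}}-\frac{F(u_2)}{u_2^{p-1}}\right]\!\di\mu_f \;\le\; 0,
$$
where $I$ is the Picone functional of \eqref{111'}. Proposition \ref{anane} gives $I\ge 0$, while the strict monotonicity of $F(t)/t^{p-1}$ in \eqref{assu_F}, combined with $u_1>u_2$ on $\Omega_+$ and $B\ge 0$, makes the second integrand pointwise non-negative. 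Both terms therefore vanish, forcing $B\equiv 0$ a.e. on $\Omega_+$ and, on each connected component $U$ of $\Omega_+$, the equality case of Proposition \ref{anane} yields $u_1\equiv Cu_2$ for a constant $C>1$.

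A boundary/connectedness argument then closes the proof. By continuity $u_1=u_2$ on $\partial U$, so $u_1=Cu_2$ with $C>1$ forces $u_2\equiv 0$ on $\partial U$; since $u_2>0$ in $\Omega$, this gives $\partial U\subset\partial\Omega$. Hence $U$ is simultaneously open and relatively closed in its connected component of $\Omega$, so (applying the argument componentwise) $\Omega_+=\Omega$, $u_1\equiv Cu_2$ on $\Omega$ with $u_2\equiv 0$ on $\partial\Omega$, and $B\equiv 0$ on $\Omega$. Substituting $B=0$ into \eqref{16} yields $Q_A'(u_2)\ge 0$ from the $u_2$-inequality and $Q_A'(u_2)=C^{1-p}Q_A'(u_1)\le 0$ from the $u_1$-one, whence $Q_A'(u_2)=0$ in $\Omega$. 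Then $u_2$ is a positive first eigenfunction of $Q_A'$ on $\Omega$, so $\lambda_A(\Omega)=0$: this is precisely case $ii)$. The main technical hurdle throughout is the boundary admissibility of $\eta_2$ when $u_2$ vanishes on $\partial\Omega$, resolved by the $\epsilon$-regularization indicated above.
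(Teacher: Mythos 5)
Your proof is correct and reaches the same conclusion, but it takes a genuinely more direct route than the paper's. The paper begins by applying Hopf's lemma to $u_2$ to obtain a global pointwise bound $u_1 \le C u_2$ on $\overline\Omega$, then uses $Cu_2$ (still a supersolution, by monotonicity of $F(t)/t^{p-1}$) together with the subsolution $u_1$ and the sub/supersolution existence theorem of Diaz to manufacture an intermediate \emph{solution} $v$ of the Dirichlet problem with $u_1 \le v \le Cu_2$ and $v = u_2$ on $\partial\Omega$. The Picone comparison is then run between $v$ and $u_2$ on $U = \{v > u_2\}$, which is technically convenient because $v$ and $u_2$ agree on $\partial\Omega$ and hence, by two applications of Hopf, the ratios $v/u_2$ and $u_2/v$ are automatically bounded. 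Your proposal bypasses the construction of $v$ altogether: you run a double Picone test directly on $(u_1, u_2)$ over $\Omega_+ = \{u_1 > u_2\}$, using $\eta_1 = (u_1^p-u_2^p)_+/u_1^{p-1}$ against the $u_1$-inequality and $\eta_2 = (u_1^p-u_2^p)_+/u_2^{p-1}$ against the $u_2$-inequality, so that the $A$-terms cancel and only $I(u_1,u_2)|_{\Omega_+}$ plus the manifestly non-negative $B$-integral remain. The price is that the admissibility of $\eta_2$ (equivalently, $u_1/u_2 \in L^\infty(\Omega_+)$) must be secured directly: this is precisely what Hopf gives you, since $u_2$ has non-vanishing inward normal derivative at boundary zeros while $u_1 \in C^{1,\mu}(\overline\Omega)$ vanishes there, and your $\epsilon$-regularization is a reasonable way to formalize the passage to the limit (though the Hopf bound arguably suffices on its own). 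The rigidity endgame — $u_1 \equiv Cu_2$ on each component, $u_2 \equiv 0$ on $\partial\Omega$ by matching, $B \equiv 0$, and $\lambda_A(\Omega) = 0$ via the $(p-1)$-homogeneity of $Q_A'$ — is the same in both proofs. In short, your route trades the existence machinery (the Diaz theorem and the auxiliary $v$) for a slightly more delicate boundary argument; both are valid, and yours is arguably the shorter of the two.

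One small remark of precision: the statement $\Omega_+ = \Omega$ at the end implicitly requires $\Omega$ to be connected (otherwise one would only conclude $ii)$ on a union of connected components); this is also implicit in the paper, since Proposition \ref{anane} is stated for connected domains.
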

\begin{proof}
%
We let $\disp\xi={u_2}_{|\partial \Omega} \in C^{1,\mu } (\partial \Omega)$ and let 
$$
V =A(x)-B(x)\frac{F(u_2)}{u_2^{p-1}}. 
$$
Note that $V \in L^\infty(\Omega)$ by \eqref{assu_F}. For $x \in \partial \Omega$ such that $u_2(x)=0$, we let $\nu$ be the inward unit normal to $\partial\Omega$ at $x$. Then, applying Theorem \ref{teo11} $(4)$ we deduce that
$$
\langle \nabla u_2(x), \nu \rangle(x) >0,
$$
by continuity, there thus exists a constant $C>0$ such that 
\begin{equation}\label{18}
u_1 \le C u_2 \qquad \text{on } T(\partial\Omega),
\end{equation}
for some tubular neighbourhood $T(\partial \Omega)$ of $\partial \Omega$. Using assumption $u_2>0$ on $\Omega$ we can suppose that \eqref{18} is true on all of $\overline{\Omega}$ with $C>1$. Because of \eqref{16} and since $B(x) \ge 0$, $C>1$ and, by \eqref{assu_F}, $F(t)/t^{p-1}$ is increasing on $\R^+$, $Cu_2$ is still a supersolution: 
$$
\begin{array}{l}
\disp \Delta_{p,f}(Cu_2) + A(Cu_2)^{p-1} - BF(Cu_2) = C^{p-1} \big[\Delta_{p,f} u_2 +Au_2^{p-1}\big] - BF(Cu_2) \\[0.2cm]
\disp \le C^{p-1} BF(u_2) - BF(Cu_2) \le B(Cu_2)^{p-1} \left[ \frac{F(u_2)}{u_2^{p-1}} - \frac{F(Cu_2)}{(Cu_2)^{p-1}}\right] \le 0. 
\end{array} 
$$
Using $u_1 \ge 0$ as a subsolution, by \eqref{18} and applying the method of sub- and supersolutions, see Theorem 4.14, page 272, in \cite{diaz}, we find a solution $v$ of 
\begin{equation}\label{19}
\left\{ \begin{array}{l}
\Delta_{p,f}v + A|v|^{p-2}v - BF(v) = 0, \\[0,2cm]
v = u_2 \qquad \text{on } \partial \Omega, 
\end{array}\right.
\end{equation}
satisfying
\begin{equation}\label{110} 
 u_1 \le v \le Cu_2 \qquad {\rm on} \ \Omega
 \end{equation}
By the $C^{1,\mu}$-regularity of Theorem \ref{teo11}, $v \in C^{1, \alpha}(\overline \Omega)$ for some $\alpha \in (0,1)$. If we show that $v \le u_2$ then \eqref{110} implies $u_1 \le u_2$ on $\Omega$ which is the conclusion $i)$ of the Proposition. \par 
Suppose that this is not the case, that is, assume that the open set $U= \{ v > u_2\}$ is non-empty. We are going to prove that $ii)$ holds. Since $v \ge u_1 \ge 0$ and $v$ is positive on $U$, then $v >0$ on $\Omega$ as a consequence of the Harnack inequality in Theorem \ref{teo11} (use $V  = A(x) -B(x)F(v)/v^{p-1}$, which by \eqref{assu_F} is bounded on $\Omega$). Alternatively, one can use the version of the strong maximum principle in Theorem 5.4.1 in \cite{pucciserrin}.
%
%
%
Now, again by the Hopf Lemma of Theorem \ref{teo11}, $\langle  \nabla v(x) , \nu(x) \rangle >0$, $\nu$ the inward unit normal to $\partial\Omega$ at $x$, at each point $x \in \partial \Omega$ where $v(x) = u_2(x)=0$. Hence, the ratio $u_2/v$ is well defined at $x$ along the half line determined by $\nu$. This shows that $u_2/v$ and similarly $v/u_2$ are in $L^\infty (\Omega)$. Applying Proposition \ref{anane} on $U$ we deduce $I(u_2,v) \ge 0$, and $I(u_2,v)=0$ if and only if $u_2$ and $v$ are proportional on $U$.
However, the positivity of the test function $(v^p - u_2^p)/u_2^{p-1} >0$ on $U$ implies, by \eqref{16} and \eqref{19}, that
$$
0  \le  \disp I(u_2, v) \le  
  \disp - \int_UB\big(u_2^p -v^p\big)\left(\frac{F(u_2)}{u_2^{p-1}} - \frac{F(v)}{v^{p-1}}\right)\di \mu_f.
$$
Being $F(t)/t^{p-1}$ strictly increasing on $\R^+$ and $B \ge 0$, we deduce 
\begin{equation}\label{112}
B\big(u_2^p -v^p\big)\left(\frac{F(u_2)}{u_2^{p-1}} - \frac{F(v)}{v^{p-1}}\right) \ge 0
\end{equation}
on $U$, whence $I(u_2,v) = 0$. We therefore conclude that $u_2=cv$ on $U$, for some constant $c$ which, because of the definition of $U$, satisfies $c>1$. Using that $v=u_2$ on $\partial U$, we necessarily have $v=u_2=0$ on $\partial U$, hence $U \equiv \Omega$. Substituting $u_2=cv$ on $\Omega$ into \eqref{112} we deduce
$$
B \left( c^p-1 \right)v^{p-1}\left(\frac{F(cv)}{(cv)^{p-1}} - \frac{F(v)}{v^{p-1}}\right) \equiv 0.
$$
Since $v>0$ on $\Omega$ and $F(t)/t^{p-1}$ is strictly increasing, $B \equiv 0$ and, from \eqref{19}, $v>0$ solves
$$
\left\{
\begin{array}{l}
\disp \Delta_{p,f} v + A(x) |v|^{p-2} v = 0 \qquad \text{on } \, \Omega \\[0.2cm]
\disp v = 0 \qquad \text{on } \, \partial\Omega.
\end{array}
\right.
$$
Consequently, $0$ admits a positive eigenfunction of $Q_A'$. By a result in \cite{anane}, $\lambda_A(\Omega)=0$, showing the validity of $ii)$.
\end{proof}

\begin{Remark}
\emph{We underline that, in the above proposition, the non-negativity of $Q_A$ is not required. However, if $B \equiv 0$, $u_2$ turns out to be a positive solution of $Q_A(u) \ge 0$, and using Proposition \ref{pro115} below we automatically have $\lambda_A(\Omega) \ge 0$.
}
\end{Remark}
In what follows we shall frequently use the next formula: for $I \subset \R$ and $\alpha \in C^2 (I)$ with $\alpha' > 0$ on $I$, and for $u \in C^0(M) \cap W^{1,p}_\loc (M)$ with $u(M) \subset I$ we have, weakly on $M$,
\begin{equation}\label{113}
\Delta_{p,f} \alpha (u)= \alpha' (u) \left| \alpha' (u) \right|^{p-2} \Delta_{p,f} u+ (p-1) \alpha'' (u) \left| \alpha' (u) \right|^{p-2} |\nabla u|^p.
\end{equation}
A second ingredient is the following existence result that goes under the name of the Allegretto-Piepenbrink theorem, see \cite{allegrettohuang, allegrettohuang2, garciamelian_sabinadelis}. We include a proof of the next slightly more general version, for the sake of completeness. 
\begin{Proposition}\label{pro115}
Let $\riem$ be a non-compact Riemannian manifold, $f \in C^\infty (M)$, $p \in (1,+\infty)$ and, for $V \in L^\infty_\loc(M)$, set $Q'_V,Q_V$ as in \eqref{07} and \eqref{06}. Then, the following statements are equivalent:
\begin{itemize}
\item[$i)$] There exists $w \in C^0(M) \cap W^{1,p}_\loc(M)$, $w > 0$ weak solution of
\begin{equation}\label{116}
Q'_V(w) \ge 0 \qquad \text{on } \, M;
\end{equation}
\item[$ii)$] There exists $u \in C^{1,\mu}_\loc(M)$, $u>0$ weak solution of
\begin{equation}\label{117}
Q'_V(u) = 0 \qquad \text{on } \, M;
\end{equation}
\item[$iii)$] $Q_V \ge 0$ on $M$.
\item[$iv)$] For each relatively compact domain $\Omega \Subset M$ with $C^{1,\alpha}$ boundary for some $\alpha \in (0,1)$, and for each $\xi \in C^{1,\alpha}(\partial \Omega)$, $\xi \ge 0$, there exists a unique solution $\varphi \in C^{1, \mu }(\overline \Omega)$ of 
\begin{equation}\label{118}
\left\{\begin{array}{ll}
Q_V'(\varphi) = 0 & \quad \text{on } \, \Omega, \\[0.2cm]
\varphi = \xi & \quad \text{on } \, \partial \Omega
\end{array}\right.
\end{equation}
satisfying $\varphi \ge 0$ on $\Omega$. Moreover, if $\xi \not \equiv 0$, then $\varphi>0$ on $\Omega$.
\end{itemize}
\end{Proposition}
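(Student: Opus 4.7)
The plan is to prove the cycle of implications $(ii) \Rightarrow (i) \Rightarrow (iii) \Rightarrow (iv) \Rightarrow (ii)$. The first is trivial, since any solution is a supersolution. For $(i) \Rightarrow (iii)$, use Proposition \ref{prop_lagrangian} with $g = w$: since $Q_V$ depends on $\varphi$ only through $|\nabla \varphi|^p$ and $|\varphi|^p$, for $\varphi \in \lip_c(M)$ one has $Q_V(\varphi) = Q_V(|\varphi|)$, and since $|\varphi| \in L^\infty_c(M) \cap W^{1,p}(M)$ is non-negative,
\begin{equation*}
Q_V(\varphi) = Q_V(|\varphi|) \geq \int_M \mathcal{L}(|\varphi|, w)\, d\mu_f \geq 0,
\end{equation*}
because the Lagrangian is pointwise non-negative.

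For $(iii) \Rightarrow (iv)$, fix $\Omega \Subset M$ with $C^{1,\alpha}$ boundary. Since $M$ is non-compact, one can insert $\Omega \Subset \Omega' \Subset M$ with $\Omega' \setminus \Omega$ of non-empty interior; then extension-by-zero yields $\lambda_V(\Omega') \geq \lambda_V(M) \geq 0$, and the strict monotonicity noted after \eqref{010} gives $\lambda_0 := \lambda_V(\Omega) > \lambda_V(\Omega') \geq 0$. The bound $Q_V(v) \geq (\lambda_0/p)\|v\|_{L^p(\Omega,d\mu_f)}^p$ combined with $\|\nabla v\|_{L^p}^p = pQ_V(v) + \int V|v|^p \leq (p + p\|V\|_{L^\infty(\Omega)}/\lambda_0) Q_V(v)$ produces coercivity $Q_V(v) \geq c\|v\|_{W^{1,p}(\Omega)}^p$ on $W^{1,p}_0(\Omega)$. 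Pick a non-negative extension $\tilde\xi \in W^{1,p}(\Omega)$ of $\xi$ and minimize $Q_V(\tilde\xi + v)$ over $v \in W^{1,p}_0(\Omega)$: by coercivity and the compact embedding $W^{1,p}(\Omega) \hookrightarrow L^p(\Omega)$ (which makes the $V$-term weakly continuous and thus $Q_V$ weakly lower semicontinuous), a minimizer exists, yielding a weak solution $\varphi$ of $Q_V'(\varphi) = 0$ in $\Omega$. Parts (1)–(2) of Theorem \ref{teo11} upgrade $\varphi$ to $C^{1,\mu}(\overline\Omega)$. To see $\varphi \geq 0$, test the equation with $-\varphi^- \in W^{1,p}_0(\Omega)$ (admissible since $\xi \geq 0$): the resulting identity $p Q_V(\varphi^-) = 0$, combined with $Q_V(\varphi^-) \geq (\lambda_0/p)\|\varphi^-\|_{L^p}^p$, forces $\varphi^- \equiv 0$. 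If $\xi \not\equiv 0$, then Harnack (Theorem \ref{teo11} (3)) rules out $\varphi \equiv 0$ and gives $\varphi > 0$. Uniqueness follows by applying Proposition \ref{prop_compagen} with $B \equiv 0$ to two candidate solutions, swapping their roles to force equality (the exceptional case $ii)$ of that proposition is excluded by $\lambda_V(\Omega) > 0$).

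For $(iv) \Rightarrow (ii)$, fix an exhaustion $\{\Omega_k\}$ of $M$ with $C^{1,\alpha}$ boundaries and a point $x_0 \in \Omega_1$. By (iv), the Dirichlet problem with boundary data $\xi_k \equiv 1$ has a positive solution $u_k \in C^{1,\mu_k}(\overline{\Omega_k})$ of $Q_V'(u_k) = 0$. Set $v_k = u_k/u_k(x_0)$, so that $v_k(x_0) = 1$. For any compact $K \subset M$ and $k$ large enough that $K \Subset \Omega_k$, the Harnack inequality of Theorem \ref{teo11} (3) yields uniform two-sided bounds $C_K^{-1} \leq v_k \leq C_K$ on $K$; the regularity estimates of Theorem \ref{teo11} (2) then produce local $C^{1,\mu'}$-bounds uniform in $k$. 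A diagonal extraction and Ascoli–Arzelà deliver a subsequence converging in $C^{1,\mu''}_\loc(M)$ to a limit $u \in C^{1,\mu}_\loc(M)$, which is a weak solution of $Q_V'(u) = 0$ with $u \geq 0$ and $u(x_0) = 1$; a final application of Harnack gives $u > 0$ on $M$.

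The main obstacle is the implication $(iii) \Rightarrow (iv)$: because $V$ may change sign, $Q_V$ is in general non-convex, so the direct method relies on upgrading the mere non-negativity $\lambda_V(M) \geq 0$ to the strict positivity $\lambda_V(\Omega) > 0$ on relatively compact sub-domains. This upgrade exploits crucially the non-compactness of $M$, which permits the insertion of a strictly larger $\Omega'$; once this is in place, the remainder of the argument is a fairly standard combination of the direct method, elliptic regularity (Theorem \ref{teo11}) and Harnack/Picone comparison.
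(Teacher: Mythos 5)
Your proof follows the same cycle $(ii)\Rightarrow(i)\Rightarrow(iii)\Rightarrow(iv)\Rightarrow(ii)$ and uses the same tools (Lagrangian representation, strict monotonicity of the fundamental tone, direct method for the Dirichlet problem, Harnack and $C^{1,\mu}$ estimates, diagonal extraction along an exhaustion) as the paper, so it is essentially the paper's argument. Your write-up is actually a bit tighter in two places: replacing $\varphi$ by $|\varphi|$ makes the application of Proposition \ref{prop_lagrangian} in $(i)\Rightarrow(iii)$ fully rigorous (since that proposition assumes the test function is non-negative), and the explicit uniqueness argument via Proposition \ref{prop_compagen} with $B\equiv 0$ addresses a point which the paper's proof of $(iii)\Rightarrow(iv)$ passes over in silence.
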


\begin{proof}
The scheme of proof is $ii) \Rightarrow i) \Rightarrow iii) \Rightarrow iv) \Rightarrow ii);$ Note that the first implication is trivial. \par 
$i) \Rightarrow iii)$. It follows immediately from Proposition \ref{prop_lagrangian} and the non-negativity of $\mathcal{L}$. \par
%
%
%
$iii) \Rightarrow iv)$. By assumption $\lambda_V (M) \ge 0$; it follows that, for $\Omega$ as in $iv)$, by the monotonicity property for eigenvalues $\lambda_V(\Omega)>0$. Hence, the variational problem associated to \eqref{118} is coercive and sequentially weakly lower-semicontinuous (see also Theorem \ref{A5} in Appendix). Therefore \eqref{118} admits a weak solution $\varphi\in W^{1,p}(\Omega)$. By the $C^{1,\mu}$-regularity of Theorem \ref{teo11} we have that $\varphi \in C^{1,\mu}(\overline\Omega)$ for some $\mu \in (0,1)$. Moreover, by the local Harnack inequality of item $(3)$, $\varphi>0$ on $\overline \Omega$ whenever $\phi \ge 0$  on $\Omega$, unless $\varphi \equiv 0$. \par 
By contradiction  suppose that $\varphi$ is somewhere negative in $\Omega$. Since $\xi \ge 0$, on $\partial\Omega$, $\varphi_- = -\min\{\varphi,0\} \in \lip_0(\Omega)$ and it is thus an admissible test function for \eqref{118} on $\Omega$. We have
$$
0 = \disp Q_V'(\varphi)[-\varphi_-] \doteq -\int_\Omega \left\{ |\nabla \varphi|^{p-2} \langle \nabla \varphi , \nabla \varphi_- \rangle-V  |\varphi|^{p-2} \varphi \varphi_- \right\} \di \mu_f \equiv p Q_V (\varphi_-)
$$
and therefore $\lambda_V(\Omega) \le 0$, a contradiction.\par
$iv)\Rightarrow ii)$. Choose an exhaustion $\{\Omega_j\}$ of $M$. Let $u_j>0$, $u_j \in C^{1,\mu_j}(\overline{\Omega}_j) \subset W^{1,p} (\Omega_j)$ be a solution of
\begin{equation}\label{119}
\left\{ \begin{array}{l}
Q_V'(u_j) =0 \qquad \text{on } \Omega_j \\[0.2cm]
u_j=1 \qquad \text{on } \partial \Omega_j.
\end{array}\right.
\end{equation}
Fix $x_0 \in \Omega_1$ and rescale $u_j$ in such a way that $u_j(x_0)=1$ for every $j$. By Theorem \ref{teo11} $3)$, $\{u_j\}$ is uniformly locally bounded in $\Omega$, thus by Theorem \ref{teo11} $2)$ $\{u_j\}$ is uniformly locally bounded in $C^{1, \mu} (\Omega)$. It follows that $\{u_j\}$ has a subsequence converging weakly and pointwise to a weak solution $u \in C^{1,\mu}_\loc(M)$ of
$$
\left\{ \begin{array}{l}
Q_V'(u) =0 \qquad \text{on } \ M \\[0.2cm]
u(x_0)=1.
\end{array}\right.
$$
Since $u \ge 0$ and $u \not \equiv 0$, again by $3)$ of Theorem \ref{teo11} we deduce $u>0$ on $M$. This shows the validity of $ii)$.
\end{proof}
Next, we need a gluing result which we will call the pasting lemma. Although for $V  \equiv 0$ this is somehow standard (a simple proof can be given by adapting Lemma 2.4 in \cite{pinchovertertikastintarev}), the presence of a nonzero $V$ makes things more delicate. First, we introduce some definitions. We recall that, given an open subset $\Omega \subset M$ possibly with non-compact closure, the space $W^{1,p}_\loc(\overline{\Omega})$ is the set of all functions $u$ on $\Omega$ such that, for every relatively compact open set $U \Subset M$ with $U \cap \Omega \not= \emptyset$, $u \in W^{1,p}(\Omega \cap U)$. A function $u$ in this space is thus well behaved on relatively compact portions of $\partial \Omega$, while no global control is assumed on the $W^{1,p}$ norm of $u$. 
\begin{Lemma}[The pasting lemma]\label{lem_pasting} Let $(M, \metric)$ be a Riemannian manifold, $f \in C^\infty (M)$, $p \in (1,\infty)$, $V \in L^\infty_\loc (M)$. Let $\Omega_1$, $\Omega_2$ be open sets such that $\Omega_1 \subset \Omega_2$. For $j=1,2$, let $u_j \in C^0(\overline\Omega_j) \cap W^{1,p}_\loc(\overline\Omega_j)$ be a positive supersolution of $Q_V'$ on $\Omega_j$, that is, $Q_V' (u_j) \ge 0$ on $\Omega_j$. If 
\begin{equation}\label{u2leu1}
u_2 \le u_1 \qquad \text{on } \partial \Omega_1 \cap \Omega_2, 
\end{equation}
then the positive function
\begin{equation}\label{minimum_pasting}
u \doteq \left\{\begin{array}{ll}
\disp \min \{u_1,u_2\} &{\rm on} \ \overline\Omega_1 \\[0.4cm]
\disp u_2 &{\rm on} \ \Omega_2 \backslash \Omega_1
\end{array}
\right.
\end{equation}
is in $W^{1,p}_\loc(\overline\Omega_2)$ and it satisfies $Q'_V (u) \ge 0$ on $\Omega_2$.
\end{Lemma}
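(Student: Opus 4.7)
The plan is to establish two things: first, that $u$ is continuous on $\Omega_2$ and belongs to $W^{1,p}_\loc(\overline\Omega_2)$, and then, that $Q_V'(u) \ge 0$ weakly on $\Omega_2$. Continuity is a direct consequence of the matching hypothesis \eqref{u2leu1}, which forces $\min\{u_1, u_2\} = u_2$ on $\partial \Omega_1 \cap \Omega_2$: the two branches of \eqref{minimum_pasting} thus agree along the interface, and the continuity of $u_1, u_2$ transfers to $u$.

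For the Sobolev regularity I would represent $u = u_2 - g$ with $g \doteq (u_2 - u_1)_+$ on $\Omega_1$ and $g \doteq 0$ on $\Omega_2 \setminus \Omega_1$. By \eqref{u2leu1}, $g$ extends continuously across $\partial \Omega_1 \cap \Omega_2$ with boundary value $0$, so on each compact $K \Subset \Omega_2$ the truncations $g_k \doteq (g - 1/k)_+$ are supported in a compact subset of $\Omega_1 \cap K$ (the boundary $\partial \Omega_1$ sits at positive distance from the level set $\{g \ge 1/k\}$ there, by continuity). Trivially extended by zero, $g_k \in W^{1,p}(K)$; dominated convergence then yields $g_k \to g$ in $W^{1,p}(K)$, placing $g$, and hence $u = u_2 - g$, in $W^{1,p}_\loc(\overline\Omega_2)$.

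For the supersolution property, I would split a generic $0 \le \psi \in \lip_c(\Omega_2)$ as $\psi = \psi_1 + \psi_2$ with, for $\eps > 0$,
$$
\psi_1 \doteq \min\{1, \eps^{-1}(u_2 - u_1)_+\}\, \psi \ \ \text{on } \Omega_1, \qquad \psi_1 \doteq 0 \ \ \text{on } \Omega_2 \setminus \Omega_1, \qquad \psi_2 \doteq \psi - \psi_1.
$$
Again by \eqref{u2leu1}, $\psi_1$ vanishes near $\partial \Omega_1 \cap \Omega_2$, so $\psi_1 \in \lip_c(\Omega_1)$, while $\psi_2 \in \lip_c(\Omega_2)$ with $\psi_2 \ge 0$. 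I would then test $Q'_V(u_1)[\psi_1] \ge 0$ and $Q'_V(u_2)[\psi_2] \ge 0$ and sum them: expanding $\nabla \psi_i$, the sum of the gradient integrals decomposes as $\mathcal{G}_\eps - \mathcal{S}_\eps$, where
$$
\mathcal{S}_\eps \doteq \eps^{-1}\int_{\{0 < u_2 - u_1 < \eps\}\cap \Omega_1} \langle |\nabla u_2|^{p-2}\nabla u_2 - |\nabla u_1|^{p-2}\nabla u_1,\, \nabla u_2 - \nabla u_1\rangle\, \psi \, \di \mu_f \ \ge\ 0
$$
by the strong monotonicity of the $p$-Laplacian.

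To conclude, the conceptual shortcut I would exploit is precisely that $\mathcal{S}_\eps$ has a favorable sign, so it may simply be dropped: the combined test inequality already yields $\mathcal{G}_\eps \ge \int V(u_1^{p-1}\psi_1 + u_2^{p-1}\psi_2)\, \di \mu_f$ with no need for a delicate estimate on the thin slab. Letting $\eps \to 0^+$, dominated convergence combined with the Stampacchia locality $\nabla u_1 = \nabla u_2$ a.e.~on $\{u_1 = u_2\}$ sends $\mathcal{G}_\eps \to \int |\nabla u|^{p-2}\langle \nabla u, \nabla \psi\rangle\, \di \mu_f$, while the right-hand side converges to $\int V u^{p-1}\psi\, \di \mu_f$, delivering $Q'_V(u)[\psi] \ge 0$. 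The hardest parts of the argument will be the Sobolev regularity step (truncation paired with locality of gradients) and the identification of $\lim \mathcal{G}_\eps$, both of which ultimately rest on fine behavior of Sobolev functions on level sets.
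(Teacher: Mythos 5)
Your argument is a genuinely different proof from the one the paper gives. The paper develops a $Q_V$-obstacle-problem apparatus and deduces the pasting lemma from the minimizing/comparison properties of its solutions (Propositions \ref{pro_compaosta} and \ref{minsupmaxsub}): they solve the obstacle problem with obstacle $u$ on $U \Subset \Omega_2$ and show the solution coincides with $u$. You instead give a direct cut-off argument: decompose the test function $\psi = \psi_1 + \psi_2$ along a transition layer $\{0 < u_2 - u_1 < \eps\}$, test the two supersolution inequalities separately, and discard the thin-slab term $\mathcal{S}_\eps$ because it has a sign by strong monotonicity of $\xi \mapsto |\xi|^{p-2}\xi$. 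This is precisely the classical ``min of two supersolutions'' route that the authors explicitly mention just before the statement — citing Le \cite{Le} and \cite{albamaririgoli} and stating that ``the details are left to the interested reader'' — before deliberately choosing the obstacle-problem route because of its independent interest. Your decomposition, the identification of $\mathcal{S}_\eps$ as the slab term, the sign argument, and the passage $\eps \to 0$ via Stampacchia locality and dominated convergence are all correct. The $W^{1,p}_\loc(\overline\Omega_2)$ step is essentially identical to the paper's (they truncate $z = \min\{u_1-u_2,0\}$ by $\varphi_j = (z+1/j)_-$, which is your $g_k$ up to sign).

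There is, however, a genuine gap in one step: the assertion that ``$\psi_1$ vanishes near $\partial\Omega_1\cap\Omega_2$, so $\psi_1\in\lip_c(\Omega_1)$.'' Hypothesis \eqref{u2leu1} gives $(u_2-u_1)_+ = 0$ \emph{on} $\partial\Omega_1\cap\Omega_2$, not in a neighbourhood of it; the open set $\{u_2>u_1\}\cap\Omega_1$ can perfectly well accumulate at $\partial\Omega_1\cap\Omega_2$, so $\psi_1$ need not have compact support in $\Omega_1$. Moreover, $(u_2-u_1)_+$ is only $C^0\cap W^{1,p}_\loc$, so $\psi_1$ is not Lipschitz. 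Neither issue is fatal, but the fix must be made explicit: either observe that $\psi_1 \in W^{1,p}_0(\Omega_1)\cap L^\infty$ with support compact in $\Omega_2$, and extend the weak inequality to such test functions by density (exactly the approximating sequence $(\psi_1-\delta)_+$, $\delta\to 0$, that you already used in the $W^{1,p}_\loc$ step); or replace $\eta_\eps = \min\{1,\eps^{-1}(u_2-u_1)_+\}$ by $\eta_{\eps,\delta} = \min\{1,\eps^{-1}((u_2-u_1)_+-\delta)_+\}$, whose support is at positive distance from $\partial\Omega_1\cap\Omega_2\cap\supp\psi$ by continuity, and send $\delta\to 0$ before $\eps\to 0$. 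With that repaired, the proof goes through.
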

When $\Omega_1 \equiv \Omega_2$, a general result of V.K. Le, \cite{Le}, guarantees that $\min\{u_1,u_2\}$ is a supersolution. The pasting lemma can then be deduced by an approximation argument, along the lines described in \cite{albamaririgoli}, and we leave the details to the interested reader. In the Appendix below, we give a quite different proof by using the obstacle problem for $Q_V$ and the minimizing properties of its solutions, that might have an independent interest.

%
%
%
%
%
%
\section{Criticality theory for $Q_V$, capacity and Hardy weights}\label{sec_criticality}
The criticality theory for $Q_V$ reveals an interesting scenario, and extends in a nontrivial way the parabolicity theory for the standard Laplacian and for the $p$-Laplacian (developed, among others, in \cite{grigoryan, pigolasettitroyanov, troyanov}). Although a thorough description goes beyond the scope of this paper, nevertheless the validity of the pasting lemma gives us the opportunity to complement known results (especially those in \cite{pinchovertintarev, pinchovertintarev2}) by relating them to a capacity theory for $Q_V$, see Theorem \ref{teo_alternative} below. We underline that, although the $Q_V$-capacity theory is investigated by following the same lines as those for the standard $p$-Laplacian, as in the previous results the presence of a nontrivial $V$ makes things subtler.\\
\par
Let $Q_V \ge 0$ on $M$, and fix a positive \emph{supersolution} $g \in C^0(M) \cap W^{1,p}_\loc(M)$ of $Q'_V$, that is, a solution of 
\begin{equation}\label{ipotesigcapac}
Q_V'(g) \ge 0.
\end{equation}
For each $K \Subset \Omega \Subset M$, $K$ compact, $\Omega$ open, let
$$
\Dd(K, \Omega, g)  = \Big\{ \varphi \in C^0(\overline \Omega) \cap \wupz \ : \ \varphi \ge g \ \text{ in a neighbourhood of } K\Big\}
$$
and define the $Q_V$\textbf{-capacity}
$$
\disp \capac_{Q_V}(K, \Omega, g) \doteq \disp  \inf_{\varphi \in \Dd(K, \Omega, g) } Q_V(\varphi) 
$$
Clearly, $\capac_{Q_V}(K, \Omega, g) $ grows if we decrease $\Omega$, as well as if we increase $K$. If $V\equiv 0$, it is customary to choose $g \equiv 1$ as solution of $\Delta_{p,f}g=0$, and we recover the classical definition of capacity. We however underline that, for the next arguments to work, it is essential that the fixed $g$ solves $Q_V'(g)  \ge 0$ on $M$, for otherwise the basic properties needed in the next results could not hold.  
\begin{Proposition}\label{prop_capacattingida}
Let $K$ be the closure of an open domain and suppose that $\partial \Omega, \, \partial K$ are of class $C^{1,\alpha}$ for some $\alpha \in (0,1)$. Then 
\begin{equation}\label{qcapacity}
\capac_{Q_V}(K, \Omega, g) = Q_V(u) 
\end{equation}
where $u$ is the unique positive solution $u \in C^0(\overline \Omega) \cap \wupz \cap C^{1,\mu}(\overline\Omega\backslash K)$ of 
\begin{equation}\label{QVcapacitor}
\left\{ \begin{array}{l}
Q_V'(u) = 0 \qquad \text{on } \Omega \backslash K, \\[0.2cm]
u= g \quad \text{on } K, \qquad u=0 \quad \text{on } \partial \Omega.
\end{array}\right.
\end{equation}
We call such a solution $u$ the $Q_V$-capacitor of $(K, \Omega, g)$. 
\end{Proposition}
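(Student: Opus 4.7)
The plan has three steps: existence of a minimizer by the direct method, characterization as the solution of \eqref{QVcapacitor}, and uniqueness.

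For \textbf{existence}, first reduce to non-negative competitors: $|\varphi|\in\Dd(K,\Omega,g)$ whenever $\varphi\in\Dd(K,\Omega,g)$, and $Q_V(|\varphi|)\le Q_V(\varphi)$. Extending any admissible $\varphi\ge 0$ by zero outside $\Omega$ and applying the Lagrangian representation of Proposition \ref{prop_lagrangian} to the positive supersolution $g$,
\begin{equation*}
Q_V(\varphi)\;\ge\;\int_\Omega \mathcal L(\varphi,g)\,\di\mu_f\;\ge\;0,
\end{equation*}
so $\capac_{Q_V}(K,\Omega,g)$ is finite and non-negative. Setting $\eta=\varphi/g$ (well-defined and locally bounded by the positivity and continuity of $g$ on $\overline\Omega$), $\mathcal L(\varphi,g)$ controls a weighted $p$-energy of $\nabla\eta$; combined with the boundedness of $g$ and $1/g$ on $\overline\Omega$, this yields a uniform $\wupz$-bound along any minimizing sequence. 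Weak compactness in $\wupz$, weak lower semicontinuity of the $p$-Dirichlet integral, and strong $L^p$-compactness via Rellich (which handles the $V$-term and the a.e.\ stability of the constraint $\varphi\ge g$ near $K$) deliver a non-negative minimizer $u\in\Dd(K,\Omega,g)$.

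For the \textbf{characterization}, perturbations $u+t\psi$ with $\psi\in\lip_c(\Omega\setminus K)$ are admissible for $|t|$ small, so minimality yields $Q_V'(u)=0$ weakly on $\Omega\setminus K$. Theorem \ref{teo11} then gives $u\in C^{1,\mu}(\overline{\Omega\setminus K})$ and $u>0$ on $\Omega\setminus K$ (by Harnack, since $u\ge g>0$ near $\partial K$); $u=0$ on $\partial\Omega$ follows from $u\in\wupz$. The delicate point is to show $u\equiv g$ on $K$. Arguing by contradiction, let $W_0$ be a connected component of $\{u>g\}\cap\mathring K$, so that $u=g$ on $\partial W_0$ by continuity. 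Variations supported in $W_0$ preserve admissibility and give $Q_V'(u)=0$ on $W_0$. Since $g$ is a positive supersolution matching $u$ on $\partial W_0$, Proposition \ref{prop_compagen} (applied with $A=V$, $B\equiv 0$, $u_1=u$, $u_2=g$, after a smooth exhaustion of $W_0$ to accommodate its possibly rough boundary) yields $u\le g$ on $W_0$, contradicting the definition of $W_0$; the exceptional alternative of Proposition \ref{prop_compagen} is excluded by $g>0$ on $\partial W_0$.

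For \textbf{uniqueness}, two capacitors $u_1,u_2$ coincide with $g$ on $K$ and vanish on $\partial\Omega$; the Hopf lemma (Theorem \ref{teo11}(4)) shows that both vanish linearly at $\partial\Omega$, so $u_1/u_2,u_2/u_1\in L^\infty(\Omega\setminus K)$. Picone's inequality (Proposition \ref{anane}) on $\Omega\setminus K$ gives $I(u_1,u_2)\ge 0$, and expanding via $Q_V'(u_i)=0$ against the test functions $(u_1^p-u_2^p)/u_i^{p-1}$---which vanish on $\partial K$ because $u_1=u_2=g$ there and extend continuously by zero to $\partial\Omega$ via L'Hospital and Hopf---forces $I(u_1,u_2)=0$, whence $u_1\equiv cu_2$; the condition $u_1=u_2=g$ on $K$ fixes $c=1$. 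I expect the main obstacle to be the saturation step $u\equiv g$ on $K$: combining the Euler--Lagrange equation in the coincidence set with the comparison principle requires care because $\partial(\{u>g\}\cap\mathring K)$ is only a measure-theoretic object, and the supersolution (rather than solution) nature of $g$ prevents a direct appeal to uniqueness for a Dirichlet problem on $W_0$.
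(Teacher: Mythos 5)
Your proposal takes a genuinely different route from the paper: you build $u$ by the direct method in the calculus of variations and then try to identify it as the solution of \eqref{QVcapacitor}, whereas the paper starts from the solution $u$ of \eqref{QVcapacitor} (whose existence, uniqueness, positivity and regularity are already granted by Proposition \ref{pro115} and the cited regularity results, as the Remark following the statement explains) and shows directly that $Q_V(u)\le Q_V(\varphi)$ for every competitor $\varphi$. The paper's argument has two steps, both consisting of a single Picone-type weak test: first, testing $Q_V'(g)\ge 0$ against $(\varphi^p-g^p)_+/g^{p-1}$ shows that one may assume $\varphi\le g$; second, testing $Q_V'(u)=0$ against $(\varphi^p-u^p)/u^{p-1}\in\lip_0(\Omega\backslash K)$ gives $Q_V(u)\le Q_V(\varphi)$. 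This avoids any discussion of the obstacle problem, the coincidence set, or the comparison principle.

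The genuine gap in your proposal is precisely the saturation step, which you yourself flag as the main obstacle. You propose to apply Proposition \ref{prop_compagen} with $u_1=u$, $u_2=g$ on a component $W_0$ of $\{u>g\}\cap\mathring K$, but that proposition requires $u_1,u_2\in C^{1,\mu}(\overline\Omega)$ on a relatively compact domain with smooth boundary. Here $g$ is only a supersolution in $C^0(M)\cap W^{1,p}_\loc(M)$; it need not belong to any $C^{1,\mu}$ space (it is not a solution, so Theorem \ref{teo11} does not upgrade its regularity), so the hypotheses of Proposition \ref{prop_compagen} are simply not met, regardless of the boundary of $W_0$. Moreover, the exhaustion you suggest makes matters worse rather than better: if $W_0^j\Subset W_0$ is an inner smooth exhaustion, then on $\partial W_0^j$ one has $u>g$ strictly, which violates the hypothesis $u_1\le u_2$ on the boundary that Proposition \ref{prop_compagen} requires. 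So the comparison argument cannot be made to run as stated. The correct way to close this gap is exactly the weak-form Picone test the paper uses: test the distributional inequality $Q_V'(g)\ge 0$ directly with the nonnegative test function $(\varphi^p-g^p)_+/g^{p-1}$, an argument which only needs $g\in C^0\cap W^{1,p}_\loc$ and sidesteps both the regularity of $g$ and the geometry of the free boundary. (Two minor further remarks: your coercivity argument via the Lagrangian is not quite complete because $\nabla\log g$ need not be in $L^\infty(\Omega)$ when $g$ is merely a supersolution---it is cleaner to invoke $\lambda_V(\Omega)>0$ as in Theorem \ref{A5}; and the constraint ``$\varphi\ge g$ in a neighbourhood of $K$'' is only preserved in the $W^{1,p}$-closure, which is why the paper phrases the conclusion in terms of $u$ lying in that closure.)
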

\begin{Remark}
\emph{The existence, uniqueness and positivity of $u$ is granted by $iv)$ in Proposition \ref{pro115}. As for regularity, the interior estimate $u \in C^{1,\mu}_\loc(\overline\Omega\backslash K)$ follows by \cite{tolksdorf,dibenedetto}, and the boundary continuity at $\partial K \cup \partial \Omega$ by Theorem 5.4, page 235 in \cite{ZM}. The fact that $u \in \wup$ follows by standard theory of Sobolev functions\footnote{In fact, $u-g \in W^{1,p}(\Omega \backslash K)$ has zero trace on $\partial K$, thus it is the $W^{1,p}$-limit (and also, up to extracting a subsequence, the pointwise limit) of some sequence $\{\varphi_j\} \subset C^\infty(\overline{\Omega \backslash K})$ where $\varphi_j \equiv 0$ in a neighbourhood of $\partial K$. Extending $\varphi_j$ to be zero on $K$ we have that $g+\varphi_j$ is Cauchy in $W^{1,p}$ and pointwise convergent to $u$, thus $g+\varphi_j \ra u \in \wup$.}. 
}
\end{Remark}
\begin{proof} 
Let $\varphi \in \Dd(K, \Omega,g)$. First, we claim that $\hat \varphi = \min\{\varphi,g\} \in \Dd(K, \Omega,g)$ solves $Q_V(\hat\varphi) \le Q_V(\varphi)$, whence we can assume, without loss of generality, that $\varphi \le g$ on $\Omega$ (and hence $\varphi =g$ on a neighbourhood of $K$). 

%
Consider the open set $U = \{\varphi >g\}\Subset \Omega$. We test $Q_V'(g) \ge 0$ with the non-negative function $(\varphi^p-g^p)_+/g^{p-1} \in \wupz$ and use the non-negativity of the Lagrangian in \eqref{deflagrangian} to deduce  
\begin{equation}\label{arrayclassico}
\begin{array}{lcl}
0 & \le & \disp \disp \int_{U}|\nabla g|^{p-2} \langle \nabla g, \nabla\left(\frac{\varphi^p}{g^{p-1}}\right)\rangle \di \mu_f - \int_U V\varphi^p\di \mu_f - pQ_V\big(g_{|U}\big) \\[0.4cm]
& = & \disp p \int_U \left(\frac{\varphi}{g}\right)^{p-1}|\nabla g|^{p-2} \langle \nabla g, \nabla \varphi \rangle \di \mu_f - (p-1) \int_U \left(\frac{\varphi}{g}\right)^{p}|\nabla g|^{p}\di \mu_f \\[0.4cm]
& & \disp - \int_U V\varphi^p\di \mu_f  -p Q_V\big(g_{|U}\big) \\[0.4cm]
& = & \disp \int_U |\nabla \varphi|^p \di \mu_f - \int_U \mathcal{L}(\varphi,g)\di \mu_f - \int_U V\varphi^p\di \mu_f -p Q_V\big(g_{|U}\big)\\[0.4cm]
& \le & \disp p Q_V\big(\varphi_{|U}\big) -p Q_V\big(g_{|U}\big),
\end{array}
\end{equation}
hence $Q_V(\hat\varphi_{|U})= Q_V(g_{|U}) \le Q_V(\varphi_{|U})$. Since $\varphi \equiv \hat\varphi$ on $\Omega \backslash U$,   the claim follows. Let now $\varphi \in \Dd(K, \Omega, g)$ be such that $\varphi = g$ on $K$. By density, we can assume that $\varphi \in \lip_0(\Omega)$. We therefore have $u-\varphi \in \lip_0(\Omega \backslash K)$. Again by density, we can further assume that $\varphi =0$ in a neighbourhood of $\partial \Omega$. Thus, testing $Q_V'(u)=0$ with $(\varphi^p-u^p)/u^{p-1} \in \lip_0(\Omega \backslash K)$ and proceeding as above we obtain
\begin{equation}\label{arrayclassico2}
\begin{array}{lcl}
0 & = & \disp \disp \int_{\Omega\backslash K}|\nabla u|^{p-2} \langle \nabla u, \nabla\left(\frac{\varphi^p}{u^{p-1}}\right)\rangle \di \mu_f - \int_{\Omega\backslash K} V\varphi^p\di \mu_f - pQ_V\big(u_{|\Omega\backslash K}\big) \\[0.4cm]
& = & \disp p \int_{\Omega\backslash K} \left(\frac{\varphi}{u}\right)^{p-1}|\nabla u|^{p-2} \langle \nabla u, \nabla \varphi \rangle \di \mu_f - (p-1) \int_{\Omega\backslash K} \left(\frac{\varphi}{u}\right)^{p}|\nabla u|^{p}\di \mu_f \\[0.4cm]
& & \disp - \int_{\Omega\backslash K} V\varphi^p\di \mu_f - pQ_V\big(u_{|\Omega\backslash K}\big) \\[0.4cm]
& = & \disp \int_{\Omega \backslash K} |\nabla \varphi|^p \di \mu_f - \int_{\Omega \backslash K} \mathcal{L}(\varphi,u)\di \mu_f - \int_{\Omega\backslash K} V\varphi^p\di \mu_f - pQ_V\big(u_{|\Omega\backslash K}\big)\\[0.4cm]
& \le & \disp p Q_V\big(\varphi_{|\Omega \backslash K}\big) -p Q_V\big(u_{|\Omega \backslash K}\big).
\end{array}
\end{equation}
As $u=\varphi=g$ on $K$, we conclude $Q_V(u) \le Q_V(\varphi)$ and whence $Q_V(u) \le \capac(K,\Omega,g)$. Since $u$ lies in the $W^{1,p}$ closure of $\Dd(K, \Omega, g)$, equality \eqref{qcapacity} follows. 
\end{proof}
\begin{Remark}
\emph{By the pasting Lemma \ref{lem_pasting}, note that $u$ solving \eqref{QVcapacitor} is a supersolution on the whole $\Omega$, that is, $Q_V'(u) \ge 0$ on $\Omega$.
}
\end{Remark}
\begin{Proposition}\label{prop_QV esplicit}
In the assumptions of the previous theorem, suppose that $Q_V'(g)=0$ on a neighbourhood of $K$, and that $\partial K$ is smooth. Then,
\begin{equation}\label{explicQVcapac}
Q_V(u) = \frac{1}{p} \int_{\partial K} g\left[|\nabla g|^{p-2}\frac{\partial g}{\partial \nu}- |\nabla u|^{p-2}\frac{\partial u}{\partial \nu}\right]\di \sigma_f, 
\end{equation}
where $\nu$ is the unit normal to $\partial K$ pointing outward of $K$.
\end{Proposition}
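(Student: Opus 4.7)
The plan is to split the computation of $Q_V(u)$ into the pieces $K$ and $\Omega\setminus K$, and convert each Dirichlet-type integral into a boundary integral over $\partial K$ by integrating by parts against the PDE it satisfies. Since $u \equiv g$ on $K$ (by the boundary condition in \eqref{QVcapacitor}), I write
\begin{equation*}
p\,Q_V(u) \,=\, \int_K \bigl(|\nabla g|^p - V g^p\bigr)\di \mu_f \,+\, \int_{\Omega\setminus K} \bigl(|\nabla u|^p - V u^p\bigr)\di \mu_f.
\end{equation*}
Before manipulating these, I would verify the regularity needed for the boundary terms. On a neighbourhood $W \Supset K$, $g$ is a positive solution of $Q_V'(g)=0$, hence by Theorem \ref{teo11}$(2)$ and the smoothness of $\partial K$, $g\in C^{1,\mu}(\overline K)$ for some $\mu\in(0,1)$; positivity of $g$ rules out degeneracy of $|\nabla g|^{p-2}\nabla g$. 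On the other side, Proposition \ref{prop_capacattingida} already grants $u\in C^{1,\mu}(\overline{\Omega\setminus K})$ with $u>0$ up to $\partial K$. Thus the traces of $|\nabla g|^{p-2}\partial g/\partial\nu$ and $|\nabla u|^{p-2}\partial u/\partial\nu$ on $\partial K$ are well defined on each respective side, although they need not match (there is no $C^1$-gluing across $\partial K$ a priori).

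Next I would work on $K$. Since $Q_V'(g)=0$ on a neighbourhood of $K$, we have $\Delta_{p,f}g = -Vg^{p-1}$ pointwise on an open set containing $\overline K$. Multiplying by $g$ and using $\Delta_{p,f}g\,\di\mu_f = \dive(e^{-f}|\nabla g|^{p-2}\nabla g)\di x$, an integration by parts gives
\begin{equation*}
-\int_K Vg^p\di\mu_f \,=\, \int_{\partial K} g\,|\nabla g|^{p-2}\frac{\partial g}{\partial\nu}\di\sigma_f \,-\, \int_K |\nabla g|^p\di\mu_f,
\end{equation*}
with $\nu$ the outward unit normal to $K$; rearranging yields
\begin{equation*}
\int_K \bigl(|\nabla g|^p - Vg^p\bigr)\di\mu_f \,=\, \int_{\partial K} g\,|\nabla g|^{p-2}\frac{\partial g}{\partial\nu}\di\sigma_f.
\end{equation*}

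Finally, on $\Omega\setminus K$ I use $\Delta_{p,f}u = -Vu^{p-1}$ (from $Q_V'(u)=0$), multiply by $u$, and integrate by parts on $\Omega\setminus K$. The boundary of this set consists of $\partial\Omega$ (where $u=0$, so no contribution) and $\partial K$ (where the outward normal of $\Omega\setminus K$ is $-\nu$, and $u=g$), giving
\begin{equation*}
\int_{\Omega\setminus K}\bigl(|\nabla u|^p - V u^p\bigr)\di\mu_f \,=\, -\int_{\partial K} g\,|\nabla u|^{p-2}\frac{\partial u}{\partial\nu}\di\sigma_f.
\end{equation*}
Summing the two contributions and dividing by $p$ gives \eqref{explicQVcapac}. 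The only potential obstacle I anticipate is the regularity issue already addressed above, which is the reason one must split the integral at $\partial K$ rather than integrating by parts globally on $\Omega$; aside from this, the argument is a clean double application of Green's identity adapted to $\Delta_{p,f}$.
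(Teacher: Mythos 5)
Your proof is correct and follows essentially the same route as the paper's: split $Q_V(u)$ along $\partial K$ and convert each piece into a boundary integral via the PDE, with the boundary terms matching \eqref{explicQVcapac} exactly. The only difference is technical: since $u$ and $g$ are only $C^{1,\mu}$ and $\Delta_{p,f}g=-Vg^{p-1}$ holds in the weak (not pointwise) sense, the paper justifies the integration by parts by testing the weak formulation against the explicit cutoffs $h_\eps(\pm\rho)u$, $h_\eps(\pm\rho)g$ and passing to the limit $\eps\to 0$ via the coarea formula, which is the rigorous version of the formal Green identity you invoke.
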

\begin{proof}
Let $T \approx \partial K \times (-\eps_0, \eps_0) \Subset \Omega$ be a tubular neighbourhood of $\partial K$ where Fermi coordinates are defined, and let $\rho(x)$ be the smooth signed distance from $\partial K$, that is, $\rho(x) = \dist(x, \partial K)$ if $x \not \in K$, and $\rho(x) = - \dist(x,\partial K)$ if $x \in K$. Let $h \in \lip(\R^+_0)$ be such that $h(t)=0$ if $t\le 0$, $h(t)=t$ for $t \in[0,1]$ and $h(t)=1$ for $t \ge 1$ and, for small $\eps \in (0, \eps_0)$, set $h_\eps(t) \doteq h(t/\eps)$.
Applying \eqref{QVcapacitor} on $\Omega\backslash K$ to the test function $h_\eps(\rho) u \in \lip_0(\Omega \backslash K)$, using the coarea's formula and letting $\eps \ra 0$, since $g$ is $C^1$ we deduce
\begin{equation}\label{bordouu}
\disp 0 = \lim_{\eps \ra 0} Q_V'(u)\big[h_\eps(\rho) u\big] = p\disp Q_V\big(u_{\Omega \backslash K}\big) + \int_{\partial K} u |\nabla u|^{p-2} \frac{\partial u}{\partial \nu}\di \sigma_f.
\end{equation}
In a similar way, applying $Q_V'(g) = 0$ on $K$ to the non-negative test function $h_\eps(-\rho)g \in \lip_0(K)$ and letting $\eps \ra 0$ we deduce that 
\begin{equation}\label{bordogg}
\disp \disp 0 = \lim_{\eps \ra 0} Q_V'(u)\big[h_\eps(-\rho) g\big] = \disp pQ_V\big(u_{K}\big) - \int_{\partial K} g |\nabla g|^{p-2} \frac{\partial g}{\partial \nu}\di \sigma_f.
\end{equation}
Subtracting the two identities and using $u=g$ on $\partial K$ yields \eqref{explicQVcapac}.
\end{proof}
Next, we consider the $Q_V$-capacity of $K$ in the whole $M$:
\begin{equation}\label{def_globalcapacity}
\begin{array}{c}
\disp \Dd(K, g)  = \Big\{ \varphi \in C^0_c(M) \cap W^{1,p}_\loc(M) \ : \ \varphi \ge g \ \text{ in a neighbourhood of } K\Big\} \\[0.4cm]
\disp \capac_{Q_V}(K, g) \doteq \disp  \inf_{\varphi \in \Dd(K, g) } Q_V(\varphi). 
\end{array}
\end{equation}
Let $\{\Omega_j\}$ be an exhaustion of $M$ with $K \Subset \Omega_1$. Then, from the definitions it readily follows that 
$$
\capac_{Q_V}(K,g) = \inf_j \capac_{Q_V}(K, \Omega_j,g) = \lim_{j \ra +\infty} \capac_{Q_V}(K, \Omega_j,g).
$$
If $K$ is the closure of a open set and $\partial K$ is of class $C^{1,\alpha}$ for some $\alpha \in (0,1)$, let $u_j$ be the $Q_V$-capacitor of $(K, \Omega_j,g)$. By Proposition \ref{prop_capacattingida},
\begin{equation}\label{capaccomelimQV}
\capac_{Q_V}(K,g) = \lim_{j\ra +\infty} Q_V(u_j).
\end{equation}
Proposition \ref{prop_compagen} implies that $0\le u_j \le u_{j+1} \le g$ for each $j$, whence, by Dini theorem and elliptic estimates, $u_j$ converges locally uniformly on $M$, in $W^{1,p}_\loc(M)$ and in the $C^1$ topology on $M\backslash K$ to a weak solution $u \in C^0(M) \cap W^{1,p}_\loc(M) \cap C^{1,\mu}_\loc(M\backslash K)$ of
\begin{equation}\label{capacitorglobal}
\left\{ \begin{array}{l}
Q_V'(u) = 0 \qquad \text{on } M \backslash K, \\[0.2cm]
u= g \quad \text{on } K, \qquad 0<u \le g \quad \text{on } M\backslash K.
\end{array}\right.
\end{equation}
The pasting Lemma \ref{lem_pasting} guarantees that $Q_V'(u) \ge 0$ on the whole $M$. We call such a $u$ the $Q_V$-capacitor of $(K,g)$. 
\begin{Proposition}\label{claimbella}
In the assumptions of Proposition \ref{prop_QV esplicit}, if $Q_V'(g)=0$ on a neighbourhood of $K$, 
\begin{equation}\label{finedeltunnel}
\capac_{Q_V}(K,g) = \frac{1}{p} \int_{\partial K} g\left[|\nabla g|^{p-2}\frac{\partial g}{\partial \nu}- |\nabla u|^{p-2}\frac{\partial u}{\partial \nu}\right]\di \sigma_f,
\end{equation}
where $\nu$ is the unit normal to $\partial K$ pointing outward of $K$.
\end{Proposition}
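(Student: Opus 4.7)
\medskip

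\noindent\textbf{Proof plan.} The strategy is to apply the integration-by-parts formula \eqref{explicQVcapac} of Proposition \ref{prop_QV esplicit} to each exhaustion capacitor $u_j$, and then pass to the limit $j\to\infty$ both in the functional $Q_V(u_j)$ (which yields $\capac_{Q_V}(K,g)$ by \eqref{capaccomelimQV}) and in the boundary integral. Concretely, for each exhaustion set $\Omega_j$ with $K\Subset\Omega_j$, let $u_j$ be the $Q_V$-capacitor of $(K,\Omega_j,g)$ produced by Proposition \ref{prop_capacattingida}. Since $Q_V'(g)=0$ on a neighbourhood of $K$ and $\partial K$ is smooth, Proposition \ref{prop_QV esplicit} applies verbatim to each $u_j$, giving
\begin{equation}\label{boundaryj}
Q_V(u_j) \;=\; \frac{1}{p} \int_{\partial K} g\left[|\nabla g|^{p-2}\frac{\partial g}{\partial \nu} - |\nabla u_j|^{p-2}\frac{\partial u_j}{\partial \nu}\right]\di \sigma_f.
\end{equation}

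\smallskip

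\noindent The core step is then to justify the convergence of the boundary integrand. Recall from the paragraph preceding the statement that $0\le u_j \le u_{j+1}\le g$ on $M$, that $u_j\to u$ locally uniformly and in $C^1_\loc(M\setminus K)$, and that $u_j=g$ on $K$ for every $j$. What is missing is $C^1$ convergence up to $\partial K$ from the outside. To obtain this, I would fix a tubular neighbourhood $T\approx \partial K\times(-\eps_0,\eps_0)\Subset \Omega_1$ where $g\in C^{1,\alpha}(\overline T)$ (this being guaranteed by Theorem \ref{teo11}(2) applied to the equation $Q_V'(g)=0$ and the smoothness of $\partial K$), and use the Dirichlet boundary condition $u_j=g$ on $\partial K$ together with the uniform $L^\infty$ bound $u_j\le g$. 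Theorem \ref{teo11}(2) then furnishes a uniform $C^{1,\mu}$ bound for $u_j$ on the one-sided neighbourhood $T\cap(\overline{\Omega_1}\setminus K)$, with $\mu\in(0,1)$ independent of $j$. Arzelà–Ascoli, combined with the pointwise convergence $u_j\nearrow u$, upgrades the convergence to $C^1(\overline{T\setminus K})$, so that $|\nabla u_j|^{p-2}\partial_\nu u_j \to |\nabla u|^{p-2}\partial_\nu u$ uniformly on $\partial K$.

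\smallskip

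\noindent To conclude, I would pass to the limit in \eqref{boundaryj}: the left-hand side tends to $\capac_{Q_V}(K,g)$ by \eqref{capaccomelimQV}, while by the uniform convergence just established and the continuity of $g,|\nabla g|^{p-2}\partial_\nu g$ on the compact set $\partial K$, the right-hand side tends to
\[
\frac{1}{p} \int_{\partial K} g\left[|\nabla g|^{p-2}\frac{\partial g}{\partial \nu}- |\nabla u|^{p-2}\frac{\partial u}{\partial \nu}\right]\di \sigma_f,
\]
which yields \eqref{finedeltunnel}. I expect no subtle obstacle beyond checking the uniform $C^{1,\mu}$ estimate at $\partial K$; the main point is that the boundary data of all $u_j$ coincide with $g$ on $\partial K$, so the classical boundary regularity theory of Lieberman cited in Remark \ref{rem14}(2) gives the bound with constants depending only on $\|g\|_{C^{1,\alpha}(\partial K)}$, $\|u_j\|_{L^\infty}\le\|g\|_{L^\infty(T)}$ and $\|V\|_{L^\infty(T)}$, all of which are uniform in $j$.
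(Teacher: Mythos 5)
Your proposal is correct and follows essentially the same route as the paper: apply the boundary-integral formula of Proposition \ref{prop_QV esplicit} to each exhaustion capacitor $u_j$, then pass to the limit using \eqref{capaccomelimQV} and the $C^1$ convergence of $u_j$ to $u$ at $\partial K$. The paper's own argument is even terser (it simply asserts $u_j \to u$ in $C^1(\partial K)$ and concludes), whereas you correctly identify and supply the one technically non-trivial ingredient — the uniform one-sided $C^{1,\mu}$ estimate up to $\partial K$ from Lieberman's boundary regularity (Theorem \ref{teo11}(2)), combined with Arzelà–Ascoli and the monotone pointwise convergence — which makes your write-up a more complete version of the same proof.
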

\begin{proof}
By Proposition \ref{prop_capacattingida}, $\capac_{Q_V}(K,g) = \lim_j Q_V(u_j)$, where $u_j$ is the $Q_V$-capacitor of $(K, \Omega_j,g)$. Now, since $u_j \ra u$ in $C^1(\partial K)$, it is enough to pass to the limit in \ref{explicQVcapac}. 
\end{proof}

Next Theorem, the core of this section, relates the subcriticality of $Q_V$ and the $Q_V$-capacity with other basic properties, which we will define below. It is due to Y. Pinchover and K. Tintarev (see \cite{pinchovertintarev}), and it is known in the literature as the ground state alternative. The authors state it for $f$ constant and $M= \R^m$. Our contribution here is to include the $Q_V$-capacity properties to the above picture. However, since at some point of \cite{pinchovertintarev} the authors use inequalities for which we found no counterpart in a manifold setting, we prefer to provide a full proof which sometimes uses arguments that differ from those in \cite{pinchovertintarev, pinchovertintarev2}, though keeping the same guidelines. 
\begin{Definition}\label{def_weighted}
For $V  \in L^\infty_\loc(M)$, define $Q_V$ as in \eqref{06} and let $\Omega \subseteq M$ be an open set.  
\begin{itemize}
\item[iii)] $Q_V$ has a \emph{\textbf{weighted spectral gap}} on $\Omega$ if there exists $W \in C^0(\Omega)$, $W>0$ on $\Omega$ such that
\begin{equation}\label{114}
\disp \int_\Omega W(x) |\varphi|^p \di \mu_f \le Q_V (\varphi) \qquad \forall \ \varphi \in \lip_c (\Omega).
\end{equation}
\vspace{0.1cm}
\item[iv)] A sequence $\{\eta_j\} \in L^\infty_c(\Omega) \cap W^{1,p}(\Omega)$ is said to be a \emph{\textbf{null sequence}} if  $\eta_j \ge 0$ a.e. for each $j$, $Q_V(\eta_j) \ra 0$ as $j \ra +\infty$ and there exists a relatively compact open set $B\Subset M$ and $C>1$ such that $C^{-1} \le \|\eta_j\|_{L^p(B)} \le C$ for each $j$. 
\vspace{0.1cm}
\item[v)] A function $0 \le \eta \in W^{1,p}_\loc(\Omega)$, $\eta \ge 0$, $\eta \not \equiv 0$ is a \emph{\textbf{ground state}} for $Q_V$ on $\Omega$ if it is the $L^p_\loc(\Omega)$ limit of a null sequence. 
\end{itemize}
\end{Definition}
\begin{Theorem}\label{teo_alternative}
Let $(M, \metric)$ be connected and non-compact, and for $V \in L^\infty_\loc(M)$ consider an operator $Q_V \ge 0$ on $M$. Then, either $Q_V$ has a weighted spectral gap or a ground state on $M$, and the two possibilities mutually exclude. Moreover, the following properties are equivalent:
\begin{itemize}
\item[$(i)_{\SR}$] $Q_V$ has a weighted spectral gap. 
\vspace{0.1cm}
%
\item[$(ii)_{\SR}$] $Q_V$ is subcritical on $M$.
\vspace{0.1cm}
\item[$(iii)_{\SR}$] There exist two positive solutions $u_1,u_2 \in C^0(M) \cap W^{1,p}_\loc(M)$ of $Q'_V(u) \ge 0$ which are not proportional.
\vspace{0.1cm}
\item[$(iv)_{\SR}$] For some (any) $K \Subset M$ compact with non-empty interior, and for some (any) $0<g \in C^0(M) \cap W^{1,p}_\loc(M)$ solving  $Q_V'(g) \ge 0$, $\capac(K,g)>0$.
\vspace{0.1cm}
%

\end{itemize}
When $Q_V$ has a ground state $\eta$, $\eta$ solves $Q_V'(\eta)=0$, and in particular $\eta \in C^{1,\mu}_\loc(M)$, $\eta>0$ on $M$. Furthermore, the next properties are equivalent:
\begin{itemize}
\item[$(i)_{\GSR}$] $Q_V$ has a ground state.
\vspace{0.1cm}
%
%
\item[$(ii)_{\GSR}$] All positive solutions $g \in C^0(M) \cap W^{1,p}_\loc(M)$ of $Q_V'(g)\ge 0$ are proportional; in particular, each positive supersolution is indeed a solution (hence, a ground state).
\vspace{0.1cm}
\item[$(iii)_{\GSR}$] For some (any) $K \Subset M$ compact with non-empty interior, and for some (any) $0<g \in C^0(M) \cap W^{1,p}_\loc(M)$ solving  $Q_V'(g) \ge 0$, $\capac(K,g)= 0$.
\vspace{0.1cm}
\end{itemize}
\end{Theorem}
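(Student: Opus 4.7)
The proof organizes the eight properties (four subcritical, three ground-state, plus the dichotomy itself) through a web of implications driven by the Allegretto--Piepenbrink theorem (Proposition \ref{pro115}), the Lagrangian representation (Proposition \ref{prop_lagrangian}), and the pasting lemma (Lemma \ref{lem_pasting}). The mutual exclusion of $(i)_{\SR}$ and $(i)_{\GSR}$ is direct: a null sequence $\{\eta_j\}$ with $C^{-1} \le \|\eta_j\|_{L^p(B,\di\mu_f)} \le C$ and $Q_V(\eta_j)\to 0$ cannot coexist with a weighted spectral gap $W\in C^0(M)$, $W>0$, since $(\min_{\overline B}W)\,C^{-p} \le \int W|\eta_j|^p\di\mu_f \le p\,Q_V(\eta_j) \to 0$ yields a contradiction.

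For the subcriticality block, $(i)_{\SR} \Rightarrow (ii)_{\SR}$ is immediate. For $(ii)_{\SR} \Rightarrow (iii)_{\SR}$, given a Hardy weight $w$ (truncating, if necessary, to land in $L^\infty_\loc$), inequality \eqref{09} gives $Q_{V+pw} \ge 0$, and applying Proposition \ref{pro115} to both $Q_V$ and $Q_{V+pw}$ produces positive solutions $g_0$ of $Q_V'(g_0)=0$ and $g_1$ of $Q_{V+pw}'(g_1)=0$; the relation $Q_V'(g_1)=pwg_1^{p-1}$ shows that $g_1$ is a positive supersolution of $Q_V'$ non-proportional to $g_0$, since otherwise $w\equiv 0$. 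The converse $(iii)_{\SR} \Rightarrow (ii)_{\SR}$ is the most delicate: starting from non-proportional positive supersolutions $u_1, u_2$, I would symmetrize the two Lagrangian bounds $Q_V(\varphi) \ge \int \mathcal{L}(\varphi,u_i)\di\mu_f$ and exploit the strict positivity of $\mathcal{L}(\varphi,u_1)+\mathcal{L}(\varphi,u_2)$ at every point where $\nabla\log u_1\ne \nabla\log u_2$, producing a continuous strict Hardy weight after a local exhaustion argument.

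For the capacity equivalences $(ii)_{\SR}\Leftrightarrow(iv)_{\SR}$ and $(i)_{\GSR}\Leftrightarrow(iii)_{\GSR}$, I work directly with the capacitor. Fix $K\Subset M$ compact with nonempty interior and a positive supersolution $g$, and exhaust $M$ by $\{\Omega_j\}$; the monotone limit $u$ of the capacitors $u_j$ on $(K,\Omega_j,g)$ is positive, satisfies $u=g$ on $K$ and $u\le g$ on $M$, and by Lemma \ref{lem_pasting} is a supersolution of $Q_V'$ on $M$. Either $u\not\equiv g$, in which case $u$ and $g$ are non-proportional positive supersolutions and $\capac_{Q_V}(K,g)>0$ follows from \eqref{finedeltunnel} combined with the Hopf lemma on $\partial K$; or $u\equiv g$ on $M$, in which case $\{u_j/\|u_j\|_{L^p(K,\di\mu_f)}\}$ is a null sequence (the denominator is constant and $Q_V(u_j)=\capac_{Q_V}(K,\Omega_j,g)\searrow 0$ by the Lagrangian bound $Q_V(u_j)\ge \int\mathcal{L}(u_j,g)\di\mu_f\to 0$), yielding a ground state together with $\capac_{Q_V}(K,g)=0$. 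This single dichotomy proves both capacity equivalences and completes the main dichotomy of the theorem.

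For the ground-state block, suppose $Q_V$ has a ground state $\eta$ with null sequence $\eta_j\to\eta$ in $L^p_\loc$, and let $g$ be any positive supersolution. From $0\le \int\mathcal{L}(\eta_j,g)\di\mu_f\le p\,Q_V(\eta_j)\to 0$ and the pointwise non-negativity of the Lagrangian, I deduce $\mathcal{L}(\eta,g)=0$ a.e., so by Proposition \ref{prop_lagrangian} $\eta$ is a constant multiple of $g$ on each connected component; in particular $\eta$ inherits positivity and $C^{1,\mu}_\loc$ regularity from $g$, and solves $Q_V'(\eta)=0$. This proves $(i)_{\GSR}\Rightarrow(ii)_{\GSR}$ and the solution/regularity claim for ground states; the converse $(ii)_{\GSR}\Rightarrow(i)_{\GSR}$ then follows by ruling out $(iii)_{\SR}$. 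I expect the main obstacle to be precisely this last passage --- upgrading the integral smallness $\int \mathcal{L}(\eta_j,g)\di\mu_f\to 0$ to pointwise vanishing of $\mathcal{L}(\eta,g)$ --- which demands extracting a.e.-convergence of $\nabla\eta_j$ via local $W^{1,p}$-compactness coupled with the strict convexity of $\mathcal{L}$ in its gradient argument, a step whose $p=2$ counterpart is standard but which requires genuinely quasilinear tools here.
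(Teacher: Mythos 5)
Your overall architecture is close to the paper's, and several pieces are sound: the mutual-exclusion argument, $(i)_{\SR}\Rightarrow(ii)_{\SR}$, the construction of a second supersolution from a Hardy weight (the paper instead routes $(ii)_{\SR}\Rightarrow(i)_{\SR}$ via a null sequence of capacitors, and gets $(iii)_{\SR}$ from the capacity dichotomy rather than directly), and your identification of the analytical crux in $(i)_{\GSR}\Rightarrow(ii)_{\GSR}$ — the paper resolves it exactly as you suspect, via a local $W^{1,p}$-boundedness claim and a careful weak-convergence argument with the truncations $\bar\eta_j=\min\{\eta_j,g\}$. The symmetrized-Lagrangian route you sketch for $(iii)_{\SR}\Rightarrow(ii)_{\SR}$ is a genuinely different and plausible idea (the paper instead proves $(iv)_{\SR}\Rightarrow(iii)_{\SR}$ and then closes the loop through Claim~2), but it is only an intention, not a proof.

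The capacity dichotomy argument, however, has a real gap. In the branch $u\equiv g$ you assert $Q_V(u_j)=\capac_{Q_V}(K,\Omega_j,g)\searrow 0$ ``by the Lagrangian bound $Q_V(u_j)\ge\int\mathcal{L}(u_j,g)\,\di\mu_f\to 0$.'' That inequality points the wrong way: it only gives $\liminf_j Q_V(u_j)\ge 0$, which is vacuous, and does not show that the capacities vanish. Separately, in the branch $u\not\equiv g$ you invoke \eqref{finedeltunnel} together with the Hopf lemma, but Proposition \ref{claimbella} requires $Q_V'(g)=0$ in a neighbourhood of $K$, whereas your $g$ is an arbitrary positive supersolution; the boundary-flux formula is simply not available. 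The paper sidesteps both issues: it first establishes the ground-state chain $(i)_{\GSR}\Leftrightarrow(ii)_{\GSR}\Leftrightarrow(iii)_{\GSR}$ (the direction $(iii)_{\GSR}\Rightarrow(i)_{\GSR}$ uses the \emph{hypothesis} $\capac=0$ together with \eqref{capaccomelimQV} to produce a null sequence, never the other way around), and in $(iv)_{\SR}\Rightarrow(iii)_{\SR}$ it deliberately switches from the given supersolution $g$ to an exact solution $\bar g$ of $Q_V'(\bar g)=0$, showing $\capac(K,\bar g)>0$ by contradiction with the ground-state equivalences, and only then applies \eqref{finedeltunnel} to $\bar g$ (for which the hypothesis is met) to conclude $u\neq\bar g$. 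To repair your argument you would need both to replace $g$ by a solution before appealing to \eqref{finedeltunnel}, and to find an independent proof that $u\equiv g$ forces $\capac(K,g)=0$; the cleanest way is to adopt the paper's ordering of implications.
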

%
%
%

%
\begin{proof}
Hereafter, $L^p$ and $W^{1,p}$ spaces will be considered with respect to the measure $\di \mu_f$. We begin with the following fact.\\[0.2cm]
\noindent \textbf{Claim 1: } Fix an open set $U \Subset M$. If $\{\phi_j\} \subset L^\infty_c(M) \cap W^{1,p}(M)$ is such that $\|\phi_j\|_{L^p(U)} + Q_V(\phi_j) \le C$ for some $C>0$ independent of $j$, then $\{\phi_j\}$ is locally bounded in $W^{1,p}(M)$.\\[0.2cm]
\textbf{Proof of Claim 1. } Up to replacing $\phi_j$ with $|\phi_j|$, we can assume that $\phi_j \ge 0$ a.e. on $M$. Using that $Q_V$ is non-negative on $M$, choose a positive solution $g \in C^{1,\mu}_\loc(M)$ of $Q_V'(g)=0$ and consider the Lagrangian representation of $Q_V(\phi_j)$:
$$
Q_V(\phi_j) = \int_M \mathcal{L}(\phi_j,g)\di \mu_f.
$$
Fix $\Omega \Subset M$ containing $\overline U$. Since $Q_V(\phi_j) \le C$ and $\mathcal{L}(\phi_j,g) \ge 0$ on $M$, it holds
\begin{equation}\label{walkon}
0 \le \limsup_{j \ra +\infty} \int_{\Omega} \mathcal{L}(\phi_j,g)\di \mu_f \le C.
\end{equation}
By using Cauchy-Schwarz and Young inequalities on the third addendum of the expression of $\mathcal{L}(\phi_j,g)$ we deduce that, for each $\eps >0$,
\begin{equation}\label{lowerb}
\mathcal{L}(\phi_j,g) \ge (1-\eps^p) |\nabla \phi_j|^p + (p-1)\left(1-\eps^{-\frac{p}{p-1}}\right)\left(\frac{\phi_j}{g}\right)^{p}|\nabla g|^p.
\end{equation}
In our assumptions, $|\nabla \log g|\in L^\infty(\Omega)$. Setting $\eps= 1/2$ in \eqref{lowerb}, integrating and using \eqref{walkon} we get
\begin{equation}
\limsup_{j \ra +\infty} \left[ \left(1-2^{-p}\right) \int_\Omega |\nabla \phi_j|^p\di \mu_f + (p-1)\left(1-2^{\frac{p}{p-1}}\right)\|\nabla \log g\|_{L^\infty(\Omega)} \int_{\Omega} |\phi_j|^p\di \mu_f \right] \le C.
\end{equation}
From this inequality we argue the existence of constants $C_1,C_2>0$ independent of $j$ such that 
\begin{equation}\label{usefulestimate}
\|\nabla \phi_j\|_{L^p(\Omega)} \le C_1\|\phi_j\|_{L^p(\Omega)} + C_2.
\end{equation}
Suppose now, by contradiction, that $\{\phi_j\}$ is not bounded in $W^{1,p}(\Omega)$. By \eqref{usefulestimate}, $\|\phi_j\|_{L^p(\Omega)}$ diverge. Set $\bar\phi_j = \phi_j/\|\phi_j\|_{L^p(\Omega)}$. Then, by \eqref{usefulestimate} $\{\bar\phi_j\}$ is bounded in $W^{1,p}(\Omega)$, thus it has a subsequence (still called $\{\bar\phi_j\}$) converging weakly in $W^{1,p}$, strongly in $L^p$ and pointwise almost everywhere to some non-negative function $\Psi$ satisfying $\|\Psi\|_{L^p(\Omega)} = \lim_j\|\bar\phi_j\|_{L^p(\Omega)}=1$. Since $|\nabla \log g| \in L^\infty(\Omega)$ we straightforwardly have that 
\begin{equation}\label{semplici}
\begin{array}{rll}
(i) & \bar\phi_j^p|\nabla \log g|^p \ra \Psi^p|\nabla \log g|^p & \quad \text{in } L^1(\Omega), \\[0.2cm]
(ii)& \bar\phi_j^{p-1}|\nabla \log g|^{p-1} \ra \Psi^{p-1}|\nabla \log g|^{p-1} & \quad \text{in } L^{\frac{p}{p-1}}(\Omega).
\end{array}
\end{equation}
We just prove $(ii)$, the other being a consequence of the proof. By the elementary inequalities 
\begin{equation}\label{elementaty1}
(x-y)^\beta \le x^\beta - y^\beta \le Cx^{\beta-1}(x-y) \qquad \text{for } y \in [0,x] \text{ and } \beta>1, 
\end{equation}
for some $C$ depending on $\beta$, with a final application of H\"older inequality we deduce that
%
\begin{equation}\label{utilistime}
\begin{array}{l}
\disp \|(\bar\phi_j^{p-1}-\Psi^{p-1})|\nabla \log g|^{p-1}\|_{L^{p/(p-1)}(\Omega)}^{p/(p-1)} \le \|\nabla \log g\|_{L^\infty(\Omega)}^p\int_\Omega \big|\bar\phi_j^{p-1}-\Psi^{p-1}\big|^{\frac{p}{p-1}}\di \mu_f \\[0.3cm]
\disp \le \|\nabla \log g\|_{L^\infty(\Omega)}^p\int_\Omega \big|\bar\phi_j^{p}-\Psi^{p}\big|\di \mu_f \le C\|\nabla \log g\|^p_{L^\infty(\Omega)} \int_\Omega \big(\max\{\bar\phi_j,\Psi\}\big)^{p-1}\big|\bar\phi_j-\Psi|\di \mu_f, \\[0.3cm]
\disp \le C\|\nabla \log g\|^p_{L^\infty(\Omega)} \left\|\bar\phi_j + \Psi\right\|_{L^p(\Omega)}^{(p-1)/p}\|\bar\phi_j-\Psi\|_{L^p(\Omega)},
\end{array}
\end{equation}
and this latter goes to zero $\bar\phi_j \ra \Psi$ in $L^p(\Omega)$. Now, coupling $(ii)$ with the weak convergence of $\bar\phi_j$ to $\Psi$ in $W^{1,p}(\Omega)$, we get
$$
\int_\Omega \left(\frac{\bar\phi_j}{g}\right)^{p-1} |\nabla g|^{p-2} \langle \nabla g, \nabla \bar\phi_j \rangle \di \mu_f \longrightarrow \int_\Omega \left(\frac{\Psi}{g}\right)^{p-1} |\nabla g|^{p-2} \langle \nabla g, \nabla \Psi \rangle \di \mu_f,
$$
so that, combining $(i), (ii)$ and the weak lower semicontinuity of $\|\cdot\|_{W^{1,p}(\Omega)}$, 
$$
0 \le \int_\Omega \mathcal{L}(\Psi,g)\di \mu_f \le \liminf_{j\ra +\infty}\int_\Omega \mathcal{L}(\bar\phi_j,g)\di \mu_f = \|\phi_j\|^{-p}_{L^p(\Omega)} \int_\Omega \mathcal{L}(\phi_j,g)\di \mu_f \ra 0 
$$
as $j \ra +\infty$. Hence $\mathcal{L}(\Psi,g)=0$ on $\Omega$, so by Proposition \ref{prop_lagrangian} $\Psi=cg$ for some constant $c \ge 0$. However, since $\|\phi_j\|_{L^p(U)}\le C$ for each $j$,
$$
0 \le \|\Psi\|_{L^p(U)} = \lim_{j \ra +\infty} \|\bar\phi_j\|_{L^p(U)} = \lim_{j \ra +\infty} \frac{\|\phi_j\|_{L^p(U)}}{\|\phi_j\|_{L^p(\Omega)}} \le \limsup_{j \ra +\infty} \frac{C}{\|\phi_j\|_{L^p(\Omega)}} =0,
$$
hence $c=0$ and $\Psi \equiv 0$ on $\Omega$, contradicting the fact that $\|\Psi\|_{L^p(\Omega)}=1$. This concludes the proof of the claim.\\[0.2cm]
\noindent \textbf{Claim 2: } Either $Q_V$ has a weighted spectral gap or a ground state, but not both.\\[0.2cm]
\noindent \textbf{Proof of Claim 2.} For a relatively compact open set $U$, define the $L^p$-capacity $c$ as follows:
$$
\Dd(U) = \Big\{\varphi \in L^\infty_c(M) \cap W^{1,p}(M) : \|\varphi\|_{L^p(U)} =1\Big\}, \qquad c(U) = \inf_{\varphi \in \Dd(U)} Q_V(\varphi),
$$
where, as usual, the $L^p$-norm is computed with respect to $\di \mu_f$. Then, two mutually exclusive cases may occur: ether $c(U) >0$ for each $U$, or $c(U) =0$ for some $U$. In the first case, it is easy to see that $Q_V$ has a weighted spectral gap. Indeed, let $\{U_j\}$ be a locally finite covering of $M$ via relatively compact, open sets, set $c_j = c(U_j)>0$ and let $\{t_j\}$ be a sequence of positive numbers such that $\sum_jt_j=1$. For each $\varphi \in \lip_c(M)$, by the definition of $c_j$ we get
$Q_V(\varphi) \ge c_j \|\varphi\|_{L^p(U_j)}^p$, and summing up we get
$$
Q_V(\varphi) = \left[\sum_{j=1}^{+\infty} t_j\right] Q_V(\varphi) \ge \sum_{j=1}^{+\infty} t_j c_j \int_{U_j}|\varphi|^p \di \mu_f = \int_M \hat W|\varphi|^p \di \mu_f,
$$
where
$$
\hat W(x) = \sum_{j=1}^{+\infty} t_jc_j 1_{U_j}(x) >0  \qquad \text{on } M.
$$
We can thus choose a weighted spectral gap $W$ by taking a positive, continuous function $W$ not exceeding $\hat W$.\\
Now, suppose that $c(U)=0$ for some $U$. We show that there exists a ground state. Indeed, by the definition of $c(U)$ there exists $\{\eta_j\} \subset L^\infty_c(M) \cap W^{1,p}(M)$ such that $\|\eta_j\|_{L^p(U)}=1$ and $Q_V(\eta_j) \ra 0$. Up to replacing $\eta_j$ with $|\eta_j|$, we can suppose that $\eta_j \ge 0$ a.e. on $M$. By Claim 1, $\eta_j$ is locally bounded in $W^{1,p}$, and a Cantor type argument on an increasing exhaustion of $M$ ensures the existence of a subsequence, still called $\{\eta_j\}$, converging weakly in $W^{1,p}_\loc(M)$ and strongly in $L^p_\loc(M)$ to some function $\eta \in W^{1,p}_\loc(M)$. By definition, $\eta$ is a ground state.\\[0.2cm] 
We now show our desired equivalences. To establish those involving $(iv)_{\SR}$ and $(iii)_{\GSR}$, where the ``some/all" alternative appears, then we will always assume the weakest alternative and prove the strongest one.\\[0.2cm]
$\mathbf{(i)_{\GSR} \Rightarrow (ii)_{\GSR}}$. Let $\eta \ge 0$ be a ground state, and let $\{\eta_j\} \subset L^\infty_c(M) \cap W^{1,p}(M)$ be a null sequence converging in $L^p_\loc$ to $\eta$. Then, by Claim 1 $\{\eta_j\}$ is locally bounded in $W^{1,p}$, thus up to passing to a subsequence we can assume that also $\eta_j \ra \eta$ weakly in $W^{1,p}_\loc$. Consider a positive solution $g \in C^0(M) \cap  W^{1,p}_\loc(M)$ of $Q_V'(g)\ge 0$. Fix $\Omega \Subset M$. Up to multiply $g$ by a large positive constant, we can suppose that $\{x \in \Omega: \eta(x) < g(x)\}$ has positive measure. Let $\bar\eta_j= \min\{\eta_j,g\}$, and note that $\{\bar\eta_j\}$ is still a null sequence, converging weakly in $W^{1,p}_\loc$ to $\bar\eta= \min\{\eta,g\}$. Consider the Lagrangian representation
\begin{equation}\label{lagrine}
Q_V(\bar\eta_j) \ge \int_M \mathcal{L}(\bar\eta_j,g)\di \mu_f
\end{equation}
guaranteed by Proposition \ref{prop_lagrangian}. We claim that
\begin{equation}\label{claimweakly}
\frac{\bar\eta_j^p}{g^{p-1}} \ra \frac{\bar\eta^p}{g^{p-1}} \qquad \text{weakly in } W^{1,p}(\Omega). 
\end{equation}
To see this, we follows arguments analogous to those yielding \eqref{semplici}. Choose a constant $c_\Omega>0$ large enough to satisfy $g \ge c_\Omega^{-1}$ on $\Omega$ and $\bar\eta_j \le c_\Omega$. By a  direct computation, $\|\bar\eta_j^p/g^{p-1}\|_{W^{1,p}(\Omega)}$ is uniformly bounded so that, by density, it is enough to check the weak convergence with test function $\varphi \in \lip_0(\Omega)$. From
$$ 
\left|\int_\Omega\frac{\bar\eta_j^p-\bar\eta^p}{g^{p-1}} \varphi \di \mu_f \right| \le c_\Omega^{p-1}\|\varphi\|_{L^\infty(\Omega)} \int_\Omega \big|\bar\eta_j^p-\bar\eta^p\big|\di \mu_f,
$$
applying the inequalities in \eqref{utilistime} from the second line to the end we deduce that $\bar\eta_j^p/g^{p-1}\ra \bar\eta^p/g^{p-1}$ weakly in $L^p(\Omega)$. Regarding the gradient part,  
$$
\left|\int_\Omega \langle \nabla\left(\frac{\bar\eta_j^p-\bar\eta^p}{g^{p-1}}\right), \nabla \varphi \rangle \di \mu_f\right| \le \mathrm{(I)} + \mathrm{(II)}, 
$$
where 
$$
\begin{array}{rcl}
\mathrm{(I)} & = & \disp (p-1)\int_\Omega \big|\bar\eta_j^p-\bar\eta^p\big||\nabla g| \frac{|\nabla \varphi|}{g^{p}} \\[0.3cm]
\mathrm{(II)} & = & \disp p \left| \int_\Omega \langle \left(\frac{\bar\eta_j}{g}\right)^{p-1} \nabla \varphi, \nabla \bar\eta_j \rangle \di \mu_f - \int_\Omega\langle\left(\frac{\bar\eta}{g}\right)^{p-1}\nabla \varphi, \nabla \bar\eta\rangle \di \mu_f \right|.
\end{array}
$$
As for (I), by H\"older and both the inequalities in  \eqref{elementaty1} we deduce 
$$
\begin{array}{rcl}
\frac{1}{p-1}\mathrm{(I)} & \le & \disp c_\Omega^p \|\nabla \varphi\|_{L^\infty(\Omega)}\|\nabla g\|_{L^p(\Omega)}\left(\int_\Omega \big|\bar\eta_j^p-\bar\eta^p\big|^{\frac{p}{p-1}}\di \mu_f\right)^{\frac{p-1}{p}} \\[0.3cm]
& \le & \disp c_\Omega^p \|\nabla \varphi\|_{L^\infty(\Omega)}\|\nabla g\|_{L^p(\Omega)}\left(\int_\Omega \left|\bar\eta_j^{\frac{p^2}{p-1}}-\bar\eta^{\frac{p^2}{p-1}}\right|\di \mu_f\right)^{\frac{p-1}{p}} \\[0.3cm]
& \le & \disp c_\Omega^p \|\nabla \varphi\|_{L^\infty(\Omega)}\|\nabla g\|_{L^p(\Omega)}C^{\frac{p-1}{p}}\left(\int_\Omega \max\{\bar\eta_j,\bar\eta\}^{\frac{p^2}{p-1}-1}\left|\bar\eta_j-\bar\eta\right|\di \mu_f\right)^{\frac{p-1}{p}} \\[0.3cm]
& \le & \disp c_\Omega^{p + \frac{p}{p-1}-\frac{p-1}{p}} \|\nabla \varphi\|_{L^\infty(\Omega)}\|\nabla g\|_{L^p(\Omega)}C^{\frac{p-1}{p}}\left(\int_\Omega \left|\bar\eta_j-\bar\eta\right|\di \mu_f\right)^{\frac{p-1}{p}}.
\end{array}
$$
Again by H\"older inequality, the last integral goes to zero since $\bar\eta_j \ra \bar \eta$ in $L^p(\Omega)$, which shows that (I)$\ra 0$ as $j \ra +\infty$. Finally, we consider (II). To show that (II)$\ra 0$, using the weak convergence if $\bar\eta_j$ to $\bar\eta$ in $W^{1,p}(\Omega)$ and standard estimates it is enough to prove that 
$$
\left(\frac{\bar\eta_j}{g}\right)^{p-1} \nabla \varphi \ra \left(\frac{\bar\eta}{g}\right)^{p-1} \nabla \varphi \qquad \text{strongly in } L^{\frac{p}{p-1}}(\Omega).
$$
This follows from
$$
\int_\Omega |\nabla \varphi|^{\frac{p}{p-1}} \left|\left(\frac{\bar\eta_j}{g}\right)^{p-1}- \left(\frac{\bar\eta}{g}\right)^{p-1}\right|^{\frac{p}{p-1}} \di \mu_f \le c_\Omega^{p}\|\nabla \varphi\|_{L^\infty(\Omega)}^{\frac{p}{p-1}} \int_\Omega \left| \bar\eta_j^{p-1}-\bar\eta^{p-1}\right|^{\frac{p}{p-1}}\di \mu_f 
$$
and inequalities analogous to those in the second and third lines of \eqref{utilistime}. This concludes the proof of \eqref{claimweakly}.\\
Now, \eqref{claimweakly} implies that
$$
\int_\Omega |\nabla g|^{p-2} \langle \nabla g, \nabla \left(\frac{\bar\eta_j^p}{g^{p-1}}\right) \rangle \di \mu_f \longrightarrow \int_\Omega |\nabla g|^{p-2} \langle \nabla g, \nabla \left(\frac{\bar\eta^p}{g^{p-1}}\right) \rangle \di \mu_f,
$$
so that integrating on $\Omega$ the Lagrangian identity 
$$
\mathcal{L}(\bar\eta_j,g) = |\nabla \bar\eta_j|^p - |\nabla g|^{p-2} \langle \nabla g, \nabla \left(\frac{\bar\eta_j^p}{g^{p-1}}\right) \rangle 
$$
and using the weak lower semicontinuity of the $W^{1,p}$ norm we deduce
\begin{equation}\label{ufi}
0 \le \int_\Omega \mathcal{L}(\bar\eta,g)\di \mu_f \le \liminf_{j\ra +\infty} \int_\Omega \mathcal{L}(\bar\eta_j,g)\di \mu_f.
\end{equation}
Inequalities \eqref{lagrine} and $\mathcal{L}(\bar\eta_j,g)\ge 0$ on $M$ then imply
$$
0 \le \int_\Omega \mathcal{L}(\bar\eta_j,g)\di \mu_f \le \int_M \mathcal{L}(\bar\eta_j,g)\di \mu_f \le Q_V(\bar\eta_j) \longrightarrow 0
$$
as $j \ra +\infty$, so we conclude by \eqref{ufi} that $\mathcal{L}(\bar\eta,g)\equiv 0$ on $\Omega$. By Proposition \ref{prop_lagrangian}, $\bar\eta = cg$ on $\Omega$ for some $c\ge 0$. In fact, $c>0$ since $\|\eta\|_{L^p(U)} = \lim_j \|\eta_j\|_{L^p(U)} \neq 0$, and $c<1$ since $\{x \in \Omega : \eta(x) <g(x)\}$ has positive measure. Therefore, $\bar\eta \equiv \eta$ on $\Omega$, showing that $g$ is a positive multiple of $\eta$ on $\Omega$, that is, $(ii)_{\GSR}$ holds.\\[0.2cm] 
\noindent\textbf{Claim 3. } When $Q_V$ has a ground state $\eta$, $\eta\in C^{1,\mu}_\loc(M)$, is positive and solves $Q_V'(\eta)=0$.\\[0.2cm] 
\noindent \textbf{Proof of Claim 3. } By $(i)_{\GSR} \Rightarrow (ii)_{\GSR}$, all solutions $g \in C^0(M) \cap W^{1,p}_\loc(M)$ of $Q_V'(g) \ge 0$ are proportional. Choosing $g$ to be a positive solution of $Q_V'(g)=0$ (which exists by Proposition \ref{pro115}), we get that $\eta>0$ solves $Q_V'(\eta)=0$ and $\eta \in C^{1,\mu}_\loc(M)$, proving the claim.\\[0.2cm]
\noindent $\mathbf{(iii)_{\GSR} \Rightarrow (i)_{\GSR}}$. Since $\mathrm{Int}(K) \neq \emptyset$ we select a closed smooth geodesic ball $B$ contained in $\mathrm{Int}(K)$. By the monotonicity of capacity, $\capac_{Q_V}(B,g)=0$. Fix an exhaustion $\{\Omega\}_j$ of $M$ with $B\Subset \Omega_1$, let $\eta_j$ be the $Q_V$-capacitor of $(B,\Omega_j,g)$ extended with zero outside $\Omega_j$, and let $\eta$ be the capacitor of $(B,g)$. Then, \eqref{capaccomelimQV} ensures that $Q_V(\eta_j)\ra \capac_{Q_V}(B,g)=0$, and since $\eta_j \ra \eta$ in $L^p_\loc(M)$ we deduce that $\eta$ is the desired ground state.\\[0.2cm]
$\mathbf{(iv)_{\SR}\Rightarrow (iii)_{\SR}}$. Up to enlarging $K$ (capacity increases), we can assume that $K$ is the closure of a relatively compact open set with smooth boundary. Now, consider a solution $\bar g \in C^{1,\mu}_\loc(M)$ of $Q_V'(\bar g)=0$ on $M$. Up to multiplying $\bar g$ by a small positive constant, we can suppose that $\bar g \le g$ on $K$. By the very definition of $Q_V$-capacity,
$$
\capac_{Q_V}(K, g) \ge \capac_{Q_V}(K, \bar g).
$$ 
We claim that $\capac_{Q_V}(K, \bar g)> 0$. Indeed, otherwise, by $(iii)_{\GSR} \Rightarrow (i)_{\GSR}$ just proved, $Q_V$ has a ground state and so all solutions $u \in C^0(M) \cap W^{1,p}_\loc(M)$ of $Q_V'(u) \ge 0$ are proportional. In particular, $\bar g = cg$ for some constant $c>0$, which implies $\capac_{Q_V}(K,g)= c^{-p} \capac_{Q_V}(K, \bar g) = 0$ contradicting our assumptions. Now, since $\bar g$ solves $Q_V'(\bar g)=0$ and $\capac_{Q_V}(K, \bar g)>0$, applying formula \eqref{finedeltunnel} in Proposition \ref{claimbella} with $\bar g$ replacing $g$ we deduce that necessarily the $Q_V$-capacitor $u$ of $(K,\bar g)$ is different from $\bar g$. As $u$ solves $Q_V'(u) \ge 0$ on $M$ and $u = g$ on $K$, $u$ and $\bar g$ are two non-proportional solutions of $Q_V'(v) \ge 0$, proving $(iii)_{\SR}$.\\[0.2cm]
\noindent $\mathbf{(ii)_{\GSR} \Rightarrow (iii)_{\GSR}}$. If, by contradiction, $\capac_{Q_V}(K,g)>0$ for some $(K,g)$, then by $(iv)_{\SR} \Rightarrow (iii)_{\SR}$ above there would exists two non-proportional solutions of $Q_V'(g) \ge 0$, contradicting $(ii)_{\GSR}$.\\[0.2cm]
Now, we have concluded the equivalence $(i)_{\GSR} \Leftrightarrow (ii)_{\GSR} \Leftrightarrow (iii)_{\GSR}$ in the ``ground state" part of the theorem. Combining with Claim $2$, we automatically have the validity of\\ 
$\mathbf{(i)_{\SR}\Leftrightarrow (iii)_{\SR} \Leftrightarrow (iv)_{\SR}}$.\\[0.2cm] 
We are thus left to show $(i)_{\SR} \Leftrightarrow (ii)_{\SR}$. Having observed that $(i)_{\SR}\Rightarrow (ii)_{\SR}$ is obvious (set $w=W$), to conclude we prove that \\[0.2cm] 
$\mathbf{(ii)_{\SR} \Rightarrow (i)_{\SR}}$. Suppose by contradiction that $(i)_{\SR}$ is not true. Then, by Claim $2$, $Q_V$ has a ground state $\eta$, which is positive on $M$ and solves $Q_V'(\eta)=0$. Fix a smooth open set $U$ such that $w \not\equiv 0$ on $U$, let $\{\Omega_j\} \uparrow M$ be an exhaustion of $M$ with $U\Subset \Omega_1$, and let $\eta_j$ be the $Q_V$-capacitor of $(U, \Omega_j, \eta)$. Then, by the equivalence $(i)_{\GSR} \Leftrightarrow (iii)_{\GSR}$ (in particular, by the proof of $(iii)_{\GSR} \Rightarrow (i)_{\GSR}$), $\{\eta_j\}$ is a null sequence. By the subcriticality assumption and since $\eta_j=\eta$ on $U$, 
$$
\int_U w |\eta|^p\di \mu_f \le \int_M w |\eta_j|^p\di \mu_f \le Q_V(\eta_j) \ra 0
$$
as $j \ra +\infty$, contradicting the fact that $\eta>0$ on $M$ and $w\not\equiv 0$ on $U$. 
\end{proof}

\begin{Remark}\label{rem_Yamabeinv}
\emph{We can now give a simple proof of the fact that the positivity of the Yamabe invariant $Y(M)$ in Theorem \ref{teo_zhang} implies the subcriticality of the conformal Laplacian $L_{\metric}$. Indeed, suppose the contrary. Then, by Theorem \ref{teo_alternative}, there exist a ground state $\eta>0$ and a null sequence $\{\eta_j\}$ locally $L^2$-converging to $\eta$. From the chain of inequalities
$$
Y(M) \left(\int_M |\eta_j|^{\frac{2m}{m-2}}\right)^{\frac{m-2}{m}} \le \int_M \left[|\nabla \eta_j|^2 + \frac{s(x)}{c_m} \eta_j^2\right]  \ra 0 \quad \text{as } \, j \ra +\infty,
$$
we deduce that $\eta_j \ra 0$ in $L^{\frac{2m}{m-2}}(M)$, hence locally in $L^2(M)$, a contradiction.
}
\end{Remark}

It is now immediate to prove the equivalence partly mentioned in the Introduction (Proposition \ref{prop_hyperbolicity_simple}). We suggest the interested reader to consult the very recent \cite{devyverfraaspinchover, devyverpinchover} for an investigation on the optimality of the Hardy weights given in items $5)$ and $6)$ below.
\begin{Proposition}\label{prop_hyperbolicity}
Let $M^m$ be a Riemannian manifold of dimension $m \ge 2$, $p \in (1,+\infty)$ and $f \in C^\infty(M)$. The following properties are equivalent:
\begin{itemize}
\item[$1)$] There exists a positive, non-constant solution $g \in C^0(M) \cap W^{1,p}_\loc(M)$ of $\Delta_{p,f}g \le 0$.
\vspace{0.1cm}
\item[$2)$] For some (any) compact $K \subset M$ with non-empty interior, and for some (any) solution $g$ of $\Delta_{p,f}g \le 0$, $\capac(K,g) >0$.
\vspace{0.1cm}
\item[$3)$] $Q_0$ is subcritical on $M$: there exists $w \in L^1_\loc (M)$, $w \ge 0$, $w \not\equiv 0$ on $M$ such that
\begin{equation}\label{130}
\disp \int_M w(x) |\varphi|^p \di \mu_f \le \int_M |\nabla \varphi|^p \di \mu_f \qquad \forall \, \varphi \in \lip_c (M).
\end{equation}
\item[$4)$] $Q_0$ has a weighted spectral gap on $M$: there exists $W \in C^0(M)$, $W>0$ on $M$ such that
\begin{equation}\label{131}
\disp \int_M W(x) |\varphi|^p \di \mu_f \le \int_M |\nabla \varphi|^p \di \mu_f \qquad \forall \, \varphi \in \lip_c (M).
\end{equation}
\item[$5)$] For each non-constant, positive weak solution $u \in W^{1,p}_\loc (M)$ of $\Delta_{p,f} u \le 0$ the following Hardy type inequality holds:
\begin{equation}\label{132}
\left(\frac{p-1}{p}\right)^p  \int_M \frac{|\nabla u|^p}{u^p} |\varphi|^p \di \mu_f \le \int_M |\nabla \varphi|^p \di \mu_f \qquad \forall \, \varphi \in \lip_c (M).
\end{equation}
\item[$6)$] For each $y \in M$, $-\Delta_{p,f}$ has a positive Green kernel $\green(x,y)$ and the following Hardy inequality holds:
\begin{equation}\label{weightpoincG}
\left(\frac{p-1}{p}\right)^p \int_M \frac{|\nabla_x \green|^p}{\green^p} |\varphi|^p\di\mu_f \le \int_M |\nabla \varphi|^p\di\mu_f \qquad \forall \, \varphi \in \lip_c(M).
\end{equation}
\end{itemize}
\end{Proposition}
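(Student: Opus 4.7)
The plan is to take Theorem \ref{teo_alternative} applied with $V\equiv 0$ as the backbone. Its equivalences $(i)_{\SR}\Leftrightarrow (ii)_{\SR}\Leftrightarrow (iv)_{\SR}$ give directly $4)\Leftrightarrow 3)\Leftrightarrow 2)$, while $(ii)_{\SR}\Leftrightarrow (iii)_{\SR}$ says that subcriticality of $Q_0$ is equivalent to the existence of two non-proportional positive supersolutions $g\in C^0(M)\cap W^{1,p}_\loc(M)$ of $-\Delta_{p,f}g\ge 0$. Since $\Delta_{p,f}c\equiv 0$ for every constant $c>0$, constants are themselves positive solutions of $\Delta_{p,f}g\le 0$, so two non-proportional positive supersolutions exist if and only if at least one of them is non-constant; that is precisely $1)$. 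Hence $1),2),3),4)$ are all equivalent.

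The implication $5)\Rightarrow 3)$ is then immediate because the Hardy weight $((p-1)/p)^p|\nabla u|^p/u^p$ is non-negative, locally integrable, and non-zero on a set of positive measure since $u$ is non-constant. For $1)\Rightarrow 5)$ I fix a non-constant positive supersolution $u$ and $\varphi\in\lip_c(M)$. By the Harnack inequality of Theorem \ref{teo11} $(3)$, $u$ is locally bounded above and below by positive constants, so $\psi\doteq|\varphi|^p/u^{p-1}$ is an admissible non-negative, compactly supported $W^{1,p}$ test function for the weak inequality $-\Delta_{p,f}u\ge 0$. Plugging it in, expanding $\nabla\psi$ and applying Cauchy--Schwarz on the cross term yields
$$(p-1)\int_M\frac{|\varphi|^p|\nabla u|^p}{u^p}\,d\mu_f\le p\int_M\left(\frac{|\varphi|}{u}\right)^{p-1}|\nabla u|^{p-1}|\nabla\varphi|\,d\mu_f.$$
Young's inequality in the form $pab\le \epsilon^p a^p+(p-1)\epsilon^{-p/(p-1)}b^{p/(p-1)}$, optimized over $\epsilon>1$ (the optimum being at $\epsilon^{-p/(p-1)}=(p-1)/p$), absorbs the cross term and produces \eqref{132} with the sharp constant $((p-1)/p)^p$.

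Finally I would incorporate $6)$ as follows. Assuming $3)$, I would build a positive $(p,f)$-Green kernel $\green(\cdot,y)$ at $y\in M$ via the classical exhaustion argument of Holopainen \cite{holopainen} and Troyanov \cite{troyanov}: solve on an exhaustion $\{\Omega_j\}$ the Dirichlet problem $-\Delta_{p,f}u_j=\nu_j$ in $\Omega_j$ with $u_j=0$ on $\partial\Omega_j$, for a smooth approximation $\nu_j$ of the Dirac mass at $y$; subcriticality of $Q_0$ translates into the positive capacity bound $\capac_{Q_0}(\overline B_\rho(y),1)>0$ for small $\rho$, which keeps $\{u_j\}$ from degenerating to zero and provides the uniform off-pole control needed to extract a limit $\green(\cdot,y)$ via Harnack and elliptic estimates. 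Since $\green(\cdot,y)$ is then a positive non-constant function that is $\Delta_{p,f}$-harmonic on $M\setminus\{y\}$ and a supersolution on $M$, applying $5)$ with $u=\green(\cdot,y)$ yields \eqref{weightpoincG}; the converse $6)\Rightarrow 1)$ is immediate because $\green(\cdot,y)$ is itself a non-constant positive supersolution. The main obstacle is precisely this Green-kernel construction when $p\neq 2$: the lack of superposition forces a direct nonlinear potential-theoretic argument, and the uniform pointwise bounds on the approximating Dirichlet solutions off the pole are the most delicate point.
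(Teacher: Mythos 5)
Your treatment of the equivalences $1)\Leftrightarrow 2)\Leftrightarrow 3)\Leftrightarrow 4)$ via Theorem \ref{teo_alternative} with $V\equiv 0$, recognizing that constants are supersolutions so $1)$ is the same as having two non-proportional supersolutions, is exactly the paper's route. Your $5)\Rightarrow 3)$ is trivial and correct. For $1)\Rightarrow 5)$ you test $-\Delta_{p,f}u\ge 0$ against $|\varphi|^p/u^{p-1}$ and optimize Young's inequality; the paper instead substitutes $z=g^{(p-1)/p}$ via \eqref{113} and invokes the Lagrangian non-negativity of Proposition \ref{prop_lagrangian}. These are essentially the same computation presented differently (the Lagrangian's non-negativity \emph{is} Cauchy--Schwarz plus Young applied to the cross term), and your direct version with the explicit optimization at $\epsilon^{-p/(p-1)}=(p-1)/p$ is correct and gives the sharp constant $((p-1)/p)^p$.

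There is, however, a genuine gap in your handling of item $6)$. You apply $5)$ directly with $u=\green(\cdot,y)$, and likewise for $6)\Rightarrow 1)$ you declare $\green(\cdot,y)$ itself to be an admissible non-constant positive supersolution. But item $5)$ requires $u\in W^{1,p}_\loc(M)$ and item $1)$ requires $g\in C^0(M)\cap W^{1,p}_\loc(M)$, and for $p\le m$ the Green kernel satisfies neither near the pole: by \eqref{asigreenzero} one has $\green(x,y)\sim c\,\dist(x,y)^{-(m-p)/(p-1)}$ as $x\to y$, hence $|\nabla_x\green|^p\sim c\,\dist(x,y)^{-p(m-1)/(p-1)}$, and
$$
\int_{B_\rho(y)}|\nabla_x\green|^p\,\di x\ \asymp\ \int_0^\rho r^{-\frac{m-1}{p-1}}\,\di r\ =\ +\infty \qquad\text{whenever } p\le m,
$$
so $\green(\cdot,y)\notin W^{1,p}_\loc(M)$; moreover it blows up at $y$, so it is not in $C^0(M)$ either. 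The hypothesis of $5)$ is therefore not met by $\green$, and the direct application fails in precisely the range $p\le m$ that covers the most interesting cases. The paper handles this by truncating: $G_c\doteq\min\{c,\green(\cdot,y)\}$ lies in $C^0(M)\cap W^{1,p}_\loc(M)$, is non-constant for $c$ large, and is a weak supersolution of $\Delta_{p,f}$ by the pasting Lemma \ref{lem_pasting}; applying $5)$ to $G_c$ and letting $c\to+\infty$ via monotone convergence yields \eqref{weightpoincG}, and the same truncated $G_c$ witnesses $1)$ for the converse. You should insert this truncation and limiting argument in place of the direct application to $\green$. (Your outline of constructing $\green$ from subcriticality via the capacity and an exhaustion in the style of Holopainen is not wrong, but the paper simply cites \cite{holopainen,holopainen_3} for this; there is no need to re-derive it.)
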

\begin{Remark}
\emph{Here, a Green kernel $\green(x,y)$ means a distributional solution of
$$
\Delta_{p,f} \green(\cdot,y) = -\delta_y,
$$
where $\delta_y$ is the Dirac delta function at $y$.
}
\end{Remark}

%
%
%
\begin{proof}
The equivalence $1) \Leftrightarrow 2) \Leftrightarrow 3) \Leftrightarrow 4)$ is the particular case $V\equiv 0$ of Theorem \ref{teo_alternative}. Indeed, as any positive constant is a solution of $\Delta_{p,f}u\le 0$, $1)$ can be rephrased as the existence of two non-proportional solutions of $\Delta_{p,f}u \le 0$.
%
%
Clearly $5) \Rightarrow 3)$, thus $5)$ yields that $Q_0$ is subcritical on $M$; therefore, we just need to show that $1) \Rightarrow 5)$. Towards this aim observe that if $g \in C^0(M) \cap \in W^{1,p}_\loc (M)$, $g>0$ is a positive, non-constant solution of $\Delta_{p,f} g \le 0$, then by \eqref{113} $\disp z=g^{\frac{p-1}{p}}$ is a positive weak solution of
$$
\disp \Delta_{p,f} z +\left( \frac{p-1}{p} \right)^p \frac{|\nabla u|^p}{u^p} z^{p-1} \le 0 \qquad \text{on } M.
$$
Proposition \ref{prop_lagrangian} and the non-negativity of $\mathcal{L}$ then imply that $Q_V$ is non-negative for
$$
V =\left( \frac{p-1}{p} \right)^p \frac{|\nabla u|^p}{u^p},
$$ 
which gives \eqref{132}.\par
To prove $6) \Rightarrow 1)$, take a positive Green kernel $\green(x,y)$ for $-\Delta_{p,f}$ at $y$. For large $c>0$, the function $G_c(x) = \min\{c, \green(x,y)\}$ is a non-constant, positive weak solution of $\Delta_{p,f} G_c \le 0$ by the pasting Lemma \ref{lem_pasting}, showing $1)$. Vice versa, if $1)$ holds, then by the equivalence $1) \Leftrightarrow 2)$ we deduce that each compact set has positive capacity with respect to the ``standard" supersolution $g \equiv 1$. This implies, by results in \cite{holopainen} and \cite{holopainen_3}, that for each $y \in M$ there exists a positive Green kernel $\green(x,y)$ for $-\Delta_{p,f}$. Defining again $G_c$ as above, $\Delta_{p,f}G_c \le 0$ and the equivalence $1) \Leftrightarrow 5)$ ensures that the Hardy inequality
\begin{equation}
\left(\frac{p-1}{p}\right)^p \int_M \frac{|\nabla G_c |^p}{G_c^p}|\varphi|^p\di\mu_f \le \int_M |\nabla \varphi|^p\di\mu_f \qquad \text{holds for each } \varphi \in \lip_c(M),
\end{equation}
and \eqref{weightpoincG} follows by letting $c \ra +\infty$ and using the monotone convergence theorem.
\end{proof}
\begin{Remark}
\emph{As a consequence of \cite{serrin_1, serrin_2} and Theorem 1.1 in \cite{kichenveron} (see also \cite{kichenveron_erratum}), if $M = \R^m$ each positive Green kernel $\green$ satisfies 
\begin{equation}\label{asicriticalgenerale}
\frac{|\nabla_x \green|^p}{\green^p}(x,y) \sim \left\{ \begin{array}{ll}
C\, \dist(x,y)^{-m} \log^{-m}\dist(x,y) & \quad \text{if } p=m, \\[0.3cm]
C\, \dist(x,y)^{-p} & \quad \text{if } p<m
\end{array}\right.
\end{equation}
as $\dist (x,y) \ra 0$, for some explicit $C>0$. In the linear case $p=2$, the first order expansions in \cite{serrin_1,serrin_2} guarantee the validity of \eqref{asicriticalgenerale} on each Riemannian manifold. On the contrary, when $p \neq 2$, the scaling arguments used in \cite{kichenveron} are typical of the Euclidean setting and, although we believe \eqref{asicriticalgenerale} to be true in general, to the best of our knowledge there is still no proof of \eqref{asicriticalgenerale} in a manifold setting.
}
\end{Remark}

The above proposition gives a useful, simple criterion to check the subcriticality of some $Q_V$.
\begin{Proposition}\label{prop_criterion}
Let $(M, \metric)$ be a Riemannian manifold, $f \in C^0(M)$ and $p>1$. Let $V \in L^\infty_\loc(M)$. Suppose that $Q_0$ is subcritical on $M$. If, for some Hardy weight $\hat w$, it holds $V \le \hat w$ and $V \not \equiv \hat w$, then $Q_V$ is subcritical.
\end{Proposition}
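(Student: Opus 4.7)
The proposition should follow directly from the definitions by a one-line computation. The plan is to produce an explicit weight witnessing the subcriticality of $Q_V$, built as the difference $\hat w - V$.

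First I would unwrap the definitions. By hypothesis, the Hardy weight $\hat w \in L^1_\loc(M)$ is non-negative, not identically zero, and satisfies
\begin{equation*}
\int_M \hat w(x)|\varphi|^p \di \mu_f \le \int_M |\nabla \varphi|^p \di \mu_f \qquad \forall\, \varphi \in \lip_c(M).
\end{equation*}
Set $w \doteq (\hat w - V)/p$. Since $V \le \hat w$ a.e., $V \not\equiv \hat w$, and $\hat w \in L^1_\loc(M)$ while $V \in L^\infty_\loc(M)$, the function $w$ lies in $L^1_\loc(M)$, is non-negative a.e., and is not identically zero, thereby qualifying as a candidate weight in Definition \ref{subcritic} $ii)$.

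Next I would verify the subcriticality inequality \eqref{09} by a direct computation. For every $\varphi \in \lip_c(M)$, adding and subtracting $\int_M \hat w|\varphi|^p\di\mu_f$ yields
\begin{equation*}
p\, Q_V(\varphi) = \int_M |\nabla \varphi|^p \di \mu_f - \int_M \hat w |\varphi|^p \di \mu_f + \int_M (\hat w - V)|\varphi|^p \di \mu_f \ge \int_M (\hat w - V)|\varphi|^p \di \mu_f,
\end{equation*}
where the inequality uses the Hardy inequality for $\hat w$. Dividing by $p$ gives $Q_V(\varphi) \ge \int_M w |\varphi|^p\di\mu_f \ge 0$, which simultaneously establishes $Q_V \ge 0$ and produces the weight $w$ demanded by the definition of subcriticality.

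There is no substantial obstacle here: the content is purely algebraic and amounts to noting that $Q_V = Q_0 - \frac{1}{p}\int V|\cdot|^p$, so that any slack in the non-negativity of $Q_{\hat w}$ (encoded by $\hat w$ being a Hardy weight) automatically transfers into slack for $Q_V$ whenever $V \le \hat w$ strictly somewhere. The only point worth a remark is that the potential $\hat w - V$ is an admissible $L^1_\loc$ weight (not merely a distribution), which is ensured by the local boundedness of $V$ and the local integrability of $\hat w$ built into the definition of Hardy weight.
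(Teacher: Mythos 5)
Your proof is correct and follows essentially the same route as the paper's: apply the Hardy inequality for $\hat w$ to bound $Q_V(\varphi)$ from below by $\frac{1}{p}\int_M(\hat w - V)|\varphi|^p\di\mu_f$, and observe that $\hat w - V$ is non-negative, not identically zero, and locally integrable. You are in fact slightly more careful than the paper in carrying the normalization factor $1/p$ from the definition of $Q_V$, which the paper drops informally in its one-line computation.
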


\begin{proof}
Indeed, using \eqref{130},
$$
Q_V(\varphi) \doteq \int_M |\nabla \varphi|^p \di \mu_f - \int_M V |\varphi|^p \di \mu_f \ge \int_M (\hat w-V)|\varphi|^p \di \mu_f;
$$
consequently, $w$ in \eqref{09} can be chosen to be $\hat w-V$, proving the subcriticality of $Q_V$.
\end{proof}
\begin{Remark}\label{rem_linearegreen}
\emph{The alternative in Theorem \ref{teo_alternative} is also related to the existence of a global positive Green kernel for $Q_V'$. It has been shown in \cite{pinchovertintarev} (Theorems 5.4 and 5.5) that, on $\R^m$, $Q_V$ has a weighted spectral gap if and only if $Q_V'$ admits a global positive Green kernel. The result depends on Lemma 5.1 therein, which has been proved via a rescaling argument typical of the Euclidean space, and calls for a different strategy in a manifold setting. However, in the linear case we can easily obtain the above equivalence on general manifolds by relating $Q_V'$ to a weighted Laplacian via a standard transformation. In fact, suppose that $p=2$ and that $Q_V$ is non-negative. Let $g$ be a positive solution of $Q_V'(g) = 0$. Then, setting $h= f-2\log g$, by a simple computation the following formula holds weakly for $\varphi \in C^2(M)$:
\begin{equation}\label{identitisolita}
-\Delta_{h}\left( \frac{\varphi}{g}\right) = g^{-1} Q_V'(\varphi). 
\end{equation}
Note that, according to our notation, for smooth $\phi$
$$
\Delta_{h}\phi = g^{-2}e^f \diver(e^{-f}g^2 \nabla \phi)
$$
Integrating by parts, we infer
$$
\frac{1}{2} \int_M \left|\nabla \left(\frac{\varphi}{g}\right)\right|^2 g^2 \di \mu_f = Q_V(\varphi) \qquad \forall \, \varphi \in C^2_c(M).
$$
Therefore, it is readily seen that $Q_V'$ is subcritical if and only if so is $-\Delta_{h}$ with respect to the measure $g^2\di \mu_f$. Now, Proposition \ref{prop_hyperbolicity} guarantees that this happens if and only if $-\Delta_{h}$ has a positive Green kernel $\green(x,y)$. Coming back with the aid of \eqref{identitisolita}, it is easy to see that $\green(x,y)g(x)g(y)$ is a global positive Green kernel for $Q_V'$.
}
\end{Remark}
\section{Hardy weights and comparison geometry} \label{sec_examples}
On a manifold $M$ for which $Q_0$ is subcritical, the criterion in Proposition \ref{prop_criterion} shifts the problem of the subcriticality of $Q_V$ to the one of finding explicit Hardy weights. As we will see in a moment, the construction of weights given in $4)$ and $5)$ of Proposition \ref{prop_hyperbolicity} is compatible with the usual geometric comparison theorems. Therefore, it gives a useful way to produce simple Hardy weights on manifolds satisfying suitable curvature assumptions. In this section, we describe some examples to illustrate the method.\par  
First, we underline the following simple fact. By its very definition, the set of Hardy weights $w \in L^1_\loc(M)$, $w\ge 0$, $w \not\equiv 0$ is convex in $L^1_\loc(M)$. More generally, given a family $\{w_\alpha\}_{\alpha \in A}$ of Hardy weights on $M$ whose index $\alpha$ lies in a measurable space $(A,\mathscr{F})$ ($\mathscr{F}$ a $\sigma$-algebra), and such that the map $$
w \ \ : \ \ (x,\alpha) \in M \times A \longrightarrow w_\alpha(x) \in [0,+\infty]
$$ 
is measurable, for each measure $\lambda$ on $A$ such that $0< \lambda(A) \le 1$ the function 
\begin{equation}\label{hardyweightgen}
\chi(x) = \int_A w(x,\alpha) \di \lambda(\alpha)
\end{equation}
is still a Hardy weight. Indeed, it is enough to apply Tonelli's theorem: for each $\varphi \in \lip_c(M)$,
\begin{equation}\label{hardyweight_integrated}
\begin{array}{lcl}
\disp \int_M \chi |\varphi|^p \di \mu_f & = & \disp \int_A \left[\int_M w_\alpha(x) |\varphi(x)|^p \di \mu_f\right] \di \lambda(\alpha) \\[0.4cm]
& \le & \disp \int_A \left[ \int_M |\nabla \varphi|^p \di \mu_f\right]\di \lambda(\alpha) \le \disp \int_M |\nabla \varphi|^p \di \mu_f.
\end{array}
\end{equation}
Clearly, this construction makes sense also if for $\lambda$-almost all $\alpha \in A$, $w_\alpha$ is a Hardy weight.

\begin{Remark}
\emph{By item $5)$ of Proposition \ref{prop_hyperbolicity}, each Green kernel $\green$ generates a family $w$ indexed by $A=M$:
\begin{equation}\label{lagardygreen}
w(x,y) \doteq \left( \frac{p-1}{p}\right)^p \frac{|\nabla_x \green(x,y)|^p}{\green(x,y)^p} \ \ \ : \ \ \ M \times M \longrightarrow [0,+\infty],
\end{equation}
provided that $w$ is measurable. If $p=2$, the standard construction of a Green kernel in \cite{grigoryan} produces a symmetric kernel, and measurability is obvious. However, measurability seems to be a subtle issue if $p \neq 2$, since $\green(x,y)$ is constructed in \cite{holopainen} and \cite{holopainen_3} by fixing $y$ and finding a solution of $\Delta_{p,f} \green(x,y) = -\delta_y$. The dependence of $\green(x,y)$ from $y$ could be, a priori, wild.
}
\end{Remark}

We now describe two simple measures $\lambda$ that have been considered in the literature when $M = \R^m$, and the corresponding Hardy inequalities. 
\begin{Example}\label{ex_multihardy}[Multipole Hardy weights]\\
\emph{Fix a possibly infinite sequence $\{y_j\} \subset M$, $j \in I \subset \mathbb{N}$, let $\{t_j\}_{j \in I} \subset (0,1]$ be such that $\sum_j t_j \le 1$ and define 
$$
\lambda = \sum_j t_j \delta_{y_j},
$$
where $\delta_{y_j}$ is the Dirac delta function at $y_j$. Being $\lambda$ discrete, measurability of $w$ follows automatically and thus, by \eqref{hardyweight_integrated},
\begin{equation}
\left(\frac{p-1}{p}\right)^p \int_M \left[ \sum_{j \in I} \frac{t_j|\nabla_x \green(x,y_j)|^p}{\green(x,y_j)^p}\right]\big|\varphi(x)\big|^p \di \mu_f \le \int_M \big|\nabla \varphi(x)\big|^p \di \mu_f
\end{equation}
holds for $\varphi \in \lip_c(M)$. 
}

\end{Example}
\begin{Example}\label{ex_blowing}[Hardy weights that blow-up along a submanifold]\\
\emph{If $w(x,y)$ in \eqref{lagardygreen} is measurable, for each rectifiable subset $\Sigma \hookrightarrow M^m$ of finite non-zero $k$-dimensional Hausdorff measure $\haus^k$ we can set 
$$
\lambda = \frac{\haus^{k}\llcorner \Sigma}{\haus^{k}(\Sigma)}, 
$$
Then, we have the Hardy inequality 
\begin{equation}
\left(\frac{p-1}{p}\right)^p \int_M \left[ \frac{1}{\haus^k(\Sigma)}\int_\Sigma \frac{|\nabla_x \green(x,y)|^p}{\green(x,y)^p}\di \haus^k(y)\right]\big|\varphi(x)\big|^p \di x \le \int_M \big|\nabla \varphi(x)\big|^p \di x 
\end{equation}
for each $\varphi \in \lip_c(M)$.\\
Hardy weights of this type have been considered, for instance, in \cite{felliterracini_multipolar}: in Theorem 1.1 therein, $M= \R^m= \R^2 \times \R^{m-2}$, $p=2$, $f \equiv 0$ and $\Sigma \subset \R^m$ is a round circle $S_\rho \doteq \mathbb{S}^1(\rho) \times \{0\} \subset \R^2\times \R^{m-2}$ centered at the origin and of radius $\rho$. 
We remark that Hardy weights of different type but still depending on the distance from a submanifold have been investigated, among others, in \cite{barbatisfilippastertikas}. The reader is also suggested to see the references therein for deepening.
}
\end{Example}
To introduce the results below, we first recall comparison geometry, starting with the definition of a model manifold. Briefly, fix a point $o \in \R^m$. Given $g \in C^2(\R^+_0)$ such that $g>0$ on some open interval $(0,R) \subset \R^+$, $g(0)=0$, $g'(0)=1$, a model $(M^m_g, \di s_g^2)$ is the manifold $B_R(o) \subset \R^m$ equipped with a radially symmetric $C^2$ metric $\di s_g^2$ whose expression, in polar geodesic coordinates $(\rho, \theta)$ centered at $o$ (where $\theta \in \mathbb{S}^{m-1}$), is given by 
$$
\di s_g^2 = \di \rho^2 + g(\rho)^2 \di \theta^2,
$$
$\di \theta^2$ being the standard metric on the unit sphere $\mathbb{S}^{m-1}$. Clearly, $\rho$ is the distance function from $o$, and $M_g$ is complete if and only if $R = +\infty$. At a point $x=(\rho, \theta)$, the radial sectional curvature $K_\rad$ of $M_g$ (that is, the sectional curvature restricted to planes containing $\nabla \rho(x)$), the Hessian and the Laplacian of $\rho$ are given by 
\begin{equation}\label{datimodello}
\disp K_\rad (x) = - \frac{g''(\rho)}{g(\rho)}, \quad
\disp \nabla \di \rho(x) = \frac{g'(\rho)}{g(\rho)} \Big( \di s_g^2 - \di \rho \otimes \di \rho\Big), \quad \Delta \rho(x) = (m-1) \frac{g'(\rho)}{g(\rho)}. 
\end{equation}
By the first formula, in \eqref{datimodello}, a model can also be given by prescribing the radial sectional curvature $-G \in C^0(\R^+_0)$ and recovering $g \in C^2(\R^+_0)$ as the solution of
\begin{equation}\label{jacobi}
\left\{ \begin{array}{l}
g''-Gg = 0 \qquad \text{on } \R^+, \\[0.2cm]
g(0)=0, \quad g'(0)=1,
\end{array}\right.
\end{equation}
on the maximal interval where $g>0$. A sharp condition on $G$ that ensures the positivity of $g$ on the whole $\R^+$ is given by 
$$
G_- \in L^1(\R^+), \qquad t \int_t^{+\infty} G_-(s) \di s \le \frac{1}{4} \qquad \text{on } \R^+,
$$
see Proposition 1.21 in \cite{bmr2}. In particular, if $G \equiv \kappa^2$ for some constant $\kappa \ge 0$, we will denote with $g_\kappa $ the solution $g$ of \eqref{jacobi}: 
\begin{equation}\label{defsnh}
g_\kappa(\rho) = \left\{\begin{array}{ll}
\rho & \ \text{if } \kappa=0, \\[0.2cm]
\kappa^{-1} \sinh(\kappa\rho) & \ \text{if } \kappa>0.
\end{array}\right.
\end{equation}
When $\kappa =0$, $M_g$ is the Euclidean space $\R^m$, while if $\kappa>0$ our model is the hyperbolic space $\HH^m_\kappa$ of sectional curvature $-\kappa^2$. Clearly, the two examples are radially symmetric with respect to any chosen origin $o$. Hereafter, we will always consider geodesically complete models.\\ 
Given $p \in (1,+\infty)$, $-\Delta_p$ is subcritical on $M_g$ if and only if 
\begin{equation}\label{lanonparabol}
g(\rho)^{-\frac{m-1}{p-1}} \in L^1(+\infty) 
\end{equation}
(the case $p=2$ can be found, for instance, in \cite{grigoryan}, and for $p \neq 2$ the argument of the proof goes along the same lines). In fact, under the validity of \eqref{lanonparabol}, up to an unessential constant, the function 
$$
G(x,o) = G \big( (\rho,\theta), o\big) = \int_\rho^{+\infty} g(s)^{-\frac{m-1}{p-1}}\di s
$$
is the minimal positive Green kernel for $-\Delta_p$ with singularity at $o$.\par
A simplified version of the Hessian and Laplacian comparison theorems from below (see \cite{petersen,prs,bmr2}) can be stated as follows: suppose that $\riem$ has a pole $o$ and, denoting with $r(x)=\dist(x,o)$, that the radial sectional curvature $K_\rad$ satisfies 
\begin{equation}\label{critKrad}
K_\rad(x) \le - G\big(r(x)\big)
\end{equation}
(i.e., for each plane $\pi \le T_xM$ containing $\nabla r(x)$, $K(\pi) \le -G\big(r(x)\big)$). Denote with $g$ the solution of \eqref{jacobi}, and let $(0,R)$ be the maximal interval where $g>0$. Then, in the sense of quadratic forms,
\begin{equation}\label{hessiadasotto}
\nabla \di r(x) \ge \frac{g'(r(x))}{g(r(x))} \Big( \metric - \di r \otimes \di r\Big)
\end{equation}
pointwise on $B_R(o) \backslash \{o\}$, and tracing 
\begin{equation}\label{lapladasotto}
\Delta r(x) \ge (m-1)\frac{g'(r(x))}{g(r(x))},
\end{equation}
whose validity holds weakly on the geodesic ball $B_R(o) \subset M$. As it is apparent from \eqref{datimodello}, $M$ is compared with a model $M_g$ of radial sectional curvature $-G$. Indeed, via Sturm comparison, for \eqref{lapladasotto} to hold it is enough that $g \in C^2(\R^+_0)$ solves the inequality
\begin{equation}\label{jacobiweak}
\left\{ \begin{array}{l}
g''-Gg \le 0 \qquad \text{on } \R^+, \\[0.2cm]
g(0)=0, \quad g'(0)=1,
\end{array}\right.
\end{equation}
so that the model to which $M$ is compared has radial sectional curvature greater than or equal to $-G$. In \cite{bmr3}, we have collected a number of examples of $G$ for which explicit positive solutions $g$ of \eqref{jacobiweak} can be found. These also include manifolds whose sectional curvature can be positive but in a controlled way, loosely speaking manifolds whose compared model is some sort of paraboloid.\\
\par  
With this preparation, we are now ready to discuss the next cases.
\subsection{Hardy weights on manifolds with a pole}\par
%
%
%
\begin{Theorem}\label{teo_laprimahardy}
Let $\riem$ be a Riemannian manifold with a pole $o$. Denoting with $r(x)= \dist(x,o)$, assume that the radial sectional curvature satisfies 
$$
K_\rad \le -G\big(r(x)\big), 
$$
for some $G \in C^2(\R^+_0)$. Suppose that $g$ solving \eqref{jacobiweak} is positive on $\R^+$, and, for $p \in (1,+\infty)$, assume that 
\begin{equation}\label{non-parabolicity}
g(r)^{- \frac{m-1}{p-1}} \in L^1(+\infty).
\end{equation}
Then, the Hardy inequality 
\begin{equation}\label{hardymonopole}
\int_M (\chi \circ r)|\varphi|^p \di x \le \int_M |\nabla \varphi|^p \di x 
\end{equation}
holds for each $\varphi \in \lip_c(M)$, where 
\begin{equation}\label{hardypolo}
\chi(t) = \left(\frac{p-1}{p}\right)^p \left[ g(t)^{\frac{m-1}{p-1}} \int_t^{+\infty} g(s)^{-\frac{m-1}{p-1}}\di s\right]^{-p}
\end{equation}
\end{Theorem}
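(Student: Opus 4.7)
The plan is to apply item $(5)$ of Proposition \ref{prop_hyperbolicity} (with $f \equiv 0$) to an explicit radial supersolution of $\Delta_p$. Define
$$u(\rho) = \int_\rho^{+\infty} g(s)^{-\frac{m-1}{p-1}}\,ds,$$
which is finite, positive, and strictly decreasing thanks to the non-parabolicity assumption \eqref{non-parabolicity}, and set $w(x) = u(r(x))$. On the model manifold $M_g$ this $w$ would be (up to normalization) the minimal positive Green kernel of $-\Delta_p$ at $o$, and the purpose of the comparison theorem is to transplant that exact identity into an inequality on $M$.

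The first key step is to verify that $\Delta_p w \leq 0$ classically on $M \setminus \{o\}$. Since $o$ is a pole, $r$ is smooth there, and using $|\nabla r| = 1$ the standard radial formula gives $\Delta_p w = \phi'(r) + \phi(r)\,\Delta r$ with $\phi(\rho) := |u'(\rho)|^{p-2}u'(\rho)$. A direct computation (using $u'<0$) shows $\phi(\rho) = -g(\rho)^{-(m-1)}$ and $\phi'(\rho) = (m-1)\,g^{-m} g'$. Multiplying the Laplacian comparison \eqref{lapladasotto}, $\Delta r \geq (m-1) g'/g$, by the negative quantity $\phi(r)$ yields $\phi(r)\Delta r \leq -(m-1) g^{-m} g'$, so the two contributions cancel and $\Delta_p w \leq 0$ pointwise on $M \setminus \{o\}$.

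The second step is to promote this to a weak global supersolution. If $p > m$, then $|\nabla w| \sim r^{-(m-1)/(p-1)}$ is in $L^p_\loc$ near $o$, so $w \in W^{1,p}_\loc(M)$ directly and the classical inequality is already the weak one. Otherwise, when $p \leq m$, the gradient blows up too fast at the pole, and I truncate: for each large $k>0$, set $w_k(x) = \min\{w(x), k\}$, which equals the constant $k$ on the small ball $B_{\rho_k}(o)$ (where $\rho_k := u^{-1}(k)$) and equals $w$ outside. The pasting Lemma \ref{lem_pasting}, applied to the supersolution $w$ on $M \setminus \overline{B_{\rho_k}(o)}$ and the supersolution $k$ on $M$ (matching values on the gluing sphere), yields that $w_k \in W^{1,p}_\loc(M)$ is a positive, bounded, non-constant weak supersolution of $\Delta_p$ on all of $M$.

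Applying Proposition \ref{prop_hyperbolicity}$(5)$ to $w_k$ and noticing that $\nabla w_k \equiv 0$ on $\{w \geq k\}$ gives, for every $\varphi \in \lip_c(M)$,
$$\left(\frac{p-1}{p}\right)^p \int_{\{w < k\}} \frac{|\nabla w|^p}{w^p} |\varphi|^p\, dx \leq \int_M |\nabla \varphi|^p\, dx;$$
letting $k \to +\infty$ and using monotone convergence together with the negligibility of $\{o\}$, the inequality extends to all of $M$. A routine algebraic simplification of the left-hand side then gives $\left(\tfrac{p-1}{p}\right)^p |\nabla w(x)|^p / w(x)^p = \chi(r(x))$, which is precisely the weight in \eqref{hardypolo}. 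The only nontrivial technical point is the truncation-plus-pasting argument needed when $p \leq m$; everything else is either comparison geometry or direct calculus.
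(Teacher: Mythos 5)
Your argument follows the paper's own strategy very closely: transplant the model Green kernel $G(x)=\int_{r(x)}^{+\infty} g(s)^{-(m-1)/(p-1)}\,ds$, verify $\Delta_p G\le 0$ off the pole via radial comparison, truncate to get a globally admissible supersolution through the pasting Lemma \ref{lem_pasting}, apply item $5)$ of Proposition \ref{prop_hyperbolicity}, and pass to the limit by monotone convergence. The radial $\Delta_p$-computation is correct, and you are right that only the Laplacian comparison \eqref{lapladasotto} is needed, not the full Hessian comparison \eqref{hessiadasotto} that the paper references (for a purely radial ansatz the two give the same conclusion).

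The one genuine gap is your treatment of the case $p>m$. You assert that since $|\nabla w|\in L^p_\loc$ near $o$ one gets $w\in W^{1,p}_\loc(M)$ "and the classical inequality is already the weak one." This is not automatic when $p>m$: singletons have positive $p$-capacity for $p>m$, so a function in $C^0\cap W^{1,p}_\loc$ that is a weak supersolution on $M\setminus\{o\}$ need not be one on $M$. Concretely, in $\R^m$ with $p>m$ the function $v(x)=|x|^{(p-m)/(p-1)}$ lies in $W^{1,p}_\loc$, is $p$-harmonic away from the origin, yet satisfies $\Delta_p v = c\,\delta_0$ with $c>0$ and so is \emph{not} a weak supersolution on $\R^m$. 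In our situation the conclusion is nonetheless true because $w$ is radially decreasing, so the distributional mass concentrated at $o$ is $-c\,\delta_o$ with $c\ge 0$; one verifies this by testing against $\varphi\,\psi_\eps$ with a standard cutoff $\psi_\eps$ vanishing near $o$ and observing that $\langle\nabla w,\nabla\psi_\eps\rangle\le 0$, so the boundary term has the favourable sign. You should either carry out this sign check or, more simply, do what the paper does: for $p>m$ the function $G$ attains a finite maximum $C=G(o)$, so for every $c<C$ the truncation $G_c=\min\{G,c\}$ is locally constant near $o$, the pasting Lemma gives $\Delta_p G_c\le 0$ weakly on all of $M$, and letting $c\to C^-$ recovers the Hardy inequality with weight $\chi\circ r$ by monotone convergence, exactly as you did for $p\le m$ with $k\to+\infty$.
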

%
\begin{proof}
Consider the transplanted Green kernel of the model $M_g$ with pole at $o$: 
\begin{equation}\label{defgreen}
G(x) = \int_{r(x)}^{+\infty} g(s)^{-\frac{m-1}{p-1}}.  
\end{equation}
Then, a computation using \eqref{hessiadasotto} shows that $\Delta_pG \le 0$ on $M \backslash \{o\}$. 

Furthermore, $G$ satisfies the asymptotic behaviour
\begin{equation}\label{asigreenzero}
G(x) \sim \left\{\begin{array}{ll} 
C >0 & \quad \text{if } p>m, \\[0.1cm]
|\log r| & \quad \text{if } p=m, \\[0.1cm]
\frac{p-1}{m-p} r^{-\frac{m-p}{p-1}} & \quad \text{if } p<m,
\end{array}\right. \qquad \text{as } r \ra 0^+,
\end{equation}
for some constant $C>0$. It thus follows that, for each $c  \in (0,C)$ if $p > m$ and $c \in \R^+$ if $p \le m$, by the pasting Lemma \ref{lem_pasting} the function $G_c = \min\{G,c\}$ is a solution of $\Delta_p G_c \le 0$, hence by \eqref{prop_hyperbolicity}
\begin{equation}
\left(\frac{p-1}{p}\right)^p \int_M \frac{|\nabla G_c |^p}{G_c^p}|\varphi|^p\di x \le \int_M |\nabla \varphi|^p\di x \qquad \text{holds for each } \varphi \in \lip_c(M).
\end{equation}
Letting $c \ra C^-$ when $p>m$, $C \ra +\infty$ when $p \le m$, and since
$$
\left(\frac{p-1}{p}\right)^p \frac{|\nabla G|^p}{G^p}(x) = \chi\big(r(x)\big),
$$
we conclude the validity of \eqref{hardymonopole}. 
\end{proof}
The function $\chi \circ r$ has the following asymptotics as $r= r(x) \ra 0^+$:
\begin{equation}\label{asicritical}
\chi(r) \sim \left\{ \begin{array}{ll}
\left(\frac{p-1}{Cp}\right)^p r^{-\frac{p(m-1)}{p-1}} & \quad \text{if } p>m, \\[0.3cm]
\left(\frac{m-1}{m}\right)^m \frac{1}{r^{m} \log^{m}r} & \quad \text{if } p=m, \\[0.3cm]
\left(\frac{m-p}{p}\right)^p \frac{1}{r^p} & \quad \text{if } p<m,
\end{array}\right.
\end{equation}
where, for $p>m$, the constant $C$ is the same as in \eqref{asigreenzero}. Note that, on each case, $\chi \circ r \in L^1_\loc(M)$, in particular the singularity at $o$ is integrable.
\begin{Remark}
\emph{Under the same assumptions on $M$, this example can be extended to deal with the weighted operator $\Delta_{p,f}$ provided that $f= f(r(x))$ is a radial function. In this case, it can be seen that \eqref{non-parabolicity} must be replaced by
$$
\frac{e^{f(r)}}{g(r)^{\frac{m-1}{p-1}}} \in L^1(+\infty),
$$
and that 
$$
G(x) = \int_{r(x)}^{+\infty} \frac{e^{f(s)}\di s}{g(s)^{\frac{m-1}{p-1}}}
$$
is a solution of $\Delta_{p,f} G \le 0$ on $M \backslash \{o\}$ that gives rise to a Hardy weight analogous to \eqref{hardymonopole}.
}
\end{Remark}
The Hardy weight in \eqref{hardymonopole} is explicit once we have an explicit $g$ solving \eqref{jacobiweak}, for example those related to the families of $G$ described in the Appendix of \cite{bmr3}. 
We refer the reader to the above mentioned paper also for a detailed study of the corresponding Hardy weight (called the ``critical curve" therein) for $p=2$. The modifications needed to deal with general $p$ are straightforward. Here, we just focus on the Euclidean and hyperbolic settings.
%
%
%
%
%
\begin{Example}\label{ex_euclhyp}[Hardy weights for Euclidean and hyperbolic spaces]\\
\emph{Consider the Euclidean space, where $g(r) = g_0(r)=r$. Condition \eqref{non-parabolicity} is met if and only if $p<m$, and the Hardy weight \eqref{hardymonopole} has the simple  expression 
\begin{equation}\label{hardyeuclideanmono}
\chi\big(r(x)\big) = \left(\frac{m-p}{p}\right)^p \frac{1}{r(x)^p}.
\end{equation}
Consequently, the Hardy inequality
\begin{equation}\label{laclassica}
\left(\frac{m-p}{p}\right)^p \int_M \frac{|\varphi|^p}{r^p}\di x \le \int_M |\nabla \varphi|^p\di x \qquad \forall \,  \varphi \in \lip_c(M)
\end{equation}
holds on each manifold with a pole and radial sectional curvature $K_\rad \le 0$. Inequality \eqref{laclassica} is classical and well-known in $\R^m$, see \cite{barbatisfilippastertikas}.\\
On the hyperbolic space $\HH^m_\kappa$ of sectional curvature $-\kappa^2$, where $g_\kappa (r) = \kappa^{-1}  \sinh(\kappa r )$, condition \eqref{non-parabolicity} is met for each $m\ge 2$ independently of $p$, but the expression of $\chi(r(x))$ is not so neat. However, an iterative argument allows us to explicitly compute the integral in the expression of $\chi$ in some relevant cases. For $\alpha>0$, set 
$$
I_\alpha(r) \doteq \int_r^{+\infty} \frac{\di s}{g_\kappa (s)^\alpha} \qquad \text{and} \qquad \chi_\alpha(r) \doteq \left(\frac{p-1}{p}\right)^p \left[g_\kappa (r)^\alpha I_\alpha(r)\right]^{-p}.
$$
In view of \eqref{hardypolo}, our case of interest is $\alpha = \frac{m-1}{p-1}$. Writing 
$$
\frac{I_{\alpha+2}}{\kappa^{\alpha+2}} = \int_r^{+\infty} \frac{\cosh(\kappa s) \cosh(\kappa s)}{\sinh^{\alpha+2}(\kappa s)}\di s - \frac{I_\alpha}{\kappa^\alpha},
$$
and integrating by parts, we obtain the recursion formula 
$$
\alpha I_\alpha(r) = \frac{\cosh( \kappa r)}{\kappa^2 g_\kappa (r)^{\alpha+1}} - \frac{\alpha+1}{\kappa^2} I_{\alpha+2}(r)
$$
that yields
$$
\alpha \chi_\alpha^{-1/p} = \frac{p\coth( \kappa r)}{(p-1)\kappa} - \frac{\alpha+1}{\kappa^2g_\kappa (r)^2} \chi_{\alpha+2}^{-1/p}.
$$
Therefore, one may inductively recover $\chi_\alpha$. In particular, if $\alpha=1$ (i.e., in our case of interest, $p=m$), by explicit integration of $I_1(r)$ we get
\begin{equation}\label{hardy2hyp}
\chi_1(r) = \left(\frac{(m-1)\kappa}{m}\right)^m \left[\sinh(\kappa r) \log\left( \frac{e^{\kappa r}+1}{e^{\kappa r}-1} \right)\right]^{-m}, 
\end{equation}
while if $\alpha=2$ (i.e. $m = 2p-1$), again by explicit integration of $I_2(r)$ we deduce
\begin{equation}\label{hardy3hyp}
\chi_2(r) = \left( \frac{2(m-1)\kappa}{m+1}\right)^{\frac{m+1}{2}} \big(1-e^{-2 \kappa r}\big)^{-\frac{m+1}{2}},
%
\end{equation}
see Example 3.15 in \cite{bmr2}. An important feature of $\chi_\alpha(r)$ is the following: 
\begin{equation}\label{soprachialfa}
\chi_\alpha(r) \ge \left(\frac{p-1}{p}\right)^p \alpha^p \kappa^p, \qquad \chi_\alpha(r) \ra \left(\frac{p-1}{p}\right)^p \alpha^p \kappa^p \quad \text{as } r \ra +\infty.
\end{equation}
Indeed, the limit is straightforwardly computable. As for the first relation, it follows from the following property. To state it, for fixed $\alpha>0$ and for $g$ satisfying $1/g^\alpha \in L^1(+\infty)$, write 
$$
\chi_{g}(r) \doteq \left(\frac{p-1}{p}\right)^p \left( g(r)^\alpha \int_r^{+\infty} \frac{\di s}{g(s)^\alpha}\right)^{-p}.
$$
Then, the next comparison result holds: 
\begin{quote}
if $g_1/g_2$ is non-decreasing on $\R^+$ (respectively, non-increasing), \\[0.2cm]
then $\chi_{g_1} \ge \chi_{g_2}$ on $\R^+$ (resp, $\le$).
\end{quote}
The proof of this fact goes along the same lines as in Proposition 3.12 in \cite{bmr2}, and is left to the interested reader. Using this with $g_1(r) \doteq g_\kappa (r)$ and $g_2(r) \doteq \exp\{\kappa r\}$ we get 
$$
\chi_{g_1}(r) \ge \chi_{g_2}(r) \equiv \left(\frac{p-1}{p}\right)^p \alpha^p \kappa^p,
$$
as claimed.
}
\end{Example}
When each point of the manifold $M$ is a pole, we can construct multipole Hardy weights by the standard procedure described at the beginning of Section \ref{sec_examples}. This is the case if, for example, $M$ is a Cartan-Hadamard manifold, that is, a simply-connected, complete manifold with non-positive sectional curvature.

\begin{Theorem}\label{multihardyCH}
For $m \ge 2$, let $M^m$ be a Cartan-Hadamard manifold satisfying $K \le -\kappa^2$, for some constant $H \ge 0$. Given the solution $g_\kappa $ of \eqref{defsnh}, let 
\begin{equation}\label{condiiperboli}
\left\{ \begin{array}{ll}
p \in (1,m) & \quad \text{if } \kappa =0, 	\\[0.1cm]
p \in (1,+\infty) & \quad \text{if } \kappa >0.
\end{array}\right.
\end{equation}
Then, for each unit measure $\lambda$ on $M$, the Hardy inequality
\begin{equation}\label{multihardy}
\int_M \left[ \int_M\big(\chi \big(\dist(x,y)\big)\big)\di \lambda(y)\right]\big|\varphi(x)\big|^p \di x \le \int_M \big|\nabla \varphi(x)\big|^p \di x 
\end{equation}
holds for each $\varphi \in \lip_c(M)$, where 
\begin{equation}\label{hardypolipolo}
\chi(t) = \left(\frac{p-1}{p}\right)^p \left[ g_\kappa (t)^{\frac{m-1}{p-1}} \int_t^{+\infty} g_\kappa (s)^{-\frac{m-1}{p-1}}\di s\right]^{-p}.
\end{equation}
In particular, when $\kappa=0$, for each $\varphi \in \lip_c(M)$
\begin{equation}\label{multihardyeucli}
\left(\frac{m-p}{p}\right)^p \int_M \left[ \int_M \frac{\di \lambda(y)}{\dist(x,y)^p}\right]\big|\varphi(x)\big|^p \di x \le \int_M \big|\nabla \varphi(x)\big|^p \di x. 
\end{equation}
\end{Theorem}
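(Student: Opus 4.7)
The plan is to combine the single-pole Hardy inequality of Theorem \ref{teo_laprimahardy} with the measure-theoretic averaging procedure \eqref{hardyweightgen}--\eqref{hardyweight_integrated} described at the beginning of Section \ref{sec_examples}. The key structural observation is that on a Cartan-Hadamard manifold every point is a pole: by the Cartan-Hadamard theorem the exponential map $\exp_y : T_yM \to M$ is a diffeomorphism for every $y \in M$. Moreover, the curvature bound $K \le -\kappa^2$ is a radial sectional curvature bound with respect to every choice of origin $y$, compared to the model with $g = g_\kappa$.

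First, I would fix $y \in M$ and apply Theorem \ref{teo_laprimahardy} with origin $o = y$. The nonparabolicity hypothesis \eqref{non-parabolicity} becomes $g_\kappa(r)^{-(m-1)/(p-1)} \in L^1(+\infty)$, which in view of \eqref{defsnh} holds precisely when either $\kappa>0$ (exponential growth) or $\kappa = 0$ and $(m-1)/(p-1)>1$, i.e.\ $p<m$; this is exactly the hypothesis \eqref{condiiperboli}. Theorem \ref{teo_laprimahardy} therefore yields, with $r_y(x) \doteq \dist(x,y)$,
\begin{equation*}
\int_M \chi\big(r_y(x)\big) |\varphi(x)|^p \di x \le \int_M |\nabla \varphi(x)|^p \di x \qquad \forall \, \varphi \in \lip_c(M),
\end{equation*}
with $\chi$ as in \eqref{hardypolipolo}. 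Thus $w(x,y) \doteq \chi(\dist(x,y))$ is a Hardy weight on $M$ for each fixed $y \in M$.

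Next, I would verify the joint measurability of $w : M \times M \to [0,+\infty]$ required to invoke Tonelli's theorem in \eqref{hardyweight_integrated}. This is immediate: $\dist : M\times M \to [0,+\infty)$ is continuous, and inspection of \eqref{hardypolipolo} shows that $\chi \in C^0((0,+\infty))$ with an integrable (polynomial or logarithmic) singularity at $0^+$, cf.\ the asymptotics \eqref{asicritical}; hence $w$ is Borel measurable on $M\times M$. Given any probability measure $\lambda$ on $M$, the construction \eqref{hardyweightgen}--\eqref{hardyweight_integrated} then produces the new Hardy weight $\chi_\lambda(x) \doteq \int_M \chi(\dist(x,y))\di \lambda(y)$ and, by Tonelli,
\begin{equation*}
\int_M \chi_\lambda(x)|\varphi(x)|^p\di x = \int_M \bigg[\int_M \chi(\dist(x,y))|\varphi(x)|^p\di x\bigg]\di\lambda(y) \le \int_M |\nabla \varphi|^p \di x,
\end{equation*}
which is exactly \eqref{multihardy}. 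Finally, specializing to $\kappa=0$, one has $g_0(r)=r$ and a direct computation gives $\int_t^{+\infty} s^{-(m-1)/(p-1)}\di s = \tfrac{p-1}{m-p}t^{-(m-p)/(p-1)}$, so $\chi(t) = \bigl(\tfrac{m-p}{p}\bigr)^p t^{-p}$, yielding \eqref{multihardyeucli}.

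I do not expect any serious obstacle here: since Theorem \ref{teo_laprimahardy} and the averaging lemma \eqref{hardyweight_integrated} are already in place, the proof reduces to checking that every $y$ can serve as a pole (a direct consequence of the Cartan-Hadamard structure), to checking joint measurability of $w(x,y)$, and to the elementary integral identifying $\chi$ in the Euclidean case. The only minor point worth attention is that Theorem \ref{teo_laprimahardy} is stated with a fixed pole $o$, but its proof uses only the Laplacian comparison \eqref{lapladasotto} from below centered at $o$; on a Cartan-Hadamard manifold with $K \le -\kappa^2$ this comparison holds with $o=y$ arbitrary and with model function $g_\kappa$, so the application to every pole $y$ is uniform, which is precisely what makes the subsequent $\lambda$-integration legitimate.
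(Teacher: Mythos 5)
Your proposal is correct and follows essentially the same route as the paper: fix $y$, note that Cartan-Hadamard makes every point a pole with the model $g_\kappa$, invoke Theorem \ref{teo_laprimahardy} to get the single-pole weight $\chi(\dist(\cdot,y))$, and then average over $\lambda$ via the Tonelli argument of \eqref{hardyweightgen}--\eqref{hardyweight_integrated}. Your added remarks on joint measurability of $(x,y)\mapsto\chi(\dist(x,y))$ and the explicit Euclidean computation of $\chi$ are correct details that the paper leaves implicit.
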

\begin{proof}
By \eqref{condiiperboli}, \eqref{non-parabolicity} is met for $g=g_\kappa $. Since each $y \in M$ is a pole of $M$, setting $r_y(\cdot) = \mathrm{dist}(\cdot, y)$ one can apply Theorem \ref{teo_laprimahardy} to deduce that \eqref{hardymonopole} holds for each fixed $y$, namely, 
\begin{equation}
\int_M (\chi \circ r_y)|\varphi|^p \di x \le \int_M |\nabla \varphi|^p \di x \qquad \forall \, \varphi \in \lip_c(M),
\end{equation}
where $\chi$ is as in \eqref{hardypolipolo}. The generalization in \eqref{multihardy} follows from the argument at the beginning of Section \ref{sec_examples}.
\end{proof}

\subsection{Hardy weights on minimally immersed submanifolds}\label{ex3sub}

\begin{Theorem}\label{teo_hardyminimal}
%
%

%
%
Let $F : (M^m, \metric) \ra (N^n, ( \, , \, ) )$ be an immersed, minimal submanifold of a Cartan-Hadamard ambient space $N^n$, and suppose that the sectional curvature $\bar K$ of $N$ satisfies $\bar K \le -\kappa^2$, for some constant $H \ge 0$. If $\kappa =0$, we assume that $m\ge 3$. Then, given the solution $g_\kappa $ of \eqref{defsnh}, 
and denoting with $\bar r_q$ the extrinsic distance from a point $q \in N$ evaluated along the immersion $F$, the following Hardy inequality holds:
\begin{equation}\label{hardysubmanifold}
\int_M (\chi \circ \bar r_q) \varphi^2 \di x \le \int_M |\nabla \varphi|^2 \di x \qquad \forall \, \varphi \in \lip_c(M), 
\end{equation}
where 
\begin{equation}\label{hardypolodim2}
\chi(t) = \frac{1}{4}\left[ g_\kappa (t)^{m-1} \int_t^{+\infty} g_\kappa (s)^{1-m}\di s\right]^{-2}.
\end{equation}
In particular, if $\kappa=0$, 
\begin{equation}\label{hardyeuclbella}
\left(\frac{m-2}{2}\right)^2 \int_M \frac{\varphi^2}{\bar r_q^2} \di x \le \int_M |\nabla \varphi|^2 \di x, \qquad \forall \, \varphi \in \lip_c(M),
\end{equation}
while, if $\kappa >0$, 
\begin{itemize}
\item[-] when $m=2$, that is, $M$ is a surface, for each $\varphi \in \lip_c(M)$
\begin{equation}\label{hardy2hypbella}
\left(\frac{\kappa}{2}\right)^2 \int_M\left[\sinh(\kappa \bar r_q) \log\left( \frac{e^{\kappa\bar r_q}+1}{e^{\kappa \bar r_q}-1} \right)\right]^{-2} \varphi^2 \di x \le \int_M |\nabla \varphi|^2 \di x; 
\end{equation}
\item[-] when $m=3$, for each $\varphi \in \lip_c(M)$
\begin{equation}\label{hardy3hypbella}
\kappa^2 \int_M \frac{\varphi^2}{(1-e^{-2\kappa \bar r_q})^2} \di x \le \int_M |\nabla \varphi|^2 \di x.
\end{equation}
\end{itemize}
\end{Theorem}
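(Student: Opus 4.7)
The plan is to produce a positive function $w$ on $M\setminus F^{-1}(q)$ satisfying $-\Delta_M w \ge (\chi\circ\bar r_q)\,w$ pointwise. By the Lagrangian representation of Proposition \ref{prop_lagrangian} applied with $g=w$, this gives $Q_V(\varphi)\ge 0$ for $V:=-\Delta_M w/w\ge \chi\circ\bar r_q$, hence the Hardy inequality \eqref{hardysubmanifold} for every $\varphi\in\lip_c(M\setminus F^{-1}(q))$; the dimensional assumptions ($m\ge 3$ if $\kappa=0$, $m\ge 2$ if $\kappa>0$) ensure that $F^{-1}(q)$ has null $W^{1,2}$-capacity in $M$, and a standard approximation argument extends the inequality to every $\varphi\in\lip_c(M)$. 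The natural candidate is the square root of the transplanted ambient model Green kernel,
$$w \ \doteq\ (G\circ\bar r_q)^{1/2},\qquad G(t) \ \doteq\ \int_t^{+\infty} g_\kappa(s)^{1-m}\di s,$$
because on the model $M_{g_\kappa}$ one has $\Delta G=0$, whence $-\Delta G^{1/2}/G^{1/2}=|\nabla G|^2/(4G^2)$ coincides exactly with $\chi$.

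\textbf{Laplace comparison and chain rule.} The key input is the interplay between Hessian comparison on $N$ and the minimality of the immersion. Since $N$ is Cartan-Hadamard with sectional curvature $\le-\kappa^2$, on $N\setminus\{q\}$ one has
$$\bar\nabla^2 \bar r_q \ \ge\ \frac{g_\kappa'(\bar r_q)}{g_\kappa(\bar r_q)}\bigl((\,,\,)-\di \bar r_q\otimes\di \bar r_q\bigr).$$
Tracing along an orthonormal frame $\{e_i\}$ of $TM$ and using that $\Delta_M \bar r_q=\sum_i\bar\nabla^2\bar r_q(e_i,e_i)+\langle\bar\nabla\bar r_q,H\rangle$ with mean curvature $H\equiv 0$, one deduces the Laplace inequality
$$\Delta_M\bar r_q \ \ge\ \frac{g_\kappa'(\bar r_q)}{g_\kappa(\bar r_q)}\bigl(m-|\nabla^M\bar r_q|^2\bigr)$$
weakly on $M\setminus F^{-1}(q)$, with $|\nabla^M\bar r_q|^2\le 1$. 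Since $(G^{1/2})'<0$, substituting this estimate into the chain-rule identity $\Delta_M w=(G^{1/2})''(\bar r_q)|\nabla^M\bar r_q|^2+(G^{1/2})'(\bar r_q)\Delta_M\bar r_q$ and simplifying via $G'=-g_\kappa^{1-m}$ and $G''=(m-1)g_\kappa^{-m}g_\kappa'$ produces, as an affine lower bound in $t:=|\nabla^M\bar r_q|^2\in[0,1]$,
$$-\frac{\Delta_M w}{w} \ \ge\ \chi(\bar r_q)\,t \ +\ \frac{m\,g_\kappa'(\bar r_q)\,g_\kappa(\bar r_q)^{-m}}{2\,G(\bar r_q)}(1-t).$$
This lower bound is $\ge\chi(\bar r_q)$ for every $t\in[0,1]$ precisely when the algebraic inequality
$$\Lambda(t) \ :=\ 2m\,g_\kappa'(t)\,g_\kappa(t)^{m-2}\,G(t)\ \ge\ 1 \qquad\text{for all }t\in\R^+$$
holds.

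\textbf{Main obstacle and conclusion.} The crux of the argument is thus to establish $\Lambda\ge 1$. Setting $\Psi(t):=g_\kappa(t)^{m-1}G(t)$, the identity $\Psi'(t)=(m-1)g_\kappa^{m-2}(t)g_\kappa'(t)G(t)-1=\tfrac{m-1}{2m}\Lambda(t)-1$ reduces the matter to proving that $\Psi$ is non-decreasing on $\R^+$. The change of variables $s=t+u$ rewrites
$$\Psi(t)\ =\ \int_0^{+\infty}\!\Bigl(\frac{g_\kappa(t)}{g_\kappa(t+u)}\Bigr)^{m-1}\di u,$$
and the integrand is non-decreasing in $t$ for each fixed $u\ge 0$: for $\kappa>0$ this follows from the identity $g_\kappa(t)/g_\kappa(t+u)=1/[\cosh(\kappa u)+\coth(\kappa t)\sinh(\kappa u)]$ together with the monotonicity of $\coth(\kappa t)$, while for $\kappa=0$ the integrand equals $(t/(t+u))^{m-1}$, which is plainly increasing in $t$. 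Hence $\Psi'\ge 0$ on $\R^+$, and in fact $\Lambda\ge 2m/(m-1)\ge 2$. With $-\Delta_M w/w\ge\chi(\bar r_q)$ so established, the Lagrangian argument (or equivalently Proposition \ref{pro115}) yields \eqref{hardysubmanifold} on $M\setminus F^{-1}(q)$, and the capacity extension completes the proof in general. The explicit weights \eqref{hardyeuclbella}, \eqref{hardy2hypbella}, \eqref{hardy3hypbella} follow by elementary closed-form evaluations of $G$ in the three cases. The genuine difficulty is precisely the algebraic step $\Lambda\ge 1$: without the minimality of $F$ the coefficient $m$ in the Laplace estimate above degrades and the inequality may fail.
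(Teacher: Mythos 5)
Your proposal follows the paper's overall architecture — transplant the ambient model Green kernel $G\circ\bar r_q$, take its square root $w=(G\circ\bar r_q)^{1/2}$, combine the Hessian comparison in $N$ with minimality and $|\nabla^M\bar r_q|\le 1$ to show $-\Delta_M w\ge\chi(\bar r_q)w$, reduce to an algebraic inequality, and then remove the singularity at $F^{-1}(q)$ — so at the structural level the two proofs agree. The genuine difference is in the central algebraic step. Your $\Lambda(t)\ge 1$ is exactly the paper's $\zeta(t)\doteq m\,g_\kappa'/g_\kappa-\sqrt{\chi}\ge 0$ in disguise (both say $2m\,g_\kappa^{m-2}g_\kappa'\,G\ge 1$), but whereas the paper proves it by observing that $y=\sqrt{\chi}$ solves the Riccati equation $y'=2y^2-(m-1)(g_\kappa'/g_\kappa)y$, studying the asymptotics of $\zeta$ at $0$ and $\infty$, and arguing by contradiction, you notice that $\Psi(t)=g_\kappa(t)^{m-1}G(t)=\int_0^\infty\big(g_\kappa(t)/g_\kappa(t+u)\big)^{m-1}\,du$ has monotone integrand in $t$ (via $\sinh(\kappa t)/\sinh(\kappa(t+u))=[\cosh\kappa u+\coth(\kappa t)\sinh\kappa u]^{-1}$ for $\kappa>0$, or $t/(t+u)$ for $\kappa=0$), giving $\Psi'\ge 0$, hence $\Lambda\ge 2m/(m-1)>1$ directly. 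This is shorter, avoids the contradiction argument, and yields the sharper constant $2m/(m-1)$ (which is actually attained in the $t\to+\infty$ limit when $\kappa>0$), so the monotonicity argument is a clean improvement on the paper's ODE analysis. You also handle the singularity at $F^{-1}(q)$ by a zero-capacity / approximation argument rather than the paper's truncation of $\chi$ plus the pasting lemma; both are standard and sound, though be aware that your phrasing slightly misattributes the role of the dimensional restrictions (they ensure convergence of $\int^\infty g_\kappa^{1-m}$, not zero capacity of $F^{-1}(q)$, which holds in any dimension $\ge 2$). Overall the proposal is correct, and its treatment of the key inequality is a genuine, and arguably nicer, alternative to the paper's.
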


\begin{Remark}
\emph{The inequality in \eqref{hardyeuclbella} has been proved in \cite{carron, liwang3} by  combining the comparison for the Hessian of the extrinsic distance function with an integration by parts argument. The case $\kappa>0$ has been considered in \cite[Example 1.8]{liwang3}. However, the Hardy weight found there is skew with \eqref{hardy2hypbella} and \eqref{hardy3hypbella}, in particular it is quite smaller if $\bar{r}_q$ is close to zero. As before, other Hardy weights can be constructed in the way described in Examples \eqref{ex_multihardy} and \eqref{ex_blowing}.
} 
\end{Remark}
\begin{proof}
We mark with a bar each quantity when referred to $N$, so that, for example, $\bar \nabla,  \overline{\dist}$ are the Riemannian connection and the distance function of $N$. For simplicity, we denote with $\bar r_q$ the distance function from $q$ in the manifold $N$, i.e. $\bar r_q(\cdot)  = \overline{\dist}(\cdot, q)$, so that the function $\bar r_q$ in the statement of the theorem is, indeed, $\bar r_q \circ F$. By the Hessian comparison theorem \eqref{hessiadasotto}, for each $q \in N$ it holds
$$
\bar \nabla \di \bar r_q \ge \frac{g_\kappa '(\bar r_q)}{g_\kappa (\bar r_q)}\Big( ( \, , \, ) - \di \bar r_q \otimes \di \bar r_q\Big) \qquad \text{on } N \backslash \{q\}.
$$
For $x\in M$, $q \in N$ define 
\begin{equation}\label{pfam2}
G_q(x) = h\big( \bar r_q\big(F(x)\big) \big), \qquad \text{where } h(t) = \int_{t}^{+\infty} \frac{\di s}{g_\kappa (s)^{m-1}}.
\end{equation}
Observe that the integral in $h(t)$ converges for each $m \ge 2$ when $\kappa>0$, and for each $m \ge 3$ if $\kappa=0$, which accounts for our dimensional restrictions. Denote with $\II$ the second fundamental form of $F$. By the chain rule and using $h'<0$, for each vector field $X$ on $M$ (identified with $F_*(X))$ we get
$$
\begin{array}{lcl}
\disp \nabla\di G_q(X,X) & = &  \disp \disp h''\big( \bar \nabla \bar r_q, X \big)^2 + h' \bar \nabla \di \bar r_{q} (X,X) + h' \big( \bar \nabla \bar r_{q}, \II(X,X)\big) \\[0.2cm]
& \le & \disp \disp \left(h''- \frac{g_\kappa '}{g_\kappa }h'\right)\big( \bar \nabla \bar r_q, X \big)^2 + h' \frac{g_\kappa '}{g_\kappa } |X|^2 + h'\big( \bar \nabla \bar r_{q}, \II(X,X)\big) \\[0.4cm]
& = & \disp \disp - mh'\frac{g_\kappa '}{g_\kappa} \big( \bar \nabla \bar r_q, X \big)^2 + h' \frac{g_\kappa '}{g_\kappa } |X|^2 + h'\big( \bar \nabla \bar r_{q}, \II(X,X)\big) \\[0.4cm]
& = & \disp \disp h'\frac{g_\kappa '}{g_\kappa}\Big[ |X|^2 - m \big( \bar \nabla \bar r_q, X \big)^2\Big] + h'\big( \bar \nabla \bar r_{q}, \II(X,X)\big) 
\end{array}
$$
Tracing with respect to an orthonormal frame $\{e_i\}$ of $M$, using minimality and again $h'<0$, we obtain
\begin{equation}\label{laplagmin}
\disp \Delta G_q \le mh'\frac{g_\kappa '}{g_\kappa } \left( 1- |\bar \nabla^T \bar r_{q}|^2\right) \le 0
\end{equation}
where $\bar \nabla^T$ is the component of the gradient in $N$ which is tangent to $M$. By Proposition \ref{prop_hyperbolicity},
\begin{equation}\label{hardyfraco}
\frac{|\nabla G_q(x)|^2}{4G_q(x)^2} = \left(\frac{h'(\bar r_q)}{2h(\bar r_q)}\right)^2|\bar \nabla^T \bar r_q|^2 = \chi \big(\bar r_q\big(F(x)\big)\big)|\bar \nabla^T \bar r_q|^2.
\end{equation}
is a Hardy weight. Unfortunately, such a weight is not effective, since we cannot control the size of $\bar \nabla^T \bar r_q$. However, we can improve \eqref{hardyfraco} to the  effective Hardy weight $\chi(\bar r_q)$ by using the full information coming from \eqref{laplagmin}. 
In fact, since $h'<0$, by \eqref{laplagmin} the function $u_1 \doteq \sqrt{G_q}$ solves, on $M \backslash f^{-1}\{q\}$,
\begin{equation}\label{bellarray}
\begin{array}{lcl}
\Delta u_1 & = & \disp \left[\frac{\Delta G_q}{2G_q} - \chi(\bar r_q) |\bar \nabla^T \bar r_q|^2\right] u_1  \le \left[ m \frac{h'}{2h} \frac{g_\kappa '}{g_\kappa } \big( 1-|\bar \nabla^T \bar r_q|^2\big) - \chi(\bar r_q) |\bar \nabla^T \bar r_q|^2\right] u_1 \\[0.5cm]
& \le & \disp \left[ - m \sqrt{\chi(\bar r_q)} \frac{g_\kappa '}{g_\kappa } + \sqrt{\chi(\bar r_q)}|\bar \nabla^T \bar r_q|^2 \left( m \frac{g_\kappa '}{g_\kappa } - \sqrt{\chi(\bar r_q)} \right) \right] u_1.
\end{array} 
\end{equation}
Now, we claim that 
\begin{equation}\label{ode}
\zeta(t) \doteq m \frac{g_\kappa '(t)}{g_\kappa (t)} - \sqrt{\chi(t)}  \ge 0 \qquad \text{for } t \in \R^+.
\end{equation}
Indeed, in the Euclidean case $\kappa=0$, $g_\kappa(t)=t$ and explicit computation gives
$$
\zeta(t) = \frac{m+2}{2t} >0 \qquad \text{on } \R^+.
$$
When $\kappa>0$, $g_\kappa(t) = \kappa^{-1} \sinh(\kappa t)$. A computation gives
\begin{equation}\label{asymptotici}
\zeta(t) \sim \frac{m+2}{2t} \quad \text{as } \, t \ra 0, \qquad \zeta(t) \sim \frac{(m+1)\kappa}{2} \quad \text{as } \, t\ra + \infty.
\end{equation}
Now, $y(t) = \sqrt{\chi(t)}$ solves
\begin{equation}\label{ode2}
y' = 2y^2 - y(m-1) \frac{g_\kappa '}{g_\kappa } \qquad \text{on } \R^+, 
\end{equation}
Suppose that $\zeta(\bar t) \le 0$ for some $\bar t>0$. Then, an inspection of \eqref{ode2} and the fact that $g_\kappa '/g_\kappa $ is decreasing show that $y'>0$ on $[\bar t, +\infty)$, whence there exists $c>0$ such that 
$$
y> \frac{m-1}{2}\frac{g_\kappa '}{g_\kappa } + c  \qquad \text{on } [\bar t, +\infty). 
$$
But then
$$
\lim_{t \ra +\infty} \zeta(t) = m \kappa - \lim_{t \ra +\infty} y(t) \le m \kappa -c -\frac{m-1}{2}\kappa = \frac{m+1}{2}\kappa -c,
$$
contradicting \eqref{asymptotici} and proving the claim. Next, by \eqref{ode} we can use the estimate $|\bar \nabla^T \bar r_q| \le 1$ to conclude
\begin{equation}\label{bella!!!}
\Delta u_1 \le \left[ - m \sqrt{\chi(\bar r_q)} \frac{g_\kappa'}{g_\kappa} + \sqrt{\chi(\bar r_q)} \left( m \frac{g_\kappa'}{g_\kappa} - \sqrt{\chi(\bar r_q)} \right) \right] u_1  = - \chi(\bar r_q) u_1.
\end{equation} 
For $\eps>0$, consider the truncated function $\chi_{\eps}(t)$ given by $\chi_{\eps}(t) = \chi(t)$ if $t \ge 2\eps$, and $0$ otherwise. Then, $\chi_{\eps}(\bar r_q \circ F) \in L^\infty_\loc(M)$ and by \eqref{bella!!!}
\begin{equation}\label{bellissima}
\Delta u_1 + \chi_{\eps}(\bar r_q)u_1 \le 0 \qquad \text{on } M \backslash F^{-1}\{q\}.
\end{equation}
If $F^{-1}\{q\} \neq \emptyset$ observe that the constant function $u_2 \doteq \sqrt{G_q(\eps)}$ solves \eqref{bellissima} on $F^{-1}\{B_{2\eps}(q)\}$. By the pasting Lemma \ref{lem_pasting} with the choices $\Omega_1 \doteq F^{-1}\{B_\eps(q)\}\backslash F^{-1}\{q\}$, $\Omega_2 \doteq F^{-1}\{B_{2\eps}(q)\}\backslash F^{-1}\{q\}$, and since $h'<0$, we deduce that 
$$
u \doteq \left\{ \begin{array}{ll}
\sqrt{G_q(\eps)} & \quad \text{on } F^{-1}\{B_\eps(q)\},\\[0.2cm]
\sqrt{G_q} & \quad \text{on } M \backslash F^{-1}\{B_\eps(q)\}
\end{array}\right.
$$
solves $\Delta u + \chi_{\eps}(\bar r_q)u \le 0$ on $\Omega_2=F^{-1}\{B_{2\eps}(q)\}\backslash F^{-1}\{q\}$. Since the pasting region $F^{-1}\{\partial B_\eps(q)\}$ is internal to $\Omega_2$ and $u$ is smooth in a neighbourhood of $F^{-1}\{q\}$, then clearly $u$ solves
\begin{equation}\label{ancorameglio}
\Delta u + \chi_{\eps}(\bar r_q)u \le 0 \qquad \text{on the whole } M.
\end{equation}
By Proposition \ref{pro115} with $p=2$, $f=1$ and $V = \chi_{\eps}(\bar r_q)$  we deduce that $\lambda_{V}(M) \ge 0$, that is 
$$
\int_M \chi_{\eps}(\bar r_q \circ F) \varphi^2 \di x \le \int_M |\nabla \varphi|^2 \di x \qquad \text{for each } \varphi \in \lip_c(M), 
$$
whence, letting $\eps \ra 0$ and using monotone convergence we deduce \eqref{hardysubmanifold}. The cases \eqref{hardyeuclbella}, \eqref{hardy2hypbella}, \eqref{hardy3hypbella} follow by computing $\chi(r)$ according to Example \ref{ex_euclhyp}, in particular see \eqref{hardyeuclideanmono}, \eqref{hardy2hyp}, \eqref{hardy3hyp}. 
\end{proof}
%
%
%

\subsection{Specializing our main theorems: an example}
To illustrate the results of the last two sections, by way of an example we specialize our Theorem \ref{teouno} to the case of quasilinear Yamabe type equations on Cartan-Hadamard manifolds. We underline that all the assumptions in the next Corollary are explicit and easy to check. Clearly, analogous results can be stated using any of Theorems \ref{teo_laprimahardy}, \ref{multihardyCH}, \ref{teo_hardyminimal}, as well as Theorem \ref{teouno_bis} instead of Theorem \ref{teouno}.
\begin{Corollary}\label{cordue}
Let $M$ be a Cartan-Hadamard manifold of dimension $m \ge 3$, and let $p \in (1,m)$. Let $a,b \in L^\infty_\loc(M)$, and suppose that, for some countable set of points $\{y_j\}_{j \in I} \subset M$ and $\{t_j\}_{j\in I}\subset [0,1]$ with $\disp \sum_j t_j \le 1$,
\begin{equation}\label{013}
a(x) \le \left(\frac{m-p}{p}\right)^p \,  \sum_{j \in I}\frac{t_j}{\dist(x,y_j)^p}.
\end{equation}
Furthermore, suppose that 
\begin{itemize}
\item[$i)$] $b_-(x)$ has compact support;
\vspace{0.1cm}
\item[$ii)$] $a(x)=O\big(b(x)\big)$ as $x$ diverges
\vspace{0.1cm}
\item[$iii)$] for some $\theta>0$, $\big(a(x)-\theta b_+(x) \big)_- \in L^1(M)$.
\end{itemize}
Fix a nonlinearity $F(t)$ satisfying \eqref{assu_F}. Then, there exists $\delta>0$ such that if
$$
b(x) \ge -\delta \qquad \text{on }M,
$$
there exists a weak solution $u \in C^{1,\mu }_\loc(M)$ of
\begin{equation} \label{014}
\left\{\begin{array}{l}
\disp \Delta_p u + a(x) u^{p-1} - b(x) F(u) = 0 \qquad \text{on } M,\\[0.2cm]
0 < u \le \|u\|_{L^\infty(M)} < +\infty.
\end{array}\right.
\end{equation}
\end{Corollary}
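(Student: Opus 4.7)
The plan is to deduce Corollary \ref{cordue} as a direct application of Theorem \ref{teouno} with $f \equiv 0$ (so that $\Delta_{p,f}=\Delta_p$ and $\di \mu_f = \di x$). Hypotheses $i), ii), iii)$ on $b$ and the smallness condition $b(x) \ge -\delta$ transcribe verbatim; the only nontrivial task is to verify the two spectral requirements, namely that $Q_0$ and $Q_a$ are subcritical on $M$.

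The key tool for this is the multipole Hardy inequality of Theorem \ref{multihardyCH}. Since $M$ is Cartan-Hadamard with $K \le 0$ and $p \in (1,m)$, Theorem \ref{multihardyCH} applies with $\kappa = 0$. Used with the discrete (sub-unit) measure $\lambda = \sum_{j \in I} t_j \delta_{y_j}$ on $M$ — whose admissibility follows from the general construction in \eqref{hardyweightgen}--\eqref{hardyweight_integrated} that extends the single-measure Hardy inequality to sub-unit measures — it yields that
\begin{equation*}
\hat w(x) \doteq \left(\frac{m-p}{p}\right)^p \sum_{j \in I} \frac{t_j}{\dist(x,y_j)^p}
\end{equation*}
is a Hardy weight for $\Delta_p$ on $M$. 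The subcriticality of $Q_0$ follows from item $3)$ of Proposition \ref{prop_hyperbolicity} applied to the classical single-pole Hardy weight $\left(\frac{m-p}{p}\right)^p \dist(x,y_0)^{-p}$, with $y_0 \in M$ arbitrary (this also handles the degenerate case in which the sum in \eqref{013} is empty or all $t_j = 0$).

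For the subcriticality of $Q_a$, note that \eqref{013} is precisely the pointwise bound $a(x) \le \hat w(x)$. Since $a \in L^\infty_\loc(M)$ while $\hat w$ blows up at each $y_j$ — and, in the degenerate case $I = \emptyset$, one may replace $\hat w$ by the single-pole weight $\left(\frac{m-p}{p}\right)^p \dist(x,y_0)^{-p}$, which still dominates $a$ since then $a \le 0$ — we have $a \not\equiv \hat w$. The comparison criterion of Proposition \ref{prop_criterion} then yields the subcriticality of $Q_a$ with explicit Hardy weight $\hat w - a \ge 0$.

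Having verified both spectral hypotheses, and since $i), ii), iii)$ in the corollary match exactly those of Theorem \ref{teouno}, the conclusion — existence of $u \in C^{1,\mu}_\loc(M)$ solving \eqref{014}, for some $\delta > 0$ produced by that theorem — is immediate. No significant technical obstacle arises in this argument; the main conceptual point is the recognition that the multipole Hardy weight from Theorem \ref{multihardyCH} is exactly tailored to the bound on $a$ imposed in hypothesis \eqref{013}, thereby giving a fully geometric and verifiable form to the abstract spectral assumption on $Q_a$.
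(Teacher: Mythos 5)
Your proof is correct and follows essentially the same route as the paper: reduce to Theorem \ref{teouno}, invoke the multipole Hardy inequality of Theorem \ref{multihardyCH} with $\kappa=0$, and conclude subcriticality of $Q_0$ via Proposition \ref{prop_hyperbolicity} and of $Q_a$ via Proposition \ref{prop_criterion}, the strictness $a\not\equiv\hat w$ coming from the blow-up of the Hardy weight at each $y_j$. Your explicit treatment of the degenerate case $I=\emptyset$ (or all $t_j=0$) by falling back to a single-pole weight is a small but genuine tightening of the paper's argument, which tacitly assumes the multipole weight is nontrivial.
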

\begin{proof}
In view of Theorem \ref{teouno}, it is enough to prove that $-\Delta_p$ and $Q_a$ are subcritical on $M$. By Theorem \ref{multihardyCH}, the Hardy inequality
$$
\left(\frac{m-p}{p}\right)^p \,   \int_M  \sum_{j \in I} \frac{t_j}{\dist(x,y_j)^p}|\varphi(x)|^p \di x \le \int_M |\nabla \varphi(x)|^p \di x
$$
holds for $\varphi \in \lip_c(M)$. Therefore, by Proposition \ref{prop_hyperbolicity}, $-\Delta_p$ is subcritical. Next, the fact that the inequality in \eqref{013} is strict on a set of positive measure (the right-hand side is essentially unbounded at each $y_j$) assures that $Q_a$ be subcritical by Proposition \ref{prop_criterion}. 
\end{proof}
\section{Proofs of Theorems \ref{teouno} and \ref{teouno_bis}} \label{esistenza}

We first address the local solvability of the Dirichlet problem for 
$$
\Delta_{p,f}u + A(x) u^{p-1} -B(x)F(u) = 0
$$
when $B \ge 0$.
\begin{Lemma}\label{lem-diriproblem}
Let $M$ be a Riemannian manifold, $p \in (1,+\infty)$ and $f \in C^\infty(M)$. Let $\Omega \Subset M$ be a smooth relatively compact open set, and let $A(x),B(x) \in L^\infty(\Omega)$ satisfy $\lambda_A(\Omega)>0$ and $B \ge 0$ a.e. on $\Omega$.
Then, for each nonlinearity $F(t)$ that satisfies \eqref{assu_F}, and for each $\varphi \in C^{1,\alpha}(\partial \Omega)$, $\alpha \in (0,1)$ such that $\varphi \ge 0$, $\varphi \not \equiv 0$, there exist $\mu \in (0,1)$ and a unique $0<z \in C^{1,\mu}(\overline \Omega)$ solving
\begin{equation}\label{localYamabe0}
\left\{ \begin{array}{ll}
\Delta_{p,f} z + A(x)z^{p-1} - B(x)F(z) = 0 & \quad \text{on } \Omega, \\[0.2cm]
z = \varphi & \quad \text{on } \partial \Omega.
\end{array}\right.
\end{equation} 
\end{Lemma}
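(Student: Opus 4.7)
The plan is to obtain $z$ by the method of sub- and supersolutions, using the linear Dirichlet problem for $Q'_A$ to manufacture a positive supersolution; uniqueness will then come from Proposition \ref{prop_compagen} together with the strict positivity $\lambda_A(\Omega)>0$.

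First, because $\lambda_A(\Omega)>0$, the Rayleigh quotient of $Q_A$ is coercive on $W^{1,p}_0(\Omega)$, and the direct method of the calculus of variations (as in the proof of $(iii)\Rightarrow(iv)$ of Proposition \ref{pro115}) yields a unique $w\in W^{1,p}(\Omega)$ solving
\begin{equation*}
Q'_A(w)=0 \ \text{on } \Omega, \qquad w=\varphi \ \text{on } \partial\Omega;
\end{equation*}
Theorem \ref{teo11}$(1)$--$(2)$ upgrades $w$ to $C^{1,\mu}(\overline\Omega)$ for some $\mu\in(0,1)$. Testing $Q'_A(w)=0$ against $w_-\in\lip_0(\Omega)$, together with $\varphi\ge 0$, forces $w\ge 0$; the Harnack inequality of Theorem \ref{teo11}$(3)$ then refines this to $w>0$ on $\Omega$, since $\varphi\not\equiv 0$ prevents $w\equiv 0$. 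Because $B\ge 0$ and $F(w)>0$ on $\Omega$,
\begin{equation*}
\Delta_{p,f}w + Aw^{p-1} - BF(w) = -BF(w)\le 0,
\end{equation*}
so $\overline z\doteq w$ is a supersolution of \eqref{localYamabe0}; trivially $\underline z\equiv 0$ is a subsolution, with $\underline z\le \overline z$ on $\overline\Omega$ and matching boundary inequalities.

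Next, I would invoke the classical sub-/supersolution theorem (Theorem 4.14 in \cite{diaz}, the same tool already used in the proof of Proposition \ref{prop_compagen}) to produce a weak solution $z\in W^{1,p}(\Omega)$ of \eqref{localYamabe0} with $0\le z\le w$ on $\Omega$. Since $z$ is bounded and \eqref{assu_F} ensures $F(z)/z^{p-1}\in L^\infty(\Omega)$, the equation can be rewritten as $Q'_{V}(z)=0$ with bounded potential $V\doteq A-BF(z)/z^{p-1}$, whence Theorem \ref{teo11}$(2)$ lifts $z$ to $C^{1,\mu}(\overline\Omega)$. The boundary datum $z|_{\partial\Omega}=\varphi\not\equiv 0$ rules out $z\equiv 0$, so Harnack gives $z>0$ throughout $\Omega$.

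Finally, given two such positive solutions $z_1,z_2$, I would apply Proposition \ref{prop_compagen} with $u_1=z_1$, $u_2=z_2$: since $z_2=\varphi\not\equiv 0$ on $\partial\Omega$, the alternative $(ii)$ of that proposition (which demands $u_2\equiv 0$ on $\partial\Omega$) is excluded, forcing $z_1\le z_2$; swapping roles gives equality. The only delicate point is the strict positivity of $w$ (and hence of $z$) in the interior of $\Omega$, which is what makes the sub-/supersolution sandwich non-degenerate and keeps the ratios $u_1/u_2, u_2/u_1$ in $L^\infty(\Omega)$ for the Picone-type comparison in Proposition \ref{prop_compagen}; this positivity is exactly what the Harnack step supplies.
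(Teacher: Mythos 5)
Your proof is correct, and it is close in spirit to the paper's, but the subsolution you use is genuinely different. The paper first solves the linear Dirichlet problem $Q_A'(z_0)=0$, $z_0=\varphi$ on $\partial\Omega$ exactly as you do (obtaining a positive supersolution), but it then manufactures a strictly positive subsolution $z_1$ by solving a second linear Dirichlet problem with the shifted potential $A-B\,F(z_0)/z_0^{p-1}$ (which still has positive bottom eigenvalue since $B\,F(z_0)/z_0^{p-1}\ge 0$), and checks via the monotonicity of $F(t)/t^{p-1}$ that $z_1$ is a subsolution satisfying $0<z_1\le z_0$. You instead take $\underline z\equiv 0$, which works because $F(0)=0$ makes it an exact solution, hence a subsolution, and because $\underline z\le\varphi$ on $\partial\Omega$ and $\underline z\le\overline z$ on $\overline\Omega$. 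Both ordered pairs feed into the same sub-/supersolution theorem from \cite{diaz}. The trade-off: the paper's construction yields a solution that is sandwiched between two strictly positive functions from the outset (which is convenient for iterating the argument, as in the proof of Theorem~\ref{teo_second}), whereas your choice is more economical but requires the a posteriori step ``$z=\varphi\not\equiv 0$ on $\partial\Omega$ plus Harnack $\Rightarrow z>0$'' to recover interior positivity --- which you correctly supply, also observing that $F(z)/z^{p-1}$ stays bounded as $z\to 0^+$ so the potential remains in $L^\infty$ and Harnack applies. Your uniqueness argument via Proposition~\ref{prop_compagen} matches the paper's; it is even slightly over-determined, since the alternative $(ii)$ there is ruled out already by $\lambda_A(\Omega)>0$, independently of $\varphi\not\equiv 0$.
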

\begin{proof}
Take the positive solution $z_0 \in C^{1,\mu}(\overline \Omega)$ of 
\begin{equation}\tag{$P_0$}
\left\{ \begin{array}{ll}
\disp \Delta_{p,f} z_0 + A|z_0|^{p-2}z_0= 0 & \quad \text{on } \Omega, \\[0.2cm]
z_0 = \varphi & \quad \text{on } \partial \Omega,
\end{array}\right. 
\end{equation}
which exists by Proposition \ref{pro115}. Since $B \ge 0$, $z_0$ gives rise to a supersolution for \eqref{localYamabe0}. To construct a subsolution with boundary value $\varphi$, we solve
\begin{equation}\tag{$P_n$}
\left\{ \begin{array}{ll}
\disp \Delta_{p,f} z_1 + (A - B_1)|z_1|^{p-2}z_1= 0 & \quad \text{on } \Omega, \disp \qquad B_1 \doteq B\frac{F(z_0)}{z_0^{p-1}}\\[0.2cm]
z_1 = \varphi & \quad \text{on } \partial \Omega.
\end{array}\right. 
\end{equation}
Since $A-B_1 \le A$, $\lambda_{A-B_1}(\Omega)>0$, thus $z_1$ exists and, by comparison, $z_1 \le z_0$. Since $F(t)/t^{p-1}$ is increasing,
$$
\disp \Delta_{p,f} z_1 + \left(A - B\frac{F(z_1)}{z_1^{p-1}} \right)z_1^{p-1} \ge \disp \Delta_{p,f} z_1 + \left(A - B\frac{F(z_0)}{z_0^{p-1}} \right)z_1^{p-1} = 0. 
$$
Hence $z_1$ is a subsolution for \eqref{localYamabe0}. Applying the subsolution-supersolution method (see \cite{diaz}, Theorem 4.14 page 272) we obtain the existence of $z$ satisfying \eqref{localYamabe0}. The local Harnack inequality in Theorem \ref{teo11} gives $z>0$ on $\overline \Omega$, and uniqueness follows from the comparison Proposition \ref{prop_compagen}.
\end{proof}

Next, we investigate the existence of local uniform lower bounds for solutions of the Dirichlet problem when $\Omega$ varies.
\begin{Lemma}\label{lem_uniformP}
Let $(M, \metric)$ be a Riemannian manifold, $f \in C^\infty (M)$, $p \in (1,+\infty)$, and assume that $Q_0$ is subcritical on $M$. Let $A,B \in L^\infty_\loc(M)$ such that 
\begin{equation}\label{22}
\left\{ \begin{array}{l}
Q_A \, \text{ is non-negative}, \\[0.2cm]
(A(x) - \theta B(x)\big)_- \in L^1(M, \di \mu_f), \qquad \text{for some constant } \, \theta>0. 
\end{array}\right.
\end{equation}
Fix $\eps >0$. Consider a triple of relatively compact, open sets $\Lambda \Subset \Lambda' \Subset \Omega \Subset M$, with $\partial\Omega$  smooth, and let $z\in C^{1, \mu } (\overline{\Omega})$ be a positive solution of
\begin{equation}\label{23}
\left\{ \begin{array}{ll}
\Delta_{p,f} z + A(x)z^{p-1} - B(x)F(z) \le 0 & \quad \text{on } \Omega; \\[0.2cm]
z= \eps & \quad \text{on } \partial \Omega.
\end{array}\right.
\end{equation}
Then, there exists $C>0$ depending on $\eps$, but independent of $\Omega$, such that
\begin{equation}\label{24}
\inf_{\Lambda} z \ge C. 
\end{equation}
\end{Lemma}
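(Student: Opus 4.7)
I will construct a positive globally defined function $w \in C^{1,\mu}_\loc(M)$, independent of $\Omega$, satisfying (a) $w \le \varepsilon$ on $M$, so that $w \le z$ on $\partial\Omega$; (b) $\Delta_{p,f} w + A w^{p-1} - B_+(x) F(w) \ge 0$ weakly on $M$; and (c) $\inf_\Lambda w > 0$. Granted such a $w$, since $B_+ F(z) \ge B F(z)$ and hence $\Delta_{p,f} z + A z^{p-1} - B_+ F(z) \le 0$ on $\Omega$, the comparison Proposition \ref{prop_compagen} applies to $u_1 = w$, $u_2 = z$ (with the non-negative reaction coefficient $B_+$) and yields $w \le z$ on $\Omega$. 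Consequently, $\inf_\Lambda z \ge \inf_\Lambda w =: C > 0$, uniformly in $\Omega$.

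To build $w$, I first invoke the non-negativity of $Q_A$ via Proposition \ref{pro115} to obtain a positive solution $g \in C^{1,\mu}_\loc(M)$ of $Q'_A(g) = 0$. Then, using \eqref{assu_F}, I fix $t^\star>0$ such that $F(t)/t^{p-1} \le \theta$ for every $t \in (0, t^\star]$, and set $\eta \doteq \min(\varepsilon, t^\star)$. Any $w$ satisfying $0 < w \le \eta$ automatically obeys $B_+ F(w) \le \theta B_+ w^{p-1}$ pointwise, so (b) reduces to asking that $w$ be a subsolution of $Q'_{A - \theta B_+}$, i.e., $\Delta_{p,f} w + (A - \theta B_+) w^{p-1} \ge 0$. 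I construct such a $w$ along an exhaustion $\{M_j\}$ of $M$ with $\Lambda' \Subset M_1$, by solving via Lemma \ref{lem-diriproblem}
\begin{equation*}
\Delta_{p,f} w_j + (A - \theta B_+) w_j^{p-1} = 0 \ \ \text{on } M_j, \qquad w_j = \eta \ \ \text{on } \partial M_j,
\end{equation*}
and passing to the $C^{1,\mu}_\loc$-limit. Proposition \ref{anane} and Theorem \ref{teo11} provide monotonicity, the upper bound $w_j \le \eta$, and local compactness. The quantitative lower bound $\inf_\Lambda w > 0$ should come from combining the subcriticality of $Q_0$ (giving, via Proposition \ref{prop_hyperbolicity}, a Hardy weight that bounds the principal eigenvalue of $Q_{A - \theta B_+}$ on $M_j$ uniformly from below) with the $L^1$-integrability of $(A - \theta B)_-$ (which controls the potentially unbounded negative part of $A - \theta B_+$ where $B > 0$, where the two coincide). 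Harnack's inequality from Theorem \ref{teo11} then propagates the positivity from $\Lambda'$ to $\Lambda$ quantitatively, with a bound depending only on $\varepsilon$, $\Lambda$, $\Lambda'$, and on $A,B,f,F$, but not on $\Omega$.

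The crux of the argument is the strict positivity of the monotone limit $w$ on $\Lambda$. The operator $Q_{A - \theta B_+}$ is not assumed non-negative --- the negative part $(A - \theta B_+)_-$ may indeed fail to be integrable, since it agrees with $(A - \theta B)_-$ only on $\{B > 0\}$ --- so the global existence theory of Proposition \ref{pro115} cannot be invoked directly, and monotonicity alone would only yield $w \ge 0$. It is here that the interplay between the subcriticality of $Q_0$ and the integrability hypothesis plays the decisive role: the Hardy inequality must guarantee a uniform spectral gap even after the perturbation by the integrable negative part, preventing the collapse of $w$ to zero on compact sets. Working out this quantitative compactness step is, in my view, the main technical burden of the proof.
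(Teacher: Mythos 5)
Your proposal aims for a different route from the paper — you want to construct a global lower barrier $w$ with $w\le\eps$, $\Delta_{p,f}w + A w^{p-1} - B_+ F(w)\ge 0$ on $M$, and $\inf_\Lambda w>0$, then conclude by the comparison Proposition \ref{prop_compagen}. The paper instead works directly with $u=\log z$: choosing the test function $\eta(u)=\bigl[1-e^{(p-1)u}/\delta^{p-1}\bigr]1_{\{u<\log\delta\}}$ in the equation $\Delta_{p,f}u = -A + B F(z)/z^{p-1} - (p-1)|\nabla u|^p$, and exploiting $F(t)/t^{p-1}<\theta$ on $(0,\delta)$, the paper obtains the uniform Caccioppoli-type bound
$$
\int_{\{z<\delta\}} |\nabla\log z|^p\,\di\mu_f \;\le\; \tfrac{1}{p-1}\,\|(A-\theta B)_-\|_{L^1(M,\di\mu_f)},
$$
which it combines with the weighted spectral gap coming from the subcriticality of $Q_0$ to run a contradiction argument. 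No barrier is ever built.

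Your approach has two genuine gaps. First, the bound $w_j\le\eta$ is unjustified: the constant function $\eta$ is a supersolution of $\Delta_{p,f}\cdot + (A-\theta B_+)|\cdot|^{p-2}\cdot$ only where $A-\theta B_+\le 0$, which is not assumed; hence nothing prevents $w_j$ from exceeding $\eta$ (and thus $t^\star$), destroying the reduction $B_+F(w)\le\theta B_+ w^{p-1}$. Citing Proposition \ref{anane} or Theorem \ref{teo11} does not cure this — neither produces such an $L^\infty$ bound — and the monotonicity of $\{w_j\}$ along the exhaustion is equally unsupported without a constant (or known) supersolution to compare against. Second, and more seriously, the quantitative step $\inf_\Lambda w>0$ is precisely the content of the lemma you are trying to prove, and it is left at the level of a heuristic. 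A uniform lower bound on $\lambda_{A-\theta B_+}(M_j)$ does not, by itself, give a pointwise lower bound on the $w_j$; and appealing to something like Proposition \ref{prop_lasolulimitata} would be circular, since that result is itself proved via Lemma \ref{lem_uniformP} together with Lemma \ref{lem_linftyestimates}. The paper's log-transform sidesteps both issues at once: the integrability hypothesis on $(A-\theta B)_-$ enters cleanly through the test-function computation, and the Hardy weight provides the lower bound on the sub-level set $\{z<\delta\}\supset\Lambda$ directly, without a barrier. If you want to salvage a comparison-type proof, you would need an independent, non-circular way to produce a positive supersolution on $M$ that is bounded above by $\eta$ and bounded below on $\Lambda$; as it stands, the decisive step is missing.
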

\begin{proof}
Observe that, by the half-Harnack inequality in Theorem \ref{teo11}, $z>0$ on $\Omega$. By comparison, we can also suppose that $z$ solves \eqref{23} with the equality sign, whence $z \in C^{1,\mu}(\overline\Omega)$. Fix $\delta \in (0,\eps)$ small enough that
\begin{equation}\label{sottoF_11}
\frac{F(t)}{t^{p-1}} < \theta \qquad \text{if } \, t \in (0,\delta).
\end{equation}
This is possible by \eqref{assu_F}. Let $\eta \in \lip_\loc (\R)$, $\eta(\log\eps)=0$ to be specified later, and define $u = \log z$ on $\Omega$, so that, weakly,
\begin{equation}\label{26}
\left\{ \begin{array}{lr}
\disp\Delta_{p,f}u = -A +B \frac{F(z)}{z^{p-1}} -(p-1)|\nabla u|^p  &{\rm on} \ \Omega \\[0.2cm]
\disp u = \log \eps & {\rm on} \ \partial\Omega.
\end{array}
\right.
\end{equation}
The function $\eta(u)\in \lip_0(\Omega)$ can be used as a test function for \eqref{26} to obtain
\begin{equation}\label{27}
\disp \int_{\Omega}\big[(p-1)\eta(u)-\eta'(u)\big]|\nabla u|^p\di \mu_f = \int_{\Omega} \left[-A + B\frac{F(z)}{z^{p-1}}\right]\eta(u)\di \mu_f.
\end{equation}
Let $L(\delta) \doteq \{x: u(x) < \log\delta\}$. Choose 
$$
\eta(t) = \left[1-\frac{e^{(p-1)t}}{\delta^{p-1}}\right]1_{(-\infty, \log\delta)}(t) \in \lip(\R).
$$
Then, from \eqref{27} and \eqref{22}, and since $\delta \in (0,\eps)$, $\eta(\log\eps)=0$. Plugging in \eqref{27} and using \eqref{sottoF_11} we deduce
\begin{equation}\label{28}
\int_{L(\delta)}|\nabla u|^p \di \mu_f \le \frac{1}{p-1}\int_{L(\delta)}\left[-A + B\frac{F(z)}{z^{p-1}}\right]\eta(u)\di \mu_f \le \frac{1}{p-1}\|(A - \theta B)_-\|_{L^1(M, \di \mu_f)}
\end{equation}
Since $Q_0$ is subcritical on $M$, by Proposition \ref{prop_hyperbolicity} there exists $W \in C^0(M)$, $W>0$ on $M$ such that 
\begin{equation}\label{29}
\int_M W|\varphi|^p\di \mu_f \le \int_M |\nabla \varphi|^p \di \mu_f \qquad \forall \,  \varphi \in \lip_c(M).
\end{equation}
Now the function $\varphi(x) = \big(u(x) -\log\delta\big) 1_{L(\delta)}(x)$, extended with $0$ outside $\Omega$, is an admissible test function for \eqref{29} and with this choice of $\varphi$ we obtain
\begin{equation}\label{210}
\disp \int_{L(\delta)} |\nabla u|^p\di \mu_f \ge \int_{L(\delta)}W|u-\log \delta|^p \di \mu_f. 
\end{equation}
By contradiction assume that there exists a sequence of relatively compact open sets $\{\Omega_j\}$ with smooth boundary such that $\Lambda' \Subset \Omega_j$ and a sequence of associated solutions $\varphi_j$ of \eqref{23}, such that $\inf_\Lambda \varphi_j \ra 0^+$ as $j \ra +\infty$. Using Harnack inequality of Theorem \ref{teo11} $3)$, $\phi_j \ra 0$ uniformly on $\Lambda$ as $j \to +\infty$ (note that, to infer $\phi_j \ra 0$, we need that each $\Omega_j$ contains a fixed domain larger than $\Lambda$, which accounts for the presence of $\Lambda'$). Having fixed $N>0$, we choose $j_0$ large enough that $u_j=\log \varphi_j<-N + \log\delta$ on $\Lambda$ when $j \ge j_0$. Consequently, $\Lambda \subset \{u_j < \log \delta\} \doteq L_j(\delta)$, and from \eqref{28} and \eqref{210} we deduce
$$
\begin{array}{lcl}
\disp N^p \int_{\Lambda}W\di \mu_f & \le & \disp \int_{L_j(\delta)} W|u_j-\log\delta|^p\di \mu_f \le \int_{L_j(\delta)} |\nabla u_j|^p\di \mu_f \\[0.4cm]
& \le & \disp \frac{1}{p-1}\|(A-\theta B)_-\|_{L^1(M, \di \mu_f)}.
\end{array}
$$
This gives a contradiction provided $N$ is large enough, proving the validity of \eqref{24}.
\end{proof}

\begin{Remark}
\emph{To guarantee \eqref{24}, the second condition in \eqref{22} cannot be relaxed too much. To see this, let us suppose that $B \equiv 0$, for which the second in \eqref{22} reads $A_- \in L^1 (M, \di \mu_f)$. We first observe that the validity of \eqref{24} is granted provided that there exists a positive, bounded solution of $Q'_A (u)=0$ on $M$. Indeed, if such a $u$ exists, comparing a solution $z$ of \eqref{23} with $\eps u/\|u\|_{L^\infty(M)}$ with the aid of Proposition \ref{prop_compagen} yields \eqref{24}. If, on the other hand, there exists a positive solution of $Q_A'(u) =0$ with $u(x) \ra +\infty$ as $x$ diverges, \eqref{24} fails: indeed, if we consider a smooth exhaustion $\{\Omega_j\}$ with $\partial \Omega_j \subset \{u \in [j, j+1)\}$, we compare  the solution of \eqref{23} on $\Omega_j$ (with the equality sign) and the function $\eps u/j$, and we let $j \ra 0$, it is easy to see that the left-hand side of \eqref{24} is zero.\\ 
%
%
Now, consider the case $p=2$, $f \equiv 0$. In Theorem 3 of \cite{bmr3} we have shown that, if $M$ is a manifold with a pole $o$, dimension $m \ge 3$ and radial sectional curvature $K_{\rm rad} \le 0$, and for $A \in L^\infty _\loc (M)$, conditions
\begin{equation}\label{221}
\left\{\begin{array}{l}
\disp |A(x)| \le \bar A\big(r(x)\big) \le \frac{(m-2)^2}{4} \frac{1}{r(x)^2} \qquad \text{on } M, \\[0.3cm]
\disp r \bar A(r) \in L^1(+\infty),
\end{array} \right.
\end{equation}
imply the existence of a positive bounded solution $u \in C^{1,\mu}_\loc(M)$ of $Q_A'(u)=0$. Thus, in this case \eqref{24} holds true. On the other hand, by Theorem 11 of \cite{bmr3} if, for $A  \in L^\infty_\loc (\R^m)$ 
\begin{equation}\label{223}
\left\{\begin{array}{l}
\disp \bar A_1\big(r(x)\big) \le A(x)  \le \bar A_2\big(r(x)\big) \le \frac{(m-2)^2}{4} \frac{1}{r(x)^2} \qquad \text{on } \R^m, \\[0.4cm] 
\disp r \left[ \bar A_j(r) -k \frac{(m-2)^2}{4r^2} \right] \in L^1(+\infty),
\end{array}\right.
\end{equation}
for some $k<0$ and $j \in \{1,2\}$, then there exists a positive solution $u$ of $Q_A'(u)=0$ on $\R^m$ such that  $u(x) \ra +\infty$ as $x \ra \infty$  and so \eqref{24} cannot hold. Note that \eqref{221} is weaker than $A_- \in L^1(M)$. However, we stress that it seems hard to find conditions analogous of \eqref{221} and \eqref{223} on more general manifolds and for nonlinear operators.
}
\end{Remark}

We now investigate the existence of uniform upper bounds, i.e. independent of $\Omega$, for the solutions of \eqref{localYamabe0} with boundary data $\varphi=1$. The next lemma, Lemma \ref{lem_linftyestimates_intro} of the Introduction, ensures \emph{global} $L^\infty$-estimates. In view of a subtle asymmetry between bounds from below and above, to reach our goal we had to find a new strategy.

\begin{Lemma}\label{lem_linftyestimates}
Let $M$ be a Riemannian manifold, $f \in C^\infty(M)$, $p \in (1,+\infty)$ and fix $F(t)$ satisfying \eqref{assu_F}. Let $A,B \in L^\infty_\loc(M)$ with $B \ge 0$ a.e. on $M$. Assume that either
\begin{itemize}
\item[$(i)$] $B \equiv 0$ and $Q_A$ is subcritical, or
\item[$(ii)$] $B \not\equiv 0$ and $Q_A$ is non-negative.
\end{itemize}
Suppose that there exist a smooth, relatively compact open set $\Lambda \Subset M$ and a constant $c>0$ such that 
\begin{equation}\label{lassunzion}
A \le cB \qquad \text{a.e. on } M \backslash \Lambda, 
\end{equation}
and fix a smooth, relatively compact open set $\Lambda'$ such that $\Lambda \Subset \Lambda'$, and a constant $\eps>0$.\\ 
Then, there exists a constant $C_\Lambda>0$ depending on $\eps, p,f,F,c, A,B, \Lambda, \Lambda'$ but not on $\Omega$ such that for each smooth, relatively compact open set $\Omega$ with $\Lambda' \Subset \Omega$, the solution $0<z \in C^{1,\mu}(\overline \Omega)$ of
\begin{equation}\label{localYamabe}
\left\{ \begin{array}{ll}
\Delta_{p,f} z + A(x)z^{p-1} - B(x) F(z) = 0 & \quad \text{on } \Omega, \\[0.2cm]
z = \eps & \quad \text{on } \partial \Omega.
\end{array}\right.
\end{equation} 
satisfies
\begin{equation}\label{245}
z \le C_\Lambda \qquad \text{on } \Omega.
\end{equation}

\end{Lemma}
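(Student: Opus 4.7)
The plan is to first reduce the problem to a uniform bound on $\sup_{\overline{\Lambda'}} z$ by means of a subsolution argument outside $\Lambda$, and then to produce that bound by a half-Harnack plus integral-estimate scheme in which the non-negativity (resp.\ subcriticality) of $Q_A$ plays a decisive role. By \eqref{assu_F}, I can pick $T_0 = T_0(c, F) > \varepsilon$ such that $F(t)/t^{p-1} > c$ for every $t \ge T_0$. On the open set $U \doteq (\Omega \setminus \overline{\Lambda}) \cap \{z > T_0\}$, the equation rewrites as $\Delta_{p,f} z = BF(z) - A z^{p-1}$, and the right-hand side is non-negative: where $B(x) > 0$ one has $BF(z) - A z^{p-1} \ge B(F(z) - c z^{p-1}) > 0$ by the choice of $T_0$, while where $B(x) = 0$ the hypothesis \eqref{lassunzion} forces $A \le 0$, so $-A z^{p-1} \ge 0$. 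Hence $z$ is a weak $\Delta_{p,f}$-subsolution on $U$. Since $z \equiv \varepsilon < T_0$ on $\partial \Omega$, the outer boundary $\partial U$ lies in $\partial \Lambda \cup \{z = T_0\}$, and the weak maximum principle yields
\begin{equation*}
\sup_{\Omega \setminus \overline{\Lambda}}\, z \ \le \ \max\bigl(T_0,\ \sup_{\partial \Lambda}\, z\bigr),
\end{equation*}
so that any bound on $\sup_{\overline{\Lambda'}} z$ independent of $\Omega$ produces the sought $C_\Lambda$.

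Since $BF(z) \ge 0$, the solution $z$ is also a weak subsolution of $Q_A'(z) \le 0$ on all of $\Omega$; the subsolution half-Harnack of Theorem \ref{teo11}, item $(3a)$, applied to a pair of relatively compact sets $\overline{\Lambda} \Subset \Lambda'$, therefore gives
\begin{equation*}
\sup_{\overline{\Lambda}}\, z \ \le \ C\,\|z\|_{L^{s}(\Lambda')}
\end{equation*}
for a fixed $s > p-1$, with $C$ depending only on $p, f, m, \Lambda, \Lambda'$ and $\|A\|_{L^\infty(\Lambda')}$. Thus the task further reduces to a uniform $L^s(\Lambda')$ estimate of $z$.

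The heart of the proof, and the main obstacle, is exactly this integral estimate, which must be obtained using only the spectral assumption on $Q_A$. In case $(ii)$, I would fix a positive solution $g \in C^{1,\mu}_\loc(M)$ of $Q_A'(g)=0$ provided by Proposition \ref{pro115} and exploit the Lagrangian representation $Q_A(\varphi) = \int_M \mathcal L(\varphi, g)\,d\mu_f \ge 0$ of Proposition \ref{prop_lagrangian}. Testing the identity $Q_A'(z) = -BF(z)$ against a cutoff-weighted function of the form $(z-T)_+^{q}\chi^{p}$, with $\chi \in \lip_c(M)$ equal to $1$ on a neighborhood of $\overline{\Lambda}$ and supported in $\Lambda'$, and combining with the Lagrangian identity applied to the same test function, one expects to extract an inequality in which $\int_M B F(z)(z-T)_+^{q}\chi^{p}\,d\mu_f$ sits on the left and absorbs all right-hand contributions. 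The crucial gain is that on $\{z > 2T\} \cap (M \setminus \Lambda)$, the superlinearity of $F$ forces $F(z)(z-T)_+^{q} \ge c\,(z-T)_+^{p+q-1}$ for $T$ large enough, so that this term replaces a Sobolev embedding and closes a Moser-type iteration ending in the desired $L^s(\Lambda')$ bound. In case $(i)$, $B \equiv 0$, this nonlinear absorbing mechanism is absent; instead I would invoke the weighted spectral gap $\int_M W|\varphi|^p d\mu_f \le Q_A(\varphi)$ guaranteed by subcriticality through Theorem \ref{teo_alternative}, item $(i)_{\SR}$, and combine it with the boundary datum $z \equiv \varepsilon$ to convert $Q_A(z\chi) = \int_M W (z\chi)^p d\mu_f + (\text{computable cutoff terms})$ into the required integral bound. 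The delicate point -- and the new technical ingredient -- is the calibration of $T$, $q$, and $\chi$ so that either the superlinearity of $F$ or the Hardy weight $W$ genuinely dominates every cutoff contribution, without recourse to any Sobolev inequality on $M$.
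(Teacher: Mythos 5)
Your reduction steps are sound and run parallel to the paper: you correctly observe that outside $\Lambda$, where $z>T_0$, the superlinearity of $F$ together with \eqref{lassunzion} makes $z$ a $\Delta_{p,f}$-subsolution, so the maximum principle confines the problem to $\overline{\Lambda'}$; and you correctly note that $z$ is a subsolution of $Q_A'$ on $\Omega$, so the half-Harnack of Theorem \ref{teo11}~(3a) reduces the $L^\infty(\Lambda)$ bound to an integral norm on $\Lambda'$. All of this is essentially what the paper does. However, the integral estimate itself --- which you correctly identify as the core of the argument --- is only sketched, and the sketch has a genuine gap in both cases.

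In case $(ii)$, the absorbing term $\int_M BF(z)(z-T)_+^{q}\chi^{p}\,d\mu_f$ that you propose to use is only effective where $B>0$, i.e.\ (after \eqref{lassunzion}) essentially on $M\setminus\Lambda$. But that is exactly the region where you already have pointwise control from the subsolution argument; the hard region is $\Lambda'$, where $B$ may vanish identically and $A$ may be large and positive, so the nonlinearity provides no help. There is no local mechanism in the equation that prevents $z$ from blowing up on $\Lambda$ as $\Omega$ grows; only the global non-negativity of $Q_A$ does, and your Moser iteration does not use it. Moreover, closing a Moser iteration ``without recourse to any Sobolev inequality'' would itself require a separate argument, not supplied. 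In case $(i)$, testing the weighted spectral gap with $z\chi$ produces cutoff terms of the form $\int |\nabla\chi|^p z^p$ supported on $\Lambda'\setminus\Lambda$, which are precisely the quantities you are trying to bound --- the argument as stated is circular.

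The paper takes a different and cleaner route at this point. Rather than trying to estimate $z$ directly, it argues by contradiction: if $\|z_j\|_{L^\infty(\Lambda)}\to\infty$ along a sequence of domains $\Omega_j$, then the truncations $\eta_j = (z_j-\alpha)_+/\|z_j\|_{L^\infty(\Lambda)}$, extended by zero, are shown to satisfy $Q_V(\eta_j)\to 0$ with $V=A-cB$ (note $Q_V'(z_j)\le 0$ on $\{z_j>\alpha\}$ and $V\le 0$ outside $\Lambda$, so all the integrals localize to $\Lambda$ and are absorbed by the $\|z_j\|^{-1}$ scaling). Since $Q_V$ is subcritical in both cases $(i)$ and $(ii)$, Theorem \ref{teo_alternative} forbids a non-degenerate null sequence: $\eta_j\to 0$ in $L^p_\loc$. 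Combined with the normalized half-Harnack bound $1\le C_H\|h_j\|_{L^p(\Lambda')}$ for $h_j=z_j/\|z_j\|_{L^\infty(\Lambda)}$, this yields the contradiction. The mechanism that replaces your missing Moser iteration (or spectral-gap integration by parts) is thus the ground-state alternative applied to the rescaled sequence; your proposal does not contain this idea and, as written, cannot be completed.
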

\begin{proof}
Without loss of generality, we can suppose that $c>1$. Using that, by \eqref{assu_F}, $F(t)/t^{p-1} \ra +\infty$ as $t \ra +\infty$, we can fix $\alpha > \eps$ such that
\begin{equation}\label{defin_alpha}
\frac{F(t)}{t^{p-1}} \ge c \qquad \text{for } \, t \ge \alpha.
\end{equation}
Consider the open set 
$$ 
U = \left\{ x \in \Omega \ : \  z(x) > \alpha \right\}.
$$ 
We note that $U \Subset \Omega$ and that, by \eqref{defin_alpha}, $z$ solves
\begin{equation}\label{246}
\left\{
\begin{array}{ll}
\disp \Delta_{p,f} z + (A-cB) z^{p-1} \ge 0 & \quad \text{on } \, U \\[0.2cm]
\disp z= \alpha & \quad \text{on } \, \partial U,
\end{array}\right.
\end{equation}
If $U= \emptyset$, for each $\Omega, z$, then \eqref{245} trivially holds with $C_\Lambda = \alpha$. Therefore, suppose that $U \neq 0$ for some $\Omega, z$. By \eqref{246} and \eqref{lassunzion}, $\Delta_{p,f}z \ge 0$ on $U \backslash \Lambda$. Thus, in the case $U \cap \Lambda = \emptyset$, $\Delta_{p,f} z \ge 0$ on $U$. For $\eps>0$ small, choose a smooth increasing sequence $\{U_\eps\}$ exhausting $U$ as $\eps \ra 0$ and such that $z \le \alpha + \eps$ on $\partial U_\eps$. Applying Proposition \ref{prop_compagen} we infer that $\disp z \le \alpha+\eps$ on $U_\eps$, thus letting $\eps \ra 0$ we get $\disp z \le \alpha$ on $U$, contradicting its very definition. Hence, we conclude that $U \cap \Lambda \neq \emptyset$. Note that
$$
\sup_{U \cap \Lambda} z \ge \alpha =z_{|\partial U},
$$
and since $\Delta_{p,f} z \ge 0$ on $U \backslash \Lambda$, again via Proposition \ref{prop_compagen} we obtain 
$$
\sup_{U \backslash \Lambda}z = \sup_{\partial (U\backslash \Lambda)}z \le \max\left\{ \sup_{\partial U}z, \sup_{\partial \Lambda} z \right\} = \max\left\{ \alpha, \sup_{\partial \Lambda \cap U} z \right\} \le \sup_{U \cap \Lambda}z.
$$
It follows that
\begin{equation}\label{247}
\text{if } \, U = \{ z>\alpha \} \neq \emptyset, \quad \text{then} \qquad \sup_U z \equiv \sup_{\Lambda}z.
\end{equation}
To prove \eqref{245} we proceed by contradiction: if it does not hold, and in view of \eqref{247} there exists a sequence of triples $(\Omega_j, z_j, U_{j})$, where $\Omega_j$ is a smooth, relatively compact open set such that $\Lambda' \Subset \Omega_j$, $z_j$ is a solution of 
\begin{equation}\label{249}
\left\{ \begin{array}{l}
\disp \Delta_{p,f} z_j + \left(A-B\frac{F(z_j)}{z_j^{p-1}}\right)z_j^{p-1} = 0 \qquad \text{on } \Omega_j, 
\\[0.2cm]
z_j = \eps \quad \text{on } \partial \Omega_j, 
\end{array}\right. 
\end{equation}
$U_{j} = \left\{x \in \Omega_j \ : \ z_j(x) > \alpha \right\}$, and 
\begin{equation}\label{250}
\|z_j\| \doteq \|z_j\|_{L^\infty(\Lambda)} \ra +\infty \qquad {\rm as} \  j \ra +\infty.
\end{equation}
Up to taking $j$ large enough, we can suppose that $\|z_j\| > 2\alpha$. Consider the rescaled functions 
\begin{equation}\label{251}
\disp h_j \doteq \frac{z_j} {\|z_j\|} \qquad {\rm on} \ \Omega_j
\end{equation} 
and note that they all solve
\begin{equation}\label{252}
\left\{ \begin{array}{l}
\disp \Delta_{p,f} h_j + Ah_j^{p-1} \ge 0 \qquad \text{on } \Lambda', \\[0.2cm]
\disp\sup_\Lambda h_j = 1, \qquad h_j \le \frac{\alpha}{\|z_j\|}< \frac 12  \qquad \text{on } \Omega_j \backslash U_j.
\end{array}\right. 
\end{equation}
By \eqref{252} and the half-Harnack inequality of Theorem \ref{teo11} $(3a)$ there exists $C_H>0$ such that 
\begin{equation}\label{253}
1=\|h_j\|_{L^\infty(\Lambda)} \le C_H \|h_j\|_{L^p(\Lambda',\di \mu_f)} \qquad \forall \, j. 
\end{equation}
Next we define 
\begin{equation}\label{254}
\eta_j (x) = \left\{ \begin{array}{ll}
\disp \frac{z_j (x) - \alpha}{\|z_j\|} & \quad \text{on } U_j, \\[0.3cm]
0 & \quad \text{on } M \backslash U_j. 
\end{array}\right.
\end{equation}
Then, $0 \le \eta_j \in \lip_c(M)$. Let $V = A-cB$. Thus, $V \le 0$ on $M\backslash \Lambda$ and, by \eqref{246}, $Q_V'(z_j) \le 0$ on $U_j$, therefore 
\begin{equation}\label{bored}
\begin{array}{lcl}
\disp 0 \le Q_V(\eta_j) & = & \disp \frac{1}{p} Q'_V (\eta_j)[\eta_j] \\[0.3cm]
\disp &= & \disp \frac{1}{p\|z_j\|^{p-1}} \int_{U_j} \left[ |\nabla z_j|^{p-2} \langle \nabla z_j, \nabla \eta_j \rangle -V  (z_j-\alpha)^{p-1}\eta_j \right]    \di \mu_f \\[0.4cm]
& \le & \disp  \frac{1}{p\|z_j\|^p} \int_{U_j} V  \left\{ z_j^{p-1} -(z_j- \alpha)^{p-1}\right\} (z_j-\alpha)\di \mu_f \\[0.4cm]
& \le & \disp \frac{1}{p\|z_j\|^p} \int_{U_j \cap \Lambda} V \left[ z_j^{p-1} -(z_j- \alpha)^{p-1}\right](z_j-\alpha)\di \mu_f \\[0.4cm]
& \le & \disp \frac{1}{p\|z_j\|^p} \int_{U_j \cap \Lambda} V_+(x) \left[ z_j^p -(z_j- \alpha)^p\right]\di \mu_f.
\end{array}
\end{equation}
We observe that on $[\alpha,+\infty)$, the function $$\rho(t) = t^p -(t- \alpha)^p$$ is increasing and $$\rho(t) \sim p\alpha t^{p-1} \qquad {\rm as} \ t \ra +\infty.$$  
We thus infer the existence of constants $c_1, c_2 >0$ just depending on $\alpha$ and such that $$\rho(t) \le c_1t^{p-1}+c_2 \qquad \text{on } \, [\alpha, +\infty).$$ 
Hence
$$
\begin{array}{l}
\disp \frac{1}{p\|z_j\|^p} \int_{U_j \cap \Lambda} V_+(x) \left[ z_j^p -(z_j- \alpha)^p\right]\di \mu_f \le \disp \frac{\|V_+\|_{L^\infty (\Lambda)}}{p\|z_j\|^p} \int_{U_j \cap \Lambda}  \big(c_1 z_j^{p-1} + c_2 \big) \di \mu_f \\[0.5cm]
\disp \le \frac{\| V_+\|_{L^\infty (\Lambda)}}{p \|z_j\|} \vol_f (\Lambda) c_1+ \frac{\| V_+\|_{L^\infty (\Lambda)}}{p \|z_j\|^p} \vol_f (\Lambda) c_2 \ \  \longrightarrow 0 \qquad \text{as } \, j \ra +\infty.
\end{array}
$$
This fact, together with inequality \eqref{bored}, implies that $Q_V(\eta_j) \ra 0$ as $j \ra +\infty$. Now, in both cases $(i)$ and $(ii)$ in the statement of the lemma, $Q_V$ is subcritical on $M$. In fact, if $(i)$ holds, $V\equiv A$ and $Q_A$ is assumed to be subcritical, while under the validity of $(ii)$ the subcriticality property \eqref{09} follows from
$$
Q_V(\varphi) = Q_A(\varphi) + \int_M (cB)|\varphi|^p \di \mu_f \ge \int_M (cB)|\varphi|^p \di \mu_f.
$$
By Theorem \ref{teo_alternative}, $Q_V$ has thus a weighted spectral gap, and in particular $\eta_j \ra 0$ in $L^p_\loc(M)$. Since, by definition \eqref{251},
$$
h_j = \eta_j + \frac{\alpha}{\|z_j\|} \qquad \text{on } U_j,
$$
we deduce that 
$$
\disp\|h_j\|_{L^p(\Lambda' \cap U_j,\di \mu_f)} \ra 0 \qquad \text{as } \, j \ra +\infty
$$
and since $\disp h_j \le \frac{\alpha}{\|z_j\|}$ on $\Lambda' \backslash U_j$ we conclude that 
$$
h_j \ra 0 \quad \text{in } \,  L^p(\Lambda', \di \mu_f) \quad \text{as } \, j \ra +\infty.
$$ 
This contradicts \eqref{253} and proves the claimed \eqref{245}.
\end{proof}

Lemmas \ref{lem_uniformP} and \ref{lem_linftyestimates} enable us to prove

\begin{Proposition}\label{prop_lasolulimitata}
Let $\riem$ be a Riemannian manifold, $f \in C^\infty (M)$, $p \in (1,+\infty)$ and suppose that $Q_0$ is subcritical on $M$. Let $V  \in L^\infty(M)$ have compact support and assume that $Q_V$ is subcritical on $M$.  Then, there exists a positive solution $g \in C^{1,\mu}_\loc(M)$ of 
\begin{equation}\label{233}
-Q'_V (g) = \Delta_{p,f} g + V  |g|^{p-2}g=0 \qquad {\rm on} \ M
\end{equation}
satisfying
\begin{equation}\label{234}
C^{-1} \le g(x) \le C \qquad {\rm on} \ M
\end{equation} 
for some constant $C>1$.
\end{Proposition}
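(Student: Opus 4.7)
The plan is to construct $g$ as a limit of solutions to Dirichlet problems on an exhaustion, obtaining the two-sided bound \eqref{234} by combining Lemma \ref{lem_linftyestimates} (for the upper bound) with Lemma \ref{lem_uniformP} (for the lower bound on a compact set containing $\supp V$) and the minimum principle for $p$-harmonic functions to propagate the lower bound to infinity.

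First, fix a smooth, relatively compact open set $\Lambda \Subset M$ with $\supp V \Subset \Lambda$, and a smooth $\Lambda'$ with $\Lambda \Subset \Lambda'$. Take an exhaustion $\{\Omega_j\}$ of $M$ with $\Lambda' \Subset \Omega_1$. Since $Q_V$ is subcritical, it is in particular non-negative, so by $(iv)$ of Proposition \ref{pro115} there exists a unique positive $g_j \in C^{1,\mu}(\overline \Omega_j)$ solving
\begin{equation*}
\left\{\begin{array}{ll} Q_V'(g_j) = 0 & \text{on } \Omega_j, \\ g_j = 1 & \text{on } \partial \Omega_j. \end{array}\right.
\end{equation*}

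Next, I would derive the upper bound by applying Lemma \ref{lem_linftyestimates} with $A = V$, $B \equiv 0$, $\eps = 1$, and the chosen $\Lambda$. Hypothesis $(i)$ of that lemma is met because $Q_V$ is subcritical; condition \eqref{lassunzion} is trivially satisfied with any $c>0$ since $V \equiv 0$ on $M\setminus \Lambda \supset M \setminus \supp V$. Hence there exists $C_\Lambda>0$, independent of $j$, with $g_j \le C_\Lambda$ on $\Omega_j$.

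For the lower bound, I would first control $g_j$ on the compact set $\Lambda$ by applying Lemma \ref{lem_uniformP} with the same $A = V$, $B \equiv 0$, $\eps = 1$, and the pair $\Lambda \Subset \Lambda'$. The subcriticality of $Q_0$ is assumed; $Q_V \ge 0$; and the integrability condition $(A - \theta B)_- = V_- \in L^1(M,\di\mu_f)$ holds trivially for any $\theta>0$ because $V$ is bounded with compact support. This produces a constant $C' > 0$, independent of $j$, such that $g_j \ge C'$ on $\Lambda$, and by continuity on $\overline{\Lambda}$. To globalize this lower bound, I observe that on $\Omega_j \setminus \overline\Lambda$ one has $V \equiv 0$, so $\Delta_{p,f} g_j = 0$ weakly, while on $\partial(\Omega_j \setminus \overline\Lambda)$ we have $g_j \ge C'$ on $\partial \Lambda$ and $g_j = 1$ on $\partial \Omega_j$. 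Applying the comparison Proposition \ref{prop_compagen} (with $A = B = 0$) against the positive constant $c_0 \doteq \min\{C',1\}$ yields
\begin{equation*}
g_j \ge c_0 \qquad \text{on } \Omega_j \setminus \overline\Lambda,
\end{equation*}
and combining with the bound on $\Lambda$ gives $g_j \ge c_0$ on all of $\Omega_j$.

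Finally, I would pass to the limit. The uniform $L^\infty$ bound $c_0 \le g_j \le C_\Lambda$ together with the local $C^{1,\mu}$ estimate in Theorem \ref{teo11}$(2)$ (whose constants depend only on $\|g_j\|_{L^\infty}$, $\|V\|_{L^\infty}$, and the local geometry) gives, via a Cantor diagonal argument on the exhaustion, a subsequence converging in $C^{1,\nu}_{\loc}(M)$ for some $\nu \in (0,\mu)$ to a function $g \in C^{1,\mu}_{\loc}(M)$ satisfying \eqref{233} and $c_0 \le g \le C_\Lambda$, which is \eqref{234} with $C = \max\{C_\Lambda, c_0^{-1}\}$. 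The main subtlety is the promotion of the lower bound from the compact set $\Lambda$ (where Lemma \ref{lem_uniformP} applies) to the whole of $\Omega_j$; this is the only place where the compact support of $V$ is essentially used, through the $p$-harmonicity of $g_j$ outside $\Lambda$ and the consequent minimum principle.
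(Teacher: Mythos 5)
Your proposal is correct and follows essentially the same route as the paper's proof: existence of $g_j$ via Proposition~\ref{pro115}, uniform two-sided bounds from Lemmas~\ref{lem_uniformP} and~\ref{lem_linftyestimates} near $\Lambda$, propagation outside $\Lambda$ using the $p$-harmonicity of $g_j$ there together with the comparison Proposition~\ref{prop_compagen}, and a $C^{1,\mu}_{\loc}$ limit via elliptic estimates. The only cosmetic difference is that you take the global upper bound directly from Lemma~\ref{lem_linftyestimates} (which already yields it on all of $\Omega_j$), rather than re-deriving it by comparison outside $\Lambda$ as the paper does.
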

\begin{proof}
Let $\Lambda$ be a smooth, relatively compact open set such that $V\equiv 0$ on $M\backslash \Lambda$, and consider an exhaustion $\{\Omega_j\}$ of $M$ by smooth, relatively compact open sets with $\overline{\Lambda} \subset \Omega_1$. We let $\varphi_j \in C^{1,\mu_j} (\overline{\Omega}_j)$ be the positive solution of
\begin{equation}\label{235}
\left\{
\begin{array}{l}
\disp Q_V' (\varphi_j)= 0 \qquad {\rm on} \ \Omega_j, \\[0.2cm]
\varphi_j=1 \qquad \qquad {\rm on} \ \partial\Omega_j,
\end{array}
\right.
\end{equation}
whose existence is granted by $iv)$ of Proposition \ref{pro115}. Our assumptions enable us to apply Lemma \ref{lem_uniformP} and \ref{lem_linftyestimates} and to conclude the existence of a constant $C>1$ independent of $j$, for which
\begin{equation}\label{236}
C^{-1} \le \varphi_j (x) \le C \qquad {\rm on} \ \Lambda.
\end{equation}
On the other hand, because of \eqref{235} and $V \equiv 0$ on $M \backslash \Lambda$, we have
$$
\left\{
\begin{array}{l}
\disp \Delta_{p,f} \varphi_j=0 \qquad {\rm on} \ \Omega_j \backslash \overline{\Lambda} \\[0.2cm]
\varphi_j =1 \quad {\rm on} \ \partial\Omega_j, \quad  C^{-1} \le \varphi_j \le C \quad {\rm on} \ \partial\Lambda,
\end{array}
\right.
$$
and thus, by the comparison Proposition \ref{prop_compagen},
$$
\disp C^{-1} \le \varphi_j (x) \le C \qquad {\rm on} \ \Omega_j.
$$
Hence, the $\varphi_j$'s are uniformly bounded from above and below. By elliptic estimates $\varphi_j \ra g$ for some $g \in C^{1,\mu}_\loc (M)$ solving $Q'_V (g)=0$ and satisfying \eqref{234}.
\end{proof} 
We are ready to prove Theorem \ref{teouno}. We rewrite its statement for the convenience of the reader.
\begin{Theorem}\label{teo_second}
Let $M^m$ be a Riemannian manifold, $f \in C^\infty(M)$, $p \in (1,+\infty)$, and suppose that $Q_0$ is subcritical on $M$. Let $a,b \in L^\infty_\loc(M)$. Assume that $Q_a$ is subcritical and that 
\begin{itemize}
\item[$i)$] $b_-$ has compact support; 
\vspace{0.1cm}
\item[$ii)$] for some $\theta>0$, $(a - \theta b_+)_- \in L^1(M,\di \mu_f)$;
\vspace{0.1cm}
\item[$iii)$] $a(x) = O\big(b_+(x)\big)$ as $x$ diverges.
\end{itemize}
Then, there exists $\delta>0$ such that, if 
\begin{equation}\label{labbrutta_proof}
b(x) \ge -\delta \qquad \text{on } M, 
\end{equation}
we can find a solution $u \in C^{1,\mu}_\loc(M)$ of
\begin{equation}\label{266}
\left\{\begin{array}{l}
\Delta_{p,f} u + a(x) u^{p-1} -b(x) F(u) = 0 \qquad \text{on } M, \\[0.2cm]
0< u \le \|u\|_{L^\infty(M)} < +\infty \qquad \text{on } M.
\end{array}\right.
\end{equation}
Moreover, if $ii)$ and $iii)$ are replaced by the stronger condition 
\begin{equation}\label{267}
a(x) \asymp b_+(x) \qquad \text{as $x$ diverges,} 
\end{equation}
then $u$ can be constructed with the further property that $\inf_M u >0$.
\end{Theorem}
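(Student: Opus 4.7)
The plan is to execute the three-step scheme sketched in the introduction: first solve the simpler Dirichlet problem where $b$ has been replaced by $b_+$, obtain uniform $L^\infty$-bounds above and below for these solutions, and finally reintroduce $b_-$ via a monotone iteration exploiting the smallness of $\delta$.

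First, fix a smooth exhaustion $\{\Omega_j\}$ of $M$ and a pair of relatively compact open sets $\Lambda \Subset \Lambda' \Subset \Omega_1$ with $\operatorname{supp} b_- \subset \Lambda$. Since $Q_a$ is subcritical on $M$, in particular $\lambda_a(\Omega_j)>0$ for every $j$; as $b_+\ge 0$, Lemma \ref{lem-diriproblem} delivers a unique positive solution $z_j\in C^{1,\mu}(\overline{\Omega_j})$ of
\begin{equation*}
\Delta_{p,f} z_j + a\, z_j^{p-1} - b_+(x)\, F(z_j)=0 \quad \text{on } \Omega_j, \qquad z_j = 1 \quad \text{on }\partial\Omega_j.
\end{equation*}
Hypothesis $iii)$, giving $a\le c\, b_+$ outside $\Lambda$, places us in case $(ii)$ of Lemma \ref{lem_linftyestimates}, yielding a constant $C_1>0$ independent of $j$ with $z_j\le C_1$ on $\overline{\Omega_j}$. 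Hypothesis $ii)$ is precisely the integrability requirement in Lemma \ref{lem_uniformP}, which produces $C_2>0$, again independent of $j$, with $z_j\ge C_2$ on $\Lambda$.

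Next, I incorporate $b_-$ through a monotone iteration on each $\Omega_j$. Starting from $u_0^{(j)}=z_j$, define inductively $u_{k+1}^{(j)}$ as the unique positive solution of
\begin{equation*}
\Delta_{p,f} u_{k+1}^{(j)} + a\, (u_{k+1}^{(j)})^{p-1} - b_+ F(u_{k+1}^{(j)}) = -\,b_-(x)\, F(u_k^{(j)}), \qquad u_{k+1}^{(j)}=1 \text{ on }\partial\Omega_j,
\end{equation*}
whose existence rests on the monotonicity and coercivity of the left-hand side and whose uniqueness follows from Proposition \ref{prop_compagen}. Because $b_-\, F(u_k^{(j)})\ge 0$ and $F(t)/t^{p-1}$ is strictly increasing, a comparison argument via Proposition \ref{prop_compagen} iteratively gives $z_j=u_0^{(j)}\le u_k^{(j)}\le u_{k+1}^{(j)}$, so the sequence is monotone non-decreasing. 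The delicate point, which constitutes the main obstacle of the proof, is to establish a uniform upper bound $u_k^{(j)}\le C'$ independent of both $k$ and $j$. This is where the smallness of $\delta$ in \eqref{labbrutta_proof} is used: since $b_-$ is compactly supported inside $\Lambda$ and $b_-\le\delta$, the forcing term $-b_-F(u_k^{(j)})$ is a compactly supported $L^\infty$ perturbation whose size is controlled by $\delta$ and the iterate itself. I anticipate obtaining the bound inductively by absorbing the forcing into the zero-order coefficient and adapting the contradiction-and-rescaling argument of Lemma \ref{lem_linftyestimates} to the resulting equation of the form \eqref{localYamabe}; the smallness of $\delta$ guarantees that the subcriticality assumption on the perturbed operator is preserved uniformly along the iteration. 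The value of $\delta$ is consequently implicit, depending on the compactness argument.

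Once the uniform bound is in hand, for each $j$ the sequence $u_k^{(j)}$ increases to a limit $u^{(j)}\in C^{1,\mu}(\overline{\Omega_j})$ solving the full equation on $\Omega_j$ with $z_j\le u^{(j)}\le C'$. The elliptic estimates of Theorem \ref{teo11} combined with a diagonal extraction then produce a subsequence converging in $C^{1,\mu}_{\loc}(M)$ to a solution $u$ of \eqref{266} with $0<u\le C'$; positivity on all of $M$ follows from the Harnack inequality in Theorem \ref{teo11} applied across the exhaustion, starting from $u\ge C_2$ on $\Lambda$. For the sharper conclusion $\inf_M u>0$ under hypothesis $iv)$, I would produce a global positive constant subsolution: since $a\asymp b_+$ outside a compact set and $F(t)/t^{p-1}\to 0$ as $t\to 0^+$, for all $c_0>0$ sufficiently small the constant function $c_0$ is a subsolution of the full equation outside a compact neighbourhood of $\operatorname{supp} b_-$, while Lemma \ref{lem_uniformP} handles the compact part. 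The pasting Lemma \ref{lem_pasting} together with Proposition \ref{prop_compagen} applied along the exhaustion then deliver $u\ge \min\{c_0,C_2\}>0$ on $M$.
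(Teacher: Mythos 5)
Your plan correctly identifies the three-step scheme and uses Lemmas \ref{lem-diriproblem}, \ref{lem_uniformP} and \ref{lem_linftyestimates} as the paper does, but the step you yourself flag as ``the main obstacle'' --- a uniform upper bound for the iterates --- is in fact the crux of the argument, and your proposal does not close this gap.

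Your iteration is structurally different from the paper's: you start at the \emph{bottom} from $z_j$ (the paper's $\varphi_\infty$, with coefficient $a$), feed $b_-$ back in as an inhomogeneous forcing term $-b_-F(u_k^{(j)})$, and climb upward; thus there is no supersolution above you to keep the sequence down. The idea of ``absorbing the forcing into the zero-order coefficient'' does not work as stated: writing $-b_-F(u_k^{(j)})=-b_-\frac{F(u_k^{(j)})}{(u_{k+1}^{(j)})^{p-1}}(u_{k+1}^{(j)})^{p-1}$ introduces a coefficient depending on the unknown, while bounding $F(u_k^{(j)})\le F(u_{k+1}^{(j)})$ and factoring only yields a \emph{subsolution} differential inequality for $u_{k+1}^{(j)}$, which gives no upper estimate. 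Moreover, without an a priori bound on $u_k^{(j)}$, the effective potential $a+b_-F(u_k^{(j)})/(u_{k+1}^{(j)})^{p-1}$ is unbounded, and there is no reason the subcriticality of $Q_a$ is ``preserved uniformly along the iteration''. Smallness of $\delta$ alone cannot be the answer, because the meaning of ``small'' must be calibrated against a bound you have not yet produced.

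The missing device is the paper's \emph{perturbed coefficient}. Subcriticality of $Q_a$ provides a weighted spectral gap weight $W$, and one then solves the auxiliary problem with coefficient $A=a+W\,1_\Lambda$ and reaction $b_+$ to produce an upper envelope $\varphi_0$; Lemma \ref{lem_linftyestimates} (whose hypothesis $A\le cB$ off $\Lambda$ is preserved by the cut-off factor $1_\Lambda$) gives the uniform bound $\varphi_0\le C_\Lambda(\eps)$. The threshold $\delta$ is then chosen \emph{explicitly},
\[
\delta=\bigl(\min_{\overline\Lambda}W\bigr)\,\frac{C_\Lambda(\eps)^{p-1}}{F\!\bigl(C_\Lambda(\eps)\bigr)},
\]
precisely so that $b_-\,F(\varphi_0)/\varphi_0^{p-1}\le W\,1_\Lambda$, which makes $\varphi_0$ a supersolution of the full problem. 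The iteration then proceeds \emph{downward} from $\varphi_0$ (with coefficients $V_n=a+b_-F(\varphi_{n-1})/\varphi_{n-1}^{p-1}\le a+W1_\Lambda$), bounded below by $\varphi_\infty$; the upper bound is inherited automatically by comparison. This ``top-down'' structure, with the enhanced coefficient $a+W1_\Lambda$ furnishing both the a priori bound and the explicit $\delta$, is the ingredient that must be added to make your increasing scheme viable.

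Your treatment of $\inf_M u>0$ under $iv)$ via a constant subsolution is a reasonable, and arguably more direct, alternative to the paper's construction through Proposition \ref{prop_lasolulimitata}: since $a\ge C^{-1}b_+\ge 0$ off a compact set and $F(t)/t^{p-1}\to 0^+$, small constants are subsolutions away from a compact set, and comparison against the exhaustion solutions (not the pasting lemma, which concerns supersolutions) then yields the lower bound, once one ensures $c_0$ is smaller than the boundary datum and the uniform lower bound on $\Lambda$. This part of the proposal is essentially sound; the gap is entirely in the $L^\infty$ upper bound for the iteration.
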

\begin{proof}
By assumption $iii)$, there exists a relatively compact, open subset $\Lambda$ containing $\supp(b_-)$, and a constant $C \ge \theta$ such that 
\begin{equation}\label{268}
a(x) \le Cb_+(x) \qquad \text{on } M\backslash \Lambda.
\end{equation}
The subcriticality of $Q_a$ on $M$ implies, via Theorem \ref{teo_alternative},  that $Q_a$ has a weighted spectral gap, that is, there exists $W \in C^0(M)$, $W>0$ on $M$ such that
\begin{equation}\label{269}
\disp \int_M W(x) |\varphi|^p \di \mu_f \le Q_a(\varphi) \qquad \forall \, \varphi \in \lip_c(M).
\end{equation}
In particular, if $A(x)= a(x) + W(x)1_\Lambda(x)$ then $Q_A$ is still subcritical on $M$. Fix $\eps>0$ and a relatively compact subset $\Lambda'$ with $\Lambda \Subset \Lambda'$. Next, consider a smooth, relatively compact open set $\Omega$ with $\Lambda' \Subset \Omega$. Applying Lemma \ref{lem-diriproblem} with the choices $A=a$ (respectively, $A= a+ W1_\Lambda$) and $B= b_+$, we produce solutions $\varphi_\infty$, $\varphi_0$ of 
\begin{equation}\label{bellacostruzione}
\left\{ \begin{array}{ll}
\Delta_{p,f} \varphi_\infty + a \varphi_\infty^{p-1} - b_+ F(\varphi_\infty) = 0 & \quad \text{on } \Omega, \\[0.2cm]
\varphi_\infty = \eps & \quad \text{on } \partial \Omega,
\end{array}\right. 
\end{equation}
\begin{equation}\label{bellacostruzione2}
\qquad \left\{ \begin{array}{ll}
\Delta_{p,f} \varphi_0 + (a+ W1_\Lambda) \varphi_0^{p-1} - b_+ F(\varphi_0) = 0 & \quad \text{on } \Omega, \\[0.2cm]
\varphi_0 = \eps & \quad \text{on } \partial \Omega.
\end{array}\right.
\end{equation}
By comparison, $\varphi_\infty \le \varphi_0$. By Lemma \ref{lem_uniformP}, because of assumption $ii)$ we can guarantee the existence of a constant $\bar C_\Lambda(\eps)>0$ (we emphasize its dependence on $\eps$), independent of $\Omega$, such that
\begin{equation}\label{varphiinftydasotto}
\varphi_\infty \ge \bar C_\Lambda(\eps)  \qquad \text{on } \, \Lambda.
\end{equation}
On the other hand, \eqref{268} implies that $a+ W1_\Lambda \le Cb_+$ outside of $\Lambda$. Hence, Lemma \ref{lem_linftyestimates} ensures the existence of $C_\Lambda(\eps)$ independent of $\Omega$ and such that
\begin{equation}\label{unifupbound}
\big(\varphi_\infty \le \big) \, \varphi_0 \le C_\Lambda(\eps) \qquad \text{on } \Omega,
\end{equation}
and we can also consider the sharpest one, that is,
\begin{equation}\label{Cdeltaepsilon}
C_\Lambda(\eps) \doteq \sup_{\footnotesize \begin{array}{c} \Omega : \Omega \text{ open smooth,} \\
\Lambda' \Subset \Omega \Subset M
\end{array}} \|\varphi_{0}\|_{L^\infty(\Omega)},  
\end{equation}
where $\varphi_0$ solves \eqref{bellacostruzione2}. As a consequence of the comparison Proposition \ref{prop_compagen}, $C_\Lambda(\eps)$ is non-increasing as a function of $\eps$.\par 
Define 
\begin{equation}\label{defdelta}
\delta = \big(\min_{\overline\Lambda}W\big)\left[\frac{C_\Lambda(\eps)^{p-1}}{F(C_\Lambda(\eps))}\right], 
\end{equation}
and observe that, by assumption \eqref{labbrutta_proof}, our definition \eqref{defdelta} of $\delta$ and the fact that $F(t)/t^{p-1}$ is increasing,
\begin{equation}\label{lachiave!}
b_- \left[\frac{F(\varphi_0)}{\varphi_0^{p-1}}\right] \le \delta 1_\Lambda \left[\frac{F(C_\Lambda(\eps))}{C_\Lambda(\eps)^{p-1}}\right] = \big(\min_{\overline \Lambda}W\big)1_\Lambda \le W 1_\Lambda,
\end{equation}
on $\Lambda$, and the same relation clearly holds on $\Omega\backslash \Lambda$ since there $b_- \equiv 0$. Inserting into \eqref{bellacostruzione2} we obtain
$$
\Delta_{p,f} \varphi_0 + a\varphi_0^{p-1} - b F(\varphi_0) \le 0 \qquad \text{on } \, \Omega,
$$
that is, $\varphi_0$ is a supersolution for the problem 
\begin{equation}\label{equazvarphi}
\left\{ \begin{array}{l}
\Delta_{p,f} u_\Omega + a u_\Omega^{p-1} -b F(u_\Omega) = 0 \qquad \text{on } \Omega, \\[0.2cm]
\disp u_\Omega = \eps \quad \text{on } \partial \Omega.
\end{array}\right.
\end{equation}
On the other hand, $\varphi_\infty$ is a subsolution for \eqref{equazvarphi}, and the subsolution-supersolution method (see \cite{diaz}, Theorem 4.14 page 272) gives the existence of $u_\Omega$ solving \eqref{equazvarphi}, satisfying
\begin{equation}\label{bounds_uomega}
\varphi_\infty \le u_\Omega \le \varphi_0 \le C_\Lambda(\eps), 
\end{equation}
and, by \eqref{varphiinftydasotto},
\begin{equation}\label{bounds_uomega_sotto}
u_\Omega \ge \bar C_\Lambda(\eps) \qquad \text{on } \, \Lambda.
\end{equation}
Indeed, in our setting we can describe a simple iteration scheme to produce $u_\Omega$: set $V_0 = a+ W 1_\Lambda$. For $n \ge 1$, we inductively define $\varphi_n \in C^{1,\mu}(\overline \Omega)$ as the positive solution of 
\begin{equation}\label{varphin1}\tag{$P_n$}
\left\{ \begin{array}{ll}
\Delta_{p,f} \varphi_n + V_n \varphi_n^{p-1} - b_+ F(\varphi_n) = 0 & \quad \text{on } \Omega, \\[0.2cm]
\varphi_n = \eps & \quad \text{on } \partial \Omega
\end{array}\right.
\end{equation}
where 
\begin{equation}\label{defin_Vn}
V_n \doteq a + b_- \left(\frac{F(\varphi_{n-1})}{\varphi_{n-1}^{p-1}}\right).
\end{equation}
We claim that each $\varphi_n$ exists and that $\{\varphi_n\}$ is a non-increasing sequence bounded below by $\varphi_\infty$. Indeed, by \eqref{lachiave!}, $V_1 \le V_0$ and so $\lambda_{V_1}(\Omega) >0$, which ensures the existence of $\varphi_1$ by Proposition \ref{lem-diriproblem}. Moreover, $\varphi_1$ solves
\begin{equation}
\left\{ \begin{array}{ll}
\Delta_{p,f} \varphi_1 + V_0 \varphi_1^{p-1} - b_+ F(\varphi_1) \ge 0 & \quad \text{on } \Omega; \\[0.2cm]
\varphi_1 = \eps & \quad \text{on } \partial \Omega,
\end{array}\right.
\end{equation}
whence by comparison $\varphi_1 \le \varphi_0$ on $\Omega$. Now, this last inequality (and the monotonicity of $F(t)/t^{p-1}$) gives $V_2 \le V_1$, so that $\lambda_{V_2}(\Omega) >0$ and $(P_2)$ admits a unique positive solution $\varphi_2$. Again from $V_2 \le V_1$, $\varphi_2$ turns out to be a subsolution of $(P_1)$, hence $\varphi_2 \le \varphi_1$ by comparison. Repeating the argument above shows the monotonicity of $\{\varphi_n\}$. The positivity of $\varphi_{n-1}$ ensures that $V_n \ge V_\infty$, so by comparison $\varphi_n \ge \varphi_\infty$ for each $n$. The inequalities $a \le V_n \le V$ and $\varphi_\infty \le \varphi_n \le \varphi_0$ for each $n$ then guarantee, via Theorem \ref{teo11} $(2)$, that there exists $\mu \in (0,1)$ such that the $C^{1,\mu}$-norm of $\varphi_n$ on $\Omega$ is uniformly bounded. Therefore, a subsequence of $\{\varphi_{n_k}\}_k \subset \{\varphi_n\}_n$ converges in the $C^1$-norm, as $k\ra +\infty$, to some non-negative $v_\Omega \in C^1(\overline \Omega)$, and since $\{\varphi_n\}$ is a non-increasing sequence the whole $\{\varphi_n\}$ converges to $v_\Omega$ uniformly. By letting $k \ra +\infty$ in the weak definition of \eqref{varphin1} along the subsequence $\{n_k\}$, we deduce that $v_\Omega$ is a weak solution of \eqref{equazvarphi}.
Now, we choose an exhaustion $\Omega_j$, and let $u_j=u_{\Omega_j}$ be the solution of \eqref{equazvarphi} on $\Omega_j$ constructed above. Note that 
$$
\varphi_{\infty,j} \le u_j \le C_\Lambda(\eps) \ \text{ on } \Omega_j, \qquad u_j \ge \bar C_\Lambda(\eps) \ \ \text{ on } \, \Lambda.
$$
where $\varphi_{\infty,j}$ solves \eqref{bellacostruzione} on $\Omega_j$. Hence, by local elliptic estimates the sequence $\{u_j\}$ subconverges to some global solution $u \ge 0$ of \eqref{266} on the whole $M$, satisfying 
$$
u \le C_\Lambda(\eps) \quad \text{on } \, M, \qquad u \ge \bar C_\Lambda(\eps) \quad \text{on } \, \Lambda.
$$
By Harnack inequality, $u>0$ on $M$, and we have proved the first part of our theorem. The final step is to guarantee that, when $a \asymp b_+$, $\inf_Mu>0$.\par 
This will be accomplished by proving corresponding lower bounds for $\varphi_{\infty,j}$. Up to reducing $\theta$, we can suppose that 
$$
a -\theta b_+ \ge 0
$$ 
outside some compact set. Using the assumption that $F(t)/t^{p-1} \ra 0$ as $t \ra 0$, fix $\alpha \in (0, \eps)$ small enough that 
$$
\frac{F(t)}{t^{p-1}} \le \theta \qquad \text{for } \, t \in [0, \alpha]. 
$$
Inspecting problem \eqref{bellacostruzione} and noting that $\alpha<\eps$, we deduce that on the subset $U_j \doteq \{ \varphi_{\infty, j} < \alpha\} \Subset \Omega_j$ the function $\varphi_{\infty,j}$ solves $\Delta_{p,f} \varphi_{\infty,j} + (a- \theta b_+) \varphi_{\infty,j}^{p-1} \le  0$, hence in particular
\begin{equation}
\left\{ \begin{array}{ll}
\Delta_{p,f} \varphi_{\infty,j} - (a- \theta b_+)_- \varphi_{\infty,j}^{p-1} \le  0 & \quad \text{on } U_j, \\[0.2cm]
\varphi_{\infty,j} = \alpha & \quad \text{on } \partial U_j,
\end{array}\right. 
\end{equation}
In view of the boundary regularity requirements to apply Proposition \ref{prop_compagen}, we fix a smooth open set $S_j$ satisfying 
$$
\big\{ x : \varphi_{\infty,j} \le \alpha/2 \big\} \Subset S_j \Subset U_j,
$$
so that 
\begin{equation}\label{bellacost}
\left\{ \begin{array}{ll}
\Delta_{p,f} \varphi_{\infty,j} - (a- \theta b_+)_- \varphi_{\infty,j}^{p-1} \le  0 & \quad \text{on } S_j, \\[0.2cm]
\varphi_{\infty,j} \ge \frac{\alpha}{2} & \quad \text{on } \partial S_j,
\end{array}\right. 
\end{equation}
Next, we use the fact that $V \doteq -(a(x)-\theta b_+ (x))_-$ is compactly supported and $Q_V$ is subcritical on $M$ (being $Q_0$ subcritical by assumption, and $V \le 0$): by Proposition \ref{prop_lasolulimitata}, there exists $g \in C^{1,\mu}_\loc(M)$ solution of
\begin{equation}\label{261}
\left\{ \begin{array}{l}
\Delta_{p,f} g - (a-\theta b_+)_- g^{p-1} = 0 \qquad \text{on } M, \\[0.2cm]
0 < \inf_M g \le \|g\|_{L^\infty(M)} < +\infty.
\end{array}\right.
\end{equation}
Rescaling $g$ by multiplying by a positive constant we can assume that
\begin{equation}\label{262}
\disp 0 < \inf_M g \le \sup_M g \le \frac{\alpha}{2}.
\end{equation}
Comparing \eqref{bellacost} and \eqref{261} on $S_j$, by Proposition \ref{prop_compagen} we infer that $\varphi_{\infty,j} \ge g$ on $S_j$, and thus by \eqref{262}
$$
u_j \ge \disp \varphi_{\infty,j} \ge \min\left\{\frac{\alpha}{2},g\right\}=g \qquad \text{on } \, \Omega_j
$$
Passing to the limit, we finally get
$$
\disp u(x) \ge g(x) \quad \text{on } \, M,
$$
and $\inf_M u>0$ follows since $\inf_M g>0$. 
\end{proof}
In the proof of the above result, the parameter $\eps$ plays no role. However, a judicious choice of $\eps$ is crucial in the following proof of Theorem \ref{teouno_bis}.
\begin{Theorem}\label{teouno_bis_2} 
Let $M^m$ be a Riemannian manifold, $f \in C^\infty(M)$ and $p \in (1,+\infty)$. Suppose that $Q_0$ is subcritical on $M$ and let $a \in L^\infty_\loc(M)$ be such that $Q_a$ is subcritical on $M$. Consider $b \in L^\infty_\loc(M)$, and assume
\begin{itemize}
\item[$i)$] $b_-(x)$ has compact support;
\vspace{0.1cm}
\item[$ii')$]  $a(x) \le 0$ outside a compact set;
\vspace{0.1cm}
\item[$iii')$]  $a(x),b(x) 	\in L^1(M,\di \mu_f)$. 
\end{itemize}
Fix a nonlinearity $F(t)$ satisfying \eqref{assu_F}. Then, there exists a sequence $\{u_k\} \subset C^{1,\mu }_\loc(M)$ of distinct weak solutions of
\begin{equation}\label{011primo_2}
\left\{\begin{array}{l}
\disp \Delta_{p,f} u_k + a(x) u_k^{p-1} - b(x) F(u_k) = 0 \qquad \text{ on M}\\[0.2cm]
0 < u_k \le \|u_k\|_{L^\infty(M)} < +\infty,
\end{array}\right.
\end{equation}
such that $\|u_k\|_{L^\infty(M)} \ra 0$ as $k \ra +\infty$. If we replace $ii')$ and $iii')$ by the stronger condition 
$$
iv') \qquad a(x), b(x) \quad \text{have compact support,}
$$
then each $u_k$ also satisfies $\inf_M u_k >0$.
\end{Theorem}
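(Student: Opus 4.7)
The plan is to iterate the scheme from the proof of Theorem \ref{teo_second} with a sequence of boundary data $\eps_k\downarrow 0$, and to exploit the stronger hypotheses $ii')$–$iii')$ to show that the threshold $\delta=\delta(\eps)$ defined in \eqref{defdelta} diverges to $+\infty$ as $\eps\to 0^+$. Since $F(t)/t^{p-1}$ is strictly increasing and vanishes at $0^+$ by \eqref{assu_F}, the map $t\mapsto t^{p-1}/F(t)$ is strictly decreasing with $t^{p-1}/F(t)\to +\infty$ as $t\to 0^+$, so it suffices to prove that the uniform-in-$\Omega$ bound $C_\Lambda(\eps)$ on the supersolution $\varphi_0$ of \eqref{bellacostruzione2} satisfies $C_\Lambda(\eps)\to 0$ as $\eps\to 0^+$. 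Once this is available, picking $\eps_k$ with $\delta(\eps_k)>\|b_-\|_{L^\infty(M)}$ forces \eqref{labbrutta_proof}, and the conclusion of Theorem \ref{teo_second} applies.

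The central quantitative step will be to establish $C_\Lambda(\eps)\le \eps C^{*}$ for some $C^{*}$ independent of both $\eps$ and $\Omega$. To do this, I would enlarge $\Lambda$ so that $a\le 0$ on $M\setminus\Lambda$, which is permitted by $ii')$, and set $A\doteq a+W\,1_\Lambda$ with $W$ as in \eqref{269}; as in the proof of Theorem \ref{teo_second}, $Q_A$ remains subcritical on $M$ because it admits the Hardy weight $W\,1_{M\setminus\Lambda}$. By Proposition \ref{pro115}, the linear Dirichlet problem $Q'_A(\psi)=0$ on $\Omega$, $\psi=1$ on $\partial\Omega$, admits a unique positive solution $\psi_1\in C^{1,\mu}(\overline\Omega)$, and the $(p-1)$-homogeneity of $Q'_A$ together with uniqueness gives $\psi_\eps=\eps\psi_1$ as the solution with boundary value $\eps$. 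Since $\Delta_{p,f}\psi_\eps + A\psi_\eps^{p-1}-b_+F(\psi_\eps)=-b_+F(\psi_\eps)\le 0$, the function $\psi_\eps$ is a supersolution for \eqref{bellacostruzione2}, and Proposition \ref{prop_compagen} yields $\varphi_0\le \psi_\eps=\eps\psi_1$. The hard technical step is then to invoke Lemma \ref{lem_linftyestimates} in its case $(i)$ with $A=a+W\,1_\Lambda$, $B\equiv 0$, and boundary datum $1$, obtaining $\psi_1\le C^{*}$ uniformly in $\Omega\supset\Lambda'$; the subcriticality of $Q_A$ and the bound $A\le 0$ on $M\setminus\Lambda$ are exactly what the lemma requires. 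Combining, $\varphi_0\le \eps C^{*}$ on every admissible $\Omega$, hence $C_\Lambda(\eps)\le \eps C^{*}$.

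With $C_\Lambda(\eps)\le \eps C^{*}$ in hand, the monotonicity of $t^{p-1}/F(t)$ forces $\delta(\eps)\to +\infty$ as $\eps\to 0^+$. Choosing $\eps_k\downarrow 0$ with $\delta(\eps_k)>\|b_-\|_{L^\infty(M)}$, the assumption \eqref{labbrutta_proof} of Theorem \ref{teo_second} is satisfied and we obtain weak solutions $u_k\in C^{1,\mu}_\loc(M)$ of \eqref{011primo_2} with $0<u_k\le C_\Lambda(\eps_k)\le \eps_k C^{*}\to 0$. Hence $\|u_k\|_{L^\infty(M)}\to 0$, and (extracting if needed) the $u_k$ are mutually distinct.

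For the refinement under $iv')$, I would repeat verbatim the end of the proof of Theorem \ref{teo_second}: since $a$ and $b$ have compact support, so does $V\doteq -(a-\theta b_+)_-$ for any $\theta>0$; as $V\le 0$ and $Q_0$ is subcritical on $M$, Proposition \ref{prop_lasolulimitata} produces a positive $g\in C^{1,\mu}_\loc(M)$ with $\Delta_{p,f}g-(a-\theta b_+)_- g^{p-1}=0$ and $0<\inf_M g\le \sup_M g<+\infty$. For each $k$, pick $\alpha_k\in (0,\eps_k)$ with $F(t)/t^{p-1}\le \theta$ on $[0,\alpha_k]$, rescale $g$ so that $\sup_M g<\alpha_k/2$, and compare via Proposition \ref{prop_compagen} on the set $\{\varphi_{\infty,k}<\alpha_k\}$ (exactly as in the final paragraph of the proof of Theorem \ref{teo_second}) to conclude $u_k\ge g>0$ on $M$, i.e.\ $\inf_M u_k>0$. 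The whole argument thus reduces to verifying the linear-in-$\eps$ bound $C_\Lambda(\eps)\le \eps C^{*}$, which is the essence of the new ingredient beyond Theorem \ref{teo_second}.
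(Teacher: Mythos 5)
Your proposal is correct and follows essentially the same route as the paper: enlarge $\Lambda$ so $a\le 0$ off $\Lambda$, use the $(p-1)$-homogeneity to scale the solution of the linear problem $Q'_A(\psi)=0$ with boundary datum $1$, bound it uniformly in $\Omega$ via Lemma \ref{lem_linftyestimates} (case $B\equiv 0$), deduce $C_\Lambda(\eps)\le \eps\hat C\to 0$, and then run the iteration of Theorem \ref{teo_second} for a decreasing sequence $\eps_k$. The only cosmetic difference is that you produce distinct solutions by extracting a subsequence of $\{u_k\}$ with strictly decreasing $L^\infty$-norms, whereas the paper chooses $\eps_k$ inductively so that $C_\Lambda(\eps_k)<\min_{\overline\Lambda}u_{k-1}$, but both yield the same conclusion.
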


%
%
%
%
\begin{proof}
We first observe that our set of assumptions on $a$ and $b$ is a special case of the one in Theorem \ref{teo_second}, namely we just include the requirement that $a \le 0$ outside some compact set. Hence, the constructions in the previous theorem hold as well in our setting, and we will refer to the proof of Theorem \ref{teo_second} also for relevant definitions.\par
Up to enlarging $\Lambda$ we can assume that $a \le 0$ on $M\backslash \Lambda$. Let $\Omega$ satisfy $\Lambda'\Subset \Omega$. By Lemma \ref{lem_linftyestimates} applied with $A=a+W1_\Lambda$, $B\equiv 0$ there exists a uniform constant $\hat C$, independent of $\Omega$, such that the solution $\psi$ of
\begin{equation}\label{ilsoprabound}
\left\{ \begin{array}{ll}
\Delta_{p,f} \psi + (a+ W 1_\Lambda) \psi^{p-1} = 0 & \quad \text{on } \Omega, \\[0.2cm]
\psi=1 & \quad \text{on } \partial \Omega
\end{array}\right.
\end{equation}
satisfies $\psi \le \hat C$ on $\Omega$. By comparison and recalling our definition of $\varphi_0$ in \eqref{bellacostruzione2}, we get $\varphi_0 \le \eps\psi$. Therefore, by our definition of $C_\Lambda(\eps)$, 
\begin{equation}\label{clambda_azero}
C_\Lambda(\eps) \le \eps\hat C \ra 0 \qquad \text{as } \eps \ra 0.
\end{equation}
Combining \eqref{clambda_azero}, the monotonicity of $C_\Lambda(\eps)$ and property $F(t)/t^{p-1} \ra 0$ as $t \ra 0$ in \eqref{assu_F}, we deduce that, given any $b$ with $b_-$ compactly supported, there exists $\eps_0$ sufficiently small such that
\begin{equation}\label{relationsepsilon}
b_- \le \big(\min_{\overline\Lambda}W\big)\left[\frac{C_\Lambda(\eps)^{p-1}}{F(C_\Lambda(\eps))}\right] \qquad \text{for each } \eps \le \eps_0.
\end{equation}
Fix such $\eps_0$ and follow the arguments in Theorem \ref{teo_second} with $\eps=\eps_0$. Note that, as in \eqref{lachiave!},
$$
b_-(x) \left( \frac{F(\varphi_0(x))}{\varphi_0(x)^{p-1}}\right) \le b_-(x) \left[ \frac{F(C_\Lambda(\eps))}{C_\Lambda(\eps)^{p-1}}\right] \le 1_\Lambda(x)\big(\min_{\overline\Lambda}W\big)\le W(x) 1_\Lambda(x), 
$$
which is the key step to produce the local solutions of \eqref{equazvarphi}, and now it does not require \eqref{labbrutta_proof}. Proceeding with the construction, we get a solution $u_0$ of 
\begin{equation}\label{266nuovo}
\left\{\begin{array}{l}
\Delta_{p,f} u_0 + a(x) u_0^{p-1} -b(x)F(u_0) = 0 \qquad \text{on } M, \\[0.2cm]
0< u_0  \le C_\Lambda(\eps_0) \qquad \text{on } M,
\end{array}\right.
\end{equation}
and $\inf_M u_0>0$ when $(iv')$ holds. Next, choose $\eps_1<\eps_0$ small enough that 
$$
C_\Lambda(\eps_1) < \min_{\overline\Lambda} u_0. 
$$
Proceeding as above with $\eps_1$ replacing $\eps_0$ we get a solution $u_1$ of
\begin{equation}\label{266nuovo2}
\left\{\begin{array}{l}
\Delta_{p,f} u_1 + a(x) u_1^{p-1} -b(x)F(u_1) = 0 \qquad \text{on } M, \\[0.2cm]
0< u_1  \le C_\Lambda(\eps_1) \qquad \text{on } M,
\end{array}\right.
\end{equation}
and $\inf_M u_1>0$ when $(iv')$ is in force. By our choice of $\eps_1$, $u_1 < u$ on $\Lambda$, thus in particular the two solutions are different. We can now repeat the procedure inductively by choosing, at each step, $\eps_k < \eps_{k-1}$ such that 
$$
C_\Lambda(\eps_k) < \min_{\overline\Lambda} u_{k-1},
$$
obtaining a solution $u_k$ of 
$$
\Delta_{p,f} u_k + a(x) u_k^{p-1} - b(x) F(u_k) = 0 \qquad \text{on } M
$$
satisfying 
\begin{equation}\label{laproprietafinale}
0< u_k \le C_\Lambda(\eps_k) \ \text{ on } M, \quad u_k < u_{k-1} \ \text{ on } \Lambda, \quad \inf_M u_k>0 \ \text{ when } (iv') \text{ holds.}
\end{equation}
By construction, $C_\Lambda(\eps_k) \ra 0$ as $k \ra +\infty$, hence $\{u_k\}$ is the desired sequence.
\end{proof}
\begin{Remark}\label{rem_counterex1}
\emph{The key point that allows, in Theorem \ref{teouno_bis_2}, to get rid of \eqref{labbrutta_proof} is the validity of the asymptotic relation  
\begin{equation}\label{laimpo}
C_\Lambda(\eps) \ra 0^+ \qquad \text{as } \eps \ra 0^+, 
\end{equation}
which is granted via the presence of an uniform $L^\infty$-bound for solutions $\psi$ of \eqref{ilsoprabound}. For general $a,b$, just satisfying  the assumptions of Proposition \ref{teo_second}, \eqref{laimpo} may not hold. As an example, consider the hyperbolic space $\HH^m$ of sectional curvature $-1$. For each $\tau \ge 1$, the radial functions
$$
u_\tau(x) = \left( 2 \cosh^2\left(\frac{r(x)}{2}\right)\right)^{-\frac{m-2}{2}} \beta_\tau \left(\tanh\left(\frac{r(x)}{2}\right)\right),
$$ 
where
$$
\beta_\tau(t) = \frac{(\tau^2-t^2)^{-\frac{m-2}{2}}}{m(m-2)\tau^2}
$$
are all solutions of 
$$
\Delta u + \frac{m(m-2)}{4} u - u^{\frac{m+2}{m-2}}=0 \qquad \text{on } \HH^m.
$$
Moreover, they are decreasing functions of $r(x)$, and the sequence $\{u_\tau\}$ is monotone decreasing. For each $\eps>0$, consider $\Omega = u_1^{-1}\{(\eps, +\infty)\}$. Then, for $\eps$ small enough in such a way that $\Omega \neq \emptyset$, by the definition of $C_\Lambda(\eps)$ in \eqref{Cdeltaepsilon} we deduce
$$
C_\Lambda(\eps) \ge \|u_1\|_{L^\infty(\Omega)} = u_1(0) =  \frac{2^{-\frac{m-2}{2}}}{m(m-2)},
$$
preventing from the validity of \eqref{laimpo}.
}
\end{Remark}

We briefly comment on the sharpness of the subcriticality assumption for $Q_a$ in Theorem \ref{teouno}, and for this reason we now state a result that improves on a theorem in \cite{litamyang}. First of all, we extend definition \eqref{010} to arbitrary subsets $\Lambda \subset M$, that is, we define the fundamental tone $\lambda_V(\Lambda)$ by setting
\begin{equation}\label{295}
\lambda_V(\Lambda)=\sup\lambda_V(\Omega)
\end{equation}
where the supremum is taken over all open subsets $\Omega \subset M$ with smooth boundary such that $\overline \Lambda \subset \Omega$. 
\begin{Proposition}\label{pro296}
Let $\riem$ be a Riemannian manifold $f \in C^\infty (M)$, $p \in (1, \infty)$ and let $a (x) \in L^\infty_\loc (M)$, $b(x) \in C^0 (M)$. Define
$$
\qquad B_0 \doteq \{x \in M \ : \ b(x) \le 0\},
$$  
Let $\Omega$ be an open domain containing $\overline B_0$ and such that there exists a positive, bounded solution $\disp u \in C^0(\overline \Omega) \cap W^{1,p}_\loc (\Omega)$ of
\begin{equation}\label{297}
\Delta_{p,f} u + a(x) u^{p-1}- b(x)F(u) \le 0 \qquad {\rm on} \ \Omega,
\end{equation}
for some nonlinearity $F(t)$ that satisfies \eqref{assu_F}. Then
\begin{equation}\label{298}
\lambda_a(B_0) \ge 0.
\end{equation}
\end{Proposition}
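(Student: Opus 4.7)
\medskip
\noindent\textbf{Proof plan for Proposition \ref{pro296}.}
The strategy is to convert the supersolution inequality \eqref{297} into a Hardy-type inequality for $Q_a$ on open sets $\Omega'$ that shrink down to $\overline{B_0}$, by means of the Lagrangian/Picone representation of Proposition \ref{prop_lagrangian}. Concretely, introduce the potential
\[
V(x) \doteq a(x) - b(x) \frac{F(u(x))}{u(x)^{p-1}} \qquad \text{on } \Omega.
\]
Since $u$ is positive, bounded, and continuous on $\overline{\Omega}$, and $F(t)/t^{p-1}$ is continuous on $(0,+\infty)$, we have $V \in L^\infty_{\loc}(\Omega)$, and \eqref{297} reads precisely $Q_V'(u) \ge 0$ weakly on $\Omega$. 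Thus $u$ is a positive supersolution of $Q_V'$ on $\Omega$ to which Proposition \ref{prop_lagrangian} applies.

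Next, for every smooth, relatively compact open set $\Omega' \Subset \Omega$ with $\overline{B_0} \subset \Omega'$, and every $\varphi \in \lip_c(\Omega')$ extended by zero, Proposition \ref{prop_lagrangian} applied with $g=u$ (to $|\varphi|$, and using $|\nabla |\varphi||=|\nabla \varphi|$ a.e.) yields
\[
\int_{\Omega'} |\nabla \varphi|^p \di \mu_f \;\ge\; \int_{\Omega'} a\, \varphi^p \di \mu_f \;-\; \int_{\Omega'} b\, \frac{F(u)}{u^{p-1}}\, \varphi^p \di \mu_f.
\]
Splitting the last integral over $B_0$ and $\Omega' \setminus B_0$: on $B_0$ one has $b \le 0$, so $-b\, F(u)/u^{p-1}\, \varphi^p \ge 0$ and the contribution is $\ge 0$; on $\Omega' \setminus B_0$ the integrand is controlled by
\[
\Big| \int_{\Omega'\setminus B_0} b \tfrac{F(u)}{u^{p-1}} \varphi^p \di \mu_f \Big| \;\le\; \|b\|_{L^\infty(\Omega'\setminus B_0)} \cdot C_{\Omega'}  \cdot \|\varphi\|^p_{L^p(\Omega',\di \mu_f)},
\]
with $C_{\Omega'}=\|F(u)/u^{p-1}\|_{L^\infty(\Omega')}$ finite, since $u$ is continuous and positive on the compact set $\overline{\Omega'}$. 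Combining gives $\lambda_a(\Omega') \ge -\|b\|_{L^\infty(\Omega'\setminus B_0)}\cdot C_{\Omega'}$.

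The conclusion follows by letting $\Omega' \downarrow \overline{B_0}$. The set $B_0=\{b\le 0\}$ is closed by continuity of $b$, so $\overline{B_0}=B_0$; hence for each $\eta>0$ the open set $U_\eta \doteq \{x\in \Omega:b(x)<\eta\}$ contains $\overline{B_0}$, and we may select a smooth $\Omega'$ with $\overline{B_0} \subset \Omega' \Subset U_\eta \cap \Omega$. Then $\|b\|_{L^\infty(\Omega'\setminus B_0)} \le \eta$, while $C_{\Omega'}$ is locally controlled. Letting $\eta \to 0^+$ and taking the supremum in the definition \eqref{295}, we obtain $\lambda_a(B_0) \ge 0$.

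The main obstacle, and the reason the shrinking argument is needed, is that $u$ is only a supersolution of $Q_V'$ (not of $Q_a'$) on $\Omega$, and one has no direct sign control of $-b F(u)/u^{p-1}$ outside $B_0$; the positive contribution of this term on $B_0$ cannot compensate the loss on $\Omega' \setminus B_0$ for a fixed $\Omega'$. Shrinking $\Omega'$ toward $\overline{B_0}$ and invoking the continuity of $b$ (with $b \le 0$ on $\overline{B_0}$) is what makes the $\Omega' \setminus B_0$ defect negligible. A minor technical point is ensuring $\overline{B_0}$ admits a smooth relatively compact neighbourhood in $\Omega$ on which $F(u)/u^{p-1}$ stays bounded; this is automatic when $\overline{B_0}$ is compact, and otherwise handled by intersecting with an exhaustion before taking the sup.
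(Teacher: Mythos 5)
Your proof is correct and reaches the same conclusion, but it is organized differently from the paper's. The paper argues by contradiction: assuming $\lambda_a(B_0)=\lambda<0$, it extracts a sequence of smooth domains $U_i\supset \overline{B_0}$ with $\lambda_a(U_i)\to\lambda$ and $b<1/i$ on $U_i$, picks relatively compact $\Omega_i\Subset U_i$ (which need not contain $\overline{B_0}$) whose first Dirichlet eigenvalues $\lambda_i\to\lambda$, takes the corresponding positive eigenfunctions $\varphi_i$, substitutes $h=\log u$ into \eqref{297}, and tests the resulting inequality with $\varphi_i^p$; the sign of $-\lambda_i$ and the smallness of $b F(u)/u^{p-1}$ on $\Omega_i$ then clash. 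You instead give a direct argument: you read \eqref{297} as $Q'_V(u)\ge 0$ with $V=a-bF(u)/u^{p-1}$, invoke the Lagrangian/Picone inequality of Proposition \ref{prop_lagrangian} to get $Q_V(\varphi)\ge 0$ for all $\varphi\in\lip_c(\Omega')$, and then peel off the $b$-term to bound $\lambda_a(\Omega')$ from below; the same ``shrink to $B_0$'' step (via $U_\eta=\{b<\eta\}$) makes the defect vanish. Both proofs hinge on identical ingredients — the Picone/log-substitution identity and the continuity of $b$ near $\{b\le 0\}$ — but yours sidesteps the eigenfunction construction and the contradiction, which is a bit cleaner. One small improvement you could make: the concern about $\overline{B_0}$ being non-compact and $\Omega'$ being relatively compact is actually a non-issue, because $F(u)/u^{p-1}\le F(\|u\|_{L^\infty(\Omega)})/\|u\|_{L^\infty(\Omega)}^{p-1}$ holds \emph{uniformly} on all of $\Omega$ — this is exactly where the boundedness of $u$ (from above) combined with the monotonicity of $F(t)/t^{p-1}$ enters — so $C_{\Omega'}$ can be replaced by a constant independent of $\Omega'$, and one may simply take $\Omega'$ to be any smooth open set with $\overline{B_0}\subset\Omega'\subset U_\eta\cap\Omega$, dropping the relative compactness requirement and the ``exhaustion'' caveat altogether.
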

\begin{Remark}\label{rema299} 
\emph{Let us consider the case $p=2$, $f \equiv 0$. The first Dirichlet eigenvalue of the Laplacian on a geodesic ball $B_r$ grows like $r^{-2}$ as $r \ra 0^+$, thus $\lambda_a(B_r) >0$ provided $r$ is sufficiently small and one may think that condition \eqref{298} expresses the fact that $b_-$ is, loosely speaking, small in a spectral sense.
}
\end{Remark}
\begin{proof}
Let $u$ be as above and by contradiction assume that $\lambda_a(B_0) \doteq \lambda<0$. Then, by definition \eqref{295} we can find a sequence $\{U_i\}$ of open sets with smooth boundaries such that 
$$
\overline B_0 \subset U_i \subset \overline U_i \subset \Omega, 
\qquad \lambda_a(U_i) \ra \lambda \quad \text{as } i \ra +\infty.
$$
Take a nested sequence $\{V_i\}$ of smooth open sets shrinking to $B_0$ such that:
$$
V_i \subset \left\{ x: b(x) < \frac 1 i \right\}, \qquad \overline B_0 \subset V_{i+1} \subset \overline V_{i+1} \subset V_i \subset \Omega, \qquad \bigcap_{i=1}^{+\infty} V_i = \overline B_0.  
$$
Then, up to replacing $U_i$ with $V_i \cap U_i$ and smoothing corners, by the monotonicity of eigenvalues we can further suppose that $\{U_i\}$ satisfies 
\begin{equation}\label{intUi}
\bigcap_{i=1}^{+\infty} U_i = \overline B_0, \qquad b < \frac 1 i \ \text{ on } U_i.
\end{equation}
%
%
By the definition of $\lambda_a(U_i)$, there exists a smooth, relatively compact open set $\Omega_i \Subset U_i$ for which $\lambda_i \doteq \lambda_a(\Omega_i) \le \lambda_a(U_i) + 1/i$, so that clearly $\lambda_i \ra \lambda$ as $i$ diverges. Corresponding to $\lambda_i$ there exists a positive eigenfunction $\varphi_i \in C^{1,\mu}(\overline{\Omega}_i)$ satisfying
$$
\left\{
\begin{array}{ll}
\disp Q_a' (\varphi_i)=\lambda_i |\varphi_i|^{p-2} \varphi_i & \quad \text{on } \Omega_i \\[0.2cm]
\disp \varphi_i =0 & \quad \text{on } \partial\Omega_i.
\end{array}
\right.
$$
Setting $\disp h=\log u$ using \eqref{113} and \eqref{297} we see that $h$ solves
$$
\disp\Delta_{p,f} h \le -a(x) + b(x) \frac{F(u)}{u^{p-1}} -(p-1) \left| \nabla h\right|^p.
$$
Integrating on $\Omega_i$ against $\varphi_i^p$ and proceeding as in the proof of $i) \Rightarrow iii)$ in Proposition \ref{pro115} we obtain
$$
\disp\int_{\Omega_i} a(x) \varphi_i^p \di \mu_f -\int_{\Omega_i} b(x) \frac{F(u)}{u^{p-1}} \varphi_i^p \di \mu_f \le \int_{\Omega_i} |\nabla \varphi_i|^p \di \mu_f.
$$
Now, since the $\varphi_i$'s are eigenfunctions, $\disp pQ_a (\varphi)=\lambda_i \| \varphi_i \|^p_{L^p (\Omega_i, \di \mu_f)}$. Therefore, inserting into the above, observing that $b < 1/i$ on $\Omega_i$ and using the monotonicity of $f(t)/t^{p-1}$ we deduce
\begin{equation}\label{intspectralinal}
0 \ge \int_{\Omega_i} \left[ -\lambda_i -b(x) \frac{F(u)}{u^{p-1}}\right] \varphi_i^p \di \mu_f \ge \int_{\Omega_i} \left[ -\lambda_i -\frac{F(\|u\|_{L^\infty (\Omega)})}{i\|u\|_{L^\infty(\Omega)}^{p-1}}\right] \varphi_i^p \di \mu_f.
\end{equation}
Concluding, as $\lambda_i \ra \lambda < 0$, taking $i$ sufficiently large the previous inequality yields the desired contradiction.
\end{proof}
In view of Proposition \ref{pro296}, and since we made no assumptions on the size of the set $\{x : b(x) =0\}$ in Theorem \ref{teouno}, the existence of a bounded solution of \eqref{011} on $M$ when $b$ changes sign requires at least that $Q_a \ge 0$ on the whole $M$. We feel interesting to investigate the validity of Theorem \ref{teouno} when assumption $i)$ is replaced with the requirement that $Q_a$ be non-negative and with a ground state.\par
\begin{Remark} \label{rem_labrutta}
\emph{We conclude this section with a remark on the role of \eqref{labbrutta_general} in Theorem \ref{teouno}. that is, $\|b_-\|_{L^\infty(M)} \le \delta$. Denote with $B_0$ the relatively compact set $\{x: b(x)<0\}$, and for each $\eps \in (0,1)$ choose a smooth relatively compact set $\Omega_\eps \subset \{ x : b_-(x) \ge (1-\eps)\|b_-\|_{L^\infty(M)}\}$. 
%
%
Then we let $\varphi_\eps$ be the positive eigenfunction of  
$$ 
\left\{ \begin{array}{l} Q'_a(\varphi_\eps) = \lambda_a(\Omega_\eps) \varphi_\epsilon^{p-1} \qquad \text{on } \, \Omega_\eps \\[0.2cm]
\disp \varphi_\eps =0 \quad \text{on } \, \partial \Omega_\eps, \qquad \varphi_\eps>0 \quad \text{on } \, \Omega_\eps.
\end{array} \right. 
$$ 
Note that $\lambda_a(\Omega_\eps) >0$ by assumption. Reasoning as in Proposition \ref{pro296} to get \eqref{intspectralinal} we	obtain 
$$
\lambda_a (\Omega_\eps) \|\varphi_\eps \|^p_{L^p (\Omega_\eps)} \ge \int_{\Omega_\eps} b_- \frac{F(u)}{u^{p-1}} \varphi_\eps^p \di \mu_f, 
$$ 
hence using the definition of $\Omega_\eps$
$$
\lambda_a (\Omega_\eps) \|\varphi_\eps\|^p_{L^p (\Omega_\eps)} \ge (1-\eps)\|b_-\|_{L^\infty(M)} \left[\inf_{B_0} \frac{F(u)}{u^{p-1}}\right] \|\varphi_\eps\|^p_{L^p (\Omega_\eps)}
$$ 
in other words
\begin{equation}\label{bellanec}
\|b_-\|_{L^\infty(M)} \left[\inf_{B_0} \frac{F(u)}{u^{p-1}}\right] \le \inf_{\eps \in (0,1)} \frac{\lambda_a(\Omega_\eps)}{1-\eps}.
\end{equation}
The above inequality helps to understand the relationship between the $L^\infty$-norm of the negative part of $b$ and a lower bound for $u$ on $B_0$. In particular, in view of the relation $F(t)/t^{p-1} \ra 0$ as $t \ra 0$ in \eqref{assu_F}, a larger size of $b_-$ forces $u$ to squeeze towards zero on $B_0$. 
}
%
%
\end{Remark}

\section{Proofs of our geometric corollaries, and concluding comments}\label{sec_applications}
In this section, we prove Theorems \ref{teodue}, \ref{coruno}, \ref{teo_tipohyperb} and their Corollaries  \ref{cor_asflat}  and \ref{corquattro}.
\begin{proof}[Proof of Theorem \ref{teodue}]
Taking into account that the conformal factor $u$ in the deformation 
$$
\widetilde{\metric} = u^{\frac{4}{m-2}} \metric
$$
shall satisfy \eqref{02}, the theorem follows immediately from Theorems \ref{teouno} and \ref{teouno_bis}. When the two-sided bound 
\begin{equation}\label{294}
\disp C^{-1} \metric \le \widetilde{\metric} \le C \metric
\end{equation}
holds, $(M, \widetilde{\metric})$ is complete if and only if $\disp (M,\metric)$ is so. Furthermore, because of \eqref{294} since $\metric$ is non-parabolic the same holds for $\widetilde{\metric}$. Indeed, it is easy to see that \eqref{294} induces a similar two-sided bound between the capacities $\capac$ and $\widetilde{\mathrm{cap}}$ of the Laplace-Beltrami operators of the two metrics (with, say, supersolution $g = 1$), whence the preservation of parabolicity follows from Theorem \ref{teo_alternative}. This concludes the proof.
\end{proof}
\begin{proof}[Proof of Theorem \ref{coruno}] It follows directly from case $(II)$ of Theorem \ref{teodue}: it is enough to observe that, if $s(x) \ge 0$ on the whole $M$ and $M$ is non-parabolic (i.e., $-\Delta$ is subcritical), then the conformal Laplacian $L_{\metric}$ is subcritical. 
\end{proof}
\begin{proof}[Proof of Corollary \ref{cor_asflat}] 
We begin with performing a ``reduction" argument that goes back to the original works of Schoen-Yau on the positive mass theorem, see \cite{leeparker}, pp. 82-83. Via a cut-off procedure and a careful analysis of Schr\"odinger operators on weighted spaces, they showed that there exists a conformal deformation 
\begin{equation}\label{background}
\metric_1=u_1^{\frac{4}{m-2}}\metric
\end{equation}
of the original  asymptotically flat metric in such a way that $(M,\metric_1)$ is still asymptotically flat and has zero scalar curvature outside a compact set. Moreover, $\metric_1$ is uniformly equivalent to $\metric$ (actually much more is true, but this is enough for our purposes). Next, by the very definition of asymptotic flatness, the metric $\metric_1$ on each end $U_j$ with respect to the compact set $K$ is bi-Lipschitz equivalent to the Euclidean one on $\R^m\backslash B_r(0)$, hence proceeding as in the proof of Theorem \ref{teodue} we deduce that $(M,\metric_1)$ is non-parabolic. Consequently, since $\metric_1$ has non-negative scalar curvature, the conformal Laplacian $L_{\metric_1}$ is subcritical. Applying previous Theorem \ref{coruno} to the background manifold $(M, \metric_1)$, we get the existence of a family of conformal deformations to scalar curvature $\widetilde{s}(x)$ which, after composing with the deformation \eqref{background}, concludes the proof of the corollary.  
\end{proof}
\begin{proof}[Proof of Theorem \ref{teo_tipohyperb}] It follows directly from $(I)$ of Theorem \ref{teodue}. Note that, according to Remark \ref{rem_linearegreen}, $L_{\metric}$ is subcritical if and only if it admits a positive Green kernel. 
\end{proof}
\begin{proof}[Proof of Corollary \ref{corquattro}]
Condition $K \le -\kappa^2$ implies, via Theorem \ref{teo_laprimahardy}, that the Hardy inequality 
$$
\int_{M} (\chi\circ r) \varphi^2 \di x \le \int_M|\nabla \varphi|^2 \di x, 
$$
holds for each $\varphi \in \lip_c(M)$, where 
$$
\chi(r) = \frac{1}{4} \left( g_\kappa (r)^{m-1} \int_r^{+\infty} \frac{\di s}{g_\kappa (s)^{m-1}} \right)^{-2}.
$$
Now, since by \eqref{soprachialfa} the Hardy weight satisfies
$$
\chi(r) \ge \frac{(m-1)^2\kappa^2}{4} \qquad \text{on } \R^+,
$$
using assumption \eqref{iposcalarehyp} we deduce that 
$$
-\frac{s(x)}{c_m} = - \frac{m-2}{4(m-1)}s(x) \le \frac{(m-1)^2\kappa^2}{4} \le \chi\big(r(x)\big),
$$
thus the conformal Laplacian $L_{\metric} = -\Delta + s/c_m$ is subcritical by Proposition \ref{prop_criterion} (clearly, $-s/c_m \not\equiv (\chi\circ r)$ since this latter tends to infinity at $o$). Now, if \eqref{nuovascalarehyp} holds outside of a compact set, assumption $i)$ of Theorem \ref{teodue} is met. Tracing $K \le -\kappa^2$ we deduce 
$$
s(x) \le -m(m-1)\kappa^2,
$$
which coupled with \eqref{iposcalarehyp} and \eqref{nuovascalarehyp} implies $s(x) \asymp \widetilde s(x)$ as $x$ diverges. Applying Theorem \ref{teo_tipohyperb} we eventually have the desired conformal deformation to a uniformly equivalent metric $\widetilde{\metric}$.
\end{proof}
We conclude with a couple of remarks. In the Introduction, the prototype cases of Euclidean and hyperbolic space helped us to have a picture of the variety of phenomena concerning the prescribed curvature problem. We have seen that the uniqueness of the conformal deformation in Theorem \ref{teouno} fails to hold for sign-changing $\widetilde s(x)$, and that fastly decaying solutions coexist with solutions bounded from below and above by positive constants. In particular, assumptions like \eqref{nuovascalarehyp} do not imply a control of the decay of the conformal factor from both sides by two comparable quantities. However, when $\widetilde s <0$ on the whole $M$, something more precise can be said about uniqueness and asymptotic behaviour of solutions $u$ of the Yamabe equation \eqref{02}. As above, consider the prototype case of $\HH^m_\kappa$, and suppose that 
\begin{equation}\label{attheend}
-C_1 \le \widetilde s(x) \le -C_2<0 \qquad \text{on } \, \HH^m_\kappa.
\end{equation}
By Theorem 3.4 in \cite{rigolizamperlin} with the choice $\beta=0$ (or even by Theorem 4 in \cite{avilesmcowen}), assumption \eqref{attheend} guarantees that the conformal deformation given in Aviles-McOwen's Theorem \ref{teo_avilesmcowen} and in Corollary \ref{corquattro} is the \emph{unique} conformal deformation realizing $\widetilde s(x)$ and such that the conformal factor satisfies $\inf_{\HH^m_\kappa} u>0$. Moreover, by \cite{rrv2, prsmemoirs} (a simpler form can also be found in Theorem 2.3 of \cite{rigolizamperlin}), in the same assumptions each solution of \eqref{02} satisfies $\sup_{\HH^m_\kappa}u < +\infty$. On the other hand, since $\widetilde s <0$ on $\HH^m_\kappa$, estimates from below for positive solutions of the Yamabe type equation \eqref{03} have been provided in \cite{rigolizamperlin}. 
Applying Theorem 2.4 of \cite{rigolizamperlin} with the choices
$$
m >4, \ \ \delta = 0, \ \ \beta = -1-\epsilon, \ \ \alpha<0 \ \text{ arbitrary,} \ \  \sigma = \frac{m+2}{m-2}, \ \ \gamma = \frac{m-2}{2}H > H 
$$
with $\epsilon >0$,  we deduce that any solution $u$ of \eqref{02} satisfies 
\begin{equation}\label{boundbellooww}
u(x) \ge C e^{-\frac{m-2}{2}\kappa r (x)} \qquad \text{on } \HH^m_\kappa,
\end{equation}
for some $C>0$. Indeed, the right-hand side in \eqref{boundbellooww} is exactly the asymptotic decay of the solutions that create the conformally deformed metrics in Theorem \ref{cor_tipoiperb}, and in fact it is also the decay of a radial solution of $L_{\metric} u = 0$ on $\HH^m_\kappa$. In summary, when \eqref{attheend} holds, the decay of the solutions $\{u_j\}$ in Theorem \ref{cor_tipoiperb} is the minimal one that a solution of the Yamabe equation with \eqref{attheend} in force can have (if $m >4$), while the function $u$ produced in Corollary \ref{corquattro} (we call it $\hat u$) is the unique solution which is bounded below by a positive constant, and indeed it also has the maximal possible order at infinity, as each solution shall be bounded above by a constant. This intriguing scenario is enriched by the fact that, by Theorem 1.1 in \cite{rrv2}, the Yamabe equation on $\HH^m_\kappa$ also admits a solution $u_c$ giving rise to a complete metric $\widetilde \metric$ whenever 
\begin{equation}\label{siamoallafine}
\widetilde s <0 \ \text{ on } M, \ \text{and } \quad \widetilde s(x) \ge -Cr(x)^2 \quad \text{as } r \ra +\infty, 
\end{equation}
for some $C>0$. It is still not clear whether $u_c$ coincides with $\hat u$ or not, or even if \eqref{siamoallafine} ensures the existence of a whole infinite family of solutions, distinct from $\{u_j\}$ and $\hat{u}$, giving rise to complete metrics.
\begin{Remark}
\emph{Based on the hints in \cite{rrv}, we conjecture that there exists a conformal deformation of the hyperbolic metric that gives rise to a complete metric of scalar curvature $\widetilde s(x)$ whenever $|\widetilde s(x)| \le Cr(x)^2$, where $r(x)$ is the distance from a fixed origin of $\HH^m_\kappa$ and $C>0$. See also \cite{bmr3} for some comments.
}
\end{Remark}

%
%
%
\section*{Appendix: the obstacle problem and the pasting lemma}
The aim of this section is to present a proof of the pasting Lemma \ref{lem_pasting}. The argument is divided in three steps. First, observe that our assumptions in Lemma \ref{lem_pasting} imply $\lambda_V(\Omega_2)\ge 0$. Therefore, the obstacle  problem that we shall consider below is solvable on relatively compact open subsets of $\Omega_2$. Secondly, the minimizing properties of its solutions yield a quick proof of the fact that the minimum of two positive supersolutions is still a supersolution. Finally we obtain Lemma \ref{lem_pasting} by refining the argument used in the second step. The idea of the proof is close to that in Section 3 of \cite{marivaltorta}. Hereafter, each $W^{1,p}$-norm is intended to be with respect to the measure $\di \mu_f$

Let $\Omega \Subset M$ be a relatively compact, open subset and $V \in L^\infty(\Omega)$. Given $\psi$ measurable and $\theta \in W^{1,p}(\Omega)$ such that $\psi \le \theta$ a.e. on $\Omega$, we define the non-empty, closed, convex set
\begin{equation}\label{A3}
\Kpt \doteq \Big\{\varphi\in \wup \ \vert \  \ \varphi \ge \psi \ \text{ a.e. and } \ \varphi-\theta\in
\wupz\Big\}.
\end{equation}
We say that $u\in \Kpt$ solves the obstacle problem if
\begin{gather}\label{A4}
Q_V'(u)[\varphi -u] \geq 0 \qquad \text{for each } \varphi \in \Kpt,
\end{gather} 
that is, weakly,
\begin{equation}
\int_\Omega |\nabla u|^{p-2} \langle \nabla u, \nabla (\varphi-u) \rangle \di \mu_f - \int_\Omega V |u|^{p-2} u(\varphi-u) \di \mu_f \ge 0
\end{equation}
Note that, for each non-negative $\widehat{\varphi} \in C^1_c (\Omega)$,  the function $\varphi= u+ \widehat{\varphi} \in \Kpt$, and putting into \eqref{A4}  we get that $u$ solving \eqref{A4} satisfies $Q_V'(u) \ge 0$, that is, $u$ is a supersolution. 
%
%
We address the solvability of the obstacle problem in the next
\begin{Theorem}\label{A5}
Let $M$ be a Riemannian manifold, $f \in C^\infty (M)$, $p \in (1,+\infty)$ and $V \in L^\infty_\loc(M)$. Let $\Omega \Subset M$ be a relatively compact open set with Lipschitz boundary for which $\lambda_V(\Omega) > 0$. Suppose that the obstacle $\psi$ satisfies $0 \le \psi\le \theta$ a.e. on $\Omega$, for some $\theta \in W^{1,p} (\Omega)$. Then, there exists a solution $u \in \Kpt$ of \eqref{A4}.
\end{Theorem}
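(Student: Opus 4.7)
The plan is to solve the obstacle problem by the direct method of the calculus of variations: we minimize the functional $Q_V$ over the closed convex set $\Kpt$, and then convert the Euler inequality into \eqref{A4} via a one-sided differentiation argument. Observe first that $\Kpt$ is non-empty since $\theta\in\Kpt$ (because $\psi\le\theta$ a.e.\ and $\theta-\theta\equiv 0\in\wupz$), convex by linearity of the defining conditions, and closed in $\wup$ (since $\wupz$ is closed and pointwise inequalities are preserved under $\wup$-convergence up to subsequences). Because these properties survive weak convergence (a convex strongly closed set is weakly closed), $\Kpt$ is weakly sequentially closed.

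The crucial step is to establish coercivity of $Q_V$ on $\Kpt$. The assumption $\lambda_V(\Omega)>0$ yields, by a standard contradiction-Rellich argument, a constant $\eps>0$ such that
\begin{equation}\label{coerc}
pQ_V(\phi)\ge \eps\|\phi\|^p_{\wup}\qquad \forall\,\phi\in\wupz.
\end{equation}
Indeed, if \eqref{coerc} fails one extracts $\phi_n\in\wupz$ with $\|\phi_n\|_{\wup}=1$ and $Q_V(\phi_n)\to 0$. Since $\int|\nabla\phi_n|^p\le pQ_V(\phi_n)+\|V\|_{L^\infty(\Omega)}$, the sequence is bounded in $\wupz$; by weak compactness and Rellich's theorem a subsequence converges weakly in $\wupz$ and strongly in $L^p(\Omega)$ to some $\phi_\infty$. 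Weak lower semicontinuity of the gradient norm and continuity in $L^p$ of the potential term yield $Q_V(\phi_\infty)\le 0$, while $pQ_V(\phi_\infty)\ge\lambda_V(\Omega)\|\phi_\infty\|^p_{L^p}$ forces $\phi_\infty\equiv 0$; then $\int V|\phi_n|^p\to 0$ and hence $\|\phi_n\|_{\wup}\to 0$, a contradiction. For $\varphi\in\Kpt$ we write $\varphi=\theta+\phi$ with $\phi\in\wupz$ and apply \eqref{coerc} after using the elementary inequalities $|a+b|^p\ge(1-\delta)|a|^p-C_\delta|b|^p$ on the gradient term and $|a+b|^p\le 2^{p-1}(|a|^p+|b|^p)$ on the potential term; choosing $\delta$ small and absorbing yields $pQ_V(\varphi)\ge c_1\|\phi\|^p_{\wup}-c_2$ for constants depending only on $\theta,V,\Omega,p$, proving that $Q_V(\varphi_n)$ bounded implies $\{\varphi_n\}$ bounded in $\wup$.

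Next, sequential weak lower semicontinuity of $Q_V$ on $\wup$ follows from the convexity (hence weak lsc) of $\varphi\mapsto\int|\nabla\varphi|^p$ together with the strong $L^p$-continuity of $\varphi\mapsto\int V|\varphi|^p$ combined with the Rellich compact embedding $\wup\hookrightarrow L^p$. Taking a minimizing sequence $\{\varphi_n\}\subset\Kpt$ for $Q_V$, coercivity provides a bound in $\wup$; weak compactness plus weak closedness of $\Kpt$ yield, along a subsequence, $\varphi_n\rightharpoonup u\in\Kpt$; and lower semicontinuity gives $Q_V(u)\le\liminf_n Q_V(\varphi_n)=\inf_{\Kpt}Q_V$, so $u$ is a minimizer.

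Finally, we convert minimality into \eqref{A4}. For any $\varphi\in\Kpt$ and $t\in(0,1]$, the convex combination $u+t(\varphi-u)=(1-t)u+t\varphi$ lies in $\Kpt$, so $Q_V(u+t(\varphi-u))\ge Q_V(u)$. Since $Q_V\in C^1(\wup)$ (the argument is standard, using $V\in L^\infty(\Omega)$ and the dominated convergence theorem to differentiate $\int|\nabla\cdot|^p$ and $\int V|\cdot|^p$), dividing by $t$ and letting $t\to 0^+$ gives the Gateaux inequality $Q_V'(u)[\varphi-u]\ge 0$, which is exactly \eqref{A4}. The main technical obstacle is the coercivity estimate outlined above, as the presence of the potential $V$ (without sign constraint) prevents a direct appeal to Poincaré's inequality and forces the compactness-based detour through \eqref{coerc}.
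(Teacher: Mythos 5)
Your approach is genuinely different from the paper's. The paper rewrites the obstacle problem in the translated convex set $\bKpt = \Kpt - \theta$ and then invokes an abstract solvability theorem of J.-L.~Lions for variational inequalities governed by pseudo-monotone, coercive operators; verifying pseudo-monotonicity occupies most of the proof (Egorov's theorem, Browder's lemma, sequential weak-$*$ continuity of $\mathcal{F}$). Your route is the direct method: minimize $Q_V$ over $\Kpt$ using coercivity and weak lower semicontinuity, then pass to the Euler inequality by one-sided Gateaux differentiation. Since $Q_V$ \emph{is} the energy functional whose derivative is the operator in question, this is perfectly legitimate and in fact more elementary than the pseudo-monotone-operator route, avoiding the delicate limit arguments the paper needs. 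The compactness-contradiction derivation of the uniform spectral-gap bound \eqref{coerc}, the identification of $\lim_{t\to0^+}\frac{1}{t}[Q_V(u+t(\varphi-u))-Q_V(u)]$ with $Q_V'(u)[\varphi-u]$, and the weak closedness and weak lsc parts are all sound.

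However, there is a real gap in the coercivity step on $\Kpt$. After writing $\varphi=\theta+\phi$ and applying $|a+b|^p\ge(1-\delta)|a|^p-C_\delta|b|^p$ to the gradient and the crude $|a+b|^p\le 2^{p-1}(|a|^p+|b|^p)$ to the potential, you are left with
$$
pQ_V(\varphi)\;\ge\;(1-\delta)\int_\Omega|\nabla\phi|^p\di\mu_f\;-\;2^{p-1}\|V_+\|_{L^\infty}\int_\Omega|\phi|^p\di\mu_f\;-\;C,
$$
and this right-hand side is \emph{not} controlled by $pQ_V(\phi)$: the factor $2^{p-1}\|V_+\|_{L^\infty}$ multiplying the $L^p$ term does not match the $(1-\delta)$ on the gradient, so \eqref{coerc} cannot be "absorbed". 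Indeed if $2^{p-1}\|V_+\|_{L^\infty}$ exceeds the Poincar\'e constant of $\Omega$ the displayed quantity can be unbounded below even though $\lambda_V(\Omega)>0$. The fix is small but necessary: split $V=V_+-V_-$ and use the \emph{same} $\delta$-asymmetric inequalities on both pieces, i.e.\ $|\theta+\phi|^p\le(1+\delta)|\phi|^p+C_\delta|\theta|^p$ on the $V_+$ part and $|\theta+\phi|^p\ge(1-\delta)|\phi|^p-C_\delta|\theta|^p$ on the $V_-$ part. This gives
$$
-\int_\Omega V|\varphi|^p\di\mu_f\;\ge\;-\int_\Omega V|\phi|^p\di\mu_f\;-\;\delta\int_\Omega|V|\,|\phi|^p\di\mu_f\;-\;C',
$$
and combined with the gradient estimate one gets
$pQ_V(\varphi)\ge pQ_V(\phi)-\delta(1+\|V\|_{L^\infty})\|\phi\|^p_{\wup}-C''$, which is $\ge\big(\eps-\delta(1+\|V\|_{L^\infty})\big)\|\phi\|^p_{\wup}-C''$ by \eqref{coerc}; taking $\delta$ small yields coercivity. (Alternatively one can use the mean-value estimate $\big||a+b|^p-|a|^p\big|\le C\big(|a|^{p-1}|b|+|b|^p\big)$, which makes the error terms lower order in $\|\phi\|_{\wup}$ and avoids matching constants altogether.)
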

\begin{proof}
We consider the translated set 
$$
\bKpt \doteq \Kpt -\theta = \big\{ \bar{g} \ : \ \bar{g}+\theta \in \Kpt \big\} \subset \wupz.
$$ 
Note that, since $\psi \ge 0$, $\forall \ \bar{g} \in \bKpt$ we have $\bar{g} +\theta \ge 0$. \par 
We now define the functional $\F : \bKpt \to \wupzst$ by setting: $\forall \, \bar{u} \in \bKpt$ and $\varphi \in \wupz$,
\begin{equation}\label{A7}
\F(\bar{u})[\varphi] \doteq Q_V'(\bar u + \theta)[\varphi] = \A(\bar{u})[\varphi]-\B (\bar{u})[\varphi]
\end{equation}
with
$$
\A(\bar{u})[\varphi]=\int_\Omega \left|\nabla (\bar{u}+\theta) \right|^{p-2} \langle \nabla (\bar{u}+\theta),\nabla \varphi \rangle \di \mu_f
$$
and
$$
\B(\bar{u})[\varphi]=\int_\Omega V  \left| \bar{u}+\theta \right|^{p-2} \left( \bar{u}+\theta \right)\varphi \di \mu_f.
$$
Clearly, $\bar u \in \bKpt$ solves the obstacle problem
\begin{equation}\label{A6}
\F(\bar{u})[\bar\varphi -\bar u] \ge 0 \qquad \forall \, \bar{\varphi} \in \bKpt
\end{equation}
if and only if $u = \bar u + \theta \in \Kpt$ is a solution of the obstacle problem \eqref{A4}. \par  

According to Theorem 8.2, p. 247 in \cite{jllions}, to solve the obstacle  problem it is enough to verify that:
\begin{itemize}
\item[1.] $\F$ is pseudo-monotone on $\bKpt$, that is,
\begin{itemize}
\item[i)] $$\F \ : \ \left(\wupz , \|\cdot\|_{\wupz} \right) \ra \left(\wupzst , \|\cdot\|_{\wupzst} \right) $$ is bounded.
\item[ii)] if $u_i, u \in \bKpt$ and $u_i \rightharpoonup u$ in $\left( \wupz , {\rm weak} \right)$ as $i \ra +\infty$  and  
\begin{equation}\label{ipopseudomon}
\limsup_{i \to +\infty} \F(u_i)[u_i-u] \le 0
\end{equation}  
then 
\begin{equation}\label{liminfpseudomonot}
\liminf_{i \to +\infty} \F(u_i) [u_i-\varphi] \ge \F(u) [u-\varphi] \qquad \forall \, \varphi \in \wupz.
\end{equation}
\end{itemize}
\item[2.] $\F$ is coercive on $\bKpt$, that is,
\begin{itemize}
\item[iii)] there exists $\bar{\varphi} \in \bKpt$ for which
$$
\disp \frac{\F(\bar{u}) [\bar{u}-\bar{\varphi}]}{\|\bar{u}\|_{\wupz}}\ra +\infty
$$
if $\|\bar{u}\|_{\wupz} \ra +\infty$ with $\bar{u} \in \bKpt$.
\end{itemize}
\end{itemize}
We first address the pseudo-monotonicity of $\F$.
\begin{itemize}
\item[i)] Boundedness follows since, for each $\varphi \in \wupz$, by H\"older inequality we have
$$
\left| \F(\bar u)[\varphi]\right| \le C\left( 1+ \|\bar u\|_{\wup}^{p-1}\right)\|\varphi\|_{\wup} 
$$
for some constant $C>0$.
\item[ii)] Let $u_i \rightharpoonup u$ weakly in $\wupz$ with $u_i, u \in \bKpt$ and suppose \eqref{ipopseudomon}. Since $u_i \rightharpoonup u$, $\{u_i\} $ is bounded in $\wupz$; since $\partial \Omega$ is Lipschitz, by Rellich-Kondrachov compactness theorem \mbox{ $\|u_i -u\|_{L^p (\Omega)}  \ra 0$} and almost everywhere. Then, it is easy to see that $\BB(u_i) [u_i-u] \ra 0$. Therefore, by \eqref{ipopseudomon} 
\begin{equation}\label{A1primo}
\limsup_i \Aa(u_i) [u_i-u] \le 0.
\end{equation}
Since $\Aa$ is a monotone operator and $u_i \rightharpoonup u$, from \eqref{A1primo} we have
\begin{align*}
\disp 0 &\le \big(\Aa(u_i) - \Aa(u) \big)[u_i-u] = \Aa(u_i)[u_i-u]-  \Aa(u) [u_i-u]\\[0.2cm]
&= \Aa(u_i)[u_i-u] + o(1)
\end{align*}
as $i \ra +\infty$, and we therefore deduce that $\lim_i \big( \Aa(u_i) - \Aa(u) \big) [u_i-u] = 0$. By Browder lemma, see Lemma 3 in \cite{browder}, since $\Aa$ is strictly monotone we obtain that $u_i \ra u$ strongly in $\wupz$. Now we show that $\F$ is sequentially continuous from $(\wupz,\|\cdot\|_{\wupz})$ to $(\wupzst, {\rm weak})$. Let $u_i$ be a sequence in $\wupz$ which converges strongly to $u$. Up a subsequence we have $(u_k (x),\nabla u_k (x)) \ra (u(x),\nabla u(x))$ for a.e. $x \in \Omega$. 
Set for convenience  
$$
X_k (x)= \nabla \left( u_k (x)+\theta(x) \right), \qquad X(x)=  \nabla \left( u (x) +\theta (x) \right).
$$ 
If $k \to \infty$, $\left(X_k(x), \left| X_k(x) \right|\right) \to \left(X(x), \left| X(x) \right|\right)$ for a.e. $x \in \Omega$. If we set $\nabla \varphi =Y$, we have
$$
\begin{array}{l}
\disp \big({\cal A}(u_k)-{\cal A}(u)\big)[\varphi] =\disp \int_\Omega \langle Y, X_k \left| X_k \right|^{p-2}-X \left|X \right|^{p-2} \rangle \di \mu_f = (I) + (II),
\end{array}
$$
where 
$$
\begin{array}{rcl}
\disp (I) & =& \disp \int_\Omega \frac{\left| X_k \right|^{p-2} \langle Y,X_k \rangle}{1+\left| X_k\right|^{p-1}} \left[ \left|X_k \right|^{p-1} - \left|X \right|^{p-1} \right]\di \mu_f;   \\[0.4cm]
\disp (II) & = & \disp \int_\Omega \langle Y, \frac{\left|X_k \right|^{p-2} X_k}{1+ \left|X_k \right|^{p-1}}-\frac{\left|X \right|^{p-2} X}{1+ \left|X \right|^{p-1}}\rangle \left( 1 + \left|X \right|^{p-1} \right) \di \mu_f.
\end{array}
$$
%
%
Since the integrand in $(II)$ is bounded by $2|Y| \left(1+\left| X \right|^{p-1} \right) \in L^1(\Omega)$, by Lebesgue theorem 
\begin{equation}\label{primointe}
(II) \ra 0 \qquad \text{as } k \ra +\infty.
\end{equation}
We now consider $(I)$.
Fix $\epsilon >0$. Then, by Egoroff theorem there exists $\Omega_\epsilon \subset \Omega$ such that $\mu_f (\Omega \backslash \Omega_\epsilon) < \epsilon$ and $|X_k| \ra |X|$ uniformly on $\Omega_\epsilon$ as $k \ra +\infty$. Since 
%
$$
\left|\frac{\left| X_k \right|^{p-2} \langle Y,X_k \rangle}{1+\left| X_k\right|^{p-1}} \right| \le |Y| \qquad {\rm on} \ \Omega
$$
we therefore obtain
$$
\int_{\Omega_\epsilon}  \left|\frac{\left| X_k \right|^{p-2} \langle Y,X_k \rangle}{1+\left| X_k\right|^{p-1}} \left(\left|X_k \right|^{p-1}  -  \left|X \right|^{p-1}\right) \right| \di \mu_f \to 0 \qquad \text{as } k\ra +\infty.
$$
On the other hand, using H\"older inequality we deduce
$$
\begin{array}{l}
\disp \left| \int_{\Omega\backslash\Omega_\epsilon }\frac{\left| X_k \right|^{p-2} \langle Y,X_k \rangle}{1+\left| X_k\right|^{p-1}} \left(\left|X_k \right|^{p-1}  -  \left|X \right|^{p-1}\right) \right| \le \|Y \|_{L^p(\Omega)}  \left\|\left| X_k \right|^{p-1}-\left|X \right|^{p-1} \right\|_{L^{\frac{p}{p-1}}(\Omega\backslash \Omega_\epsilon)} \\[0.4cm]
\disp \le C\|Y \|_{L^p(\Omega)} \Big\|\left| X_k \right|^{p}+\left|X \right|^{p} \Big\|^{\frac{p-1}{p}}_{L^{1}(\Omega\backslash \Omega_\epsilon)} \le C\|Y \|_{L^p(\Omega)} \Big\|\big| |X_k - X| + |X| \big|^{p}+\left|X \right|^{p} \Big\|^{\frac{p-1}{p}}_{L^{1}(\Omega\backslash \Omega_\epsilon)} \\[0.4cm]
\le \disp C \|Y \|_{L^p(\Omega)} \left(\disp 2^{p} \Big\|\left|X_k-X \right|^p \Big\|_{L^1(\Omega)} + \left( 2^p+1 \right) \Big\|\left|X \right|^p\Big\|_{L^1(\Omega \backslash \Omega_\eps)} \right)^{\frac{p-1}{p}}.
\end{array}
$$
For some constant $C>0$ only depending on $p$. The first integral converges to $0$ because $\|X_k-X \|_p \to 0$ as $k \to \infty$, while the second is infinitesimal, if $\epsilon \to 0$, by the absolute continuity of the integral. Thus 
$$
0 \le \limsup_{k \ra +\infty} |(I)| \le C(\epsilon),
$$
where $C(\epsilon)\ra 0^+$ as $\epsilon \ra 0$. By the arbitrariness of $\eps$, $(I) \ra 0$ as $k\ra +\infty$ and, combining with \eqref{primointe}, $\left( \Aa(u_k)-\Aa(u)\right)[\varphi] \ra 0$ as $k \ra +\infty$. Next, in an analogous way, it can be shown that 
$$
\big(\BB (u_k)-\BB (u) \big)[\varphi] \ra 0 \qquad \text{as } k \ra +\infty.
$$
%
We have thus proved that 
$$
{\cal F} (u_k) \rightharpoonup {\cal F} (u) \qquad {\rm in} \ \ W_0^{1,p} (\Omega)^* \quad {\rm as} \ \ k \to \infty,
$$
for some subsequence $\{u_k\}$ of the original $\{u_i\}$. A simple reasoning by contradiction thus shows that the whole $\F(u_i) \rightharpoonup \F(u)$ weakly on $\wupz^*$, proving the sequential continuity of $\F$. 
%
Therefore, since $\|u_i-u \|_{\wupz} \ra 0$ as $i \to +\infty$, for each $\varphi \in \wupz$, we have:
$$
\F(u_i)[ u_i -\varphi] = \F(u_i)[u_i - u] + \F(u_i)[u-\varphi] \ra 0 + \F(u)[u -\varphi]
$$
as $i \ra +\infty$, which because of \eqref{ipopseudomon} proves the pseudo-monotonicity of $\F$.\\
\item[iii)] We are left to prove the coercivity of $\F$. This is a more or less predictable consequence of the assumption $\lambda_V(\Omega) >0$, but some technical details suggest to provide a full proof. We shall prove the validity of $iii)$ with the choice $\bar{\varphi} \equiv 0 \in \bKpt$. \par 
First we observe that, by Cauchy-Schwarz inequality,
\begin{equation}\label{eqcoerc}
\disp  \Aa(\bar{u}) [\bar{u}] \ge \disp \int_\Omega |\nabla(\bar{u}+\theta)|^{p}\di \mu_f - \int_\Omega |\nabla(\bar{u}+\theta)|^{p-1} |\nabla \theta|  \di \mu_f. 
\end{equation}
Next, using
$$
|X+Y|^p \ge |X|^p -p |X|^{p-1}|Y|, \qquad |X+Y|^{p-1} \le 2^{p-1}\big(|X|^{p-1}+|Y|^{p-1}\big)
$$ 
and H\"older inequality into \eqref{eqcoerc} we obtain
\begin{equation}\label{A9}
\disp \Aa(\bar{u})[\bar{u}]  \ge \|\nabla \bar{u} \|_{L^p(\Omega)}^p -C_3 \|\nabla \bar{u} \|_{L^p(\Omega)}^{p-1}-C_2 
\end{equation}

for some constants $C_2,C_3>0$ independent of $\bar{u}$. \par 
To deal with   $\BB(\bar{u})[\bar{u}]$ we first observe that, since $\theta \ge \psi \ge 0$ and $\bar{u}+\theta \ge 0$, we have
\begin{equation}\label{trouble}
\begin{array}{lcl}
\disp -\BB(\bar{u})[\bar{u}] & = &  \disp - \int_\Omega V  |\bar{u}|^p \di \mu_f - \int_\Omega V \left[|\bar{u}+\theta|^{p-2}(\bar{u}+\theta) - |\bar{u}|^{p-2} \bar{u}\right] \bar{u} \di \mu_f \\[0.4cm]
\disp & \ge & \disp -\int_\Omega V  |\bar{u}|^p \di \mu_f - \disp \|V\|_{L^\infty (\Omega)}\int_{\{\bar{u} \ge \theta\}} \left[|\bar{u}+\theta|^{p-1} - |\bar{u}|^{p-1}\right] \bar{u} \di \mu_f \\[0.4cm]
& & - \disp \|V\|_{L^\infty (\Omega)}\int_{\{\bar{u} < \theta\}} \left[|\bar{u}+\theta|^{p-1} - |\bar{u}|^{p-1}\right] \bar{u} \di \mu_f.
\end{array}
\end{equation}
Since $\bar{u}\ge -\theta $, on the set $\{ x \in \Omega \ : \ \bar{u}(x) \le \theta(x)\}$ we have $\bar{u} (x) \in [-\theta,\theta]$; hence the third integral in the right-hand side of the above is bounded above by $\disp C_5=3^{p-1}2\|V\|_{L^\infty(\Omega)}\|\theta\|_{L^p(\Omega)}^p$. As for the second integral, on $\{\bar{u} \ge \theta\}$, the following elementary inequality holds:
$$
(\bar{u}+\theta)^{p-1} - \bar{u}^{p-1} \le \left\{ \begin{array}{ll}
\disp \bar{u}^{p-1} + \theta^{p-1} -\bar{u}^{p-1} = \theta^{p-1} & \quad \text{if } p-1 \le 1, \\[0.3cm]
\disp \bar{u}^{p-1}\left[\left( 1+ \frac{\theta}{\bar{u}}\right)^{p-1} -1\right] \le\bar{u}^{p-1} \left[p 2^{p-1}\frac{\theta}{\bar{u}}\right]   & \quad \text{if } p-1 > 1.
\end{array}\right.
$$
Therefore, 
$$
\begin{array}{l}
\disp \int_{\{\bar{u} \ge \theta\}} \big[(\bar{u}+\theta)^{p-1} - \bar{u}^{p-1}\big]\bar{u} \di \mu_f \\[0.4cm] \le \left\{ \begin{array}{ll}
\disp \int_{\{\bar{u} \ge \theta\}} \theta^{p-1}\bar{u} \di \mu_f \le \|\theta \|_{L^p(\Omega)}^{p-1}\|\bar{u}\|_{L^p(\Omega)}  & \quad \text{if } p-1 \le 1, \\[0.3cm]
\disp p 2^{p-1} \int_{\{\bar{u} \ge \theta\}} \theta\bar{u}^{p-1} \di \mu_f \le p2^{p-1}\|\theta \|_{L^p(\Omega)}\|\bar{u}\|_{L^p(\Omega)}^{p-1}  & \quad \text{if } p-1 > 1.
\end{array}
\right.
\end{array}
$$
Inserting the obtained estimates into \eqref{trouble} we finally get
\begin{equation}\label{A11}
\disp -\BB(\bar{u}) [\bar{u}] \ge - \int_\Omega V |\bar{u}|^p \di \mu_f - C_4\|\bar{u}\|^{\max\{p-1,1\}}_{L^p(\Omega)} - C_5
\end{equation}
for some constants $C_4,C_5>0$ independent of $\bar{u}$. \par 
Combining  \eqref{A9} and \eqref{A11} we obtain
\begin{equation}\label{A12}
\F(\bar{u})[\bar{u}] \ge \| \nabla \bar{u}\|^p_{L^p (\Omega)}-C_3 \| \nabla \bar{u}\|^{p-1}_{L^p(\Omega)}-C_6-C_4 \| \bar{u} \|_{L^p(\Omega)}^{\max \left\{p-1,1\right\}}-\int_\Omega V  |\bar{u}|^p \di \mu_f.
\end{equation}
On the other hand, using Rayleigh characterization of $\lambda_V(\Omega)$, since $\bar u \in \wupz$ we get
$$
\disp \|\nabla \bar{u}\|^p_{L^p(\Omega)} -\int_\Omega V  |\bar{u}|^p \di \mu_f \ge \lambda_V(\Omega) \| \bar{u}\|^p_{L^p(\Omega)},
$$
thus,
\begin{equation}\label{cisiamo!}
\disp  \F(\bar{u}) [\bar{u}] \ge \lambda_V(\Omega) \|\bar{u}\|_{L^p(\Omega)}^{p} - C_4\|\bar{u}\|^{\max\{p-1,1\}}_{L^p(\Omega)} - C_3\|\nabla \bar{u}\|_{L^p(\Omega)}^{p-1} - C_6 
\end{equation}
for some constants $C_4, C_3, C_6>0$ and independent of $\bar{u}$. \par 
Since $\bar{u} \in \wupz$, by Poincar\'e inequality on $\Omega$, there exists a constant $C_P>0$ independent of $\bar{u}$ such that 
$$
\|\bar{u}\|_{L^p(\Omega)} \leq C_P \|\nabla \bar{u}\|_{L^p(\Omega)}.
$$ 
Now, let $\bar{u}_k \in \bKpt$ be any sequence such that $\|\bar{u}_k\|_{\wup} \ra +\infty$ as $k \ra +\infty$. Again by Poincar\'e inequality, $\|\nabla \bar{u}_k\|_{L^p(\Omega)} \ra +\infty$ as $k \ra +\infty$ , and two cases may occur: either
$$
(a) \quad \frac{\|\bar{u}_k\|_{L^p(\Omega)}}{\|\nabla \bar{u}_k\|_{L^p(\Omega)}} \ra 0, \qquad \text{or} \quad (b) \quad \limsup_{k \ra +\infty} \frac{\|\bar{u}_k\|_{L^p(\Omega)}}{\|\nabla \bar{u}_k\|_{L^p(\Omega)}} = c >0.
$$
In the case $(a)$, using \eqref{A12}, we deduce
$$
\begin{array}{l}
\disp \frac{ \F(\bar{u}_k)[\bar{u}_k]}{\|\bar{u}_k\|_{\wupz}} \ge \\[0.4cm]
 \disp \ge \frac{\|\nabla \bar{u}_k \|_{L^p(\Omega)}^{p} - \|V\|_{L^\infty (\Omega)} \|\bar{u}_k\|_{L^p(\Omega)}^p - C_4\|\bar{u}_k\|^{\max\{p-1,1\}}_{L^p(\Omega)} - C_3\|\nabla \bar{u}_k\|_{L^p(\Omega)}^{p-1} - C_6}{(1+C_P)\|\nabla \bar{u}_k\|_{L^p(\Omega)}} \ra +\infty
\end{array}
$$
as $\|\nabla \bar u_k\|_{L^p(\Omega)} \ra +\infty$. \par

In case $(b)$, for each subsequence (still denoted with $\{\bar{u}_k\}$) satisfying 
$$
\frac{\|\bar{u}_k\|_{L^p(\Omega)}}{\|\nabla\bar{u}_k\|_{L^p(\Omega)}} \ra \bar c \in (0,c] \qquad \text{for} \ k \to +\infty
$$
using \eqref{cisiamo!} and $\lambda_V(\Omega) >0$ we get
$$
\disp \frac{ \F(\bar{u}_k)[\bar{u}_k]}{\|\bar{u}_k\|_{\wupz}}  \ge  \disp \frac{\lambda_V(\Omega) \|\bar{u}_k\|_{L^p(\Omega)}^{p} - C_4\|\bar{u}_k\|^{\max\{p-1,1\}}_{L^p(\Omega)} - C_3\|\nabla \bar{u}_k\|_{L^p(\Omega)}^{p-1} - C_6}{(1+C_P)\|\nabla\bar{u}_k\|_{L^p(\Omega)}} \ra +\infty
$$
as $\|\nabla \bar u_k\|_{L^p(\Omega)} \ra +\infty$. This concludes the proof of the coercivity of $\F$.
\end{itemize}
 
\end{proof}
For $\theta=\psi$ we set ${\cal K}_\psi$ for ${\cal K}_{\psi,\psi}$. Next, we prove a minimizing properties for solutions of the obstacle problem.
%
%
%
%

%
\begin{Proposition}\label{pro_compaosta}
Let $u \in \Kpt$, $u \not \equiv 0$ be a solution of the obstacle problem with $\psi \ge 0$. Suppose that $w \in \wup$, $w \not\equiv 0$ solves $Q_V'(w) \ge 0$ on $\Omega$, such that $\min\{u,w\}\in \Kpt$. If 
\begin{equation}\label{technical}
\frac u w, \, \, \frac w u \in L^\infty(\Omega), 
\end{equation}
then $u\leq w$ on $\Omega$.
\end{Proposition}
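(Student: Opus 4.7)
The plan is to run a Picone-type comparison, exploiting the fact that the obstacle solution $u$ is not merely a supersolution but actually a solution of $Q_V'(u)=0$ on the non-contact set $\{u>\psi\}$, and then to check that the set $U:=\{u>w\}$ lies entirely in that non-contact set. To see the latter, first note that testing the variational inequality with $\varphi=u+t\phi$ for $\phi\in\wupz$ non-negative and $t\ge 0$ small shows $Q_V'(u)\ge 0$ weakly on $\Omega$. Then, for any $x_0$ with $u(x_0)>\psi(x_0)$, continuity of $u-\psi$ (or a pointwise-a.e.\ argument using the $C^{1,\mu}_{\loc}$-regularity of $u$) yields a neighborhood where $u>\psi$, and test functions $\varphi=u\pm t\phi$ compactly supported there lie in $\Kpt$ for $|t|$ small, forcing $Q_V'(u)=0$ weakly on $\{u>\psi\}$. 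The inclusion $U\subset\{u>\psi\}$ then follows from the hypothesis $\min\{u,w\}\in\Kpt$, which gives $w\ge\psi$ a.e., so $u=\psi$ together with $u>w$ would contradict $w\ge\psi$.

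Next I would introduce the Picone-type test functions
$$
\phi_u:=\frac{(u^p-w^p)_+}{u^{p-1}},\qquad \phi_w:=\frac{(u^p-w^p)_+}{w^{p-1}}.
$$
Using $u,w>0$ on $\Omega$ (Harnack, via Theorem \ref{teo11} (3a) applied to the supersolutions $u$ and $w$) and the bounded ratios $u/w,w/u\in L^\infty(\Omega)$, these functions are non-negative, belong to $W^{1,p}(\Omega)$, and have zero trace on $\partial\Omega$ (since $u=\theta\le w$ there, as forced by $\min\{u,w\}-\theta\in\wupz$), hence $\phi_u,\phi_w\in\wupz$. Both vanish on $\{u\le w\}$ and, by the argument above, also on $\{u=\psi\}$; their supports therefore lie in the non-contact set $\{u>\psi\}$, so $\phi_u$ is an admissible test function against $Q_V'(u)=0$. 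Testing yields $Q_V'(u)[\phi_u]=0$ and $Q_V'(w)[\phi_w]\ge 0$. Because $u^{p-1}\phi_u=w^{p-1}\phi_w=(u^p-w^p)_+$, the potential terms cancel identically upon subtraction, leaving
$$
\int_{U}\Bigl[|\nabla u|^{p-2}\langle\nabla u,\nabla\phi_u\rangle-|\nabla w|^{p-2}\langle\nabla w,\nabla\phi_w\rangle\Bigr]\,d\mu_f\le 0.
$$

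Finally, the pointwise identity underlying the proof of Proposition \ref{anane} shows that the integrand equals $\mathcal{L}(u,w)+\mathcal{L}(w,u)$, which is non-negative by Proposition \ref{prop_lagrangian}. Hence both Lagrangians vanish a.e.\ on $U$, and by the equality clause of Proposition \ref{prop_lagrangian}, $u$ is a positive constant multiple of $w$ on each connected component $U_0$ of $U$, say $u=c_{U_0}w$ with $c_{U_0}>1$. If $\partial U_0$ meets $\Omega$ at some point $x_0$, continuity of $u,w$ gives $u(x_0)=w(x_0)>0$ while $u(x_0)=c_{U_0}w(x_0)$, forcing $c_{U_0}=1$, a contradiction. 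The remaining alternative $U_0=\Omega$ is ruled out by examining traces on $\partial\Omega$: $\min\{u,w\}-\theta\in\wupz$ combined with $u>w$ on $\Omega$ gives $w=\theta$ on $\partial\Omega$, while $u=\theta$ and $u=c_{U_0}w$ force $\theta=0$ on $\partial\Omega$; then $u,w\in\wupz$ would be two positive solutions of $Q_V'(\cdot)=0$, forcing $\lambda_V(\Omega)=0$ and contradicting the solvability assumption implicit in Theorem \ref{A5}. Thus $U=\emptyset$, i.e.\ $u\le w$.

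The main delicate step is the admissibility of $\phi_u$ as a test function against the equation $Q_V'(u)=0$: this requires $\phi_u$ to be supported in the non-contact set $\{u>\psi\}$, which is exactly what the interaction between $(u^p-w^p)_+$ and the hypothesis $w\ge\psi$ (coming from $\min\{u,w\}\in\Kpt$) delivers. Once this admissibility is secured, the cancellation of $V$ and the pointwise non-negativity of the Picone Lagrangian do the rest, converting the obstacle inequality into a rigidity statement that clashes with the definition of $U$.
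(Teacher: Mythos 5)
The central gap is the admissibility of $\phi_u$. You appeal to $Q_V'(u)=0$ holding weakly on the non-contact set $\{u>\psi\}$ and observe that $\phi_u$ vanishes off $\{u>\psi\}$; but the free equation there is tested against $W^{1,p}_0(\{u>\psi\})$, and a $W^{1,p}(\Omega)$-function that merely vanishes a.e.\ outside an open set need not lie in the $W^{1,p}_0$-space of that set --- a capacity-theoretic point about the fine structure of $\partial\{u>\psi\}$ that your proof does not address. Nor can you simply plug $\varphi=u-\phi_u$ into the variational inequality: on $\{u>w\}$ one has $u-\phi_u=w^p/u^{p-1}$, and this can fall below $\psi$ even though $w\ge\psi$ (try $p=2$, $w=1$, $u=10$, $\psi=0.9$, giving $u-\phi_u=0.1<\psi$). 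A uniform $t>0$ with $u-t\phi_u\ge\psi$ a.e.\ does exist and would rescue your argument (one can verify that $t\le\min\{1,(p-1)^{-1}\}$ works, since $(1-t)u+tw^p/u^{p-1}\ge w\ge\psi$ on $\{u>w\}$ whenever $t(1-(w/u)^{p-1})\le 1-w/u$), but you did not supply this estimate.

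This is precisely what the paper's scaling trick handles. It sets $c_0=\inf\{c>1:\ cw\ge u\ \text{a.e.}\}$ (finite by \eqref{technical} and Harnack), assumes $c_0>1$ for contradiction, and picks $c\in[c_0^{(p-1)/p},c_0)$ so that $(cw)^p/u^{p-1}\ge(c^p/c_0^{p-1})\,w\ge w\ge\psi$ follows directly from $u\le c_0 w$. Then $\varphi=\min\{u,(cw)^p/u^{p-1}\}$ genuinely lies in $\Kpt$, and the variational inequality gives $Q_V'(u)\big[(u^p-(cw)^p)_+/u^{p-1}\big]\le 0$ without any reference to the free equation on the non-contact set. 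Combining with the supersolution property of $cw$ and the Picone functional of Proposition \ref{anane} forces $u\equiv cw$ on $\{u>cw\}$, contradicting that set's definition; hence $c$ belongs to the set whose infimum is $c_0$, contradicting $c<c_0$. The paper's argument thus needs only a one-sided inequality and never touches the contact set, whereas your route, although close in spirit, genuinely requires the missing admissibility step.
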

\begin{proof}
Since $w,u$ are non-negative, nonzero supersolutions ($u$ being a solution of the obstacle problem), by the half-Harnack inequality we have $u>0$ and $w>0$ on $\Omega$.
%
Combining with \eqref{technical} and since $\Omega$ is relatively compact, we deduce that the following set is non-empty.  
$$
A \doteq \{c>1 : cw(x) \ge u(x) \quad \text{for a.e. } x \in \Omega\}.
$$ 
Let $c_0 = \inf A$, so that $c_0w \ge u$ a.e. on $\Omega$. We shall show that $c_0=1$. By contradiction suppose $c_0>1$, and choose $1<c<c_0$ close enough to $c_0$ to have
\begin{equation}\label{occhio}
\frac{(cw)^p}{u^{p-1}} \ge \psi \quad \text{on } \Omega. 
\end{equation}
This is possible since, for $c \ge c_0^{\frac{p-1}{p}}$ we have
$$
\frac{(cw)^p}{u^{p-1}} \ge \frac{(cw)^p}{(c_0w)^{p-1}} = \frac{c^p}{c_0^{p-1}} w \ge w \ge \psi, \qquad \text{on} \qquad \overline \Omega.
$$
Consider the non-empty set $U = \{ x \in \Omega \ : \ u(x)>cw(x)\}$, and define
$$
\varphi= u + \min\left\{\frac{(cw)^p-u^p}{u^{p-1}}, 0\right\} = \min \left\{ u, \frac{(cw)^p}{u^{p-1}} \right\}.
$$
Because of \eqref{occhio}, $\varphi \ge \psi$ and $\varphi-\theta =u-\theta=0$ on $\partial\Omega$; in other words, $\varphi \in \Kpt$. \par  Since $u$ solves the obstacle problem \eqref{A4}, using the above $\varphi$ we deduce
\begin{equation}\label{primaosta}
\begin{array}{lcl}
\disp 0 \le Q_V'(u)[\varphi-u] & = & \disp\int_U |\nabla u|^{p-2}\langle \nabla u, \nabla \left(\frac{(cw)^p-u^p}{u^{p-1}}\right)\rangle \di \mu_f \\[0.4cm]
& & - \disp \int_U V \big((cw)^p-u^p\big)\di \mu_f.
\end{array}
\end{equation}
On the other hand, applying the definition of supersolution to $cw$
with the non-negative test function 
$$
\disp\widetilde{\varphi}= \frac{\big(u^p-(cw)^{p}\big)_+}{(cw)^{p-1}}\in W^{1,p}_0(\Omega),
$$ 
we get
\begin{equation}\label{secondasuper}
\begin{array}{lcl}
\disp 0 \le Q_V'(cw)[\widetilde \varphi] & = & \disp\int_U |\nabla (cw)|^{p-2}\langle \nabla (cw), \nabla \left(\frac{u^p-(cw)^p}{(cw)^{p-1}}\right)\rangle \di \mu_f \\[0.4cm]
&& - \disp \int_U V \left(u^p-(cw)^p\right)\di \mu_f.
\end{array}
\end{equation}
Summing up \eqref{primaosta} and \eqref{secondasuper} we get
\begin{equation}\label{intemag000}
\int_U |\nabla (cw)|^{p-2}\langle \nabla (cw), \nabla \left(\frac{u^p-(cw)^p}{(cw)^{p-1}}\right)\rangle \di \mu_f - \int_U |\nabla u|^{p-2}\langle \nabla u, \nabla \left(\frac{u^p-(cw)^p}{u^{p-1}}\right)\rangle \di \mu_f \ge 0.
\end{equation}
Set $z = \max\{u,cw\}$, and note that \eqref{intemag000} is equivalent to $I(cw,z) \le 0$. Indeed, in the definition \eqref{111'} of $I$, the part of the integral outside $U$ is zero since $z \equiv cw$. To conclude, applying Proposition \ref{anane} we deduce $I(cw,z)=0$, so that $cw$ and $z$ (hence $u$) are proportional. Since $cw=u$ on $\partial U$ we conclude that $u \equiv cw$ on $U$, contradicting the definition of this latter.
\end{proof}
\begin{Remark}
\emph{A typical case when \eqref{technical} is automatically met is when the data $\psi,\theta$ satisfy $\psi,\theta \in \wup \cap C^0(\overline \Omega)$ and $\theta>0$ on $\partial \Omega$, which we will frequently use. In fact, when $\psi, \theta \in C^0(\overline\Omega)$, the solution $u \in \Kpt$ of the obstacle problem is continuous on $\overline \Omega$. In this respect, see Theorem 5.4, page 235 of \cite{ZM}. 
}
\end{Remark}

\begin{Remark}
\emph{Although it is not explicitly stated, condition $\lambda_V(\Omega) \ge 0$ is automatic by assuming the existence of a $u$ solving the obstacle problem and $\psi \ge 0$, $\psi \not \equiv 0$. Indeed, $u$ is a positive solution of $Q_V'(u) \ge 0$ on $M$, and $\lambda_V(\Omega) \ge 0$ follows from Proposition \ref{pro115}. 
}
\end{Remark}
In particular from the above proposition we deduce the following characterization.
\begin{Corollary}\label{corsuper}
Let $\Omega \Subset M$ be a relatively compact open set, and let $0 \le \theta \in \wup \cap C^0(\overline \Omega)$ be such that $\theta>0$ on $\partial \Omega$. Then, a solution $u$ of the obstacle problem on $\Kpt$ with obstacle $0 \le \psi \le \theta$, $\psi \not \equiv 0$ is the minimal $w \in \Kpt$ satisfying $Q_V' (w) \ge 0$ on $\Omega$. Consequently, such a solution is unique.
\end{Corollary}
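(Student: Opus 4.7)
The plan is to reduce to Proposition \ref{pro_compaosta}. Given any $w\in\Kpt$ with $Q_V'(w)\ge 0$, I would first replace $w$ by $\tilde w\doteq\min\{u,w\}$: by the pasting Lemma \ref{lem_pasting} (applied with $\Omega_1=\Omega_2=\Omega$), $\tilde w$ is again a weak supersolution on $\Omega$, and the lattice property of $\wupz$ together with $u,w\ge\psi$ places $\tilde w$ in $\Kpt$. Since $\tilde w\le u$ and $\tilde w\le w$, the desired conclusion $u\le w$ reduces to $u\le\tilde w$, which via Proposition \ref{pro_compaosta} amounts to verifying the ratio condition \eqref{technical} for the pair $(u,\tilde w)$. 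The inequality $\tilde w/u\le 1$ is immediate, so the task is to establish $u/\tilde w\in L^\infty(\Omega)$.

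To this end, note that by the Remark following Proposition \ref{pro_compaosta} (continuity of obstacle solutions up to $\partial\Omega$, from Theorem~5.4 of \cite{ZM}) $u\in C^0(\overline\Omega)$ with $u=\theta>0$ on $\partial\Omega$; the strong minimum principle for supersolutions (Theorem~5.4.1 of \cite{pucciserrin}) then gives $u>0$ on $\Omega$, and compactness of $\overline\Omega$ upgrades this to a uniform bound $u\ge c_1>0$. For a matching lower bound on $\tilde w$ I would construct a positive barrier: since $\lambda_V(\Omega)>0$, Proposition \ref{pro115}$(iv)$ produces, for each $\varepsilon\in(0,1)$, a positive $v\in C^{1,\mu}(\overline\Omega)$ solving $Q_V'(v)=0$ with $v_{|\partial\Omega}=\varepsilon\theta$, and $v\ge c_\varepsilon>0$ uniformly by continuity. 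Since $v$ is simultaneously a sub- and supersolution and $v=\varepsilon\theta\le\theta=\tilde w$ in trace on $\partial\Omega$, a weak comparison principle (a variant of Proposition \ref{prop_compagen} with $B\equiv 0$, comparing the regular $v$ with the merely $W^{1,p}$-supersolution $\tilde w$) yields $v\le\tilde w$ on $\Omega$, hence $\tilde w\ge c_\varepsilon>0$ on $\overline\Omega$.

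With both ratios in \eqref{technical} under control, Proposition \ref{pro_compaosta} gives $u\le\tilde w\le w$, which is the minimality statement. Uniqueness is then automatic: if $u'$ is another solution of the obstacle problem it is itself a supersolution in $\Kpt$, so minimality of $u$ forces $u\le u'$; exchanging the roles of $u$ and $u'$ yields $u'\le u$, whence $u=u'$.

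The main technical hurdle is the last comparison step in the barrier argument, because $\tilde w$ is not a priori in $C^{1,\mu}(\overline\Omega)$ and so Proposition \ref{prop_compagen} does not apply verbatim. I expect to circumvent this by localising to a smooth Lipschitz exhaustion $\{\Omega_\delta\}_{\delta>0}$ of $\Omega$, where $\tilde w$ enjoys $C^{1,\mu}$-regularity up to $\partial\Omega_\delta$ by interior estimates, applying Proposition \ref{prop_compagen} there, and then passing to the limit $\delta\to 0$ using the continuity of $v$ and the fact that $\tilde w-\theta\in\wupz$ forces $\liminf_{\delta\to 0}\tilde w_{|\partial\Omega_\delta}\ge\theta$ at regular boundary points.
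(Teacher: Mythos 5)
Your strategy is the right one and is in fact what the paper (implicitly) does: the paper deduces Corollary \ref{corsuper} ``from the above proposition,'' i.e.\ Proposition \ref{pro_compaosta}, with the preceding Remark asserting that the ratio condition \eqref{technical} is ``automatically met'' under the hypotheses on $\psi,\theta$. Replacing $w$ by $\widetilde w=\min\{u,w\}$ to trivialize one of the two ratio bounds, and the uniqueness argument by double application of minimality, are both correct. The part you are adding is a detailed justification of the bound $\inf_\Omega \widetilde w>0$, which the paper glosses over, and this is exactly where the trouble lies.

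The barrier argument does not close the gap as written, for two concrete reasons. First, when you propose to invoke Proposition \ref{prop_compagen} on an exhaustion $\{\Omega_\delta\}$, you claim ``$\widetilde w$ enjoys $C^{1,\mu}$-regularity up to $\partial\Omega_\delta$ by interior estimates''; but $\widetilde w$ is only a \emph{supersolution}, and the $C^{1,\mu}$ interior estimates of Theorem \ref{teo11}$(2)$ (Tolksdorf--DiBenedetto--Lieberman) require $\widetilde w$ to \emph{solve} an equation of $p$-Laplacian type. A generic supersolution in $W^{1,p}$ is not even continuous, let alone $C^{1,\mu}$, so Proposition \ref{prop_compagen} simply does not apply to the pair $(v,\widetilde w)$ on any $\Omega_\delta$. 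Second, even setting regularity aside, the boundary step of the limiting argument is not justified: the inclusion $\widetilde w-\theta\in\wupz$ gives trace equality only in the $W^{1-1/p,p}(\partial\Omega)$ sense, which for $p\le m$ does not yield a pointwise statement such as $\liminf_{\delta\to 0}\widetilde w|_{\partial\Omega_\delta}\ge\theta$. Without it you cannot guarantee $v\le\widetilde w$ on $\partial\Omega_\delta$, which is what the inner comparison needs. Note also that the alternative of testing directly with a Picone-type function $(v^p-\widetilde w^p)_+/\widetilde w^{p-1}$ presupposes exactly the lower bound on $\widetilde w$ you are trying to prove, and the elementary monotonicity comparison between the supersolution $\widetilde w$ and the subsolution $v$ fails for potentials with $V_+\not\equiv 0$ (the term $\int V(v^{p-1}-\widetilde w^{p-1})(v-\widetilde w)$ has the wrong sign). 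So there is a genuine gap here; the paper inherits the same issue by passing it through the Remark without proof, and a rigorous closure would require either an additional regularity/positivity hypothesis on the competitor $w$, or a boundary weak Harnack estimate for supersolutions of $Q_V'$ up to $\partial\Omega$, neither of which is supplied.
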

A second important consequence of Proposition \ref{pro_compaosta} is
\begin{Proposition}\label{minsupmaxsub}
Let $w_1,w_2\in W^{1,p}_\loc(M) \cap C^0 (M)$ be positive solutions of $Q_V'(w_j) \ge 0$ on $M$, $j=1,2$. Then, 
$$
w \doteq \min\{w_1,w_2\}
$$ 
solves 
\begin{equation}\label{A19}
Q_V'(w) \ge 0 \qquad {\rm on} \ \ M.
\end{equation}
\end{Proposition}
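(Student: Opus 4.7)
Since $Q_V'(w) \ge 0$ is a local property, I will show $Q_V'(w) \ge 0$ on each smooth, relatively compact open set $\Omega \Subset M$. Fix such $\Omega$. As $w_1 > 0$ is a supersolution on $M$, Proposition \ref{pro115} gives $Q_V \ge 0$ on $M$; combined with the strict monotonicity of the fundamental tone on proper relatively compact subdomains (noted right after Proposition \ref{pro115}), this yields $\lambda_V(\Omega) > 0$. The plan is then to apply Theorem \ref{A5} with the choice $\psi = \theta = w|_{\overline\Omega}$, producing a solution $u \in \mathcal{K}_{w,w}$ of the obstacle problem. By the standard regularity result quoted after Corollary \ref{corsuper}, $u \in C^0(\overline\Omega)$, and since $u \ge w > 0$ on the compact $\overline\Omega$, the function $u$ is bounded above and below by positive constants there.

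Next I will show $u \le w$ on $\Omega$. Combined with $u \ge \psi = w$ from membership in $\mathcal{K}_{w,w}$, this forces $u \equiv w$ on $\Omega$; since $u$ is automatically a supersolution of $Q_V'$ (as observed just after \eqref{A4}), the identity $u = w$ yields $Q_V'(w) \ge 0$ on $\Omega$, and by arbitrariness of $\Omega$ we conclude \eqref{A19}. To obtain $u \le w$, I will apply Proposition \ref{pro_compaosta} twice: once with the supersolution $w_1$, once with $w_2$, concluding $u \le w_1$ and $u \le w_2$ on $\Omega$, hence $u \le \min\{w_1,w_2\} = w$.

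The only delicate point is verifying the hypotheses of Proposition \ref{pro_compaosta} for the pair $(u, w_1)$. The ratio condition $u/w_1, w_1/u \in L^\infty(\Omega)$ follows at once from continuity and positivity of $u, w_1$ on the compact $\overline\Omega$. For the membership $\min\{u, w_1\} \in \mathcal{K}_{w,w}$, the inequality $\min\{u, w_1\} \ge w$ is immediate since both $u$ and $w_1$ exceed $w$. For the boundary condition $\min\{u, w_1\} - w \in W^{1,p}_0(\Omega)$, one writes
$$
\min\{u, w_1\} - w = (u - w) - (u - w_1)_+.
$$
The first term lies in $W^{1,p}_0(\Omega)$ by definition of $\mathcal{K}_{w,w}$; for the second, since $u = w \le w_1$ in the sense of traces on $\partial \Omega$ (because $w = \min\{w_1, w_2\} \le w_1$ a.e.), the positive part $(u - w_1)_+$ has vanishing trace and thus belongs to $W^{1,p}_0(\Omega)$.

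The main obstacle I expect is precisely this boundary-trace computation, which relies crucially on the fact that the obstacle problem solution $u$ inherits the boundary datum $w$ exactly, and that this datum is dominated by $w_1$ from above. Once this is secured, the proof is a clean assembly of Theorem \ref{A5} and Proposition \ref{pro_compaosta}.
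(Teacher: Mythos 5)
Your proof is correct and follows essentially the same route as the paper's: solve the obstacle problem in $\mathcal{K}_{w,w}$ on a smooth relatively compact $\Omega$, then squeeze the obstacle solution between $w$ from below (membership in $\mathcal{K}_{w,w}$) and each $w_j$ from above (via Proposition \ref{pro_compaosta}), forcing it to equal $w$ and hence making $w$ a supersolution. You supply more detail than the paper does --- in particular the verification of $\lambda_V(\Omega) > 0$, the ratio condition, and the trace computation showing $\min\{u,w_1\} \in \mathcal{K}_{w,w}$ --- but the argument is the same one.
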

\begin{proof}
Fix any relatively compact open set $\Omega$ with smooth boundary and consider a solution $s$ of the obstacle problem ${\cal K}_w$ on $\Omega$. This is possible since $w \in \wup$. From $s \in {\cal K}_w$ we have $s \ge w$, and, being a solution of the obstacle problem, $Q_V' (s) \ge 0$. 
Next, since $w_j >0$ on $\overline \Omega$ and $Q'_V(w_j)\ge 0$, we can apply Proposition \ref{pro_compaosta} to obtain $s \le w_j$ for each $j$, whence $s \le w$. This shows that $s\equiv w$, so that $w$ solves \eqref{A19} as claimed.
\end{proof}
We are now ready for the 
\begin{proof}[Proof of Lemma \ref{lem_pasting}] First we show that $u \in W^{1,p}_\loc(\overline \Omega_2)$. Let $U\subset \overline\Omega_2$ be a relatively compact open set. Without loss of generality we can assume that $\Omega_1 \cap U \neq \emptyset$, for otherwise $u \equiv u_2$ on $U$ and the sought is immediate. Since $z=\min \{u_1-u_2,0\} \in W^{1,p}(\Omega_1)$ is zero on $\partial \Omega_1 \cap \Omega_2$, there exists a sequence $\{\varphi_j\} \subset C^0(\overline{U \cap \Omega}_1) \cap W^{1,p}(U\cap \Omega_1)$,  $\varphi_j \equiv 0$ on some neighbourhood of $\partial \Omega_1 \cap \Omega_2$, converging in the $W^{1,p}(U \cap \Omega_1)$ norm to $z$. Using that $z\le 0$ on $\Omega_1$, one can take, for instance, 
\begin{equation}\label{sequence}
\varphi_j = \left( z + \frac{1}{j} \right)_-.
\end{equation}
We extend each $\varphi_j$ to a continuous function on $U\backslash \Omega_1$ by setting $\varphi_j \equiv 0$ on $U\backslash \Omega_1$, so that $\varphi_j \in W^{1,p}(U)$ and clearly $\varphi_j \rightarrow z 1_{\Omega_1}$ in $W^{1,p}(U)$, which shows that $z1_{\Omega_1}\in W^{1,p}(U)$. It follows that $u = u_2 + \varphi_j \in W^{1,p}(U)$ converges to $u = u_2 + z1_{\Omega_1}$, whence $u\in W^{1,p}(U)$. \par
To prove that $Q_V' (u) \ge 0$ on $\Omega_2$, we shall reduce ourselves to Proposition \ref{pro_compaosta}. We take a smooth open set $U \Subset \Omega_2$ which, without loss of generality, intersects $\Omega_1$. Since $\lambda_V(U) >0$ by monotonicity of eigenvalues, Theorem \ref{A5} guarantees the existence of a solution $s$ of the obstacle problem in ${\cal K}_u$ on $U$ which, applying Theorem 5.4, page 235 in \cite{ZM} is continuous on $\overline U$. We want to show that $s \equiv u$ on $U$. First, since $u>0$ on $\overline U$, using Proposition \ref{minsupmaxsub} we get
\begin{equation}\label{quasiquasi..}
s \le u_2 \qquad \text{on } U. 
\end{equation}
Hence, $s=u_2=u$ on $U \backslash \Omega_1$. Consequently, since also $s \in \cal K_u$, $s-u=0$ on $\partial U \cup (\partial \Omega_1 \cap U) = \partial (\Omega_1 \cap U)$, and so by standard theory 
\begin{equation}\label{A24}
s-u \in W_0^{1,p} (\Omega_1 \cap U).
\end{equation}
(one can construct an approximating sequence as in \eqref{sequence} above). Therefore, $s$ is also a solution of the obstacle problem on the closed convex set
$$
\hat{\cal K}_u = \Big\{\varphi\in W^{1,p}(\Omega_1 \cap U) \ \vert \  \ \varphi \ge u \ \text{ a.e. and } \ \varphi-u\in
W^{1,p}_0(\Omega_1 \cap U)\Big\}.
$$
As $u_1$ is a positive supersolution on $\Omega_1 \cap U$ and $\min\{s,u_1\} \in \hat{\cal K}_u$, by Proposition \ref{pro_compaosta}   
\begin{equation}\label{ancoraquasi..}
s \le u_1 \qquad \text{on } \Omega_1 \cap U.
\end{equation}
Coupling with \eqref{quasiquasi..} we get $s \le u$ on $\Omega_1 \cap U$, and combining with $s \ge u$ on $U$ ($s \in \cal K_u$), $s=u = u_1$ on $U \backslash \Omega_1$, we conclude $s \equiv u$ on $U$. This shows that $u$ solves $Q_V'(u) \ge 0$ on $U$. As $U \Subset \Omega_2$ is arbitrary, $Q_V'(u) \ge 0$ on $\Omega_2$, concluding the proof. 
\end{proof}

\vspace{0.7cm}

\noindent \textbf{Acknowledgements}: The second author is supported by the grant PRONEX - N\'ucleo de An\'alise Geom\'etrica e Aplicac\~oes 
Processo nº PR2-0054-00009.01.00/11.\\
It is a pleasure to thank Y. Pinchover for having suggested us, after we posted a first version of this work on arXiv, the very recent \cite{devyverfraaspinchover, devyverpinchover} which contain results tightly related to those in Section \ref{sec_criticality}. 

\bibliographystyle{plain}
\bibliography{bibliosignchanging_complete}

\end{document}